\DeclareRobustCommand\widecheck[1]{{\mathpalette\@widecheck{#1}}}
\def\@widecheck#1#2{%
   \box\z@\hbox{\m@th$#1#2$}%
   \box\tw@\hbox{\m@th$#1%
      \widehat{%
         \vrule\@width\z@\@height\ht\z@
         \vrule\@height\z@\@width\wd\z@}$}%
   \dp\tw@-\ht\z@
   \@tempdima\ht\z@ \advance\@tempdima2\ht\tw@ \divide\@tempdima\thr@@
   \box\tw@\hbox{%
      \raise\@tempdima\hbox{\scalebox{1}[-1]{\lower\@tempdima\box\tw@}}}%
   {\ooalign{\box\tw@ \cr \box\z@}}}
\newtheorem{theorem}{Theorem} [section]
\newtheorem{lemma}[theorem]{Lemma}
\newtheorem{proposition}[theorem]{Proposition}
\newtheorem{remark}[theorem]{Remark}
\newtheorem{example}{Example}
\newtheorem{definition}[theorem]{Definition}
\begin{document}
\title[Local existence for Randomized GP hierarchies]{Local existence of solutions to Randomized Gross-Pitaevskii hierarchies}
\author[Vedran Sohinger]{Vedran Sohinger}
\address{
University of Pennsylvania, Department of Mathematics, David Rittenhouse Lab, Office 3N4B, 209 South 33rd Street, Philadelphia, PA 19104-6395, USA}
\email{vedranso@math.upenn.edu}
\urladdr{http://www.math.upenn.edu/~vedranso/}
\keywords{Gross-Pitaevskii hierarchy, Nonlinear Schr\"{o}dinger equation, Randomization, density matrices, collision operator, Duhamel iteration, local-in-time solutions}
\subjclass[2010]{35Q55, 70E55}
\thanks{V. S. was supported by a Simons Postdoctoral Fellowship.}

\maketitle

\begin{abstract}

In this paper, we study the local-in-time existence of solutions to randomized forms of the Gross-Pitaevskii hierarchy on periodic domains. In particular, we study the \emph{independently randomized Gross-Pitaevskii hierarchy} and the \emph{dependently randomized Gross-Pitaevskii hierarchy}, which were first introduced in the author's joint work with Staffilani \cite{SoSt}. For these hierarchies, we construct local-in-time low-regularity solutions in spaces which contain a random component. The constructed density matrices will solve the full randomized hierarchies, thus extending the results from \cite{SoSt}, where solutions solving arbitrarily long subhierarchies were given.

Our analysis will be based on the truncation argument which was first used in the deterministic setting in the work of T. Chen and Pavlovi\'{c} \cite{CP4}. The presence of randomization in the problem adds additional difficulties, most notably to estimating the Duhamel expansions that are crucial in the truncation argument. These difficulties are overcome by a detailed analysis of the Duhamel expansions. In the independently randomized case, we need to keep track of which randomization parameters appear in the Duhamel terms, whereas in the dependently randomized case, we express the Duhamel terms directly in terms of the initial data. In both cases, we can obtain stronger results with respect to the time variable if we assume additional regularity on the initial data.

\end{abstract}

\section{Introduction}

\subsection{Background: The Gross-Pitaevskii hierarchy and its randomized forms}
\label{Setup of the problem}

Given a spatial domain $\Lambda=\mathbb{R}^d$ or $\Lambda=\mathbb{T}^d$, the \emph{Gross-Pitaevskii hierarchy on $\Lambda$} is defined to be the following infinite system of linear partial differential equations:
\begin{equation}
\label{GrossPitaevskiiHierarchy}
\begin{cases}
i \partial_t \gamma^{(k)} + (\Delta_{\vec{x}_k}-\Delta_{\vec{x}_k'}) \gamma^{(k)}=\sum_{j=1}^{k} B_{j,k+1} (\gamma^{(k+1)})\\
\gamma^{(k)}\big|_{t=0}=\gamma_0^{(k)}.
\end{cases}
\end{equation}
In the above notation, each $\gamma_0^{(k)}$ is a function $\gamma_0^{(k)}: \Lambda^k \times \Lambda^k \rightarrow \mathbb{C}$, which we henceforth refer to as a  \emph{density matrix of order} $k$. Moreover, $\gamma^{(k)}=\gamma^{(k)}(t)$ is a time-dependent density matrix of order $k$. By $\Delta_{\vec{x}_k}$ and $\Delta_{\vec{x}_k'}$, we denote the Laplace operator in the first and second factor set of $k$ spatial variables. Namely:
$$\Delta_{\vec{x}_k}:=\sum_{j=1}^{k} \Delta_{x_j}\,,\,\Delta_{\vec{x}_k'}:=\sum_{j=1}^{k} \Delta_{x'_j}.$$
Finally, $B_{j,k+1}$ denotes the \emph{collision operator}. This is a linear operator mapping density matrices of order $k+1$ to density matrices of order $k$ and it is precisely defined in Section \ref{Notation}. As in \cite{SoSt}, we will not assume any additional symmetry properties of the solutions $(\gamma^{(k)})_k$ to \eqref{GrossPitaevskiiHierarchy}.

The Gross-Pitaevskii hierarchy is closely related to the nonlinear Schr\"{o}dinger equation. Physically, both objects occur in the context of Bose-Einstein condensation. This is a state of matter of dilute bosonic particles which are cooled to a temperature close to absolute zero. At such a temperature, the particles tend to occupy the lowest quantum state. This one-particle state corresponds to the solution of a nonlinear Schr\"{o}dinger equation. Following this interpretation, the nonlinear Schr\"{o}dinger equation is sometimes called the \emph{Gross-Pitaevskii equation}, after the work of Gross \cite{Gross} and Pitaevskii \cite{Pitaevskii}. The physical phenomenon of Bose-Einstein condensation was originally predicted by Bose \cite{Bose} and Einstein \cite{Einstein} in 1924-1925. Their theoretical prediction was verified by experiments conducted independently by the research teams of Cornell and Wieman \cite{CW} and Ketterle \cite{Ket} in 1995. These two groups were jointly awarded the Nobel Prize for Physics in 2001.

A further connection between these two objects can be seen in the derivation of NLS-type equations from the equations describing the dynamics of $N$ bosonic particles as $N \rightarrow \infty$. The dynamics of $N$ bosonic particles are determined by the $N$-body Schr\"{o}dinger equation, which, in turn gives rise to a hierarchy called the \emph{BBGKY hierarchy}. This hierarchy is similar to \eqref{GrossPitaevskiiHierarchy}, but it has dependence on the number of particles $N$. As $N \rightarrow \infty$, one formally obtains the Gross-Pitaevskii hierarchy as the limit of the BBGKY hierarchy. Making this limiting procedure rigorous takes a lot of effort. This strategy was pioneered by Spohn \cite{Spohn}. In \cite{Spohn}, the author was able to rigorously derive the nonlinear Hartree equation 
$iu_t+\Delta u = (V*|u|^2)u$ on $\mathbb{R}^d$, for $V=V(x) \in L^{\infty}(\mathbb{R}^d)$.  We note that a key part of the above analysis was devoted to the study of conditional uniqueness of solutions to a hierarchy similar to \eqref{GrossPitaevskiiHierarchy}. Extensions of Spohn's result to the singular case of the Coulomb potential 
$V(x)=\pm \frac{1}{|x|}$ when $d=3$ were given in the subsequent works by Bardos, Golse and Mauser \cite{BGM} and Erd\H{o}s and Yau \cite{EY}.  The question of deriving the nonlinear Hartree equation was revisited in the work of Fr\"{o}hlich, Tsai, and Yau \cite{FrTsYau1,FrTsYau2,FrTsYau3}.
In a series of monumental works, Erd\H{o}s, Schlein and Yau \cite{ESY2,ESY3,ESY4,ESY5} use this framework in order to provide a rigorous derivation of the defocusing cubic nonlinear Schr\"{o}dinger equation on $\mathbb{R}^3$. The uniqueness step in the latter works was achieved by means of a Feynman graph expansion.

Finally, let us note that a structural connection between the Gross-Pitaevskii hierarchy and the nonlinear Schr\"{o}dinger equation is given in the framework of \emph{factorized solutions}. Namely, suppose that $\phi$ solves the defocusing cubic \emph{nonlinear Schr\"{o}dinger equation (NLS)} on $\Lambda$:
\begin{equation}
\notag
%\label{NLS2}
\begin{cases}
i \partial_t \phi + \Delta \phi = |\phi|^2\phi,\,\,\mbox{on}\,\,\mathbb{R}_t \times \Lambda\\
\phi|_{t=0}=\phi_0,\,\mbox{on}\,\Lambda.
\end{cases}
\end{equation}
Let $| \cdot \rangle \langle \cdot|$ denote the \emph{Dirac bracket}, which is given by $|f \rangle \langle g| (x,x'):=f(x) \overline{g(x')}$.
Then:
$$\gamma^{(k)}(t,\vec{x}_k;\vec{x}_k'):=\prod_{j=1}^k \phi(t,x_j) \overline{\phi(t,x_j')}=|\phi \rangle \langle \phi|^{\otimes k}(t,\vec{x}_k;\vec{x}_k')$$
solves \eqref{GrossPitaevskiiHierarchy} with initial data $\gamma_0^{(k)}=|\phi_0 \rangle \langle \phi_0|^{\otimes k}$. These are defined to be the \emph{factorized solutions} of \eqref{GrossPitaevskiiHierarchy}. Physically, they represent the one-particle state that was noted in the discussion above.

An alternative approach to part of the uniqueness analysis of \cite{ESY1,ESY2,ESY3,ESY4,ESY5} for $\Lambda=\mathbb{R}^3$ was given in the work of Klainerman and Machedon \cite{KM}. Their approach was based on a combinatorial reformulation of the Feynman graph argument and on a spacetime estimate of the following type:
\begin{equation}
\label{SpacetimeBoundalpha}
\big\|S^{(k,\alpha)} B_{j,k+1}\,\mathcal{U}^{(k+1)}(t)\, \gamma^{(k+1)}_0\big\|_{L^2([0,T] \times \Lambda^k \times \Lambda^k)} \lesssim \big\|S^{(k+1,\alpha)}\gamma^{(k+1)}_0\big\|_{L^2(\Lambda^{k+1} \times \Lambda^{k+1})}.
\end{equation}
Here, $\alpha$ is a fixed regularity exponent,  $T>0$ is a fixed time and the implied constant depends on $T$. The operator $\mathcal{U}^{(k)}(t)$ denotes the analogue of the free Schr\"{o}dinger evolution for density particles of order $k$, i.e. the free evolution associated to the operator $i \partial_t + \big(\Delta_{\vec{x}_k}-\Delta_{\vec{x}'_k}\big)$. The operator $S^{(k,\alpha)}$ corresponds to taking $\alpha$ fractional derivatives of density matrices of order $k$. The precise definitions of $\mathcal{U}^{(k)}(t)$ and $S^{(k,\alpha)}$ are given in \eqref{free_evolution} and \eqref{FractionalDerivative} below. We note that estimates similar to \eqref{SpacetimeBoundalpha} were already used in \cite{ESY2,ESY3,ESY4,ESY5} and in the works mentioned above. In \cite{KM}, the novelty was to prove \eqref{SpacetimeBoundalpha} by using methods reminiscent of those used in the proof of null-form estimates for the wave equation in the previous work of the same authors \cite{KM2}.
The Feynman diagram argument from \cite{ESY1,ESY2,ESY3,ESY4,ESY5} was reformulated in terms of a combinatorial \emph{boardgame argument}. By applying the spacetime estimate and the above combinatorial formulation, the authors were able to  prove a uniqueness result for \eqref{GrossPitaevskiiHierarchy} in a class of density matrices containing the factorized solutions.

The estimate \eqref{SpacetimeBoundalpha} and related bounds are used in the aforementioned works in order to contract the Duhamel expansions that occur in the study of \eqref{GrossPitaevskiiHierarchy}. In particular, by an appropriate use of \eqref{SpacetimeBoundalpha}, it is possible to control very long Duhamel expansions. The range of $\alpha$ for which it is possible to prove the spacetime estimate typically determines in what regularity classes one can prove results for \eqref{GrossPitaevskiiHierarchy} by using the mentioned strategies. 

Estimates related to \eqref{SpacetimeBoundalpha} have recently been studied outside of the realm of the Gross-Pitaevskii hierarchy. In particular, generalizations of \eqref{SpacetimeBoundalpha} for $\Lambda=\mathbb{R}^d$ were recently studied on their own right in the work of Beckner \cite{Beckner}. The motivation for the latter analysis was to develop a method for understanding more general forms of Stein-Weiss integrals which involve restriction to a smooth submanifold \cite{Beckner2}. 
 
In the work of Kirkpatrick, Schlein, and Staffilani \cite{KSS}, the authors prove that \eqref{SpacetimeBoundalpha} holds on $\mathbb{T}^2$ whenever $\alpha>\frac{3}{4}$. Using this result and an energy argument, they could obtain a rigorous derivation of the cubic defocusing nonlinear Schr\"{o}dinger equation on $\mathbb{T}^2$ and $\mathbb{R}^2$ from many-body quantum dynamics by the strategy outlined above. This was the rigorous derivation of the NLS on a periodic domain. The periodic problem for the Gross-Pitaevskii hierarchy was previously studied by Erd\H{o}s, Schlein, and Yau \cite{ESY1}, as well as by Elgart, Erd\H{o}s, Schlein, and Yau \cite{EESY}. In these works, the authors consider the problem when $\Lambda=\mathbb{T}^3$ and they obtain all of the steps in Spohn's strategy except for uniqueness. The uniqueness step was recently obtained by the author in \cite{VS2}.
We note that the problem that we are considering also makes physical sense if we assume that the bosonic particles are interacting in a bounded medium, as would be the case in a laboratory. 

In the author's previous work with Gressman and Staffilani \cite{GSS}, \eqref{SpacetimeBoundalpha} was shown to hold on $\mathbb{T}^3$ for $\alpha>1$. As an application of this result, it was possible to prove conditional uniqueness for \eqref{GrossPitaevskiiHierarchy} on $\mathbb{T}^3$ in an appropriate class $\mathcal{A}$ of density matrices which posses $\alpha>1$ fractional derivatives and which satisfy a certain a priori bound. We refer the reader to \cite{GSS} for a precise definition. The relevant class $\mathcal{A}$ was shown to be non-empty. In particular, it was noted that $\mathcal{A}$ contains the factorized solutions of the corresponding regularity. 

By constructing an explicit counterexample, it was shown in \cite{GSS} that the estimate \eqref{SpacetimeBoundalpha} \emph{does not hold on $\mathbb{T}^3$ when $\alpha=1$}. This is in sharp contrast with what happens when $\Lambda=\mathbb{R}^3$ \cite{KM} and when $\Lambda=\mathbb{T}^2, \mathbb{R}^2$ \cite{KSS}, where the spacetime estimate was shown to hold in the case $\alpha=1$. Such a phenomenon is consistent with the intuition that dispersion becomes weaker on periodic domains and with the intuition that dispersion becomes weaker when the dimension of the periodic domain becomes larger. 

The regularity level $\alpha=1$ gives the natural energy space for \eqref{GrossPitaevskiiHierarchy}. Hence, the approach based on the application of the spacetime estimate as in \eqref{SpacetimeBoundalpha} and energy arguments, which was used to derive the NLS equation from many-body dynamics on $\mathbb{T}^2$, can not be directly applied on $\mathbb{T}^3$. A rigorous derivation of the NLS equation from many-body dynamics on $\mathbb{T}^3$ was recently obtained by the author in \cite{VS2} by using different techniques, based on arguments from the work of T. Chen, Hainzl, Pavlovi\'{c}, and Seiringer \cite{CHPS} as well as Elgart, Erd\H{o}s, Schlein, and Yau \cite{EESY}.

It was noted in \cite{GSS} that, on $\mathbb{T}^3$, the estimate \eqref{SpacetimeBoundalpha} does hold when $\alpha=1$ in the special case of factorized objects. In other words, it was noted that \eqref{SpacetimeBoundalpha} holds if $\alpha=1$ and if $\gamma_0^{(k+1)}= |\phi \rangle \langle \phi|^{\otimes (k+1)}$ for some $\phi \in H^1(\mathbb{T}^3)$. A question which arises in this context is whether the spacetime estimate is true on $\mathbb{T}^3$ for a wider range of $\alpha$ in some \emph{generic sense}.

In the author's previous work with Staffilani \cite{SoSt}, an affirmative answer to this question was given by the use of probabilistic methods. This approach builds on previous applications of probability theory to the study of nonlinear dispersive equations. The general idea is that an instability can be overcome by introducing randomness into the model. The hope is that, for generic values of the random parameter, the instability does not occur. The first results of this type were the almost-sure global well-posedness results for the nonlinear Schr\"{o}dinger equation in low regularity by Bourgain \cite{B,B2,B3,B4}. They, in turn, build on the previous work of Lebowitz, Rose and Speer \cite{LRS}, and Zhidkov \cite{Zhidkov}. A related variant of the probabilistic approach which applies to equations which have less structure was recently developed by Burq and Tzvetkov \cite{BT1}. The latter approach has its precursors in the local-in-time analysis in the work of Bourgain \cite{B2}.
In all of the aforementioned works, the general idea was to use the randomness in order to extend the range of regularity exponents for which one can study the PDE. There has been a lot of work done on the application of randomization techniques to the study of nonlinear dispersive equations. A more detailed discussion of these techniques and additional relevant sources are given in Subsection \ref{Previously known results}.

As opposed to the works mentioned above and in Subsection \ref{Previously known results} below, in \cite{SoSt}, we do not randomize in the initial data. Instead, we \emph{randomize in the collision operator}. More precisely, we replace the collision operator $B_{j,k+1}$ by the \emph{randomized collision operator} $[B_{j,k+1}]^{\omega}$ for a given random parameter $\omega$ belonging to an appropriate probability space $\Omega$. The operator $[B_{j,k+1}]^{\omega}$ is obtained from $B_{j,k+1}$ by randomizing the Fourier coefficients by means of a collection of standard Bernoulli random variables. A precise definition of the randomized collision operator is given in \eqref{Bjkomega}, \eqref{Bjkomega2}, and \eqref{Bjkrandomized} below. With $[B_{j,k+1}]^{\omega}$ defined in this way, the following estimate was shown to hold for $\Lambda=\mathbb{T}^3$:
\\
\\
\emph{\textbf{Theorem 3.1 from \cite{SoSt}:}
Suppose that $\alpha>\frac{3}{4}$. Then, there exists $C_0>0$, depending only on $\alpha$, such that for all $k \in \mathbb{N}$, $j \in \{1,2,\ldots,k\}$, and for all density matrices $\gamma_0^{(k+1)}$ of order $k+1$ on $\mathbb{T}^3$, the following estimate holds:}
\begin{equation}
\label{Theorem1bound2}
\|S^{(k,\alpha)} \, [B_{j,k+1}]^{\omega} \,\gamma_{0}^{(k+1)}\|_{L^2(\Omega \times \mathbb{T}^{3k} \times \mathbb{T}^{3k})} \leq C_0 \|S^{(k+1,\alpha)} \gamma_0^{(k+1)}\|_{L^2(\mathbb{T}^{3(k+1)} \times \mathbb{T}^{3(k+1)})}.
\end{equation}
In particular, by unitarity of $\mathcal{U}^{(k+1)}(t)$:
\begin{equation}
\label{Theorem1bound}
\big\|S^{(k,\alpha)} \, [B_{j,k+1}]^{\omega} \, \mathcal{U}^{(k+1)}(t) \, \gamma_{0}^{(k+1)}\big\|_{L^2(\Omega \times [0,T] \times \mathbb{T}^{3k} \times \mathbb{T}^{3k})} \leq
\end{equation}
$$C_0 \sqrt{T} \big\|S^{(k+1,\alpha)} \gamma_0^{(k+1)}\big\|_{L^2(\mathbb{T}^{3(k+1)} \times \mathbb{T}^{3(k+1)})}.
$$
We can view the randomized estimate \eqref{Theorem1bound} as a randomized extension of the spacetime estimate \eqref{SpacetimeBoundalpha}. As we see, in the randomized setting, it holds for a wider range of regularity exponents $\alpha>\frac{3}{4}$. The ideas in the proof of this estimate were inspired by the original techniques used to obtain an almost sure gain in integrability as a result of the randomization due to Rademacher \cite{Rademacher}, with related work by Paley and Zygmund \cite{PaleyZygmund1,PaleyZygmund2,PaleyZygmund3}, as well as Marcinkiewicz and Zygmund \cite{MarcinkiewiczZygmund} and Khintchine, see \cite{Wolff}. The structure of the collision operator required us to perform a combinatorial analysis of the frequencies which pair up due to the randomization. 

Having constructed $[B_{j,k+1}]^{\omega}$, in \cite{SoSt} we studied several randomized modifications of the problem \eqref{GrossPitaevskiiHierarchy}. In particular, we studied the \emph{dependently randomized Gross-Pitaevskii hierarchy}:
\begin{equation}
\label{Introduction_RandomizedGP1}
\begin{cases}
i\partial_t \gamma^{(k)}+(\Delta_{\vec{x}_k} -\Delta_{\vec{x}'_k})\gamma^{(k)}=\sum_{j=1}^{k}[B_{j,k+1}]^{\omega}(\gamma^{(k+1)}) \\
\gamma^{(k)}|_{t=0}=\gamma_0^{(k)}.
\end{cases}
\end{equation}
Here $\omega \in \Omega$ is a random parameter and each $\gamma^{(k)}$ can depend on $\omega$. However, $\gamma_0^{(k)}$ does not depend on $\omega$.

Furthermore, we studied the \emph{independently randomized Gross-Pitaevskii hierarchy}:
\begin{equation}
\label{Introduction_RandomizedGP2}
\begin{cases}
i\partial_t \gamma^{(k)}+(\Delta_{\vec{x}_k} -\Delta_{\vec{x}'_k})\gamma^{(k)}=\sum_{j=1}^{k}[B_{j,k+1}]^{\omega_{k+1}}(\gamma^{(k+1)}) \\
\gamma^{(k)}|_{t=0}=\gamma_0^{(k)}.
\end{cases}
\end{equation}
Here, $\omega^{*}:=(\omega_k)_{k \geq 2}$ is a sequence of elements of the probability space $\Omega$. In this case, the randomizations of the collision operator are mutually independent at each level. This is emphasized by denoting the parameters $\omega_{k+1}$ with different indices. Each $\gamma^{(k)}$ can depend on the full sequence of  random parameters $\omega^{*}=(\omega_2,\omega_3,\omega_4,\ldots)$. As above, $\gamma_0^{(k)}$ does not have any random dependence. We note that \eqref{Introduction_RandomizedGP2} reduces to \eqref{Introduction_RandomizedGP1} when all of the $\omega_{k+1}$ are equal to $\omega$.

For the independently randomized hierarchy, we were able to apply the estimate \eqref{Theorem1bound2} in order to study sequences of Duhamel terms which solve \emph{arbitrarily long subhierarchies of} \eqref{Introduction_RandomizedGP2} with zero initial data. Namely, we starte from a fixed \emph{deterministic} sequence of time-dependent density matrices $(\gamma^{(k)}(t))_k$ which satisfies uniformly in $t$ the following bound:
\begin{equation}
\label{aprioriboundIL}
\|S^{(k,\alpha)}\gamma^{(k)}(t)\|_{L^{\infty}_{t} L^2(\mathbb{T}^{3k} \times \mathbb{T}^{3k})} \leq C_1^k
\end{equation}
for some constant $C_1>0$, independent of $k$. The condition \eqref{aprioriboundIL} is natural in light of the Sobolev-type spaces $\mathcal{H}^{\alpha}_{\xi}$, which were first introduced in the work of T. Chen and Pavlovi\'{c} \cite{CP1}. The precise definition of these spaces is recalled in \eqref{Space1} below. Furthermore, this bound is motivated by the a priori estimate needed in the conditional uniqueness proof of Klainerman and Machedon \cite{KM}. 

Having defined the sequence $(\gamma^{(k)}(t))_k$ in this way, we let, for fixed $n \in \mathbb{N}$ and for independently chosen random parameters $\omega_{k+1},\omega_{k+2},\ldots,\omega_{n+k} \in \Omega$:
\begin{equation}
\label{DuhamelRandomI2L}
\sigma^{(k)}_{n;\,\omega_{k+1},\omega_{k+2},\ldots,\omega_{n+k}}(t):=
\end{equation}
$$(-i)^n \int_{0}^{t} \int_{0}^{t_{k+1}} \cdots \int_{0}^{t_{n+k-1}} \mathcal{U}^{(k)}(t_k-t_{k+1}) \, [B^{(k+1)}]^{\omega_{k+1}} \,\mathcal{U}^{(k+1)}(t_{k+1}-t_{k+2})$$
$$ [B^{(k+2)}]^{\omega_{k+2}} \cdots \,\mathcal{U}^{(n+k-1)}(t_{n+k-1}-t_{n+k}) \, [B^{(n+k)}]^{\omega_{n+k}} \, \gamma^{(n+k)}(t_{n+k}) \,dt_{n+k} \cdots dt_{k+2} \, dt_{k+1}.$$
In this notation, the superscript $k$ denotes the order of the density matrix whereas the subscript $n$ denotes the length of the Duhamel expansion. Moreover, $[B^{(k+1)}]^{\omega}:=\sum_{j=1}^{k}[B_{j,k+1}]^{\omega}$ denotes the \emph{full randomized collision operator}, as defined in \eqref{Bomegak+1} below.

For $\ell=1,2,\ldots,n$, let us define:
\begin{equation}
\label{gammatilde}
\tilde{\gamma}^{(\ell)}:=\sigma^{(\ell)}_{n-\ell+1;\,\omega_{\ell+1}, \omega_{\ell+2}, \ldots, \,\omega_{n+1}}.
\end{equation}
We observe that then, the $\tilde{\gamma}^{(\ell)}$ solve:

\begin{equation}
\notag
\begin{cases}
i \partial_t \tilde{\gamma}^{(k)} + (\Delta_{\vec{x}_k}-\Delta_{\vec{x}'_k})\tilde{\gamma}^{(k)}=\sum_{j=1}^{k} [B_{j,k+1}]^{\omega_{k+1}} (\tilde{\gamma}^{(k+1)})\\
\tilde{\gamma}^{(k)}\big|_{t=0}=0.
\end{cases}
\end{equation}
for all $k \in \{1,2,\ldots,n-1\}$. In other words, the $\tilde{\gamma}^{(j)}$ defined in \eqref{gammatilde} give us an \emph{arbitrarily long subset of solutions} to the full hierarchy \eqref{Introduction_RandomizedGP2}
with zero initial data. We then say that they solve an \emph{arbitrarily long subhierarchy of} \eqref{Introduction_RandomizedGP2}. For $\sigma^{(k)}_{n;\,\omega_{k+1},\omega_{k+2},\ldots,\omega_{n+k}}(t)$ defined as in \eqref{DuhamelRandomI2L}, we could iteratively apply the estimate \eqref{Theorem1bound2} in order to prove:
\\
\\
\emph{\textbf{Theorem 5.2 from \cite{SoSt}:}
Let $\alpha>\frac{3}{4}$ and $k \in \mathbb{N}$. Then, there exists $T>0$ depending only on $\alpha$ and on the constant $C_1$ in \eqref{aprioriboundIL} such that:
\begin{equation}
\label{Theorem2boundL}
\,\, \sup_{t \in [0,T]} \, \big\|S^{(k,\alpha)} \sigma^{(k)}_{n;\,\omega_{k+1},\omega_{k+2},\ldots,\omega_{n+k}}(t)\big\|_{L^2 \big(\Omega_{k+1} \times \Omega_{k+2} \times \cdots \times \Omega_{n+k}; \,L^2(\mathbb{T}^{3k} \times \mathbb{T}^{3k})\big)} \rightarrow 0\,\,
\end{equation} 
as $n \rightarrow \infty.$
}
\\
\\
It was noted in \cite{SoSt} that it is possible to replace the finite product $\Omega_{k+1} \times \Omega_{k+2} \times \cdots \times \Omega_{n+k}$ on the right-hand side of \eqref{Theorem2boundL} with the infinite product $\Omega^{*}:=\prod_{j \geq 2} \Omega_{j}$ following the work of Kakutani \cite{Kakutani} and Kolmogorov \cite{Kolmogorov}. In this way, the convergence obtained in \eqref{Theorem2boundL} is measured in the same space. One can view \eqref{Theorem2boundL} as a randomized analogue of the analysis of the Duhamel terms which was used in the proof of uniqueness results for the Gross-Pitaevskii hierarchy \eqref{GrossPitaevskiiHierarchy} in \cite{KM}, and subsequently in \cite{GSS,KSS}. An important feature of the result is that $T$ remains constant as $n \rightarrow \infty$, which allows us to work on a fixed time interval. 
	
In the study of the dependently randomized Gross-Pitaevskii hierarchy \eqref{Introduction_RandomizedGP1}, it is not possible to iteratively use the estimate \eqref{Theorem1bound2} as in the independently randomized setting. Furthermore, is it not possible to directly estimate a full Duhamel expansion for arbitrary initial data, as is shown in Example 1 of Subsection 6.1 of \cite{SoSt}.
However, one can obtain an estimate for the full Duhamel expansion of \emph{non-resonant objects}. In other words, we recall that in \cite{SoSt}, it was necessary to consider Duhamel expansions of objects in the \emph{non-resonant class} $\mathcal{N}$. Let us note that the non-resonance conditions resemble the idea of \emph{Wick ordering}, which was used in the study of the NLS in \cite{B3,COh}, as well as in a related approach in \cite{NS}. Moreover, the objects in $\mathcal{N}$ are assumed to satisfy an a priori bound as in \eqref{aprioriboundIL}. The non-resonant class is precisely defined in Subsection 6.3 of \cite{SoSt} and the corresponding estimate in this class is given in Proposition 6.2 of \cite{SoSt}.

In particular, for a fixed $(\gamma^{(k)}(t))_k$ \emph{belonging to the non-resonant class $\mathcal{N}$} and for a fixed $\omega \in \Omega$, we define:
\begin{equation}
\notag
\label{DuhamelRandomI1L}
\sigma^{(k)}_{n;\,\omega}(t):=
\end{equation}
$$(-i)^n \int_{0}^{t} \int_{0}^{t_{k+1}} \cdots \int_{0}^{t_{n+k-1}} \mathcal{U}^{(k)}(t_k-t_{k+1}) \, [B^{(k+1)}]^{\omega} \,\mathcal{U}^{(k+1)}(t_{k+1}-t_{k+2})$$
$$ [B^{(k+2)}]^{\omega} \cdots \,\mathcal{U}^{(n+k-1)}(t_{n+k-1}-t_{n+k}) \, [B^{(n+k)}]^{\omega} \, \gamma^{(n+k)}(t_{n+k}) \,dt_{n+k} \cdots dt_{k+2} \, dt_{k+1}.$$ 
In this way, we obtain solutions to arbitrarily long subhierarchies of  \eqref{Introduction_RandomizedGP1} similarly as above.
The following result was shown to hold:
\\
\\
\emph{\textbf{Theorem 6.4 from \cite{SoSt}:}	
Let $\alpha \geq 0$ and $k \in \mathbb{N}$. There exists $T>0$, depending on $\alpha$, $k$, and on the constant $C_1$ in the definition of $\mathcal{N}$ such that, for $\sigma^{(k)}_{n;\,\omega}$ as defined in \eqref{DuhamelRandomI1L} above: 
\begin{equation}
\notag
%\label{Theorem3bound}
\sup_{t \in [0,T]} \, \big\|S^{(k,\alpha)} \sigma^{(k)}_{n;\,\omega}(t)\big\|_{L^2 \big(\Omega \times \mathbb{T}^{3k} \times \mathbb{T}^{3k} \big)} \rightarrow 0
\end{equation}
as $n \rightarrow \infty.$
}

The Gross-Pitaevskii hierarchy has recently been studied as a Cauchy problem in a series of works by T. Chen and Pavlovi\'{c} \cite{CP1,CP_survey_article,CP4,CP3,CP}, as well as in their joint works with Tzirakis \cite{CPT1,CPT2}, and in the subsequent work of Z. Chen and Liu \cite{CL}. The Cauchy problem associated to the Hartree equation for infinitely many particles has recently been studied by Lewin and Sabin \cite{LewinSabin1,LewinSabin2}.
In these papers, the authors prove analogues of results known for the Cauchy problem associated to the nonlinear Schr\"{o}dinger equation in the context of the Cauchy problem associated to the Gross-Pitaevskii hierarchy or to the Hartree equation for infinitely many particles. In the former case, the motivation for this analysis is that the Gross-Pitaevskii hierarchy \eqref{GrossPitaevskiiHierarchy} can be thought of as a generalization of the defocusing cubic nonlinear Schr\"{o}dinger equation in the sense that from each solution $\phi$ to the defocusing cubic NLS, we obtain a factorized solution to \eqref{GrossPitaevskiiHierarchy}. By introducing a coupling constant, one can also study the focusing problem.
By defining a different collision operator, it is possible to obtain a Gross-Pitaevskii hierarchy which is related to the quintic NLS \cite{CP1,CP_survey_article,CP2,CP4,CP3,CPT1,CPT2}, as well as the NLS with more general nonlinearities \cite{Xie}. In \cite{SoSt}, it was noted that \eqref{Introduction_RandomizedGP1} admits factorized solutions coming from an NLS-type equation with a cubic random nonlinearity. In this context, we can think of the randomization in the collision operator as a \emph{nonlinear form of randomization}. Details of this construction are given in Subsection 4.1 of \cite{SoSt}.

The above analysis can be extended to the general case of $\Lambda=\mathbb{T}^d$ when $d \in \mathbb{N}$ without much change. For the independently randomized hierarchy, the condition $\alpha>\frac{3}{4}$ gets replaced by $\alpha>\frac{d}{4}$, and for the dependently randomized hierarchy, the condition $\alpha \geq 0$ does not change. We refer the interested reader to Remark 3.7 in \cite{SoSt} and to the discussion in the introductory paragraph of Section 6 in \cite{SoSt}. In the discussion that follows, we will primarily work with the case $\Lambda=\mathbb{T}^3$ for simplicity of notation and for its physical interpretation.

There has been a great amount of activity in the study of problems related to the Gross-Pitaevskii hierarchy, both from the theoretical and from the experimental point of view. Additional references to those mentioned above are cited in Subsection \ref{Previously known results} below. In what follows, we will explain the statement of our problem and we will give the main results. 

\subsection{Statement of the problem. Main Results}
\label{Statement of the problem. Main Results}
As was explained above, in \cite{SoSt}, we were able to construct solutions to arbitrarily long subhierarchies of the randomized hierarchies \eqref{Introduction_RandomizedGP1} and \eqref{Introduction_RandomizedGP2}. In this paper, we will be interested in constructing solutions \emph{to the full hierarchies} \eqref{Introduction_RandomizedGP1} and \eqref{Introduction_RandomizedGP2}, at least locally in time. Our analysis of this problem was inspired by the work of T. Chen and Pavlovi\'{c} \cite{CP4} in the deterministic setting. In \cite{CP4}, the authors construct local-in-time solutions to  \eqref{GrossPitaevskiiHierarchy} for $\Lambda=\mathbb{R}^d$ by using a truncation method. 
More precisely, given a parameter $N \in \mathbb{N}$, they consider the \emph{truncated Gross-Pitaevskii hierarchy}, i.e the system for $k \leq N$:

\begin{equation}
\label{TruncatedGP}
\begin{cases}
i \partial_t \, \gamma_N^{(k)}+(\Delta_{\vec{x}_k}-\Delta_{\vec{x}'_k}) \, \gamma_N^{(k)}=\sum_{j=1}^{k} B_{j,k+1} \big(\gamma_N^{(k+1)}\big)\\
\gamma_N^{(k)} \big|_{t=0} = \gamma_0^{(k)}.
\end{cases}
\end{equation}
with the additional condition that:
\begin{equation}
\label{TruncatedGPcondition}
\gamma_N^{(k)} \equiv 0,\,\,\mbox{whenever}\,\,k>N.
\end{equation}
Similarly as before, each $\gamma_N^{(k)}$ is a density matrix of order $k$. 
They show that it is possible to construct local-in-time solutions $\gamma_N^{(k)}$ to \eqref{TruncatedGP} which satisfy the additional condition \eqref{TruncatedGPcondition} by using explicit Duhamel expansions. Moreover, it is shown that there exists $T>0$, depending on the initial data, such that $\gamma_N^{(k)}(t)$ has a limit as $N \rightarrow \infty$ whenever $t \in [0,T]$. The limit is observed to solve \eqref{GrossPitaevskiiHierarchy} on $[0,T]$. This is purely an existence result and there is no statement involving uniqueness of these solutions. 

In our paper, we would like to adapt this approach to the randomized models \eqref{Introduction_RandomizedGP1} and \eqref{Introduction_RandomizedGP2}
and construct local-in-time solutions to these hierarchies. As we will see, the randomness in the model will pose certain challenges. These will be overcome by an appropriate application of the randomized estimates from \cite{SoSt}. Throughout the following discussion, we will consider the case $\Lambda=\mathbb{T}^3$ unless it is stated otherwise. 

Let us recall the definition of a Sobolev-type norm associated to \eqref{GrossPitaevskiiHierarchy}, which was first introduced in the work of T. Chen and Pavlovi\'{c} \cite{CP1}. If $\Sigma=(\sigma^{(k)})$ is a sequence such that each $\sigma^{(k)}$ is a density matrix of order $k$, then for $\alpha,\xi>0$, one defines:
$$\big\|\sigma\|_{\mathcal{H}^{\alpha}_{\xi}}:=\sum_{k \in \mathbb{N}} \xi^k \cdot \|S^{(k,\alpha)}\sigma^{(k)}\|_{L^2(\mathbb{T}^{3k} \times \mathbb{T}^{3k})}.$$
The main results that we prove are:
\\
\\
\emph{\textbf{Theorem 1:}
Let us fix $\alpha>\frac{3}{4}$, and  $\xi,\xi'>0$, such that $\frac{\xi}{\xi'}$ is sufficiently small. 
Suppose that $\Gamma(0):=(\gamma_0^{(k)})_k \in \mathcal{H}^{\alpha}_{\xi'}(\mathbb{T}^3)$. Then, there exists $T>0$, depending on $\alpha,\xi,\xi'$, and $[\Gamma\,]^{\omega^{*}}=([\gamma^{(k)}]^{\omega^{*}})_{k} \in L^{\infty}_{t \in [0,T]} L^2(\Omega^{*}) \mathcal{H}^{\alpha}_{\xi}$, such that, for all $k \in \mathbb{N}$:
\begin{equation}
\notag
%\label{Theorem1_Introduction}
\big\|[\gamma^{(k)}]^{\omega^{*}}(t)
-\,\mathcal{U}^{(k)}(t) \, \gamma_0^{(k)}+i \int_{0}^{t} \,\mathcal{U}^{(k)}(t-s)\, [B^{(k+1)}\,]^{\omega_{k+1}} [\gamma^{(k+1)}]^{\omega^{*}}(s) \,ds \big\|_{L^{\infty}_{t \in [0,T]} L^2(\Omega^{*}) H^{\alpha}(\mathbb{T}^{3k} \times \mathbb{T}^{3k})}=0.
\end{equation}
Moreover, $[\Gamma\,]^{\omega^{*}}$ satisfies the a priori bound:
\begin{equation}
\label{A_priori_bound1}
\big\|[\Gamma\,]^{\omega^{*}}\big\|_{L^{\infty}_{t \in [0,T]} L^2(\Omega^{*}) \mathcal{H}^{\alpha}_{\xi}} \lesssim_{\,\xi,\,\xi',\,\alpha} \big\|\Gamma(0)\big\|_{\mathcal{H}^{\alpha}_{\xi'}}.
\end{equation}
The space $L^{\infty}_{t \in [0,T]}L^2(\Omega^{*})\mathcal{H}^{\alpha}_{\xi}$ is defined in \eqref{Space2B} below.
}
\\
\\
Here, we note that, for every $k \in \mathbb{N}$: 
$$[\gamma^{(k)}]^{\omega^{*}} : I \times \Omega^{*} \times \mathbb{T}^{3k} \times \mathbb{T}^{3k} \rightarrow \mathbb{C}.$$
The sequence $[\Gamma\,]^{\omega^{*}}$ from Theorem 1 should be thought of as a local-in-time solution to the independently randomized Gross-Pitaevskii hierarchy \eqref{Introduction_RandomizedGP2} in the sense given above. Theorem 1 is given as Theorem \ref{Theorem1} below. 
The conditions relating $\alpha, T, \xi$ and  $\xi'$  are given in \eqref{xi_prime} and \eqref{xi}.

If we add additional regularity assumptions on the initial data $\Gamma(0)=(\gamma_0^{(k)})_k$, we can prove the following stronger result:
\\
\\
\emph{\textbf{Theorem 2:}
Let us fix $\alpha>\frac{3}{4}$, and $\xi',\xi>0$, with $\frac{\xi}{\xi'}$ sufficiently small. Furthermore, let $\alpha_0>\alpha$ be given. Suppose that $\Gamma(0)=(\gamma_0^{(k)})_k \in \mathcal{H}^{\alpha_0}_{\xi'}(\mathbb{T}^3)$. Then, there exists $T>0$, depending on $\alpha,\alpha_0,\xi,\xi'$, and $[\Gamma\,]^{\omega^{*}}=([\gamma^{(k)}]^{\omega^{*}})_{k} \in C_{t \in [0,T]} L^2(\Omega^{*}) \mathcal{H}^{\alpha}_{\xi}$, such that, for all $k \in \mathbb{N}$, and for all $t \in [0,T]$:
\begin{equation}
\notag
%\label{Theorem1_Introduction}
\big\|[\gamma^{(k)}]^{\omega^{*}}(t)
-\mathcal{U}^{(k)}(t) \, \gamma_0^{(k)}+i \int_{0}^{t} \,\mathcal{U}^{(k)}(t-s)\, [B^{(k+1)}\,]^{\omega_{k+1}} [\gamma^{(k+1)}]^{\omega^{*}}(s) \,ds \big\|_{L^2(\Omega^{*}) H^{\alpha}(\mathbb{T}^{3k} \times \mathbb{T}^{3k})}=0.
\end{equation}
The sequence $[\Gamma\,]^{\omega^{*}}$ satisfies the a priori bound:
\begin{equation}
\label{A_priori_bound2}
\big\|[\Gamma\,]^{\omega^{*}}\big\|_{L^{\infty}_{t \in [0,T]} L^2(\Omega^{*}) \mathcal{H}^{\alpha}_{\xi}} \lesssim_{\,\xi,\,\xi',\,\alpha,\,\alpha_0} \big\|\Gamma(0)\big\|_{\mathcal{H}^{\alpha}_{\xi'}}.
\end{equation}
The space $C_{t \in [0,T]} L^2(\Omega^{*}) \mathcal{H}^{\alpha}_{\xi}$ is defined in \eqref{Space2C} below.
}
\\
\\
In other words, if assume the initial data to be infinitesimally more regular than $\alpha$, we can obtain a solution of regularity $\alpha$ \emph{for all $t$} in the time interval, as opposed to \emph{for almost all $t$}. By analogy with the theory of nonlinear dispersive equations, we can view Theorem 1 as the construction of a \emph{weak solution} and Theorem 2 as the construction of a \emph{strong solution} to the hierarchy. This terminology can be found, for example in \cite{Tao}. 

From the proof of Theorem 2, it follows that the sequences $[\Gamma\,]^{\omega^{*}}$ constructed in the above two theorems coincide on their common domains of definition. It is possible that the time interval in Theorem 2 is a bit smaller due to the additional dependence on $\alpha_0$. Theorem 2 is given as \ref{Theorem2} below. The precise conditions on the parameters are given in the statement of Theorem \ref{Theorem2}.

For the dependently randomized Gross-Pitaevskii hierarchy \eqref{Introduction_RandomizedGP1}, we can prove the following analogues of the above results:
\\
\\
\emph{\textbf{Theorem 3:}
Let us fix $\alpha \geq 0$, and  $\xi,\xi'>0$, such that $\frac{\xi}{\xi'}$ is sufficiently small. 
Suppose that $\Gamma(0)=(\gamma_0^{(k)})_k$ belongs to  $\mathcal{H}^{\alpha}_{\xi'}(\mathbb{T}^3) \cap \mathcal{N}$. Here, $\mathcal{N}$ denotes the non-resonant class, which is precisely defined in Definition \ref{Non-resonant} below. Then, there exists $T>0$, depending on $\alpha,\xi,\xi'$, and $[\Gamma\,]^{\omega}=([\gamma^{(k)}]^{\omega})_{k} \in L^{\infty}_{t \in [0,T]} L^2(\Omega) \mathcal{H}^{\alpha}_{\xi}$, such that, for all $k \in \mathbb{N}$:
\begin{equation}
\notag
%\label{Theorem1_Introduction}
\big\|[\gamma^{(k)}]^{\omega}(t)
-\mathcal{U}^{(k)}(t) \, \gamma_0^{(k)}+i \int_{0}^{t} \,\mathcal{U}^{(k)}(t-s)\, [B^{(k+1)}\,]^{\omega_{k+1}} [\gamma^{(k+1)}]^{\omega}(s) \,ds \big\|_{L^{\infty}_{t \in [0,T]} L^2(\Omega) H^{\alpha}(\mathbb{T}^{3k} \times \mathbb{T}^{3k})}=0.
\end{equation}
Moreover, $[\Gamma\,]^{\omega}$ satisfies the a priori bound:
\begin{equation}
\label{A_priori_bound3}
\big\|[\Gamma\,]^{\omega}\big\|_{L^{\infty}_{t \in I} L^2(\Omega) \mathcal{H}^{\alpha}_{\xi}} \lesssim_{\,\xi,\,\xi',\,\alpha} \big\|\Gamma(0)\big\|_{\mathcal{H}^{\alpha}_{\xi'}}.
\end{equation}
The space $L^{\infty}_{t \in [0,T]}L^2(\Omega)\mathcal{H}^{\alpha}_{\xi}$ is defined in \eqref{Space3B} below.
}
\\
\\
The sequence $[\Gamma\,]^{\omega}$ should be thought of as a local-in-time solution to the dependently randomized Gross-Pitaevskii hierarchy \eqref{Introduction_RandomizedGP1}.
We note that in Theorem 3, the range of regularity exponents is $\alpha \geq 0$, so it is possible to construct solutions from $L^2$ initial data.
Theorem 3 is given as Theorem \ref{Theorem1GP2} below. The conditions relating $\alpha, T, \xi$, and $\xi'$ are given in \eqref{xi_prime2} and \eqref{xi2}.

Furthermore, we can prove the following analogue of Theorem 2 for the dependently randomized Gross-Pitaevskii hierarchy:
\\
\\
\emph{\textbf{Theorem 4:}
Let us fix $\alpha \geq 0$, and  $\xi',\xi>0$, such that $\frac{\xi}{\xi'}$ is sufficiently small. Moreover, let $\alpha_0>\alpha$ be given. Suppose that $\Gamma(0)=(\gamma_0^{(k)})_k$ belongs to $\mathcal{H}^{\alpha_0}_{\xi'}(\mathbb{T}^3) \cap \mathcal{N}$. Then, there exists $T>0$, depending on $\alpha,\xi,\xi'$, and $[\Gamma\,]^{\omega}=([\gamma^{(k)}]^{\omega})_{k} \in C_{t \in [0,T]} L^2(\Omega) \mathcal{H}^{\alpha}_{\xi}$, such that, for all $k \in \mathbb{N}$, and for all $t \in [0,T]$:
\begin{equation}
\notag
%\label{Theorem1_Introduction}
\big\|[\gamma^{(k)}]^{\omega}(t)
-\mathcal{U}^{(k)}(t) \, \gamma_0^{(k)}+i \int_{0}^{t} \,\mathcal{U}^{(k)}(t-s)\, [B^{(k+1)}\,]^{\omega_{k+1}} [\gamma^{(k+1)}]^{\omega}(s) \,ds \big\|_{L^2(\Omega) H^{\alpha}(\mathbb{T}^{3k} \times \mathbb{T}^{3k})}=0.
\end{equation}
The sequence $[\Gamma\,]^{\omega}$ satisfies the a priori bound:
\begin{equation}
\label{A_priori_bound4}
\big\|[\Gamma\,]^{\omega}\big\|_{L^{\infty}_{t \in I} L^2(\Omega) \mathcal{H}^{\alpha}_{\xi}} \lesssim_{\,\xi,\,\xi',\,\alpha,\,\alpha_0} \big\|\Gamma(0)\big\|_{\mathcal{H}^{\alpha}_{\xi'}}.
\end{equation}
The space $C_{t \in [0,T]} L^2(\Omega)\mathcal{H}^{\alpha}_{\xi}$ is defined in \eqref{Space3C} below.
}
\\
\\
In other words, we can construct \emph{strong solutions} to \eqref{Introduction_RandomizedGP1} of regularity $\alpha$ if the initial data is infinitesimally more regular.
As in the independently randomized setting, it is the case that the sequences $[\Gamma\,]^{\omega}$ constructed in Theorem 3 and in Theorem 4 coincide on their common domains of definition. Theorem 4 is given as Theorem \ref{Theorem2GP2} below. The precise conditions on the parameters are given in the statement of Theorem \ref{Theorem2GP2}.

These results should be viewed in the context of a \emph{low-regularity almost-sure local existence theory}. 
Here, we adapt the point of view from \cite{CP1} and the related works cited above, in the sense that we view the Gross-Pitaevskii hierarchy as a Cauchy problem and proving analogues of known results for nonlinear Schr\"{o}dinger equation. More precisely, we can obtain solutions to the randomized Gross-Pitaevskii hierarchies \eqref{Introduction_RandomizedGP1} and \eqref{Introduction_RandomizedGP2} in an almost-sure sense evolving from initial data in a low-regularity space. In the case of \eqref{Introduction_RandomizedGP2}, the regularity for the initial data is $\alpha>\frac{3}{4}$ and for \eqref{Introduction_RandomizedGP1}, it is $\alpha \geq 0$. The difference from the existing literature noted above and in Subsection \ref{Previously known results} is that \emph{the initial data is not random}, but that \emph{the randomness is in the collision operators}, i.e in the equation itself. Our solutions $[\Gamma\,]^{\omega^{*}}$ and $[\Gamma\,]^{\omega}$ solve the hierarchies in spaces which involve the random parameter, and not in a deterministic space. In other words, the solutions we obtain exist \emph{for all initial data}, but they are solutions in the \emph{almost-sure sense} due to the random structure of the spaces $L^{\infty}_{t \in I} L^2(\Omega^{*}) \mathcal{H}^{\alpha}_{\xi}$ and 
$L^{\infty}_{t \in I} L^2(\Omega) \mathcal{H}^{\alpha}_{\xi}$.

From the construction of the spaces used above, we can obtain bounds on the solutions that we construct. For example, the solution $[\Gamma\,]^{\omega^{*}} \in L^{\infty}_{t \in [0,T]} L^2(\Omega^{*}) \mathcal{H}^{\alpha}_{\xi}$ to \eqref{Introduction_RandomizedGP2} constructed in Theorem 1, will satisfy for all $k \in \mathbb{N}$:
\begin{equation}
\label{aprioriboundIL_randomized}
\big\|S^{(k,\alpha)}[\gamma^{(k)}]^{\omega^{*}}\big\|_{L^{\infty}_{t \in [0,T]} L^2(\Omega^{*}) L^2(\mathbb{T}^{3k} \times \mathbb{T}^{3k})} \leq C^k
\end{equation}
for some $C>0$, depending on $\alpha, T, \xi,\xi'$ and $\Gamma(0)$. This follows immediately from the a priori bound \eqref{A_priori_bound1}. We can view \eqref{aprioriboundIL_randomized} as a randomized version of the a priori bound \eqref{aprioriboundIL}.
Hence, we have shown that the randomized version of the estimate holds for solutions to the \emph{full hierarchy}.
Analogous results can be deduced from the a priori estimates \eqref{A_priori_bound2}, \eqref{A_priori_bound3}, and \eqref{A_priori_bound4}. Additional a priori bounds on the constructed solutions, which correspond to probabilistic versions of the assumption used in the work of Klainerman and Machedon \cite{KM} are given in Remarks \ref{BhatGamma_omega_starA} and \ref{BhatGamma_omega_A} below.

As was noted in the earlier discussion, the analysis of our paper generalizes to the case $\Lambda=\mathbb{T}^d$, with a minor modifications in the statements. Namely, Theorem 1 and Theorem 2 hold for $\Lambda=\mathbb{T}^d$ if the condition $\alpha>\frac{3}{4}$ gets replaced by the condition $\alpha>\frac{d}{4}$. Here, we need to use Remark 3.7 of \cite{SoSt} which states that the $d$-dimensional analogue of \eqref{Theorem1bound2} holds for $\alpha>\frac{d}{4}$. Moreover, Theorem 3 and Theorem 4 hold for $\Lambda=\mathbb{T}^d$ with no change in the assumptions. The only change is that the definition of the non-resonant class is now modified as in Remark \ref{Non_resonant_Td}. All of the relevant estimates remain unchanged by the discussion from the introduction to Section 6 of \cite{SoSt}.
These observations are summarized as Remarks \ref{Remark_higher_dimensions1} and  \ref{Remark_higher_dimensions2} below.

Let us note that, by time-reversibility, all of the results stated above also hold for negative times, i.e. on the interval $[-T,0]$ as well. In general, it is possible to construct solutions on any interval $I$ of length $T$ given that $\Gamma(t_0) \in \mathcal{H}^{\alpha}_{\xi}$ at some $t_0 \in I$. For simplicity of notation, we will consider the case $I=[0,T]$ and $t_0=0.$
We note that the initial data is always taken to be deterministic.

%\textbf{APPLICATION OF RANDOMIZED ESTIMATES}

%\textbf{RECALL THE WORK OF T. CHEN AND PAVLOVI\'{C}}
%\textbf{REFER TO SECTION ABOUT ADDITIONAL REFERENCES}
%\textbf{STATE THE MAIN THEOREMS}

%\textbf{MAYBE ADD A DISCUSSION ON DEPENDENCE ON THE INITIAL DATA!}
%\textbf{ADD A DISCUSSION ABOUT WHAT HAPPENS ON $\mathbb{T}^d$}
%\textbf{COMMENT ON THE LOW-REGULARITY INITIAL DATA}
%\textbf{COMMENT ON THE A PRIORI BOUND}

\subsection{Previously known results}
\label{Previously known results}
In this subsection, we will summarize some known results on the connections between the Gross-Pitaevskii hierarchy and the nonlinear Schr\"{o}dinger equation in addition to the ones that we had mentioned above. A more comprehensive discussion about the history of this problem and of many relevant results is given in the expository works \cite{LSSY} and \cite{Schlein}.
Moreover, we will note some further results on the use of probabilistic techniques in the study of nonlinear dispersive PDE. A detailed summary of the main techniques and of the earlier results obtained by using these techniques can be found in the expository works \cite{1B24} and \cite{Zh}.

In addition to the aforementioned references, there is a vast literature on the connection between the derivation of NLS-type equations and hierarchies of the type \eqref{GrossPitaevskiiHierarchy}, as well as on related problems. Simultaneously with development of the strategy of Spohn \cite{Spohn}, mentioned above, a different direction based on Fock space techniques was taken in the work of Hepp \cite{Hepp} and Ginibre and Velo \cite{GV1,GV2}. Here, the authors were also able to derive NLS-type equations. The Fock space-based technique was later applied in \cite{XC1,XC2,FKP,FKS,GM,GMM1,GMM2}. Ideas related to the strategy of Spohn were subsequently applied to similar problems in \cite{ABGT,AGT,BGM,BePoSc1,BePoSc2,BdOS,CP,CT,XC3,XC4,ChenHolmer1,ChenHolmer2,ChenHolmer3,ES,FGS,FL,MichelangeliSchlein,Xie}. 

The question of the rate of convergence in the derivation of the NLS was first studied by Rodnianski and Schlein \cite{RodnianskiSchlein}. Subsequent results on this aspect of the problem have been proved in \cite{Anapolitanos,BdOS,ChenLeeSchlein,XC1,XC2,ErdosSchlein,FKP,GM,GMM1,GMM2,KP,Lee,Luhrmann,MichelangeliSchlein,Pickl1,Pickl2}. 

The Gross-Pitaevskii hierarchy has been studied at the $N$-body level in \cite{LS,LSY,LSY2} and later in \cite{LewinNamRougerie3}. A connection of the above problems with optical lattice models was explored in \cite{ALSSY1,ALSSY2}. We refer the interested reader to a more detailed discussion about the previously mentioned aspects of the problem given in the introduction of \cite{GSS}.

Recently, a proof of unconditional uniqueness for the cubic Gross-Pitaevskii hierarchy when $\Lambda=\mathbb{R}^3$ has been obtained in the work of T. Chen, Pavlovi\'{c}, Hainzl, and Seiringer \cite{CHPS} by using the \emph{Quantum de Finetti Theorem}. Techniques based on this theorem have subsequently been used in order to show scattering results in \cite{ChHaPavSei2}, as well as in order to obtain uniqueness in low regularities in \cite{HTX}. Furthermore, these ideas were a crucial ingredient in the author's derivation of the defocusing cubic nonlinear Schr\"{o}dinger equation on $\mathbb{T}^3$ \cite{VS2}. Recently, the Quantum de Finetti was used in the study of the Chern-Simons-Schr\"{o}dinger hierarchy in \cite{CS}. The Quantum de Finetti theorem is a quantum analogue of the classical theorem of de Finetti concerning exchangeable sequences of random variables \cite{deFinetti1,deFinetti2}, with later extensions in \cite{DiaconisFreedman,Dynkin,HewittSavage}. The quantum version of the theorem states that, under certain assumptions, density matrices are given as an average over factorized states. This type of result was first proved in the $C^*$ algebra context in \cite{HudsonMoody,Stormer}. Connections to density matrices were noted in \cite{AmmariNier1,AmmariNier2,LewinNamRougerie}. The first application of these ideas to the uniqueness problem was in \cite{CHPS}.

Randomization techniques have been shown to be useful in the study of nonlinear dispersive equations at low regularities. In particular, the idea is to apply some form of randomization in order to construct solutions in case the deterministic methods such as the \emph{high-low method} of Bourgain \cite{1B7}, or the \emph{I-method} of Colliander, Keel, Staffilani, Takaoka, and Tao \cite{CKSTT} are known not to apply. As was mentioned above, this approach was pioneered by Bourgain \cite{B,B2,B3,B4}, with precursors in the work of Lebowitz, Rose, and Speer \cite{LRS} and Zhidkov \cite{Zhidkov}. The main idea is that almost-sure global existence can be obtained from the existence of an invariant Gibbs measure associated to the equation. The Gibbs measure makes sense for low-regularity initial data and it is supported away from the set of initial data which obstructs the application of the deterministic methods. The invariance of the Gibbs measure heuristically replaces a conservation law at the level of low regularity. We note that related problems had also been studied by McKean and Vaninsky \cite{McKeanVaninsky1,McKeanVaninsky2,McKeanVaninsky3}.

The construction of the invariant measure 
depends on the structure of the equation. In particular, it is known to be applicable primarily in the Hamiltonian context. In the more general setting, it is still possible to apply the ideas of randomization without appealing to the existence of an invariant Gibbs measure. The key point is to note that in 
\cite{B}, the construction of the invariant measure induces a randomization of the initial data at the level of the Fourier coefficients. This randomization allows one to prove a local result in time. In a more general context, one can \emph{randomize the initial data} without appealing to an invariant measure. In particular, given a function $f=\sum_{n} c_n e^{inx}$ in $L^2(\Lambda)$ as a Fourier series, one can consider its \emph{randomization} $f^{\omega}$, which is obtained by multiplying each Fourier coefficient $c_n$ by an appropriate random variable $h_n(\omega)$. One typically takes $(h_n(\omega))_n$ to be a sequence of independent, identically distributed random variables with expected value equal to zero. This method is sometimes called a \emph{rough randomization}. A precise definition is given in \eqref{randomization_of_a_function} below.
	
	This more general approach was applied in the study of almost-sure local theory for supercritical wave equations in the work of Burq and Tzvetkov \cite{BT1}. In this work, the authors used the fact that \emph{randomization improves integrability almost surely}. As was noted in the introduction, this observation was first made by Rademacher \cite{Rademacher}. Related results were proved by Paley and Zygmund \cite{PaleyZygmund1,PaleyZygmund2,PaleyZygmund3}, as well as by Marcinkiewicz and Zygmund \cite{MarcinkiewiczZygmund} and Khintchine, see \cite{Wolff}. In the recent work \cite{AyacheTzvetkov}, this question of almost sure improved integrability was revisited by alternative means in the special case of Gaussian random variables. As was noted earlier, the idea of the proof of the almost sure gain in integrability is an important step in the proof of \eqref{Theorem1bound2} in \cite{SoSt}.
The gain in integrability due to randomization is related to the more general phenomenon of \emph{hypercontractivity} 
\cite{Federbush,Glimm,LGross1,LGross2,Nelson1}.

Both the invariant measure and the rough randomization approach have been shown to be useful in the study of nonlinear dispersive equations.
We refer the interested reader to the additional relevant works \cite{BenyiOhPocovnicu1,BenyiOhPocovnicu2,BourgainBulut,BourgainBulut2,BourgainBulut3,BTT,BurqTzvetkov,BT1,1BT2,BurqandTzvetkov,Cacciafesta_deSuzzoni,COh, Deng1, Deng2,DengTzvetkovVisciglia,LM,NORBS,NRBSS,Oh1,Oh2,Oh3,OhSulem,Richards,deSuzzoni2,deSuzzoni,deSuzzoni3,deSuzzoniTzvetkov,1Tho,TT,Tzv1,Tzv2,Tzv3,Xu} and the references therein. 
The rough randomization technique has recently also been applied in the context of the Navier-Stokes equations in supercritical regimes \cite{DengCui1,DengCui2,NahmodPavlovicStaffilani,ZhangFang}. 

In \cite{SoSt}, probabilistic methods similar to those used in the study of nonlinear dispersive equations were applied in order to study the Gross-Pitaevskii hierarchy. We note that different probabilistic methods in other contexts had been used in previous work on problems concerning the $N$-body Schr\"{o}dinger equation. In \cite{BenArousKSch}, the authors prove a central limit theorem for the fluctuations around Hartree dynamics obtained in the limit $N \rightarrow \infty$. This work builds on the results previously proved in 
\cite{CramerEisert,CushenHudson, GVV,Hayashi,HeppLieb,JaksicPautratPillet,Kuperberg}. 
Furthermore, in \cite{AdCoKo}, the authors interpret a many-body system of mutually repellent bosons as an expectation with respect to a marked Poisson process. This formulation is used in order to obtain an explicit variational formulation of the limiting free energy. Similar methods were used before in the setting without interaction in \cite{Fichtner, Rafler}. Finally, we note that in \cite{ChatterjeeDiaconis}, the authors give a rigorous proof of the phenomenon of Bose-Einstein condensation with positive probability under the assumption that the temperature is sufficiently low. The probability measure with respect to which this is measured is the canonical Gibbs measure on configuration states. A more detailed discussion about these problems is given in the introduction of \cite{SoSt}.

Let us note that probabilistic methods have also been applied in the study of the $N$-body Schr\"{o}dinger equation in the experimental literature. In particular, we note that the experiment in \cite{Zwierlein1} is based on an application of a Monte Carlo method of thermodynamic measurements adapted to a unitary Fermi gas across the superfluid phase transition. This experiment succeeds in confirming the theory of strongly interacting matter originally proposed by Bardeen, Cooper and Schrieffer \cite{BaCoSc1,BaCoSc2,Co}. The randomization method in this context is the \emph{Bold diagrammatic Monte Carlo (BDMC)} method, and it was first developed in the experimental works \cite{ProkofievSvistunov2,ProkofievSvistunov3,ProkofievSvistunov1}. The same method was later used in the context of summation of a formal summation of Feynman graphs in \cite{Zwierlein2}.

\subsection{Ideas and techniques used in the proofs}

We will construct local solutions to the randomized hierarchies \eqref{Introduction_RandomizedGP1} and \eqref{Introduction_RandomizedGP2} by following the strategy given in \cite{CP4} in the deterministic setting. In order to adapt this strategy to the randomized setting, we will need to use the randomized estimates from \cite{SoSt}. In the study of the independently randomized hierarchy \eqref{Introduction_RandomizedGP2}, it is necessary to pay close attention to which randomization parameters occur in the expressions which we are considering, whereas in analyzing the dependently randomized hierarchy \eqref{Introduction_RandomizedGP1}, one works directly with the initial data $\Gamma(0)$. Each of these points will be explained in more detail below.
\subsubsection{\textbf{The strategy from \cite{CP4}}}
Let us first recall the strategy developed in \cite{CP4} for the deterministic problem.
The starting point for this strategy is to consider the \emph{truncated problem} associated to \eqref{GrossPitaevskiiHierarchy}, which is given in \eqref{TruncatedGP} above. In other words, for fixed $N \in \mathbb{N}$, one replaces in \eqref{GrossPitaevskiiHierarchy} the full initial data $\Gamma(0)=(\gamma_0^{(k)})_{k}$ with the truncated initial data $\mathbf{P}_{\leq N} \Gamma(0)=(\gamma_1^{(0)},\gamma_2^{(0)},\ldots,\gamma_N^{(0)},0,0,\ldots)$. Moreover, one assumes that the condition \eqref{TruncatedGPcondition} holds. In particular, by using this condition when $k=N+1$, it follows that $\gamma_N^{(N)}$ solves:

\begin{equation}
\notag
\begin{cases}
i \partial_t \, \gamma_N^{(N)} + (\Delta_{\vec{x}_N}-\Delta_{\vec{x}'_N}) \,\gamma_N^{(N)}=0\\
\big[\gamma_N^{(N)}\big]^{\omega^{*}}\big|_{t=0}=\gamma_0^{(N)}.
\end{cases}
\end{equation}
It is hence possible to directly solve for $\gamma_N^{(N)}$ as:
\begin{equation}
\label{k=N}
\gamma_N^{(N)}(t)=\,\mathcal{U}^{(N)}(t) \, \gamma_0^{(N)}.
\end{equation}
One then substitutes \eqref{k=N} into \eqref{TruncatedGP} for $k=N-1$ and uses Duhamel's principle in order to solve for $\gamma_{N}^{(N-1)}$. This procedure is then iterated and one obtains an explicit solution $\Gamma_N =(\gamma^{(k)}_N)_{k}$ of \eqref{TruncatedGP} which satisfies \eqref{TruncatedGPcondition}. This explicit solution is written as a sum of Duhamel expansion terms. Having constructed $\Gamma_N$, the next step of the strategy from \cite{CP4} is to show that the sequence of density matrices $\Gamma_N$ converges to a limit $\Gamma$ as $N \rightarrow \infty$. Finally, the last step is to check that $\Gamma$ solves the Gross-Pitaevskii hierarchy. The norms in which one obtains convergence of $\Gamma_N$ to $\Gamma$ and the sense in which $\Gamma$ is a solution to the hierarchy are given precisely in the discussion below.

In \cite{CP4}, the authors take the full initial data $\Gamma(0)$ to belong to $\mathcal{H}^{\alpha}_{\xi'}$, for appropriate $\alpha$ and $\xi'>0$ and they deduce that the sequence $(\Gamma_N)_N$ is Cauchy in the space $L^{\infty}_{t \in [0,T]} \mathcal{H}^{\alpha}_{\xi}$, for $T, \xi>0$, which are determined by $\alpha$ and $\xi'$. Since $L^{\infty}_{t \in [0,T]} \mathcal{H}^{\alpha}_{\xi}$ is a complete metric space, the sequence $(\Gamma_N)_N$ converges  to a limit $\Gamma$ strongly in $L^{\infty}_{t \in [0,T]} \mathcal{H}^{\alpha}_{\xi}$. In practice, the latter step requires an intermediate step in which one checks that the sequence $(\widehat{B}\, \Gamma_N)_N$
is Cauchy in $L^{\infty}_{t \in [0,T]} \mathcal{H}^{\alpha}_{\xi''}$ for an appropriate $\xi'' \in (\xi,\xi')$. The operator $\widehat{B}\,$ is defined in \eqref{B_hat} below. At this point of the analysis, one has to explicitly estimate the Duhamel expansions obtained in the construction of $\Gamma_N\,$. In order to achieve this in the deterministic setting, one recombines the large number of terms that occur in the Duhamel expansion. The recombination of terms is done by applying the combinatorial \emph{boardgame argument} in \cite{KM}, which is based on the earlier Feynman graph techniques in \cite{ESY1,ESY2,ESY3,ESY4,ESY5}. The sequence $(\widehat{B}\,\Gamma_N)_N$ is hence shown to converge strongly to a limit $\theta$ in $L^{\infty}_{t \in [0,T]} \mathcal{H}^{\alpha}_{\xi''}$. Using this fact and recalling \eqref{TruncatedGP}, the authors are able to show that the sequence $(\Gamma_N)_N$ is Cauchy in $L^{\infty}_{t \in [0,T]}\mathcal{H}^{\alpha}_{\xi}$ and hence has that it has a strong limit $\Gamma \in L^{\infty}_{t \in [0,T]}\mathcal{H}^{\alpha}_{\xi}$.

The final step is to show that $\Gamma\, \in L^{\infty}_{t \in [0,T]}$ is a local-in-time solution to \eqref{GrossPitaevskiiHierarchy} in the following sense:
$$\big\|\Gamma(t)-\mathcal{U}(t) \, \Gamma(0)\, + i\int_{0}^{t} \,\mathcal{U}(t-s)\,\widehat{B}\,\Gamma(s)\,ds\,\big\|_{L^{\infty}_{t \in [0,T]} \mathcal{H}^{\alpha}_{\xi}}=0$$
The strategy in \cite{CP4} is originally stated for $\Lambda=\mathbb{R}^d$. It carries through to the case $\Lambda=\mathbb{T}^d$ as long as one can prove the key spacetime estimate \eqref{SpacetimeBoundalpha} for the regularity exponent $\alpha$. For a more detailed discussion about this point, we refer the reader to Remark 1.6 in \cite{GSS}.

\subsubsection{\textbf{The analysis in the randomized setting}}
Our goal is to adapt the strategy from \cite{CP4} to the randomized setting. As we will see, the randomization will present several challenges in the analysis.  The key to overcome these challenges is to use the tools developed in \cite{SoSt}. In the discussion that follows, the spatial domain will be $\Lambda=\mathbb{T}^3$. Moreover, we will always assume that $\alpha \geq 0$ in our study of the dependently randomized case \eqref{Introduction_RandomizedGP1}, and that $\alpha>\frac{3}{4}$ in our study of the independently randomized case \eqref{Introduction_RandomizedGP2}.
We will first outline the main ideas used in the study of the independently randomized case \eqref{Introduction_RandomizedGP2}, since the analysis is simpler in this context. After that, we will summarize the techniques used in the study of the dependently randomized case \eqref{Introduction_RandomizedGP1}.

The construction of solutions to truncated hierarchies corresponding to \eqref{Introduction_RandomizedGP1}  and \eqref{Introduction_RandomizedGP2} carries over without much change from the deterministic setting. The explicit solutions are given in \eqref{gammaNkomega} and \eqref{gammaNkomega2} below. Here, we use the shorthand notation for the Duhamel expansion terms in the randomized setting given in \eqref{Duhamel_0}, \eqref{Duhamel_j}, \eqref{Duhamel_02}, and \eqref{Duhamel_j2}.

The step in which one shows that the solutions to the truncated hierarchies converge in an appropriate norm to a solution of the full hierarchy is significantly different in the randomized setting. We first analyze the independently randomized hierarchy \eqref{Introduction_RandomizedGP2}. In this case, we construct the solutions $[\Gamma_N]^{\omega^{*}}$ to the associated truncated hierarchy and we show that $([\widehat{B}\,]^{\omega^{*}}[\Gamma_N]^{\omega^{*}})_N$ is Cauchy in the space $L^{\infty}_{t \in [0,T]} L^2(\Omega^{*}) \mathcal{H}^{\alpha}_{\xi}$, defined in \eqref{Space2B} below. In order to prove that $([\widehat{B}\,]^{\omega^{*}} [\Gamma_N]^{\omega^{*}})_N$ is Cauchy, one needs to have a good bound on the Duhamel terms. As was noted in Subsection 5.1 of \cite{SoSt}, it is not possible to apply the combinatorial recombination argument in this context. Instead, we argue as in \cite{SoSt}, and we use the following integral identity:
\begin{equation}
\notag
%\label{Integral_Identity_Introduction}
\int_{0}^{t_{j}} \int_{0}^{t_{j+1}} \cdots \int_{0}^{t_{j+k-1}} \,dt_{j+k}\,\cdots \,dt_{j+2} \,dt_{j+1} = \frac{t_j^k}{k!}\,.
\end{equation}
Let us note that, in this way, we obtain a factorial gain in the denominator. This compensates for the factorially large number of terms in the Duhamel expansion. We can use this gain due to the fact that there is no time integral in the estimate \eqref{Theorem1bound2}. 
A similar method was first used in the study of the one-dimensional deterministic problem in \cite{CP2}. The estimate that we use for the Duhamel expansions in this context is given in Proposition \ref{DuhamelEstimate1}. The proof of the Cauchy property of $([\widehat{B}\,]^{\omega^{*}}[\Gamma_N]^{\omega^{*}})_N$ is given in Proposition \ref{CauchySequence1}.

As was seen in Example 1 from Subsection 6.1 of \cite{SoSt}, it is not always possible to estimate $[B_{j,k+1}]^{\omega} \gamma^{(k+1)}$ by arguing as in the proof of \eqref{Theorem1bound2} in the case when $\gamma^{(k+1)}$ has some additional $\omega$ dependence. In order to avoid this problem when working with $\theta^{\,\omega^{*}}=\lim_{N \rightarrow \infty}([\widehat{B}\,]^{{\omega}^{*}} [\Gamma_N]^{\omega^{*}})_N$, we need to record which random parameter occurs in each element of the limiting sequence. In particular, in Proposition \ref{limit2}, we obtain that, for all $k \geq 2$, $\big(\theta^{\,\omega^{*}}\big)^{(k)}$ does not depend on $\omega_2, \omega_3, \ldots, \omega_k$. Here, we recall the notation $\omega^{*}=(\omega_2,\omega_3,\omega_4,\ldots)$. In the further analysis, we use Proposition \ref{RandomizedEstimate2}, which gives us a way of avoiding the difficulty when we want to apply \eqref{Theorem1bound2} in the case when there are several random parameters. In particular, we note that it is possible to estimate $[B_{j,k+1}]^{\omega} \gamma^{(k+1)}$ when $\gamma^{(k+1)}$ depends on a sequence of random parameters $(\tilde{\omega}_j)_j$, all of which are independent from $\omega$. As in \cite{CP4}, we use a limiting procedure and note that, $\theta^{\,\omega^{*}}$ satisfies:
\begin{equation}
\label{theta_omega_star_Introduction}
\big\|\theta^{\,\omega^{*}}-\big[\widehat{B}\,\big]^{\omega^{*}}\,\mathcal{U}(t)\,\Gamma(0)+i\int_{0}^{t} \big[\widehat{B}\,\big]^{\omega^{*}}\,\mathcal{U}(t-s)\,\theta^{\,\omega^{*}}(s)\,ds \,\big\|_{L^{\infty}_{t \in [0,T]} L^2(\Omega^{*}) \mathcal{H}^{\alpha}_{\xi_0}}=0,
\end{equation}
for all $\xi_0>0$.
This result is given as Proposition \ref{theta_omega_star_equation}.
In the randomized setting, we need to use the observation from Proposition \ref{RandomizedEstimate2} and the known dependence of each component of $\theta^{\,\omega^{*}}$ on the random parameters in order to finish the proof of \eqref{theta_omega_star_Introduction}.

We show that $([\Gamma_N]^{\omega^{*}})_N$ is Cauchy in $L^{\infty}_{t \in [0,T]} L^2(\Omega^{*}) \mathcal{H}^{\alpha}_{\xi}$ in Proposition \ref{CauchySequence2}. Similarly as for $\theta^{\,\omega^{*}}$ in Proposition \ref{limit2}, we note that the limit $[\Gamma\,]^{\omega^{*}}$ has the property that, for all $k \geq 2$, $\big([\Gamma\,]^{\omega^{*}}\big)^{(k)}$ does not  depend on the random parameters $\omega_2, \omega_3, \ldots, \omega_k$. As in \cite{CP4}, in the proof of the Cauchy property of $([\Gamma_N]^{\omega^{*}})_N$, we need to use the Cauchy property of $([\widehat{B}\,]^{\omega^{*}} [\Gamma_N]^{\omega^{*}})_N$. This step is given in \eqref{second_termGammaN1N2} below. In Proposition \ref{Gamma_omega_star_equation}, we note that $[\Gamma\,]^{\omega^{*}}$ and $\theta^{\,\omega^{*}}$ are linked by the equation:
\begin{equation}
\notag
\big\|\big[\Gamma\,\big]^{\omega^{*}}-\mathcal{U}(t)\,\Gamma(0)+i\int_{0}^{t} \mathcal{U}(t-s)\,\theta^{\,\omega^{*}}(s)\,ds \,\big\|_{L^{\infty}_{t \in [0,T]} L^2(\Omega^{*}) \mathcal{H}^{\alpha}_{\xi_0}}=0,
\end{equation}
for all $\xi_0>0$.
We can put all of these results together and deduce Theorem \ref{Theorem1}, which states that, for all $\xi_0>0$ the limit $[\Gamma\,]^{\omega^{*}}$ satisfies:
\begin{equation}
\notag
\big\|\big[\Gamma\,\big]^{\omega^{*}}(t)-\mathcal{U}(t)\,\Gamma(0)+i \int_{0}^{t}\,\mathcal{U}(t-s)\,\big[\widehat{B}\,\big]^{\omega^{*}} \big[\Gamma\,\big]^{\omega^{*}}(s)\,ds\,\big\|_{L^{\infty}_{t \in [0,T]} L^2(\Omega^{*}) \mathcal{H}^{\alpha}_{\xi_0}}=0.
\end{equation}

If we assume the initial data to be infinitesimally more regular than $\alpha$, i.e. if $\Gamma(0) \in \mathcal{H}^{\alpha_0}_{\xi'}$ for some $\alpha_0>0$, it is possible to prove that $[\Gamma\,]^{\omega^{*}}$ is \emph{continuous in time} in the sense that it belongs to the space $C_{t \in [0,T]} L^2(\Omega^{*}) \mathcal{H}^{\alpha}_{\xi}$ defined in \eqref{Space3C} below. The key in this analysis is to use the difference estimate from Lemma \ref{DifferenceBound}, which states that, for all density matrices $\sigma^{(k)}$ of order $k$, for all $\beta_0>\beta>0$ and for all $t \in \mathbb{R}, \delta>0$, the following estimate holds:
\begin{equation}
\label{DifferenceBound_Introduction}
\big\|\big(\,\mathcal{U}^{(k)}(t+\delta)-\mathcal{U}^{(k)}(t)\big) \sigma_0^{(k)}\big\|_{H^{\beta}(\Lambda^k \times \Lambda^k)} \leq C \delta^{r} \, \cdot \, \big\|\sigma_0^{(k)}\big\|_{H^{\beta_0}(\Lambda^k \times \Lambda^k)}.
\end{equation}
Here, the constants $C,r>0$ depend on $\beta$ and $\beta_0$. We use \eqref{DifferenceBound_Introduction} to prove that $([\Gamma_N]^{\omega^{*}})_N$ \emph{is equicontinous in} $L^2(\Omega^{*}) \mathcal{H}^{\alpha}_{\xi}$ in the sense of Definition \ref{equicontinuousOmegastar} below.
Here, we use the sequence $([\Gamma_N]^{\omega^{*}})_N)$ which was constructed earlier, but for possibly a slightly smaller $T$ and for a $\xi'$ chosen to be possibly smaller in terms of $\xi$. The reason for this change in the parameters is the additional $\alpha_0$ dependence in the problem. We will typically not explicitly mention this distinction. The proof of the equicontinuity of $([\Gamma_N]^{\omega^{*}})_N$ and the related results are given in Theorem \ref{Theorem2}. The main point is that we can now obtain a solution $[\Gamma\,]^{\omega^{*}}$ of \eqref{Introduction_RandomizedGP2}, which is a solution \emph{pointwise in time}, as opposed to being a solution \emph{for almost all times $t$}.

In the context of the dependently randomized Gross-Pitaevskii hierarchy \eqref{Introduction_RandomizedGP1}, we need to argue differently. In particular, since there is only one random parameter $\omega$, it is not possible to apply the estimate from Proposition \ref{RandomizedEstimate2}, which relies on the random parameter occurring in the collision operator being independent from the random parameters occurring in the density matrix. We overcome this difficulty by explicitly writing out the Duhamel expansions in terms of the initial data and leaving them in this form. 
Let us again recall Example 1 from Subsection 6.1 of \cite{SoSt}, by which we can not estimate Duhamel expansions of order greater than 1 for arbitrary initial data. Instead, we need to restrict the initial data to to belong to a \emph{non-resonant class} of density matrices. The non-resonant class $\mathcal{N}$ is precisely defined in Definition \ref{Non-resonant} below. Let us note that this similar to the class of non-resonant density matrices which were previously used in \cite{SoSt}.

For non-resonant initial data, we recall an important randomized estimate, which can be deduced from \cite{SoSt} and which is given as Proposition \ref{non-resonant} below. A crucial observation is that we can use this estimate in order to bound the Duhamel expansion terms in Proposition \ref{DuhamelEstimate2}. In other words, we do not iteratively use \eqref{Theorem1bound2} as in the independently randomized case. Instead, we use Proposition \ref{DuhamelEstimate2} in order to estimate each separate Duhamel expansion term and we show that $([\widehat{B}\,]^{\omega} [\Gamma_N]^{\omega})_N$ is Cauchy in $L^{\infty}_{t \in [0,T]} L^2(\Omega) \mathcal{H}^{\alpha}_{\xi}$. This result is the content of Proposition \ref{CauchySequence1GP2}. The space $L^{\infty}_{t \in [0,T]} L^2(\Omega) \mathcal{H}^{\alpha}_{\xi}$ is defined in \eqref{Space3B} below.
Consequently, in Proposition \ref{limit2GP2}, we obtain that $([\widehat{B}\,]^{\omega}[\Gamma_N]^{\omega})_N$ converges strongly in $L^{\infty}_{t \in [0,T]} L^2(\Omega) \mathcal{H}^{\alpha}_{\xi}$ to a limit $\theta^{\,\omega}$. In Proposition \ref{theta_omega_equation}, we observe that $\theta^{\,\omega}$ satisfies:
\begin{equation}
\label{theta_omega_Introduction}
\big\|\theta^{\,\omega}-\big[\widehat{B}\,\big]^{\omega}\,\mathcal{U}(t)\,\Gamma(0)+i\int_{0}^{t} \big[\widehat{B}\,\big]^{\omega}\,\mathcal{U}(t-s)\,\theta^{\,\omega}(s)\,ds \,\big\|_{L^{\infty}_{t \in I} L^2(\Omega) \mathcal{H}^{\alpha}_{\xi_0}}=0,
\end{equation}
for all $\xi_0>0$. The proof of \eqref{theta_omega_Introduction} differs from the proof of \eqref{theta_omega_star_Introduction} in the sense that we are not allowed to iteratively use randomized estimates due to the fact that there is only one random parameter $\omega$. In this context, it is more difficult to estimate $\theta^{\,\omega}-[\widehat{B}\,]^{\omega} [\Gamma_N]^{\omega}$. This expression appears in the third term on the right-hand side of \eqref{theta_omega_differenceGP2} below. 
We estimate this term by first considering the truncation:
\begin{equation}
\notag
\big[\Phi_{N}^{M}\big]^{\omega}:=\big[\widehat{B}\,\big]^{\omega} \big[\Gamma_M\big]^{\omega}-\big[\widehat{B}\,\big]^{\omega} \big[\Gamma_N\big]^{\omega},
\end{equation}
defined in \eqref{Phi_N^M}, and by letting $M \rightarrow \infty$. We hence avoid working with infinite sums of terms involving only one random parameter. 

In Proposition \ref{CauchySequence2GP2}, it is shown that $([\Gamma_N]^{\omega})_N$ has a strong limit $[\Gamma\,]^{\omega}$ in $L^{\infty}_{t \in [0,T]} L^2(\Omega)  \mathcal{H}^{\alpha}_{\xi}.$ This step and the rest of the analysis are similar to the arguments given in the independently randomized case. Finally, in Theorem \ref{Theorem1GP2}, it is shown that $[\Gamma\,]^{\omega}$ satisfies:
\begin{equation}
\notag
\big\|\big[\Gamma\,\big]^{\omega}(t)-\mathcal{U}(t)\,\Gamma(0)+i \int_{0}^{t}\,\mathcal{U}(t-s)\,\big[\widehat{B}\,\big]^{\omega} \big[\Gamma\,\big]^{\omega}(s)\,ds\,\big\|_{L^{\infty}_{t \in [0,T]} L^2(\Omega) \mathcal{H}^{\alpha}_{\xi_0}}=0,
\end{equation}
for all $\xi_0>0$.

In the dependently randomized setting, it is also possible to consider infinitesimally more regular initial data and construct solutions of regularity $\alpha$ which are continuous in time. The main difference is in checking the equicontinuity properties of $\big(\big[\widehat{B}\,\big]^{\omega} \big[\Gamma_N\,\big]^{\omega}\big)_N$, where the random parameter $\omega$ appears more than once. This issue is resolved by looking at the explicit Duhamel expansions as before. In particular, we recall the analysis from Subsection 6.2 of \cite{SoSt} in Lemma \ref{Duhamel_Expansion} below. Using these ideas, we write out the difference of two Duhamel terms at different times in Lemma \ref{Duhamel_Expansion_Difference}. It is important to note that the frequencies in these expansions are treated as mutually distinct formal symbols to which we can assign the values $\pm 1$ in the definition of the sum given in \eqref{Sigma_Star} below. An explicit example of the difference of two Duhamel terms is given in Example \ref{Example}, following Lemma \ref{Duhamel_Expansion_Difference}. We can then use Lemma \ref{Duhamel_Expansion_Difference} in order to show that $[\Gamma\,]^{\omega}$ belongs to the space $C_{t \in [0,T]}L^2(\Omega) \mathcal{H}^{\alpha}_{\xi}$, which is defined in \eqref{Space3C} below. This is the content of Theorem \ref{Theorem2GP2}. As in the independently randomized setting, we obtain a  solution to the hierarchy which is defined \emph{pointwise in $t$}.

\subsection{Organization of the paper}

In Section \ref{Notation}, we define the relevant notation and we recall several useful general facts. Section \ref{The independently randomized Gross-Pitaevskii hierarchy} is devoted to the study of the independently randomized Gross-Pitaevskii hierarchy \eqref{Introduction_RandomizedGP2}. 
In Proposition \ref{DuhamelEstimate1}, we prove a randomized Duhamel estimate.
Furthermore, in Subsection \ref{The truncated Gross-Pitaevskii hierarchy corresponding to RandomizedGP1}, we define and analyze the associated truncated hierarchy. In Subsection \ref{A local-in-time result for initial data of regularity alpha}, we construct solutions in $L^{\infty}_{t \in [0,T]} L^2(\Omega^{*}) \mathcal{H}^{\alpha}_{\xi}$ to the full hierarchy \eqref{Introduction_RandomizedGP2}, which evolve from initial data of regularity $\alpha>\frac{3}{4}$. The main result is given in Theorem \ref{Theorem1}. Subsection \ref{additional_regularity} is devoted to the construction of solutions in $L^{\infty}_{t \in [0,T]} L^2(\Omega^{*}) \mathcal{H}^{\alpha}_{\xi}$, which evolve from initial data of regularity $\alpha_0>\alpha$ and which are continuous in time. The main result of this subsection is the content of Theorem \ref{Theorem2}. 

Section \ref{The dependently randomized Gross-Pitaevskii hierarchy} is devoted to the study of the dependently randomized Gross-Pitaevskii hierarchy \eqref{Introduction_RandomizedGP1}. 
In Proposition \ref{DuhamelEstimate2}, we prove an estimate for Duhamel expansions evolving from non-resonant initial data. The definition of the non-resonant class is given in Definition \ref{Non-resonant}. 
Subsection \ref{The truncated Gross-Pitaevskii hierarchy corresponding to RandomizedGP2} is devoted to the study the corresponding truncated hierarchy. In Subsection \ref{A local-in-time result for non-resonant initial data of regularity alpha}, we construct local-in-time solutions in $L^{\infty}_{t \in [0,T]} L^2(\Omega^{*}) \mathcal{H}^{\alpha}_{\xi}$ to the full hierarchy \eqref{Introduction_RandomizedGP1}, which evolve from non-resonant initial data of regularity $\alpha \geq 0$. The main result of this subsection is given in Theorem \ref{Theorem1GP2}.
Similarly as Subsection \ref{additional_regularity}, Subsection \ref{additional_regularityGP2}  is devoted to the construction of solutions in $L^{\infty}_{t \in [0,T]} L^2(\Omega) \mathcal{H}^{\alpha}_{\xi}$, which evolve from initial data of regularity $\alpha_0>\alpha$, and which are continuous in time. 
Sub-subsection \ref{An explicit formula for the difference of two Duhamel expansions} is devoted to obtaining an explicit formula for the difference of two Duhamel expansions. This formula is given in Lemma \ref{Duhamel_Expansion_Difference}. The main result of Subsection \ref{additional_regularityGP2} is Theorem \ref{Theorem2GP2}, which is proved in Sub-subsection \ref{Proof of Theorem2GP2}.

\subsection{Acknowledgements} The author would like to thank Gigliola Staffilani for valuable discussions. He would also like to thank Thomas Chen, Philip Gressman, Antti Knowles, Andrea Nahmod, and Nata\v{s}a Pavlovi\'{c} for their helpful comments. V.S. was supported by a Simons Postdoctoral Fellowship.
 
\section{Notation}
\label{Notation}

Let us introduce some notation and terminology which we will use in our paper. 
If $a$ and $b$ are positive quantities, we say that $a \lesssim b$ if $a \leq Cb$ for some constant $C>0$. We also sometimes write this as $a=\mathcal{O}(b)$ and we implicitly keep track of the constant $C$. If $C$ depends on the quantities $q_1,q_2,\ldots,q_k$, then we write $a \lesssim_{\,q_1,\,q_2,\ldots,\,q_k} b$. 
If $a \lesssim b$ and $b \lesssim a$, then we write $a \sim b$.

Throughout the paper, $\Lambda$ will denote the spatial domain, which we will take to be $\mathbb{T}^3$, unless it is specified otherwise. The discussions in the paper will also apply for $\Lambda=\mathbb{T}^{d}$, when $d \geq 1$. For a more detailed discussion, we refer the reader to Remarks \ref{Remark_higher_dimensions1} and \ref{Remark_higher_dimensions2} below. 

We will use the notational convention that the sum $\sum_{i=A}^{B}(\cdots)$ equals zero if $A>B$. This convention will be used, for instance, in \eqref{gammaNkomega} and in \eqref{gammaNkomega2} below.
We will sometimes abbreviate the Gross-Pitaevskii hierarchy as the \emph{GP hierarchy}.

$I$ will usually denote a time interval of the form $[0,T]$ for a fixed $T>0$.

In our notation, the greek letter $\zeta$ will be used to denote a frequency variable. Frequency variables will also be denoted by $\xi_{k}$ for integer indices $k \geq 1$. 
The symbols $\xi,\xi'$ and $\xi_0$ will usually denote regularity parameters in the spaces defined in equations \eqref{Space1}, \eqref{Space2}, \eqref{Space3} below. 

Given a spatial domain $\Lambda$ as above, we define a \emph{density matrix of order $k$} or $k$-\emph{particle density matrix} to be a function:
\begin{equation}
\notag
\sigma^{(k)}: \Lambda^k \times \Lambda^k \rightarrow \mathbb{C}.
\end{equation}

As was noted above, in our paper, \emph{we will work on the spatial domain $\Lambda:=\mathbb{T}^3$}, unless it is noted otherwise.
Hence, the frequencies will be elements of $\mathbb{Z}^3$. For a given function $f \in L^2(\Lambda)$, and for a frequency $\xi \in \mathbb{Z}^3$, we define the Fourier transform of $f$ evaluated at $\xi$ by:
$$\widehat{f}(\xi):=\int_{\Lambda} f(x) e^{- i \cdot \langle x, \xi \rangle} \,dx.$$
The quantity $\langle \cdot, \cdot \rangle$ denotes the inner product on $\mathbb{R}^3$.
\\
When defining the Fourier transform of a density matrix, we will use the same convention as in \cite{GSS,SoSt}. In particular, given $\sigma^{(k)}$, a density matrix of order $k$ which belongs to $L^2(\Lambda^k \times \Lambda^k)$, we define its Fourier transform 
$(\sigma^{(k)})\,\,\widehat{}\,\,$ as follows:

For all $\vec{\xi}_k=(\xi_1,\ldots,\xi_k),\vec{\xi}'_k=(\xi'_1,\ldots,\xi'_k) \in (\mathbb{Z}^3)^k$:
$$(\sigma^{(k)})\,\,\widehat{}\,\,(\vec{\xi}_k;\vec{\xi}'_k): = \int_{\Lambda^k \times \Lambda^k}\sigma^{(k)}(\vec{x}_k;\vec{x}'_k)
e^{-i \cdot \sum_{j=1}^{k} \langle x_j, \xi_j \rangle + i \cdot \sum_{j=1}^{k} \langle x'_j, \xi'_j \rangle} \,d\vec{x}_k \, d\vec{x}'_k.$$
We will usually write the Fourier transform $(\sigma^{(k)})\,\,\widehat{}\,\,$ of the density matrix $\sigma^{(k)}$ as $\widehat{\sigma}^{(k)}$ for simplicity.

Let us consider the operator $i \partial_t + \big(\Delta_{\vec{x}_k}-\Delta_{\vec{x}'_k}\big)$, acting on density matrices of order $k$. Its associated \emph{free evolution operator} $\mathcal{U}^{(k)}(t)$ is defined as:
\begin{equation}
\label{free_evolution}
\mathcal{U}^{(k)}(t)\,\sigma^{(k)}:=e^{it \sum_{j=1}^{k} \Delta_{x_j}} \sigma^{(k)} e^{-it \sum_{j=1}^{k} \Delta_{x_j'}}.\end{equation}
Here $\sigma^{(k)}$ is a fixed density matrix of order $k$.
By construction:
$$\Big(i \partial_t + (\Delta_{\vec{x}_k}-\Delta_{\vec{x}_k'})\Big)\,\mathcal{U}^{(k)}(t)\,\sigma^{(k)}=0.$$
We note that $\mathcal{U}^{(k)}(t)$ is the analogue of the free Schr\"{o}dinger operator for $k$ density matrices of order $k$. 

Given $\alpha \in \mathbb{R}$, we define the operator of \emph{differentiation of order $\alpha$} which acts on density matrices of order $k$. This is done by using the Fourier transform. For $\sigma^{(k)}$, a density matrix of order $k$, we define a new density matrix of order $k$, $S^{(k,\alpha)}\sigma^{(k)}$, by:
\begin{equation}
\label{FractionalDerivative}
\big(S^{(k,\alpha)} \sigma^{(k)} \big)\,\,\widehat{}\,\,(\xi_1,\ldots,\xi_k;\xi'_1,\ldots,\xi'_k):=
\end{equation}
$$\langle \xi_1 \rangle^{\alpha} \cdots \langle \xi_k \rangle^{\alpha} \cdot \langle \xi'_1 \rangle^{\alpha} \cdots \langle \xi'_k \rangle^{\alpha} \cdot \widehat{\sigma}^{(k)} (\xi_1, \ldots, \xi_k;\xi'_1, \ldots, \xi'_k).$$ 
Here,
$$\langle x \rangle : = \sqrt{1+|x|^2}$$
denotes the \emph{Japanese bracket}. 

We will sometimes write
$\|S^{(k,\alpha)} \sigma^{(k)}\|_{L^2(\Lambda^k \times \Lambda^k)}$ 
as $\|\sigma^{(k)}\|_{H^{\alpha}(\Lambda^k \times \Lambda^k)}.$
This is consistent with the definition of fractional Sobolev norms for functions. We will use both notations in our paper.

The \emph{collision operator} is an operator which acts on density matrices. More precisely, given $k \in \mathbb{N}$ and $j \in \{1,\ldots,k\}$, the collision operator $B_{j,k+1}$ acts on density matrices of order $k+1$. For $\sigma^{(k+1)}$, a fixed density matrix of order $k+1$, we define: 
\begin{equation}
\notag
B_{j,k+1}\, \big(\sigma^{(k+1)}\big):=B^{+}_{j,k+1}\big(\sigma^{(k+1)}\big)-B^{-}_{j,k+1}\big(\sigma^{(k+1)}\big),
\end{equation}
where:

\begin{equation}
\notag
B^{+}_{j,k+1} \,\big(\sigma^{(k+1)}\big)(\vec{x}_k; \vec{x}_k'):=\int_{\Lambda} \delta(x_j-x_{k+1})\, \sigma^{(k+1)}(\vec{x}_k,x_{k+1};\vec{x}_k',x_{k+1})\, dx_{k+1}
\end{equation}
and
\begin{equation}
\notag
B^{-}_{j,k+1} \,\big(\sigma^{(k+1)}\big)(\vec{x}_k; \vec{x}_k'):=\int_{\Lambda} \delta(x_j'-x_{k+1}) \,\sigma^{(k+1)}(\vec{x}_k,x_{k+1};\vec{x}_k',x_{k+1})\, dx_{k+1}.
\end{equation}
Here $\delta$ denotes the Dirac delta function.
For example, when $j=1$:
$$B_{1,k+1} \, \big(\sigma^{(k+1)}\big)(\vec{x}_k;\vec{x}_k')= \int_{\Lambda} \sigma^{(k+1)} (\vec{x}_k,x_1;\vec{x}_k',x_1)\,dx_1-\int_{\Lambda} \sigma^{(k+1)} (\vec{x}_k,x'_1;\vec{x}_k',x'_1)\,dx'_1.$$
The calculation for general $j \in \{1,\ldots,k\}$ is similar.
In particular, we note that $B_{j,k+1}$ is a linear operator which maps density matrices of order $k+1$ to density matrices of order $k$. We will usually abbreviate $B_{j,k+1} \, \big(\sigma^{(k+1)}\big)$ by $B_{j,k+1} \,\sigma^{(k+1)}$. The \emph{full collision operator} $B^{(k+1)}$ is given by:

\begin{equation}
\label{Bk+1_sum}
B^{(k+1)}:=\sum_{j=1}^{k} B_{j,k+1}.
\end{equation}

The Fourier transform of the density matrix $B^{+}_{1,k+1} \, \sigma^{(k+1)}$ can be computed as:
\begin{equation}
\label{FourierTransform1}
(B^{+}_{1,k+1} \, \sigma^{(k+1)})\,\,\widehat{}\,\,(\vec{\xi}_k; \vec{\xi'}_k)= 
\end{equation}
$$\sum_{{\xi}_{k+1}, \, {\xi'}_{k+1} \in \mathbb{Z}^3} \, \widehat{\sigma}^{(k+1)}(\xi_1-\xi_{k+1}+\xi'_{k+1}, \xi_2, \ldots, \xi_k, \xi_{k+1}; \xi'_1,\ldots, \xi'_k,\xi'_{k+1}).$$
Analogously, we can compute:
\begin{equation}
\label{FourierTransform2}
(B^{-}_{1,k+1} \, \sigma^{(k+1)})\,\,\widehat{}\,\,(\vec{\xi}_k; \vec{\xi'}_k)= 
\end{equation} 
$$\sum_{{\xi}_{k+1}, \, {\xi'}_{k+1} \in \mathbb{Z}^3} 
\,\widehat{\sigma}^{(k+1)}(\xi_1, \ldots, \xi_k, \xi_{k+1}; \xi'_1-\xi'_{k+1}+\xi_{k+1}, \xi'_2, \ldots, \xi'_k,\xi'_{k+1}).$$
The formula for general $(B^{\pm}_{j,k+1} \, \sigma^{(k+1)})\,\,\widehat{}\,\,(\vec{\xi}_k; \vec{\xi'}_k)$ is similar.

Let us recall the definition of the \emph{randomized collision operator} from \cite{SoSt}. We first define a sequence of random variables indexed by the set of frequencies. In the case $\Lambda=\mathbb{T}^3$, this indexing set will be $\mathbb{Z}^3$. More precisely, we consider the sequence $(h_{\zeta})_{\zeta \in \mathbb{Z}^3}$ of independent, identically distributed real Bernoulli random variables which have expected value zero and standard deviation $1$. In other words, each $h_{\zeta}$ takes value $1$ or $-1$ with probability $\frac{1}{2}$. 

Having defined $h_{\zeta}$ as above, we can define the \emph{randomization of a function}. Namely, given $f \in L^2(\Lambda)$ and $\omega \in \Omega$, we define the function $f^{\omega}$ by:
$$\widehat{f^{\omega}}(\zeta):=h_{\zeta}(\omega) \cdot \widehat{f}(\zeta),$$
whenever $\zeta \in \mathbb{Z}^3$.
In other words, if $f$ is given as the Fourier series:
$$f(x)=\sum_{\zeta \in \mathbb{Z}^3} c_{\zeta} \cdot e^{i x \zeta},$$
then the randomization of $f$, which we denote by $f^{\omega}$, is given as:
\begin{equation}
\label{randomization_of_a_function}
f^{\omega}(x):=\sum_{\zeta \in \mathbb{Z}^3} h_{\zeta}(\omega) \cdot c_{\zeta} \cdot e^{ix\zeta}.
\end{equation}
By Plancherel's Theorem, we note that $f^{\omega} \in L^2(\Lambda)$ and that $\|f^{\omega}\|_{L^2}=\|f\|_{L^2}$. Here, we are using the precise choice of $(h_{\zeta})_{\zeta}$ as a sequence of Bernoulli random variables.

As in \cite{SoSt}, we use idea of randomizing on the Fourier domain and we \emph{randomize} the collision operator $B_{j,k+1}$ in order to obtain the \emph{randomized collision operator} $[B_{j,k+1}]^{\omega}$. For $(h_{\zeta})_{\zeta \in \mathbb{Z}^3}$, as above, we recall \eqref{FourierTransform1} and we define, for fixed $\omega \in \Omega$ and for a fixed density matrix $\sigma^{(k+1)}$ of order $k+1$:
\begin{equation}
\label{Bjkomega}
([B^{+}_{1,k+1}]^{\omega} \, \sigma^{(k+1)})\,\,\widehat{}\,\,(\vec{\xi}_k; \vec{\xi}_k'):= 
\end{equation}
$$ \sum_{\xi_{k+1},\,\xi'_{k+1} \in \mathbb{Z}^3} h_{\xi_1}(\omega) \cdot h_{\xi_1-\xi_{k+1}+\xi'_{k+1}}(\omega) \cdot h_{\xi_{k+1}}(\omega) \cdot h_{\xi'_{k+1}}(\omega) 
$$
$$\cdot \, \widehat{\sigma}^{(k+1)}(\xi_1-\xi_{k+1}+\xi'_{k+1}, \xi_2, \ldots, \xi_k, \xi_{k+1}; \xi'_1,\ldots, \xi'_k,\xi'_{k+1}).$$
Furthermore, we recall \eqref{FourierTransform2} and we define:

\begin{equation}
\label{Bjkomega2}
([B^{-}_{1,k+1}]^{\omega} \sigma^{(k+1)})\,\,\widehat{}\,\,(\vec{\xi}_k; \vec{\xi}_k'):= 
\end{equation}
$$ \sum_{\xi_{k+1},\,\xi'_{k+1} \in \mathbb{Z}^3} h_{\xi_1}(\omega) \cdot h_{\xi_1-\xi'_{k+1}+\xi_{k+1}}(\omega) \cdot h_{\xi_{k+1}}(\omega) \cdot h_{\xi'_{k+1}}(\omega) 
$$
$$\cdot \, \widehat{\sigma}^{(k+1)}(\xi_1, \ldots, \xi_k, \xi_{k+1}; \xi'_1-\xi'_{k+1}+\xi_{k+1}, \xi'_2, \ldots, \xi'_k,\xi'_{k+1}).$$

The quantity $[B^{\pm}_{j,k+1}]^{\omega}$ is defined similarly for $j \in \{1,2,\ldots,k\}$. The \emph{randomized collision operator} is then defined as:
\begin{equation}
\label{Bjkrandomized}
[B_{j,k+1}]^{\omega}:=[B^{+}_{j,k+1}]^{\omega}-[B^{-}_{j,k+1}]^{\omega}.
\end{equation}
By construction, it follows that $[B_{j,k+1}]^{\omega}=B_{j,k+1}$, if $\omega \in \Omega$ is chosen such that $h_{\zeta}(\omega)=1$ for all $\zeta \in \mathbb{Z}^3$ or if $h_{\zeta}(\omega)=-1$ for all $\zeta \in \mathbb{Z}^3$. Furthermore, by analogy with \eqref{Bk+1_sum}, we define the \emph{full randomized collision operator} $\big[B^{(k+1)}\,\big]^{\omega}$ as:
\begin{equation}
\label{Bomegak+1}
\big[B^{(k+1)}\,\big]^{\omega}:=\sum_{j=1}^{k} \big[B_{j,k+1}\big]^{\omega}.
\end{equation}
As a convention, will extend the definition of $B_{j,k+1}$ and $\big[B_{j,k+1}\,\big]^{\omega}$ to density matrices $\sigma^{(\ell)}$ of order $\ell>k+1$. This is done by acting by the collision operators only in the variables $\vec{x}_{k+1}$ and $\vec{x}'_{k+1}$ and by treating all the other variables as parameters. We will use this convention in Subsection \ref{additional_regularityGP2}, more precisely in Sub-subsection \ref{An explicit formula for the difference of two Duhamel expansions}. In particular, we apply it in Lemma \ref{Duhamel_Expansion}, Lemma \ref{Duhamel_Expansion_Difference}, and in Example \ref{Example} of Sub-subsection \ref{An explicit formula for the difference of two Duhamel expansions}.

Let us examine more closely the sequence of Bernoulli random variables $(h_{\zeta})_{\zeta \in \mathbb{Z}^3}$ defined above.
We will denote the probability space associated to $(h_{\zeta})_{\zeta \in \mathbb{Z}^3}$ by $(\Omega,\Sigma,p)$. Here, $\Sigma$  is the corresponding sigma-algebra and $p$ is the probability measure. We will usually denote the probability space just by $\Omega$. By $L^2(\Omega)$, we will denote the $L^2$ space on $\Omega$, with respect to the probability measure $p$.

Let us define:
$$\Omega^{*}:=\prod_{k \geq 2} \Omega_k.$$
Here, each $\Omega_k=\Omega$ is the probability space associated to a sequence of Bernoulli trials as defined above. We denote each copy of the probability space by $(\Omega_k,\Sigma_k,p_k)$, in order to distinguish the factors. Here $\Sigma_k=\Sigma$ is the sigma-algebra and $p_k=p$ is the probability measure on $\Omega_k=\Omega$.
By construction, every $\omega^{*} \in \Omega^{*}$ is of the form
$$\omega^{*}=(\omega_2,\omega_3,\omega_4, \ldots)$$
where each $\omega_j \in \Omega$. We know that $\Omega^{*}$ can be given the structure of an infinite product probability space, following the work of Kakutani \cite{Kakutani}, which, in turn, is based on the work of Kolmogorov \cite{Kolmogorov}. The precise statement which we will use is the following:

\begin{theorem} 
(\cite{Kakutani},\cite{Kolmogorov})
\label{Kakutani_Kolmogorov}
There exists a probability measure $p^{*}$ on $\Omega^{*}$ such that, for all finite subsets $F \subseteq \{2,3,\ldots\}$:
\begin{equation}
\notag
p^{*} \Big(\bigcap_{i \in F} \pi_i^{-1}(A_i)\Big)=\prod_{i \in F} p_i (A_i)
\end{equation}
whenever $A_i \in \Sigma_i$ for all $i \in F$. Here, for $j \in \{2,3,\ldots\}$, $\pi_j: \Omega^{*} \rightarrow \Omega_j$ denotes the canonical projection map onto $\Omega_j$.
\\
The obtained probability space is given by $(\Omega^{*},\mathcal{B},p^{*})$, where $\mathcal{B}$ is the Borel field generated by sets of the form:
$$\bigcap_{i \in F} \pi_i^{-1}(A_i)$$
for some finite subset $F \subseteq \mathcal{I}$ and for $A_i \in \Sigma_i$.
\end{theorem}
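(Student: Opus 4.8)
This is the classical existence theorem for an infinite product of probability measures, and the plan is to prove it by the standard route: build a finitely additive set function on the algebra of cylinder sets, promote it to a countably additive pre-measure by a diagonal sectioning argument, and then extend via Carath\'eodory. First I would fix notation for the finite-dimensional pieces. For each finite $F \subseteq \{2,3,\ldots\}$, let $\Omega_F := \prod_{i \in F}\Omega_i$ carry the product $\sigma$-algebra $\Sigma_F := \bigotimes_{i\in F}\Sigma_i$ and the finite product measure $p_F := \bigotimes_{i\in F}p_i$; the existence of $p_F$ is routine, obtained by iterating Fubini's theorem over the finitely many factors. Writing $\pi_F : \Omega^{*}\to\Omega_F$ for the canonical projection and $\mathcal{A}_F := \pi_F^{-1}(\Sigma_F)$, the collection $\mathcal{A} := \bigcup_{F}\mathcal{A}_F$, the union being over all finite $F$, is an algebra of subsets of $\Omega^{*}$: it is stable under complements inside each $\mathcal{A}_F$, and for $A\in\mathcal{A}_F$, $A'\in\mathcal{A}_G$ one has $A,A'\in\mathcal{A}_{F\cup G}$, so $A\cup A'\in\mathcal{A}$. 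On $\mathcal{A}$ I would set $p^{*}(\pi_F^{-1}(E)) := p_F(E)$ for $E\in\Sigma_F$; this is well defined because two cylinder representations of one set agree after lifting to the common index set, where the consistency $(\bigotimes_{i\in F\cup G}p_i)(E\times\Omega_{(F\cup G)\setminus F}) = (\bigotimes_{i\in F}p_i)(E)$ is again Fubini. With this definition $p^{*}$ is finitely additive with $p^{*}(\Omega^{*})=1$, and the marginal identity $p^{*}\big(\bigcap_{i\in F}\pi_i^{-1}(A_i)\big) = \prod_{i\in F}p_i(A_i)$ holds by construction.

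The crux --- and the step I expect to be the main obstacle --- is countable additivity of $p^{*}$ on $\mathcal{A}$, equivalently continuity at the empty set: if $(C_n)_n\subseteq\mathcal{A}$ decreases to $\emptyset$, then $p^{*}(C_n)\to 0$. Since no topology is imposed on the factors, no compactness argument is available and one must work directly with measure-theoretic sections. Assuming $\inf_n p^{*}(C_n)=\varepsilon>0$, after passing to a subsequence I may take $C_n = \pi_{F_n}^{-1}(B_n)$ with $F_n=\{2,\ldots,m_n\}$, $m_n$ strictly increasing, and $B_n\in\Sigma_{F_n}$. Integrating out all coordinates but the second gives functions $g_n^{(1)}(\omega_2)$, the $p_{\{3,\ldots,m_n\}}$-measure of the $\omega_2$-section of $B_n$, which decrease in $n$ and satisfy $\int g_n^{(1)}\,dp_2 = p^{*}(C_n)\geq\varepsilon$; monotone convergence then produces $\omega_2^{*}$ with $g_n^{(1)}(\omega_2^{*})\geq\varepsilon$ for all $n$. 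Iterating --- at stage $j$ one freezes $\omega_2^{*},\ldots,\omega_j^{*}$, forms the section measures $g_n^{(j)}(\omega_{j+1})$, which decrease in $n$ with integral $g_n^{(j-1)}(\omega_j^{*})\geq\varepsilon$, and extracts $\omega_{j+1}^{*}$ with $g_n^{(j)}(\omega_{j+1}^{*})\geq\varepsilon$ for every $n$ --- yields a point $\omega^{*}=(\omega_2^{*},\omega_3^{*},\ldots)\in\Omega^{*}$. For any fixed $n$, the section of $B_n$ at $(\omega_2^{*},\ldots,\omega_{m_n}^{*})$ has measure $\geq\varepsilon>0$; but $B_n$ depends only on coordinates $2,\ldots,m_n$, so this section is either the whole remaining product space or empty, hence it is the whole space and $(\omega_2^{*},\ldots,\omega_{m_n}^{*})\in B_n$, i.e. $\omega^{*}\in C_n$. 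Thus $\omega^{*}\in\bigcap_n C_n$, contradicting $C_n\downarrow\emptyset$.

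With $p^{*}$ now a countably additive probability pre-measure on the algebra $\mathcal{A}$, the final step is to apply the Carath\'eodory extension theorem to extend it to a probability measure, still written $p^{*}$, on $\sigma(\mathcal{A})$, which is precisely the Borel field $\mathcal{B}$ generated by the cylinder sets $\bigcap_{i\in F}\pi_i^{-1}(A_i)$. The marginal identity is inherited from the generating algebra $\mathcal{A}$, and uniqueness of the extension follows from Dynkin's $\pi$-$\lambda$ theorem, since $\mathcal{A}$ is an algebra generating $\mathcal{B}$ on which $p^{*}$ is finite. This produces the desired probability space $(\Omega^{*},\mathcal{B},p^{*})$.
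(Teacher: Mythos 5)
Your proof is correct, but note that the paper itself does not prove this statement at all: Theorem \ref{Kakutani_Kolmogorov} is quoted as a classical result with citations to Kakutani and Kolmogorov, so there is no internal argument to compare against. What you have written is essentially the standard proof of that cited theorem: define the consistent finitely additive set function on the algebra of cylinder sets, establish countable additivity by verifying continuity at $\emptyset$ through the iterated-section argument, and conclude with Carath\'eodory extension and $\pi$-$\lambda$ uniqueness. The one point a reader might pause over is your extraction of $\omega_2^{*}$ with $g_n^{(1)}(\omega_2^{*})\geq\varepsilon$ for \emph{all} $n$ (rather than the more cautious $\varepsilon/2$ seen in some texts); it is in fact justified, since $g_n^{(1)}\downarrow g^{(1)}$ with $0\leq g^{(1)}\leq 1$ and $\int g^{(1)}\,dp_2\geq\varepsilon$ forces $p_2(\{g^{(1)}\geq\varepsilon\})>0$ (otherwise $\varepsilon-g^{(1)}$ would be strictly positive on a set of full measure with vanishing integral), and the same reasoning applies at every stage of the iteration. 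Your argument also correctly exploits the feature that makes Kakutani's theorem more general than topological extension theorems: because every factor is a probability space, no compactness or inner-regularity hypothesis is needed, the sectioning argument alone producing the point of the intersection. The final identification of $\sigma(\mathcal{A})$ with the Borel field $\mathcal{B}$ generated by the cylinders, and the inheritance of the marginal identity, are exactly as the statement requires.
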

Let us note that there is also related work on infinite product measures by Doob \cite{Doob}. We will henceforth denote by $L^2(\Omega^{*})$ the $L^2$ space on $\Omega^{*}$, with respect to the probability measure $p^{*}$. This space is well-defined by Theorem \ref{Kakutani_Kolmogorov}. 

When working in $\Omega^{*}$, we will also be interested in elements of $\Omega^{*}$ which do not depend on a certain component, say $\omega_{k+1}$. We write:
\begin{equation}
\label{tildeomegak+1}
\tilde{\omega}_{k+1} \in \prod_{\ell \neq k+1} \Omega_{\ell}
\end{equation}
in order to emphasize that $\tilde{\omega}_{k+1} \in \Omega^{*}$ is a sequence of random parameters in which $\omega_{k+1}$ does not occur, i.e. in which all of the elements are independent from $\omega_{k+1}$. This concept will be useful in Proposition \ref{RandomizedEstimate2} below.

Throughout our paper, we will study sequences of the form $\Sigma=\big(\sigma^{(k)}\big)_k$, where each $\sigma^{(k)}$ is a density matrix of order $k$, possibly depending on other parameters. $\Sigma$ will typically belong to function spaces such as those defined below. More precisely,  let us fix $\alpha \in \mathbb{R}$ and $\xi>0$. We first define the space $\mathcal{H}^{\alpha}_{\xi}=\mathcal{H}^{\alpha}_{\xi}(\Lambda)$ by:
\begin{equation}
\label{Space1}
\big\|\Sigma\big\|_{\mathcal{H}^{\alpha}_{\xi}}:=\sum_{k \in \mathbb{N}} \xi^k \cdot \big\|\sigma^{(k)}\big\|_{H^{\alpha}(\Lambda^k \times \Lambda^k)}
\end{equation}
In this case, we assumed that each $\sigma^{(k)}$ was a density matrix of order $k$ which does not involve any other parameters. Such spaces were first defined in the work of T. Chen and Pavlovi\'{c} \cite{CP1}.

If the $\sigma^{(k)}$ depend on the $\Omega^{*}$-variable, i.e. if:
\begin{equation}
\notag
\sigma^{(k)}: \Omega^{*} \times \Lambda^{k} \times \Lambda^{k} \rightarrow \mathbb{C},
\end{equation}
for all $k$, then we define:
\begin{equation}
\label{Space2}
\big\|\Sigma\big\|_{L^2(\Omega^{*}) \mathcal{H}^{\alpha}_{\xi}}:=\sum_{k \in \mathbb{N}} \xi^k \cdot \big\|\sigma^{(k)}\big\|_{L^2(\Omega^{*}) H^{\alpha}(\Lambda^k \times \Lambda^k)}.
\end{equation}
Here, $\big\|\sigma^{(k)}\big\|_{L^2(\Omega^{*}) H^{\alpha}(\Lambda^k \times \Lambda^k)}$ means that we first take the $H^{\alpha}(\Lambda^k \times \Lambda^k)$ norm of $\sigma^{(k)}$ to obtain the function of $\omega^{*}$ given by $\big\|\sigma^{(k)}(\omega^{*},\cdot)\big\|_{H^{\alpha}(\Lambda^{k} \times \Lambda^{k})}$. We then take the $L^2(\Omega^{*})$ norm of the resulting function. It is important to note that in \eqref{Space2}, we are not taking $\mathcal{H}^{\alpha}_{\xi}$ norms of $\Sigma$ and then taking $L^2(\Omega^{*})$ norms of the result. 

If, in addition, the $\sigma^{(k)}$ depend on the time variable $t \in I$:
$$\sigma^{(k)} : I \times \Omega^{*} \times \Lambda^{k} \times \Lambda^{k} \rightarrow \mathbb{C},$$
then we define:
\begin{equation}
\label{Space2B}
\big\|\Sigma\big\|_{L^{\infty}_{t \in I} L^2(\Omega^{*}) \mathcal{H}^{\alpha}_{\xi}}:=\Big\|\big\|\Sigma(t)\big\|_{L^2(\Omega^{*}) \mathcal{H}^{\alpha}_{\xi}}\Big\|_{L^{\infty}_{t \in I}}=\Big\|\sum_{k \in \mathbb{N}} \xi^k \cdot \big\|\sigma^{(k)}(t)\big\|_{L^2(\Omega^{*}) H^{\alpha}(\Lambda^k \times \Lambda^k)}\Big\|_{L^{\infty}_{t \in I}}.
\end{equation}
If the map $t \mapsto \|\Sigma(t)\big\|_{L^2(\Omega^{*}) \mathcal{H}^{\alpha}_{\xi}}$ is continuous, then we say that:
\begin{equation}
\label{Space2C}
\Sigma \in C_{t \in I} L^2(\Omega^{*}) \mathcal{H}^{\alpha}_{\xi}.
\end{equation}
In this space, we use the norm given by \eqref{Space2B}.

Similarly, if the $\sigma^{(k)}$ depend on $\Omega$, i.e. if:
\begin{equation}
\notag
\sigma^{(k)}: \Omega \times \Lambda^{k} \times \Lambda^{k} \rightarrow \mathbb{C},
\end{equation}
for all $k$, then we use the same conventions as in \eqref{Space2} and we define:

\begin{equation}
\label{Space3}
\big\|\Sigma\big\|_{L^2(\Omega) \mathcal{H}^{\alpha}_{\xi}}:=\sum_{k \in \mathbb{N}} \xi^k \cdot \big\|\sigma^{(k)}\big\|_{L^2(\Omega) H^{\alpha}(\Lambda^k \times \Lambda^k)}.
\end{equation}
Here, $\big\|\sigma^{(k)}\big\|_{L^2(\Omega) H^{\alpha}(\Lambda^k \times \Lambda^k)}$ is obtained by first applying the $H^{\alpha}(\Lambda^k \times \Lambda^k)$ norm to $\sigma^{(k)}$ and then applying the $L^2(\Omega)$ norm to the result. Similarly as in \eqref{Space2B}, if there is time-dependence:
$$\sigma^{(k)}: I \times \Omega \times \Lambda^{k} \times \Lambda^{k} \rightarrow \mathbb{C},$$ 
then we define:
\begin{equation}
\label{Space3B}
\big\|\Sigma\big\|_{L^{\infty}_{t \in I} L^2(\Omega) \mathcal{H}^{\alpha}_{\xi}}:=\Big\|\big\|\Sigma(t)\big\|_{L^2(\Omega) \mathcal{H}^{\alpha}_{\xi}}\Big\|_{L^{\infty}_{t \in I}}=\Big\|\sum_{k \in \mathbb{N}} \xi^k \cdot \big\|\sigma^{(k)}(t)\big\|_{L^2(\Omega) H^{\alpha}(\Lambda^k \times \Lambda^k)}\Big\|_{L^{\infty}_{t \in I}}.
\end{equation}
If the map $t \mapsto \|\Sigma(t)\big\|_{L^2(\Omega) \mathcal{H}^{\alpha}_{\xi}}$ is continuous, then we say that:
\begin{equation}
\label{Space3C}
\Sigma \in C_{t \in I} L^2(\Omega) \mathcal{H}^{\alpha}_{\xi}.
\end{equation}
In this space, we use the norm given by \eqref{Space3B}.

Each of the quantities \eqref{Space1}, \eqref{Space2}, \eqref{Space2B}, \eqref{Space3}, and \eqref{Space3B} defines a norm which gives rise to a Banach space. Moreover, the spaces in \eqref{Space2C} and \eqref{Space3C} are Banach spaces if we use the norms given in \eqref{Space2B} and \eqref{Space3B} respectively. In addition to \cite{CP1}, the space $\mathcal{H}^{\alpha}_{\xi}$ given in \eqref{Space1} was also used in \cite{CP2,CP4,CP3}. The spaces $L^2(\Omega^{*}) \mathcal{H}^{\alpha}_{\xi}$ and $L^2(\Omega) \mathcal{H}^{\alpha}_{\xi}$ coming from \eqref{Space2} and \eqref{Space3} respectively can be viewed as its probabilistic variants. Both the deterministic and the  probabilistic variants will be useful in our paper.

Given $\Sigma=(\sigma^{(k)})_k$ as above, we define a variant of the \emph{Laplace operator} applied to $\Sigma$ as in \cite{CP1,CP2,CP4,CP3}:
\begin{equation}
\label{Delta_plusminus}
\widehat{\Delta}_{\pm} \Sigma :=\big((\Delta_{\vec{x}_k}-\Delta_{\vec{x}'_k})\sigma^{(k)}\big)_k.
\end{equation}
We also define \emph{collision operators} applied to $\Sigma$.  
In the deterministic setting, we recall the definition from \cite{CP1,CP2,CP4,CP3}:

\begin{equation}
\label{B_hat}
\widehat{B}\,\,\Sigma:=\Big(B^{(k+1)}\,\sigma^{(k+1)}\Big)_k.
\end{equation}
Here $B^{(k+1)}$ is defined as in \eqref{Bk+1_sum}.

We recall \eqref{Bomegak+1} and we define two randomized versions of \eqref{B_hat}. Given $\omega^{*}=(\omega_2,\omega_3,\omega_4, \ldots) \in \Omega^{*}$, we define the operator $\big[\widehat{B}\,\big]^{\omega^{*}}$ by:
\begin{equation}
\label{B_hat_omega_star}
\big[\widehat{B}\,\big]^{\omega^{*}} \Sigma:=\Big(\big[B^{(k+1)}\,\big]^{\omega_{k+1}} \sigma^{(k+1)}\Big)_k.
\end{equation}

Given $\omega \in \Omega$, we define the operator $\big[\widehat{B}\,\big]^{\omega}$ by:
\begin{equation}
\label{B_hat_omega}
\big[\widehat{B}\,\big]^{\omega} \, \Sigma:=\Big(\big[B^{(k+1)}\,\big]^{\omega} \sigma^{(k+1)}\Big)_k.
\end{equation}

Given $N \in \mathbb{N}$ and $\Sigma=(\sigma^{(k)})_k$ as above, we define the projection operator $\mathbf{P}_{\leq N}$ applied to $\Sigma$ as:

\begin{equation}
\label{P<=N}
\mathbf{P}_{\leq N} \Sigma := (\sigma^{(1)},\sigma^{(2)},\ldots,\sigma^{(N)},0,0,\ldots).
\end{equation}
Furthermore, let us define:
\begin{equation}
\label{PgreaterN}
\mathbf{P}_{>N}\Sigma:=\Sigma-\mathbf{P}_{\leq N}\Sigma=(0,0,\ldots,0,\sigma^{(N+1)},\sigma^{(N+2)},\ldots).
\end{equation}

Let us recall the following useful integral identity:
\begin{equation}
\label{Integral_Identity}
\int_{0}^{t_{j}} \int_{0}^{t_{j+1}} \cdots \int_{0}^{t_{j+k-1}} \,dt_{j+k}\,\cdots \,dt_{j+2} \,dt_{j+1} = \frac{t_j^k}{k!}\,.
\end{equation}
We will use \eqref{Integral_Identity} in order to estimate Duhamel expansion terms in Proposition \ref{DuhamelEstimate1} and in Proposition \ref{DuhamelEstimate2} below. This type of observation was first used in \cite{CP2} in the one-dimensional deterministic setting. It was also subsequently used in the randomized setting in \cite{SoSt}.

\section{The independently randomized Gross-Pitaevskii hierarchy}
\label{The independently randomized Gross-Pitaevskii hierarchy}
In this section, we will study the \emph{independently randomized Gross-Pitaevskii hierarchy}:

\begin{equation}
\label{RandomizedGP1}
\begin{cases}
i \partial_t \gamma^{(k)} + (\Delta_{\vec{x}_k}-\Delta_{\vec{x}'_k}) \gamma^{(k)}=\sum_{j=1}^{k}[B_{j,k+1}]^{\omega_{k+1}}(\gamma^{(k+1)})\\
\gamma^{(k)}\big|_{t=0}=\gamma^{(k)}_0.
\end{cases}
\end{equation}
As before, the $\omega_{k+1}$ are mutually independent random parameters.
We take the spatial domain to be $\Lambda=\mathbb{T}^3$, unless it is noted otherwise.

Let us first recall the following randomized estimate, which is the content of Theorem 3.1 in \cite{SoSt}:
\begin{theorem}
\label{RandomizedEstimate1}
For all $\alpha>\frac{3}{4}$, there exists $C_0>0$, depending only on $\alpha$ such that, for all sequences of density matrices $(\gamma_0^{(k)})_k$, for all $k \in \mathbb{N}$ and $j \in \{1,2,\ldots,k\}$, the following estimate holds:
\begin{equation}
\notag
\big\|[B_{j,k+1}]^{\omega}\gamma_0^{(k+1)}\big\|_{L^2(\Omega) H^{\alpha}(\Lambda^k \times \Lambda^k)} \leq C_0 \big\|\gamma_0^{(k+1)}\|_{H^{\alpha}(\Lambda^{k+1} \times \Lambda^{k+1})}.
\end{equation}
In particular:
\begin{equation}
\notag
\big\|[B^{(k+1)}]^{\omega}\gamma_0^{(k+1)}\big\|_{L^2(\Omega) H^{\alpha}(\Lambda^k \times \Lambda^k)} \leq C_0k \cdot \big\|\gamma_0^{(k+1)}\|_{H^{\alpha}(\Lambda^{k+1} \times \Lambda^{k+1})}.
\end{equation}

\end{theorem}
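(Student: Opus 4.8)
\emph{Proof proposal.} The plan is to follow the mechanism behind the almost-sure gain in integrability due to Rademacher, adapted to the algebraic structure of the collision operator (this is the strategy of the proof of Theorem 3.1 in \cite{SoSt}). First I would make several reductions. By the symmetry of the indices $j\in\{1,\dots,k\}$ it suffices to treat $j=1$, and by writing $[B_{1,k+1}]^{\omega}=[B^{+}_{1,k+1}]^{\omega}-[B^{-}_{1,k+1}]^{\omega}$ and using the triangle inequality in $L^{2}(\Omega)H^{\alpha}$ it suffices to estimate the $+$ piece, the $-$ piece being entirely analogous by \eqref{Bjkomega2}. The ``In particular'' statement then follows at once from $[B^{(k+1)}]^{\omega}=\sum_{j=1}^{k}[B_{j,k+1}]^{\omega}$ in \eqref{Bomegak+1} together with the triangle inequality, which produces the factor $k$. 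Next, by Plancherel's theorem and Tonelli's theorem (the integrand being nonnegative after squaring, so that the frequency sum and the $L^{2}(\Omega)$-integral may be interchanged),
$$\big\|[B^{+}_{1,k+1}]^{\omega}\gamma_{0}^{(k+1)}\big\|_{L^{2}(\Omega)H^{\alpha}(\Lambda^{k}\times\Lambda^{k})}^{2}=\sum_{\vec{\xi}_{k},\vec{\xi}'_{k}}\Big(\prod_{j=1}^{k}\langle\xi_{j}\rangle^{2\alpha}\langle\xi'_{j}\rangle^{2\alpha}\Big)\,\mathbb{E}\,\big|G(\vec{\xi}_{k};\vec{\xi}'_{k})\big|^{2},$$
where $G(\vec{\xi}_{k};\vec{\xi}'_{k})$ is the random sum over $\xi_{k+1},\xi'_{k+1}\in\mathbb{Z}^{3}$ on the right-hand side of \eqref{Bjkomega}. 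The decisive structural point is that $B^{+}_{1,k+1}$ couples only the first and the $(k+1)$-st frequency slots: the variables $\xi_{2},\dots,\xi_{k},\xi'_{1},\dots,\xi'_{k}$ are mere spectators, so after freezing them the remaining estimate is genuinely \emph{uniform in $k$}, and the spectator weights match the corresponding weights in $\|\gamma_{0}^{(k+1)}\|_{H^{\alpha}}^{2}$ with no loss.

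The core is the computation of $\mathbb{E}|G|^{2}$. Expanding the square into a double sum over $(\xi_{k+1},\xi'_{k+1})$ and $(\eta_{k+1},\eta'_{k+1})$, the two factors $h_{\xi_{1}}(\omega)$ square to $1$, and one is left with $\mathbb{E}$ of a product of six Bernoulli variables indexed by the six frequencies $\xi_{1}-\xi_{k+1}+\xi'_{k+1},\ \xi_{k+1},\ \xi'_{k+1},\ \xi_{1}-\eta_{k+1}+\eta'_{k+1},\ \eta_{k+1},\ \eta'_{k+1}$. Since the $h_{\zeta}$ are independent with $h_{\zeta}^{2}\equiv1$ and $\mathbb{E}h_{\zeta}=0$, this expectation equals $1$ when every frequency value occurs with even multiplicity and $0$ otherwise. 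Hence $\mathbb{E}|G|^{2}$ splits into finitely many contributions, one for each admissible way of grouping the six frequencies into coincident blocks of even size. The ``diagonal'' grouping $\xi_{k+1}=\eta_{k+1}$, $\xi'_{k+1}=\eta'_{k+1}$ gives the main term $\sum_{\xi_{k+1},\xi'_{k+1}}\big|\widehat{\gamma}_{0}^{(k+1)}(\xi_{1}-\xi_{k+1}+\xi'_{k+1},\dots,\xi_{k+1};\dots,\xi'_{k+1})\big|^{2}$; here, and in the other ``non-degenerate'' groupings, one transfers the outer weight $\langle\xi_{1}\rangle^{2\alpha}$ onto $\langle\xi_{1}-\xi_{k+1}+\xi'_{k+1}\rangle^{2\alpha}\langle\xi_{k+1}\rangle^{2\alpha}\langle\xi'_{k+1}\rangle^{2\alpha}$ by the elementary Peetre-type inequality $\langle x+y\rangle^{\beta}\lesssim_{\beta}\langle x\rangle^{|\beta|}\langle y\rangle^{|\beta|}$ (and a Cauchy--Schwarz step when the two copies of $\widehat{\gamma}_{0}^{(k+1)}$ differ by a permutation of arguments), obtaining a bound by $\|\gamma_{0}^{(k+1)}\|_{H^{\alpha}}^{2}$ with an absolute constant and \emph{no restriction on $\alpha$}.

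The hypothesis $\alpha>\tfrac34$ enters only through the ``degenerate'' groupings, in which the coincidences force extra linear relations among $\xi_{k+1},\xi'_{k+1},\eta_{k+1},\eta'_{k+1}$. In each such case one free summation variable $\zeta\in\mathbb{Z}^{3}$ survives which occupies the $(k+1)$-st slot of both the unprimed and the primed arguments of $\widehat{\gamma}_{0}^{(k+1)}$ — hence carries combined weight $\langle\zeta\rangle^{4\alpha}$ inside $\widehat{\gamma}_{0}^{(k+1)}$ — but carries no weight in the term itself; inserting $\langle\zeta\rangle^{-4\alpha}\langle\zeta\rangle^{4\alpha}$ and applying Cauchy--Schwarz in $\zeta$ produces the factor $\sum_{\zeta\in\mathbb{Z}^{3}}\langle\zeta\rangle^{-4\alpha}$, which is finite precisely because $4\alpha>3$, i.e. $\alpha>\tfrac34$. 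Summing the finitely many contributions and then summing over the spectator frequencies with their weights yields the estimate with $C_{0}=C_{0}(\alpha)$. I expect the main obstacle to be exactly this combinatorial case analysis: one must enumerate all admissible groupings of the six frequencies, verify that in every non-degenerate grouping the weights can be redistributed at no summability cost, and check that in every degenerate grouping the orphaned variable indeed carries weight $\langle\zeta\rangle^{4\alpha}$ and never less, so that $\alpha>\tfrac34$ is simultaneously what is required and what suffices, with the constant staying independent of $k$.
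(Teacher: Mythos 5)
Your outline matches the approach behind the result as it is actually established: the theorem is recalled in this paper from Theorem 3.1 of \cite{SoSt}, whose proof is precisely the Rademacher-type computation you describe — Plancherel, expectation of products of the Bernoulli variables, and a combinatorial analysis of which frequencies pair up — with the threshold $\alpha>\frac{3}{4}$ entering exactly through the degenerate pairings via $\sum_{\zeta\in\mathbb{Z}^3}\langle\zeta\rangle^{-4\alpha}<\infty$, consistent with the $\alpha>\frac{d}{4}$ condition on $\mathbb{T}^d$ noted in Remark 3.7 of \cite{SoSt}. Your reductions (to $j=1$ and the $+$ piece, the factor $k$ from \eqref{Bomegak+1}, uniformity in $k$ from the spectator frequencies) are sound, and the only remaining labor is the case enumeration you already identify as the main obstacle.
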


\begin{remark}
\label{Remark_higher_dimensions1}
As was noted in the discussions in Subsection \ref{Setup of the problem} and Subsection \ref{Statement of the problem. Main Results}, the analysis that we will present in this section can be generalized to the case $\Lambda=\mathbb{T}^d$, but then the condition $\alpha>\frac{3}{4}$ has to be replaced with $\alpha>\frac{d}{4}.$
\end{remark}

Given a sequence $\Gamma(0)=(\gamma_0^{(k)})_k$ of density matrices \footnote{Throughout our paper, we will denote the sequence of density matrices $(\gamma_0^{(k)})_k$ by $\Gamma(0)$ instead of by $\Gamma_0$, since the $\Gamma_N$ are typically defined to be solutions to truncated hierarhcies.} and $\omega^{*} \in \Omega^{*}$, we define a new sequence of time-dependent density matrices $Duh_{\,j}^{\,\omega^{*}} (\Gamma(0))(t)$ as follows:

If $j=0$:
\begin{equation}
\label{Duhamel_0}
Duh_{\,0}^{\,\omega^{*}}(\Gamma(0))^{(k)}(t):=\mathcal{U}^{(k)}(t)\,\gamma_0^{(k)}.    
\end{equation}

If $j \geq 1$:
\begin{equation}
\label{Duhamel_j}
Duh_{\,j}^{\,\omega^{*}}(\Gamma(0))^{(k)}(t):=
\end{equation}
$$(-i)^j \int_0^t \int_0^{t_1} \cdots \int_0^{t_{j-1}} \,\mathcal{U}^{(k)}(t-t_1) \,[B^{(k+1)}]^{\omega_{k+1}} \, \mathcal{U}^{(k+1)}(t_1-t_2)\,[B^{(k+2)}]^{\omega_{k+2}}\, \cdots
$$ 
\begin{equation}
\notag
\cdots \,\mathcal{U}^{(k+j-1)}(t_{j-1}-t_j)\,[B^{(k+j)}]^{\omega_{k+j}}\,\mathcal{U}^{(k+j)}(t_j)\,\gamma_0^{(k+j)} \, dt_j\,dt_{j-1}\,\cdots\,dt_2\,dt_1.
\end{equation}
The quantity $Duh_{\,j}^{\,\omega^{*}}(\Gamma(0))$ corresponds to an iterated Duhamel expansion in the hierarchy \eqref{RandomizedGP1}.
We note that, in our convention, the subscript $j$ denotes the length of the Duhamel expansion. This is used in order to be consistent with our notation in \cite{SoSt}. Let us note that a slightly different convention is used in \cite{CP4}, where the subscript denotes the length of the Duhamel expansion plus one.

Let us now prove an estimate on the Duhamel terms $Duh_{\,j}^{\,\omega^{*}}(\Gamma(0))^{(k)}(t)$. We recall from Section \ref{Notation} that, for given $T>0$, we denote the interval $[0,T]$ by $I$. With this convention in mind, we prove: 

\begin{proposition}
\label{DuhamelEstimate1}
Let us fix $\alpha>\frac{3}{4}$.
Suppose that $T>0$. There exists $C_1>0$, depending only on $\alpha$ and a universal constant $C_2>0$ such that for all $j,k \in \mathbb{N}$:
\begin{equation}
\notag
\big\|Duh_{\,j}^{\,\omega^{*}} (\Gamma(0))^{(k)}(t) \big\|_{L^{\infty}_{t \in I} L^2(\Omega^{*}) H^{\alpha}(\Lambda^k \times \Lambda^k)} \leq (C_1T)^j \, \cdot \, C_2^k \, \cdot \, \big\|\gamma_0^{(k+j)}\big\|_{H^{\alpha}(\Lambda^{k+j} \times \Lambda^{k+j})}.
\end{equation} 
\end{proposition}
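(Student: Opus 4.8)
The plan is to iterate the randomized estimate of Theorem \ref{RandomizedEstimate1} across the $j$ levels of the Duhamel expansion, crucially using that the random parameters $\omega_{k+1},\dots,\omega_{k+j}$ occurring in $Duh_{\,j}^{\,\omega^{*}}(\Gamma(0))^{(k)}$ are mutually independent, together with the factorial gain coming from the integral identity \eqref{Integral_Identity}. For $j=0$ the claim is immediate: by \eqref{Duhamel_0} and unitarity of $\mathcal{U}^{(k)}(t)$ on $H^{\alpha}$ one has $\|Duh_{\,0}^{\,\omega^{*}}(\Gamma(0))^{(k)}(t)\|_{H^{\alpha}}=\|\gamma_0^{(k)}\|_{H^{\alpha}}$ (there is no $\Omega^{*}$-dependence), so the bound holds provided $C_2\geq 1$.

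For $j\geq 1$, first apply Minkowski's inequality to pull the $L^{2}(\Omega^{*})H^{\alpha}$-norm inside the $j$-fold time integral in \eqref{Duhamel_j}; it then suffices to bound, uniformly in $0\leq t_j\leq\cdots\leq t_1\leq t\leq T$, the quantity $\big\|\mathcal{U}^{(k)}(t-t_1)\,[B^{(k+1)}]^{\omega_{k+1}}\mathcal{U}^{(k+1)}(t_1-t_2)\,[B^{(k+2)}]^{\omega_{k+2}}\cdots\mathcal{U}^{(k+j)}(t_j)\,\gamma_0^{(k+j)}\big\|_{L^{2}(\Omega^{*})H^{\alpha}(\Lambda^{k}\times\Lambda^{k})}$. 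Since $\mathcal{U}^{(k)}(t-t_1)$ is unitary on $H^{\alpha}$ and commutes with the $\Omega^{*}$-integration, it is discarded. The outermost randomized operator $[B^{(k+1)}]^{\omega_{k+1}}$ is applied to a density matrix depending only on $\omega_{k+2},\dots,\omega_{k+j}$, hence independent of $\omega_{k+1}$; conditioning on $(\omega_{k+2},\dots,\omega_{k+j})$ makes the argument deterministic, so the ``in particular'' bound of Theorem \ref{RandomizedEstimate1} applies and, after integrating in the remaining variables via Fubini, gives $\|[B^{(k+1)}]^{\omega_{k+1}}G\|_{L^{2}(\Omega^{*})H^{\alpha}}\leq C_0 k\,\|G\|_{L^{2}(\Omega^{*})H^{\alpha}}$ for any $G$ independent of $\omega_{k+1}$. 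Repeating this — discard $\mathcal{U}^{(k+1)}(t_1-t_2)$, peel off $[B^{(k+2)}]^{\omega_{k+2}}$ (independent of $\omega_{k+2}$), and so on — after $j$ steps one reaches $\|\mathcal{U}^{(k+j)}(t_j)\gamma_0^{(k+j)}\|_{H^{\alpha}}=\|\gamma_0^{(k+j)}\|_{H^{\alpha}}$ with accumulated constant $C_0 k\cdot C_0(k+1)\cdots C_0(k+j-1)=C_0^{\,j}\,\tfrac{(k+j-1)!}{(k-1)!}$.

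Feeding this back into the time integral and using \eqref{Integral_Identity}, $\int_0^t\int_0^{t_1}\cdots\int_0^{t_{j-1}}dt_j\cdots dt_1=\tfrac{t^j}{j!}\leq\tfrac{T^j}{j!}$, yields
$$\big\|Duh_{\,j}^{\,\omega^{*}}(\Gamma(0))^{(k)}(t)\big\|_{L^{\infty}_{t\in I}L^{2}(\Omega^{*})H^{\alpha}}\leq \frac{C_0^{\,j}T^j}{j!}\,\frac{(k+j-1)!}{(k-1)!}\,\|\gamma_0^{(k+j)}\|_{H^{\alpha}}=C_0^{\,j}T^j\binom{k+j-1}{j}\|\gamma_0^{(k+j)}\|_{H^{\alpha}}.$$
Finally, the crude bound $\binom{k+j-1}{j}\leq 2^{k+j-1}\leq 2^{k}2^{j}$ gives the asserted estimate with $C_1:=2C_0$, depending only on $\alpha$, and the universal constant $C_2:=2$.

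The only delicate point — and the heart of the argument — is the level-by-level factorization of the $L^2(\Omega^*)$-norm: one must check at each stage that the randomized collision operator being peeled off acts on an object independent of its own randomization parameter, so that, after conditioning on the other parameters, the deterministic-input estimate of Theorem \ref{RandomizedEstimate1} genuinely applies. This is precisely where the independent-randomization structure of \eqref{RandomizedGP1} enters; the analogous iteration fails for the dependently randomized hierarchy, which is why a separate Duhamel estimate (Proposition \ref{DuhamelEstimate2}) is needed there. The rest is the elementary balancing of the factorial gain from \eqref{Integral_Identity} against the factorial growth of $(k+j-1)!/(k-1)!$ via the binomial bound.
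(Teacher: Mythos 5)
Your proof is correct and follows essentially the same route as the paper: Minkowski's inequality, iterated application of Theorem \ref{RandomizedEstimate1} level by level (justified by the independence of the $\omega_{k+\ell}$ and Fubini), and the factorial gain from \eqref{Integral_Identity}. The only difference is cosmetic: at the final combinatorial step you bound $\frac{1}{j!}\cdot\frac{(k+j-1)!}{(k-1)!}=\binom{k+j-1}{j}\leq 2^{k+j-1}$, whereas the paper uses Stirling's formula together with $(1+k/j)^{j}\leq C_2^{k}$ — both yield the stated constants.
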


\begin{proof}
Let us fix $t \in I$ and $j,k \in \mathbb{N}$. We use Minkowski's inequality in order to estimate:

$$\big\|Duh_{\,j}^{\,\omega^{*}}(\Gamma(0))^{(k)}(t)\big\|_{L^2(\Omega^{*}) H^{\alpha}( \Lambda^k \times \Lambda^k)}=\big\|S^{(k,\alpha)}\,Duh_{\,j}^{\,\omega^{*}}(\Gamma(0))^{(k)}(t)\big\|_{L^2(\Omega^{*} \times \Lambda^k \times \Lambda^k)}$$ 
$$\leq \int_0^t \int_0^{t_1} \cdots \int_0^{t_{j-1}} \|S^{(k,\alpha)} \,\mathcal{U}^{(k)}(t-t_1) \,[B^{(k+1)}]^{\omega_{k+1}} \, \mathcal{U}^{(k+1)}(t_1-t_2) \,[B^{(k+2)}]^{\omega_{k+2}}\, \cdots$$
$$\cdots \, \mathcal{U}^{(k+j-1)}(t_{j-1}-t_j)\,[B^{(k+j)}]^{\omega_{k+j}}\,\mathcal{U}^{(k+j)}(t_j)\, \gamma_0^{(k+j)}\big\|_{L^2(\Omega^{*} \times \Lambda^k \times \Lambda^k)} dt_j \, dt_{j-1} \cdots dt_2 \,dt_1.$$
%Let us recall that $[B_{k+1}]^{\omega_{k+1}}=\sum_{j=1}^{k}[B_{j,k+1}]^{\omega_{k+1}}$. 
We can now iteratively use Theorem \ref{RandomizedEstimate1} in $\omega_{k+1}, \omega_{k+2}, \ldots, \omega_{k+j}.$ This is justified by using Fubini's Theorem as in the proof of Theorem 5.2 in  \cite{SoSt}.  It follows that the above expression is:
$$\leq \int_0^t \int_0^{t_1} \cdots \int_0^{t_{j-1}}  \, C_0^j \, \cdot \, k \, \cdot \, (k+1) \,  \cdots  \, (k+j-1) \, \cdot \big\|S^{(k+j,\alpha)}\gamma_0^{(k+j)}\big\|_{L^2(\Lambda^{k+j} \times \Lambda^{k+j})} dt_j \, dt_{j-1}\,\cdots\,dt_2\,dt_1$$
$$=\frac{t^j}{j!}\, \cdot \, C_0^j \, \cdot \, k \, \cdot \, (k+1) \,  \cdots  \, (k+j-1) \, \cdot \big\|S^{(k+j,\alpha)}\gamma_0^{(k+j)}\big\|_{L^2(\Lambda^{k+j} \times \Lambda^{k+j})}.$$
In the last equality, we used \eqref{Integral_Identity}.

We know by Stirling's formula that:
\begin{equation}
\label{Stirling's formula}
m! \sim \Big(\frac{m}{e}\Big)^m \cdot \sqrt{2\pi m}.
\end{equation}
It follows that there exists a universal constant $C>0$ such that:
$$
\big\|S^{(k,\alpha)}\,Duh_{\,j}^{\,\omega^{*}}(\Gamma(0))^{(k)}(t)\big\|_{L^2(\Omega^{*} \times \Lambda^k \times \Lambda^k)}
$$
\begin{equation}
\label{upperbound1}
\leq \frac{C^j T^j}{j^j} \, \cdot \, C_0^j \, \cdot \, (k+j)^j\, \cdot \, \big\|S^{(k+j,\alpha)}\gamma_0^{(k+j)}\big\|_{L^2(\Lambda^{k+j} \times \Lambda^{k+j})}.
\end{equation}
We note that:
$$\frac{1}{j^j} \, \cdot \, (k+j)^j=\Big(1+\frac{k}{j}\Big)^{\,j} \leq C_2^k$$
for some universal constant $C_2>0$.

Hence, the quantity in \eqref{upperbound1} is 
$$\leq (C_1 T)^j \, \cdot \, C_2^k \, \cdot \, \big\|S^{(k+j,\alpha)} \gamma_0^{(k+j)} \big\|_{L^2(\Lambda^{k+j} \times \Lambda^{k+j})}$$
$$= (C_1 T)^j \, \cdot \, C_2^k \, \cdot \, \big\|\gamma_0^{(k+j)} \big\|_{H^{\alpha}(\Lambda^{k+j} \times \Lambda^{k+j})}.$$ 
Here, $C_1=C_0 \cdot C.$ The claim now follows after taking the supremum in $t \in I$.
\end{proof}

Another useful estimate which follows from Theorem \ref{RandomizedEstimate1} is the following:

\begin{proposition} 
\label{RandomizedEstimate2} 
Let $\alpha>\frac{3}{4}$ be given. Let us fix $k \in \mathbb{N}$, and $j \in \{1,\ldots,k\}$. Suppose that $\tilde{\omega}_{k+1} \in \prod_{\ell \neq k+1} \Omega_{\ell}$ as in \eqref{tildeomegak+1}. Suppose furthermore that $\gamma_{0,\tilde{\omega}_{k+1}}^{(k+1)}$ is a density matrix of order $k+1$, which depends only on the random parameter $\tilde{\omega}_{k+1}$ (and hence is independent of $\omega_{k+1}$). Then, for the same constant $C_0$ in Theorem \ref{RandomizedEstimate1}, the following estimate holds:
\begin{equation}
\label{RandomizedEstimate2I}
\big\|[B_{j,k+1}]^{\omega_{k+1}}\gamma_{0,\tilde{\omega}_{k+1}}^{(k+1)}\big\|_{L^2(\Omega^{*}) H^{\alpha} (\Lambda^k \times \Lambda^k)} \leq C_0 \big\|\gamma_{0,\tilde{\omega}_{k+1}}^{(k+1)}\|_{L^2(\Omega^{*}) H^{\alpha} (\Lambda^{k+1} \times \Lambda^{k+1})}.
\end{equation}
In particular:
\begin{equation}
\label{RandomizedEstimate2II}
\big\|[B^{(k+1)}]^{\omega_{k+1}}\gamma_{0,\tilde{\omega}_{k+1}}^{(k+1)}\big\|_{L^2(\Omega^{*}) H^{\alpha} (\Lambda^k \times \Lambda^k)} \leq C_0k \cdot \big\|\gamma_{0,\tilde{\omega}_{k+1}}^{(k+1)}\|_{L^2(\Omega^{*}) H^{\alpha} (\Lambda^{k+1} \times \Lambda^{k+1})}.
\end{equation}
Moreover, let us assume that for all $k \in \mathbb{N}$, the set of random parameters $\tilde{\omega}_{k+1}$ and density matrices $\gamma^{(k+1)}_{0,\tilde{\omega}_{k+1}}$ are as above (i.e. they are independent from $\omega_{k+1}$).  We define the sequence: 
$$\big[\tilde{\Gamma}_0\,\big]^{\omega^{*}}:=\big(\big(\big[\tilde{\Gamma}_0\,\big]^{\omega^{*}}\big)^{(1)},\gamma^{(2)}_{0,\tilde{\omega}_2}, \gamma^{(3)}_{0,\tilde{\omega}_3},\ldots \big)$$
where $\big(\big[\tilde{\Gamma}_0\,\big]^{\omega^{*}}\big)^{(1)}$ is an arbitrary density matrix of order $1$ which depends on $\omega^{*}$.
The following estimate then holds for all $\xi, \xi_0>0$ such that $\xi_0<\xi$:

\begin{equation}
\label{RandomizedEstimate2III}
\big\|\big[\widehat{B}\,\big]^{\omega^{*}}\big[\tilde{\Gamma}_0\,\big]^{\omega^{*}}\big\|_{L^2(\Omega^{*}) \mathcal{H}^{\alpha}_{\xi_0}} \lesssim_{\,\xi,\,\xi_0,\,\alpha} \big\|\big[\tilde{\Gamma}_0\big]^{\omega^{*}}\big\|_{L^2(\Omega^{*})\mathcal{H}^{\alpha}_{\xi}}.
\end{equation}
%The implied constant depends on $\xi,\xi_1$, and $\alpha$.
\end{proposition}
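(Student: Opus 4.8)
The plan is to reduce everything to Theorem \ref{RandomizedEstimate1} by conditioning on the random parameters other than $\omega_{k+1}$. The key structural observation is that the randomized collision operator $[B_{j,k+1}]^{\omega_{k+1}}$ is built, via \eqref{Bjkomega}, \eqref{Bjkomega2}, \eqref{Bjkrandomized}, only out of the Bernoulli variables $h_\zeta(\omega_{k+1})$, and hence acts only in the $\Omega_{k+1}$-variable; it leaves $\tilde\omega_{k+1}$ untouched. Using the product structure of $\Omega^{*}$ provided by Theorem \ref{Kakutani_Kolmogorov}, one may disintegrate $p^{*}=p_{k+1}\otimes p^{*}_{\widehat{k+1}}$, where $p^{*}_{\widehat{k+1}}$ is the induced product measure on $\prod_{\ell\neq k+1}\Omega_\ell$.

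First I would fix $\tilde\omega_{k+1}$. Then $\gamma^{(k+1)}_{0,\tilde\omega_{k+1}}$ is an ordinary (deterministic) density matrix of order $k+1$, so Theorem \ref{RandomizedEstimate1}, applied with $\omega=\omega_{k+1}$, gives
\begin{equation}
\notag
\big\|[B_{j,k+1}]^{\omega_{k+1}}\gamma^{(k+1)}_{0,\tilde\omega_{k+1}}\big\|_{L^2(\Omega_{k+1}) H^\alpha(\Lambda^k\times\Lambda^k)} \leq C_0 \big\|\gamma^{(k+1)}_{0,\tilde\omega_{k+1}}\big\|_{H^\alpha(\Lambda^{k+1}\times\Lambda^{k+1})}.
\end{equation}
Squaring, integrating in $\tilde\omega_{k+1}$ against $p^{*}_{\widehat{k+1}}$, applying Tonelli's theorem, and taking square roots yields \eqref{RandomizedEstimate2I}; here one uses that $[B_{j,k+1}]^{\omega_{k+1}}\gamma^{(k+1)}_{0,\tilde\omega_{k+1}}$, viewed as a function on $\Omega^{*}$, has $L^2(\Omega^{*})$-norm equal to the iterated integral in $\omega_{k+1}$ and then $\tilde\omega_{k+1}$. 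Estimate \eqref{RandomizedEstimate2II} then follows at once from \eqref{RandomizedEstimate2I}, the triangle inequality in $L^2(\Omega^{*}) H^\alpha$, and the identity $[B^{(k+1)}]^{\omega_{k+1}}=\sum_{j=1}^k [B_{j,k+1}]^{\omega_{k+1}}$.

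For \eqref{RandomizedEstimate2III}, unravel the definition \eqref{B_hat_omega_star}: the $k$-th component of $\big[\widehat B\,\big]^{\omega^{*}}\big[\tilde\Gamma_0\,\big]^{\omega^{*}}$ is $[B^{(k+1)}]^{\omega_{k+1}}\gamma^{(k+1)}_{0,\tilde\omega_{k+1}}$ (for every $k\geq 1$ its argument is one of the $\gamma^{(k+1)}_{0,\tilde\omega_{k+1}}$), and since $\gamma^{(k+1)}_{0,\tilde\omega_{k+1}}$ is independent of $\omega_{k+1}$, estimate \eqref{RandomizedEstimate2II} applies componentwise. Summing the weighted norms that define $\|\cdot\|_{L^2(\Omega^{*})\mathcal{H}^\alpha_{\xi_0}}$ and shifting the index $k\mapsto k+1$, one reduces to the term-by-term inequality $C_0 k\,\xi_0^{k}\leq C(\xi,\xi_0,\alpha)\,\xi^{k+1}$ for $k\geq 1$, i.e. $C_0 k\,(\xi_0/\xi)^{k}\leq C(\xi,\xi_0,\alpha)\,\xi$. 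This holds because $\xi_0<\xi$ forces $\xi_0/\xi\in(0,1)$, so $\big(k(\xi_0/\xi)^{k}\big)_{k\geq 1}$ is a bounded (indeed summable) sequence; taking $C(\xi,\xi_0,\alpha)=C_0\,\xi^{-1}\sup_{k\geq 1}k(\xi_0/\xi)^{k}$ gives the claimed $\lesssim_{\,\xi,\,\xi_0,\,\alpha}$ bound (the $\alpha$-dependence entering only through $C_0$).

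I expect the only genuine subtlety to be the measure-theoretic bookkeeping in the first step — justifying the disintegration $p^{*}=p_{k+1}\otimes p^{*}_{\widehat{k+1}}$ and the corresponding Fubini/Tonelli interchange on the infinite product space $\Omega^{*}$. This is precisely where Theorem \ref{Kakutani_Kolmogorov}, together with the hypothesis that $\gamma^{(k+1)}_{0,\tilde\omega_{k+1}}$ is genuinely a measurable function of $\tilde\omega_{k+1}$ alone, is needed; it parallels the Fubini argument already used in the proof of Theorem 5.2 of \cite{SoSt}. Everything else is a routine triangle-inequality-and-geometric-series computation.
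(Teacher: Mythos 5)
Your proposal is correct and follows essentially the same route as the paper: the paper likewise proves \eqref{RandomizedEstimate2I} by freezing the parameters $\omega_\ell$, $\ell\neq k+1$, applying the (proof of) Theorem \ref{RandomizedEstimate1} in the $\Omega_{k+1}$-variable alone, and then restoring the full $L^2(\Omega^{*})$ norm via the product structure of the probability measure, and it obtains \eqref{RandomizedEstimate2III} from \eqref{RandomizedEstimate2II} by exactly the weight comparison $k\,\xi_0^{k}\lesssim_{\,\xi,\,\xi_0}\xi^{k+1}$ followed by summation in $k$. Your treatment only adds slightly more explicit Fubini/Tonelli bookkeeping on $\Omega^{*}$, which is consistent with, not different from, the paper's argument.
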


\begin{proof}
We observe that \eqref{RandomizedEstimate2II} follows from \eqref{RandomizedEstimate2I} and from the fact that $[B^{(k+1)}]^{\omega_{k+1}}=\sum_{j=1}^{k}[B_{j,k+1}]^{\omega_{k+1}}$ by \eqref{Bomegak+1}. Let us now prove \eqref{RandomizedEstimate2I}. We note that:
$$\big\|S^{(k,\alpha)}[B_{j,k+1}]^{\omega_{k+1}}\gamma_{0,\tilde{\omega}_{k+1}}^{(k+1)}\big\|_{L^2(\Omega^{*} \times \Lambda^k \times \Lambda^k)}=
\big\|S^{(k,\alpha)}[B_{j,k+1}]^{\omega_{k+1}}\gamma_{0,\tilde{\omega}_{k+1}}^{(k+1)}\big\|_{L^2(\Omega_{k+1} \times \prod_{\ell \neq k+1} \Omega_{\ell} \times \Lambda^k \times \Lambda^k)}.$$
By the proof of Theorem \ref{RandomizedEstimate1}, which is given in \cite{SoSt}, it follows that we can treat all the $\omega_{\ell}$ for $\ell \neq k+1$ as fixed parameters. Hence, this expression is:
$$\leq C_0 \cdot \big\|S^{(k+1,\alpha)}\gamma_{0,\tilde{\omega}_{k+1}}^{(k+1)}\big\|_{L^2(\prod_{\ell \neq k+1}\Omega_{\ell} \times \Lambda^{k+1} \times \Lambda^{k+1})}.$$
By the property of probability measures, this quantity equals:
$$C_0 \cdot \big\|S^{(k+1,\alpha)}\gamma_{0,\tilde{\omega}_{k+1}}^{(k+1)}\big\|_{L^2(\Omega^{*} \times \Lambda^{k+1} \times \Lambda^{k+1})}=C_0 \cdot \big\|\gamma_{0,\tilde{\omega}_{k+1}}^{(k+1)}\big\|_{L^2(\Omega^{*}) H^{\alpha}(\Lambda^{k+1} \times \Lambda^{k+1})}.$$
The claims \eqref{RandomizedEstimate2I} and \eqref{RandomizedEstimate2II} now follow.

In order to prove \eqref{RandomizedEstimate2III}, we multiply both sides of \eqref{RandomizedEstimate2II} by $\xi_0^k$ and use the fact that $k \cdot \xi_0^{k} \lesssim_{\,\xi,\,\xi_0} \xi^{k+1}$, uniformly in $k$. The claim follows when we sum in $k$.
\end{proof}

\begin{remark}
 We note that the quantity $\big(\big[\tilde{\Gamma}_0\big]^{\omega^{*}}\big)^{(1)}$ does not appear on the left-hand side of \eqref{RandomizedEstimate2III}.
\end{remark}

\begin{remark}
In general, we note that if $\tilde{\omega}_{k+1}$ does not depend on $\omega_j$ for some $j \in \{2,3,\ldots,k+1\}$, then we can leave out the $L^2(\Omega_j)$ component of the norm on both sides of \eqref{RandomizedEstimate2I} and \eqref{RandomizedEstimate2II}. Namely, due to the fact that neither side depends on $\omega_j$ and since $\Omega_j$ has measure $1$ by the property of probability measures, the norms indeed do not change if we omit this factor. For future reference, we will keep the whole $\Omega^{*}$ product in order to simplify the notation and in order to work in the same space.
\end{remark}

\subsection{The truncated Gross-Pitaevskii hierarchy corresponding to \eqref{RandomizedGP1}}
\label{The truncated Gross-Pitaevskii hierarchy corresponding to RandomizedGP1}

Let us now consider the \emph{truncated Gross-Pitaevskii hierarchy corresponding to \eqref{RandomizedGP1}}. In particular, we fix $N \in \mathbb{N}$ and $\omega^{*} \in \Omega^{*}$ and we look for a  sequence $\big(\big[\gamma_N^{(k)}\big]^{\omega^{*}}\big)_k$, where each 
$\big[\gamma_N^{(k)}\big]^{\omega^{*}}$ is a time-dependent density matrix of order $k$, which is parametrized by $\omega^{*}=(\omega_2,\omega_3,\omega_4,\ldots) \in \Omega^{*}$.

%\begin{equation}
%\notag
%\big[\gamma_N^{(k)}\big]^{\omega^{*}}:
%\mathbb{R}_{t} \times \Omega^{*} \times \Lambda^{k} \times \Lambda^{k} \rightarrow \mathbb{C}
%\end{equation}
Moreover, we assume that for $k \in \mathbb{N}$ such that $k \leq N$:

\begin{equation}
\label{TruncatedRandomizedGP1}
\begin{cases}
i \partial_t \, \big[\gamma_N^{(k)}\big]^{\omega^{*}}+(\Delta_{\vec{x}_k}-\Delta_{\vec{x}'_k}) \, \big[\gamma_N^{(k)}\big]^{\omega^{*}}=\sum_{j=1}^{k} [B_{j,k+1}]^{\omega_{k+1}}\big(\big[\gamma_N^{(k+1)}\big]^{\omega^{*}}\big)\\
\big[\gamma_N^{(k)}\big]^{\omega^{*}} \big|_{t=0} = \gamma_0^{(k)}.
\end{cases}
\end{equation}
For $k>N$, we will impose the condition that:
\begin{equation}
\label{TruncatedRandomizedGP1assumption}
\big[\gamma_N^{(k)}\big]^{\omega^{*}} \equiv 0.
\end{equation}
As in \cite{CP4}, our goal now is to find $\big(\big[\gamma_N^{(k)}\big]^{\omega^{*}}\big)_k$, which satisfies \eqref{TruncatedRandomizedGP1} and \eqref{TruncatedRandomizedGP1assumption} without any statement about the uniqueness of these solutions. 

By construction, for $k=N$:
\begin{equation}
\notag
\begin{cases}
i \partial_t \, \big[\gamma_N^{(N)}\big]^{\omega^{*}} + (\Delta_{\vec{x}_N}-\Delta_{\vec{x}'_N}) \,\big[\gamma_N^{(N)}\big]^{\omega^{*}}=0\\
\big[\gamma_N^{(N)}\big]^{\omega^{*}}\big|_{t=0}=\gamma_0^{(N)}.
\end{cases}
\end{equation}
Hence, we can take:
$$\big[\gamma_N^{(N)}\big]^{\omega^{*}}(t)=\,\mathcal{U}^{(N)}(t) \, \gamma_0^{(N)}.$$
Let us note that the above expression does not depend on $\omega^{*}$ in the end. 

For $k<N$, we can find a solution $\big[\gamma_N^{(k)}\big]^{\omega^{*}}$ to \eqref{TruncatedRandomizedGP1} by Duhamel iteration. The formula that we obtain is:

\begin{equation}
\label{gammaNkomega}
\big[\gamma_N^{(k)}\big]^{\omega^{*}}(t)=\sum_{j=0}^{N-k} Duh_{\,j}^{\,\omega^{*}} (\Gamma(0))^{(k)}(t).
\end{equation}
The fact that $\big[\gamma_N^{(k)}\big]^{\omega^{*}}$, given by \eqref{gammaNkomega}, solves \eqref{TruncatedRandomizedGP1} follows by the definition of $Duh_{\,j}^{\,\omega^{*}}$ and by regressive induction on $k$.
Let us define $\big[\Gamma_N\,\big]^{\omega^{*}}=\Big(([\Gamma_N]^{\omega^{*}})^{(k)}\Big)_k$ by:
\begin{equation}
\label{GammaN}
\big(\big[\Gamma_N\,\big]^{\omega^{*}}\big)^{(k)}:=\big[\gamma_N^{(k)}\big]^{\omega^{*}}.
\end{equation}
We will henceforth use this notation.
Let us also write the \emph{truncated initial data} as follows:
\begin{equation}
\label{GammaN0}
\Gamma_N(0):=(\gamma_0^{(1)},\gamma_0^{(2)},\ldots,\gamma_0^{(N)},0,0, \ldots)=\mathbf{P}_{\leq N}\Gamma(0),
\end{equation}
for the projection operator $\mathbf{P}_{\leq N}$ defined in \eqref{P<=N}.

By construction, $\big[\Gamma_N\big]^{\omega^{*}}$ solves:

\begin{equation}
\label{GammaNomegastar}
\begin{cases}
i \partial_t \, \big[\Gamma_N \big]^{\omega^{*}} + \widehat{\Delta}_{\pm} \, \big[\Gamma_N \big]^{\omega^{*}} = \big[\widehat{B} \,\big]^{\omega^{*}} \big[\Gamma_N \big]^{\omega^{*}}\\
\big[\Gamma_N \big]^{\omega^{*}} \big|_{t=0}=\Gamma_N(0).
\end{cases}
\end{equation}
Here, we recall the definition of $\widehat{\Delta}_{\pm}$ from \eqref{Delta_plusminus} and the definition of $\big[\widehat{B}\,\big]^{\omega^{*}}$ from \eqref{B_hat_omega_star}.

So far, we have constructed a solution $\big[\Gamma_N\big]^{\omega^{*}}$ to \eqref{GammaNomegastar}. We would like to take the limit as $N \rightarrow \infty$. 

Let us first fix some notation, which we will use for the rest of this section unless it is specified otherwise:

We will fix:
\begin{equation}
\label{alpha}
\alpha>\frac{3}{4}.
\end{equation} 
Let us consider a time $T>0$ and the associated time interval $I=[0,T]$. For this $T$, we first choose $\xi'>0$ sufficiently large such that: 
\begin{equation}
\label{xi_prime}
\frac{C_1T}{\xi'}<1.
\end{equation} 
Having chosen $\xi'$, we choose $\xi \in (0,\xi')$ sufficiently small such that: 
\begin{equation}
\label{xi}
\frac{C_2 \,\xi}{\xi'}<1.
\end{equation}
Here, $C_1$ and $C_2$ are the constants from Proposition \ref{DuhamelEstimate1}.
Conversely, given $\xi,\xi'>0$ with $\xi \in (0,\xi')$, which satisfy \eqref{xi}, we can find $T>0$ which satisfies \eqref{xi_prime}. Unless it is otherwise noted, we will assume that $T,\xi,\xi'$ satisfy the above assumptions.

\subsection{A local-in-time result for initial data $\Gamma(0)$ of regularity $\alpha$}
\label{A local-in-time result for initial data of regularity alpha}

In this subsection, we will assume that $\Gamma(0) \in \mathcal{H}^{\alpha}_{\xi'}$, for $\alpha$ as in \eqref{alpha}.

Keeping in mind \eqref{xi_prime} and \eqref{xi}, we now prove the following result:

\begin{proposition}
\label{CauchySequence1}
The sequence $\big(\big[\widehat{B}\,\big]^{\omega^{*}} \big[\Gamma_N\big]^{\omega^{*}}\big)_N$ is Cauchy in $L^{\infty}_{t \in I} L^2(\Omega^{*}) \mathcal{H}^{\alpha}_{\xi}$. %Here, $\xi$ is chosen to satisfy condition \eqref{xi}.
\end{proposition}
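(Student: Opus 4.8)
The plan is to show that the sequence $\big(\big[\widehat{B}\,\big]^{\omega^{*}}\big[\Gamma_N\big]^{\omega^{*}}\big)_N$ is Cauchy in $L^{\infty}_{t \in I} L^2(\Omega^{*}) \mathcal{H}^{\alpha}_{\xi}$ by writing the difference of two terms explicitly as a sum of Duhamel expansions and applying Proposition \ref{DuhamelEstimate1}. Fix $M > N$. From \eqref{gammaNkomega} and \eqref{GammaN}, for each $k \le N$ we have
\begin{equation}
\notag
\big(\big[\Gamma_M\big]^{\omega^{*}}\big)^{(k)}(t) - \big(\big[\Gamma_N\big]^{\omega^{*}}\big)^{(k)}(t) = \sum_{j=N-k+1}^{M-k} Duh_{\,j}^{\,\omega^{*}}(\Gamma(0))^{(k)}(t),
\end{equation}
while for $N < k \le M$ the difference is $\big(\big[\Gamma_M\big]^{\omega^{*}}\big)^{(k)}(t) = \sum_{j=0}^{M-k} Duh_{\,j}^{\,\omega^{*}}(\Gamma(0))^{(k)}(t)$, and for $k > M$ it is zero. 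Applying $\big[\widehat{B}\,\big]^{\omega^{*}}$ (which sends the $(k+1)$st component to the $k$th) shifts indices by one, so in all cases the $k$th component of $\big[\widehat{B}\,\big]^{\omega^{*}}\big[\Gamma_M\big]^{\omega^{*}} - \big[\widehat{B}\,\big]^{\omega^{*}}\big[\Gamma_N\big]^{\omega^{*}}$ is a sum of terms of the form $\big[B^{(k+1)}\big]^{\omega_{k+1}} Duh_{\,j}^{\,\omega^{*}}(\Gamma(0))^{(k+1)}(t)$ over a range of $j$ that is empty unless $j \ge N-k$.

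The key step is to bound each such term. We have $\big[B^{(k+1)}\big]^{\omega_{k+1}} Duh_{\,j}^{\,\omega^{*}}(\Gamma(0))^{(k+1)}(t) = -i\, Duh_{\,j+1}^{\,\omega^{*}}(\Gamma(0))^{(k)}(t)$ (up to the outermost time integral), or more directly we just observe that applying one more collision operator and using Theorem \ref{RandomizedEstimate1} in the parameter $\omega_{k+1}$ — which is legitimate since the relevant Duhamel term depends on $\omega_{k+1}$ only through the outermost collision operator being absent, i.e. it depends only on $\omega_{k+2},\omega_{k+3},\dots$ — combined with Proposition \ref{DuhamelEstimate1} gives a bound of the shape $(C_1 T)^{j}\, C_2^{k+1}\, k \cdot \|\gamma_0^{(k+1+j)}\|_{H^{\alpha}}$, after including the extra factor of $C_0 \cdot (k+1)$ from the collision operator (absorbed into the constants). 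Thus, multiplying by $\xi^k$ and summing over $k$, the $L^{\infty}_{t\in I}L^2(\Omega^{*})\mathcal{H}^{\alpha}_{\xi}$ norm of the difference is bounded by
\begin{equation}
\notag
\sum_{k \in \mathbb{N}} \xi^k \sum_{\substack{j \ge \max(N-k,\,0)}} C \, (C_1 T)^{j}\, C_2^{k}\, \big\|\gamma_0^{(k+1+j)}\big\|_{H^{\alpha}(\Lambda^{k+1+j}\times\Lambda^{k+1+j})},
\end{equation}
up to harmless constants. Setting $\ell = k+1+j$ and using $\Gamma(0) \in \mathcal{H}^{\alpha}_{\xi'}$, i.e. $\|\gamma_0^{(\ell)}\|_{H^{\alpha}} \le (\xi')^{-\ell}\|\Gamma(0)\|_{\mathcal{H}^{\alpha}_{\xi'}}$, the double sum is dominated by a constant times
\begin{equation}
\notag
\|\Gamma(0)\|_{\mathcal{H}^{\alpha}_{\xi'}} \sum_{k} \Big(\frac{C_2\xi}{\xi'}\Big)^{k} \sum_{j \ge N-k} \Big(\frac{C_1 T}{\xi'}\Big)^{j},
\end{equation}
and here conditions \eqref{xi_prime} and \eqref{xi} make both geometric series summable; because the inner sum starts at $j = N-k$, the whole expression is bounded by a constant times $\|\Gamma(0)\|_{\mathcal{H}^{\alpha}_{\xi'}}\, r^{N}$ for some $r<1$ (roughly $\max(C_1T/\xi',\,C_2\xi/\xi')$), which tends to $0$ as $N\to\infty$ uniformly in $M>N$. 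This gives the Cauchy property.

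The main obstacle, and the point requiring care, is the justification that Theorem \ref{RandomizedEstimate1} (equivalently the relevant case of Proposition \ref{RandomizedEstimate2}) may be applied to the term $\big[B^{(k+1)}\big]^{\omega_{k+1}} Duh_{\,j}^{\,\omega^{*}}(\Gamma(0))^{(k+1)}$: one must check that $Duh_{\,j}^{\,\omega^{*}}(\Gamma(0))^{(k+1)}$ depends only on the parameters $\omega_{k+2},\dots,\omega_{k+1+j}$ and is therefore independent of $\omega_{k+1}$, so that the randomized estimate applies cleanly in the single parameter $\omega_{k+1}$ with the others held fixed (this is exactly the structure exploited in Proposition \ref{RandomizedEstimate2} and in the proof of Theorem 5.2 of \cite{SoSt}, invoked via Fubini). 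Equivalently, one may simply note $\big[B^{(k+1)}\big]^{\omega_{k+1}} Duh_{\,j}^{\,\omega^{*}}(\Gamma(0))^{(k+1)}(t) = i\partial$-free part of $Duh_{\,j+1}^{\,\omega^{*}}(\Gamma(0))^{(k)}(t)$ and bound it directly by the same argument as in Proposition \ref{DuhamelEstimate1}, which sidesteps the independence bookkeeping entirely; I would present this route as the cleanest. The remaining steps — interchanging sums and integrals, taking suprema in $t$, and the geometric-series estimates — are routine given \eqref{xi_prime}, \eqref{xi}, and Stirling's formula as already used in Proposition \ref{DuhamelEstimate1}.
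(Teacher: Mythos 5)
Your argument is correct and is essentially the paper's proof: the same cancellation of the common initial-data components leaves only the Duhamel terms with $k+j+1>N$, each of which is estimated by applying the randomized collision bound in the single parameter $\omega_{k+1}$ (legitimate precisely because $Duh_{\,j}^{\,\omega^{*}}(\Gamma(0))^{(k+1)}$ depends only on $\omega_{k+2},\ldots,\omega_{k+j+1}$, i.e. Proposition \ref{RandomizedEstimate2}) together with Proposition \ref{DuhamelEstimate1}, and then summed via the geometric series permitted by \eqref{xi_prime} and \eqref{xi}. The only differences are cosmetic: the paper extracts smallness as $\big\|\mathbf{P}_{>N}\Gamma(0)\big\|_{\mathcal{H}^{\alpha}_{\xi'}}\rightarrow 0$ rather than your geometric factor $r^{N}$ (both are valid, and the factor $k$ you absorb into constants is harmless since $\sum_k k\rho^k<\infty$ for $\rho<1$), and your ``direct'' route through one longer expansion does not really sidestep the independence bookkeeping, since the iterative use of Theorem \ref{RandomizedEstimate1} inside the proof of Proposition \ref{DuhamelEstimate1} rests on exactly that Fubini/independence structure.
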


\begin{proof}
Let us fix $t \in I$ and let us also take $N_1, N_2 \in \mathbb{N}$ such that $N_1<N_2$. For $k \in \mathbb{N}$, we note that:

$$\Big(\big[\widehat{B}\,\big]^{\omega^{*}} \big(\big[\Gamma_{N_1}\big]^{\omega^{*}}-\big[\Gamma_{N_2}\big]^{\omega^{*}}\big)\Big)^{(k)}(t)=$$
$$=\big[B^{(k+1)}\big]^{\omega_{k+1}} \Big(\big(\big[\Gamma_{N_1}\big]^{\omega^{*}}\big)^{(k+1)}-\big(\big[\Gamma_{N_2}\big]^{\omega^{*}}\big)^{(k+1)}\Big)(t)$$
$$=\big[B^{(k+1)}\big]^{\omega_{k+1}} \big(\sum_{j=0}^{N_1-k-1} Duh_{\,j}^{\,\omega^{\,*}}(\Gamma_{N_1}(0))^{(k+1)}(t)-\sum_{j=0}^{N_2-k-1} Duh_{\,j}^{\,\omega^{\,*}}(\Gamma_{N_2}(0))^{(k+1)}(t)\big)$$
$$=\big[B^{(k+1)}\big]^{\omega_{k+1}} \big(\sum_{j=0}^{N_1-k-1} Duh_{\,j}^{\,\omega^{*}}(\Gamma_{N_1}(0)-\Gamma_{N_2}(0))^{(k+1)}(t)-\sum_{j=N_1-k}^{N_2-k-1}Duh_{\,j}^{\,\omega^{*}}(\Gamma_{N_2}(0))^{(k+1)}(t)\big)$$
By construction, $(\Gamma_{N_1}(0))^{(\ell)}=(\Gamma_{N_2}(0))^{(\ell)}$ for all $\ell \leq N_1$ and so 
$$Duh_{\,j}^{\,\omega^{*}} (\Gamma_{N_1}(0))^{(k+1)}(t)=Duh_{\,j}^{\,\omega^{*}} (\Gamma_{N_2}(0))^{(k+1)}(t)$$ 
for all $j=0,\ldots,N_1-k-1$. It follows that the first sum above equals zero. Hence:
\begin{equation}
\label{BhatGammaNdifference}
\Big(\big[\widehat{B}\,\big]^{\omega^{*}} \big(\big[\Gamma_{N_1}\big]^{\omega^{*}}-\big[\Gamma_{N_2}\big]^{\omega^{*}}\big)\Big)^{(k)}(t)=
\end{equation}
\begin{equation}
\notag
-\sum_{j=N_1-k}^{N_2-k-1}\big[B^{(k+1)}\big]^{\omega_{k+1}} Duh_{\,j}^{\,\omega^{*}}(\Gamma_{N_2}(0))^{(k+1)}(t).
\end{equation}
Consequently:
$$\Big\|\Big(\big[\widehat{B}\,\big]^{\omega^{*}} \big(\big[\Gamma_{N_1}\big]^{\omega^{*}}-\big[\Gamma_{N_2}\big]^{\omega^{*}}\big)\Big)^{(k)}(t)\Big\|_{L^2(\Omega^{*})H^{\alpha}(\Lambda^k \times \Lambda^k)}$$
$$\leq \sum_{j=N_1-k}^{N_2-k-1}  \Big\|\big[B^{(k+1)}\big]^{\omega_{k+1}} Duh_{\,j}^{\,\omega^{*}}(\Gamma_{N_2}(0))^{(k+1)}(t)\Big\|_{L^2(\Omega^{*})H^{\alpha}(\Lambda^k \times \Lambda^k)}.$$
We recall the definition of $Duh_{\,j}^{\,\omega^{*}}$ in \eqref{Duhamel_0} and \eqref{Duhamel_j} in order to deduce that, in the above sum, the terms $Duh_{\,j}^{\,\omega^{*}}(\Gamma_{N_2}(0))^{(k+1)}(t)$ do not depend on $\omega_{k+1}$. 
More precisely, if $j=0$, $Duh_{\,j}^{\,\omega^{*}}(\Gamma_{N_2}(0))^{(k+1)}(t)$ does not depend on any random parameters, and if $j \geq 1$, $Duh_{\,j}^{\,\omega^{*}}(\Gamma_{N_2}(0))^{(k+1)}(t)$ depends only on the random parameters $\omega_{k+2},\omega_{k+3},\ldots,\omega_{k+j+1}$. 
In either case, there is no dependence on $\omega_{k+1}$. In particular, we can use Proposition \ref{RandomizedEstimate2} %and the fact that $[B_{n+1}]^{\omega_{n+1}}=\sum_{j=1}^{n}[B_{j,n+1}]^{\omega_{n+1}}$
to deduce that the above sum is:
$$\leq C_0 k \cdot \sum_{j=N_1-k}^{N_2-k-1} \big\|Duh_{\,j}^{\,\omega^{*}}(\Gamma_{N_2}(0))^{(k+1)}(t)\big\|_{L^2(\Omega^{*})H^{\alpha}(\Lambda^{k+1} \times \Lambda^{k+1})}.$$
By Proposition \ref{DuhamelEstimate1}, this sum is:
$$\leq C_0 k \cdot \sum_{j=N_1-k}^{N_2-k-1} (C_1T)^j \cdot C_2^{k+1} \big\|\gamma_0^{(k+j+1)}\big\|_{H^{\alpha}(\Lambda^{k+j+1} \times \Lambda^{k+j+1})}.$$
As a result, we may deduce that:
$$\sum_{k=1}^{\infty} \xi^k \cdot \big\| \big(\big[\widehat{B}\,\big]^{\omega^{*}} \big(\big[\Gamma_{N_1}\big]^{\omega^{*}}-\big[\Gamma_{N_2}\big]^{\omega^{*}})\big)^{(k)}(t)\big\|_{L^2(\Omega^{*})H^{\alpha}(\Lambda^k \times \Lambda^k)}$$
$$\leq \frac{C_0C_2}{\xi'} \cdot \sum_{k=1}^{\infty} \sum_{j=N_1-k}^{N_2-k-1} \Big(\frac{C_1T}{\xi'}\Big)^j \cdot k \cdot \Big(\frac{C_2 \, \xi}{\xi'}\Big)^k \cdot (\xi')^{k+j+1} \cdot \big\|\gamma_0^{(k+j+1)}\big\|_{H^{\alpha}(\Lambda^{k+j+1} \times \Lambda^{k+j+1})}.
$$
Since in the above sum, $k+j+1>N_1$, it follows that:
$$\Big\|\big[\widehat{B}\,\big]^{\omega^{*}} \big(\big[\Gamma_{N_1}\big]^{\omega^{*}}-\big[\Gamma_{N_2}\big]^{\omega^{*}}\big)(t)\Big\|_{L^2(\Omega^{*}) \mathcal{H}^{\alpha}_{\xi}}$$
$$\leq \frac{C_0C_2}{\xi'} \cdot \sum_{j=N_1-k}^{N_2-k-1} \Big(\frac{C_1T}{\xi'}\Big)^j \cdot \sum_{k=1}^{\infty} k \cdot \Big(\frac{C_2 \, \xi}{\xi'}\Big)^k \cdot \big\|\mathbf{P}_{>N_1} \Gamma(0)\big\|_{\mathcal{H}^{\alpha}_{\xi'}}.$$
Here, we used the definition of the norm $L^2(\Omega^{*})\mathcal{H}^{\alpha}_{\xi'}$ in \eqref{Space2} and of the operator $\mathbf{P}_{>N_1}$ in \eqref{PgreaterN}.
Since, by the assumptions \eqref{xi_prime} and \eqref{xi} we know that $\frac{C_1T}{\xi'}, \frac{C_2\,\xi}{\xi'} \in (0,1)$, it follows that:
\begin{equation}
\label{CauchySequence1bound}
\big\|\big[\widehat{B}\,\big]^{\omega^{*}}\big(\big[\Gamma_{N_1}\big]^{\omega^{*}}-\big[\Gamma_{N_2}\big]^{\omega^{*}}\big)(t)\big\|_{L^{\infty}_{t \in I}L^2(\Omega^{*})\mathcal{H}^{\alpha}_{\xi}} \lesssim_{\,T,\,\xi,\,\xi',\,\alpha} \big\|\mathbf{P}_{>N_1} \Gamma(0)\big\|_{\mathcal{H}^{\alpha}_{\xi'}}.
\end{equation}
Since $\Gamma(0) \in \mathcal{H}^{\alpha}_{\xi'}$ by assumption, it follows that $\big\|\mathbf{P}_{>N_1} \Gamma(0)\big\|_{\mathcal{H}^{\alpha}_{\xi'}} \rightarrow 0$ as $N_1 \rightarrow \infty$. We can now deduce the claim. 
\end{proof}

\begin{remark}
\label{BN1}
Let us observe that same methods used to prove \eqref{CauchySequence1bound} allow us to deduce the following bound:
\begin{equation}
\label{BN1bound}
\big\|\big[\widehat{B}\,\big]^{\omega^{*}}\big[\Gamma_{N_1}\big]^{\omega^{*}}(t)\big\|_{L^{\infty}_{t \in I} L^2(\Omega^{*}) \mathcal{H}^{\alpha}_{\xi}} \lesssim_{\,T,\,\xi,\,\xi',\,\alpha} \big\|\Gamma(0)\big\|_{\mathcal{H}^{\alpha}_{\xi'}}
\end{equation}
uniformly in $N_1$.
\end{remark}

We can now use Proposition \ref{CauchySequence1} in order to prove the following result:

\begin{proposition}
\label{limit2}
There exists $\theta^{\,\omega^{*}} \in L^{\infty}_{t \in I} L^2(\Omega^{*}) \mathcal{H}^{\alpha}_{\xi}$ such that $\big[\widehat{B}\,\big]^{\omega^{*}} \big[\Gamma_N \big]^{\omega^{*}} \rightarrow \theta^{\,\omega^{*}}$ strongly in $L^{\infty}_{t \in I} L^2(\Omega^{*}) \mathcal{H}^{\alpha}_{\xi}$ as $N \rightarrow \infty$. In addition, $\theta^{\,\omega^{*}}$ has the property that, for all $k \geq 2$, $\big(\theta^{\,\omega^{*}}\big)^{(k)}$ does not depend on $\omega_2, \omega_3, \ldots, \omega_k$.
\end{proposition}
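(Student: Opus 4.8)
The plan is to produce $\theta^{\,\omega^{*}}$ as a strong limit and then transfer the structural property from the terms of the sequence to the limit. First I would invoke Proposition \ref{CauchySequence1}, which gives that $\big(\big[\widehat{B}\,\big]^{\omega^{*}}\big[\Gamma_N\big]^{\omega^{*}}\big)_N$ is Cauchy in $L^{\infty}_{t \in I} L^2(\Omega^{*}) \mathcal{H}^{\alpha}_{\xi}$. Since this space is a Banach space (as recorded after \eqref{Space3C}), the sequence converges strongly to some $\theta^{\,\omega^{*}} \in L^{\infty}_{t \in I} L^2(\Omega^{*}) \mathcal{H}^{\alpha}_{\xi}$, which is the first assertion.

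Next I would identify, for each fixed $k \geq 2$, precisely which random parameters occur in $\big(\big[\widehat{B}\,\big]^{\omega^{*}}\big[\Gamma_N\big]^{\omega^{*}}\big)^{(k)}(t)$. If $N \leq k$ this component vanishes by \eqref{TruncatedRandomizedGP1assumption}. If $N \geq k+1$, then by \eqref{gammaNkomega}, \eqref{B_hat_omega_star}, and \eqref{Bomegak+1},
\begin{equation}
\notag
\big(\big[\widehat{B}\,\big]^{\omega^{*}}\big[\Gamma_N\big]^{\omega^{*}}\big)^{(k)}(t) = \big[B^{(k+1)}\big]^{\omega_{k+1}} \sum_{j=0}^{N-k-1} Duh_{\,j}^{\,\omega^{*}}(\Gamma(0))^{(k+1)}(t),
\end{equation}
and from \eqref{Duhamel_0}, \eqref{Duhamel_j} the term $Duh_{\,0}^{\,\omega^{*}}(\Gamma(0))^{(k+1)}(t)$ is deterministic while $Duh_{\,j}^{\,\omega^{*}}(\Gamma(0))^{(k+1)}(t)$ for $j\geq 1$ depends only on $\omega_{k+2}, \ldots, \omega_{k+j+1}$ — exactly the bookkeeping already used in the proof of Proposition \ref{CauchySequence1}. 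Applying $\big[B^{(k+1)}\big]^{\omega_{k+1}}$ introduces a dependence on $\omega_{k+1}$ and on nothing else. Hence, for every $N$ and every $t$, the density matrix $\big(\big[\widehat{B}\,\big]^{\omega^{*}}\big[\Gamma_N\big]^{\omega^{*}}\big)^{(k)}(t)$ is, as a function of $\omega^{*}$, measurable with respect to the sub-$\sigma$-algebra $\mathcal{F}_k \subseteq \mathcal{B}$ generated by $\{\pi_j : j \geq k+1\}$; in particular it does not depend on $\omega_2, \ldots, \omega_k$.

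Finally I would pass this to the limit. The set $V_k$ of density matrices in $L^2(\Omega^{*}) H^{\alpha}(\Lambda^k \times \Lambda^k)$ which are $\mathcal{F}_k$-measurable in the $\Omega^{*}$ variable is a closed linear subspace (it is the image of the orthogonal projection obtained by taking conditional expectation with respect to $\mathcal{F}_k$ in the $\Omega^{*}$ variable, a contraction on $L^2(\Omega^{*})$). Since strong convergence in $L^{\infty}_{t \in I} L^2(\Omega^{*}) \mathcal{H}^{\alpha}_{\xi}$ forces, for a.e. $t \in I$ and each fixed $k$, that $\xi^k$ times the $L^2(\Omega^{*}) H^{\alpha}(\Lambda^k \times \Lambda^k)$-norm of the difference of the $k$-th components tends to $0$, and since every term of the sequence lies in $V_k$, the limit $\big(\theta^{\,\omega^{*}}\big)^{(k)}(t)$ lies in $V_k$ as well; thus $\big(\theta^{\,\omega^{*}}\big)^{(k)}$ does not depend on $\omega_2, \ldots, \omega_k$. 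The argument is soft once Proposition \ref{CauchySequence1} is in hand; the only delicate point is the last step, where one must phrase ``independence of $\omega_2,\ldots,\omega_k$'' as membership in a closed subspace so that it survives the strong limit — and this is exactly the structural fact that later makes it legitimate to apply Proposition \ref{RandomizedEstimate2} to $\theta^{\,\omega^{*}}$.
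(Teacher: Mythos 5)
Your proposal is correct and follows essentially the same route as the paper: obtain $\theta^{\,\omega^{*}}$ from the Cauchy property of Proposition \ref{CauchySequence1} and completeness of $L^{\infty}_{t \in I} L^2(\Omega^{*}) \mathcal{H}^{\alpha}_{\xi}$, then read off from the explicit Duhamel expansion that $\big(\big[\widehat{B}\,\big]^{\omega^{*}}\big[\Gamma_N\big]^{\omega^{*}}\big)^{(k)}$ depends only on $\omega_{k+1},\ldots,\omega_N$, and pass this to the limit componentwise. The only difference is that you justify the final limit step explicitly, by phrasing independence as membership in the closed subspace of $\mathcal{F}_k$-measurable elements (via conditional expectation), whereas the paper simply asserts that the property survives the strong convergence; this is a welcome clarification but not a different argument.
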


\begin{proof}
We observe that $L^{\infty}_{t \in I} L^2(\Omega^{*}) \mathcal{H}^{\alpha}_{\xi}$ is a Banach space. Hence, by Proposition \ref{CauchySequence1}, it follows that there exists $\theta^{\,\omega^{*}} \in L^{\infty}_{t \in I} L^2(\Omega^{*}) \mathcal{H}^{\alpha}_{\xi}$ such that:

\begin{equation}
\label{limit1}
\big[\widehat{B}\,\big]^{\omega^{*}} \big[\Gamma_N\big]^{\omega^{*}} \rightarrow \theta^{\,\omega^{*}}
\end{equation}
strongly in $L^{\infty}_{t \in I} L^2(\Omega^{*}) \mathcal{H}^{\alpha}_{\xi}$ as $N \rightarrow \infty.$ 

We now fix $k \in \mathbb{N}$, and we now study the dependence on the random parameters $\omega_j$ of the density matrix $(\theta^{\,\omega^{*}})^{(k)}$. In particular, we note that, by construction:

\begin{equation}
\label{B_GammaN_omegastar}
\Big(\big[\widehat{B}\,\big]^{\omega^{*}} \big[\Gamma_N\big]^{\omega^{*}}\Big)^{(k)}(t)=\big[B^{(k+1)}\big]^{\omega_{k+1}} \Big(\big[\Gamma_N\big]^{\omega^{*}}\Big)^{(k+1)}(t)
\end{equation}
$$=\sum_{j=0}^{N-k-1} \big[B^{(k+1)}\big]^{\omega_{k+1}} Duh_{\,j}^{\,\omega^{*}} (\Gamma(0))^{(k+1)}(t)$$
$$=\big[B_{k+1}\big]^{\omega_{k+1}}\,\mathcal{U}^{(k+1)}(t)\, \gamma_0^{(k+1)} -i \int_{0}^{t} \big[B^{(k+1)}\big]^{\omega_{k+1}}
\,\mathcal{U}^{(k+1)}\,(t-t_1) \big[B^{(k+2)}\big]^{\omega_{k+2}} \,\mathcal{U}^{(k+2)}(t_1)\, \gamma_0^{(k+2)}\, dt_1+\cdots
$$
$$\cdots+(-i)^{N-k-1} \int_{0}^{t} \int_{0}^{t_1} \cdots \int_{0}^{t_{N-k-2}}
\big[B^{(k+1)}\big]^{\omega_{k+1}}\,\mathcal{U}^{(k+1)}(t-t_1)\,\big[B^{(k+2)}\big]^{\omega_{k+2}} \, \mathcal{U}^{(k+2)}(t_1-t_2)
\,\big[B^{(k+3)}\big]^{\omega_{k+3}}\,\cdots$$ 
$$\cdots \big[B^{(N)}\big]^{\omega_{N}} \, \mathcal{U}^{(N)}(t_{N-k-1}) \,\gamma_0^{(N)}\,dt_{N-k-1} \cdots dt_2 \, dt_1.
$$
We note that the above expression depends only on the random parameters $\omega_{k+1},\omega_{k+2},\ldots,\omega_{N}$.
In particular, if $k \geq 2$, then it does not depend on $\omega_2, \omega_3, \ldots, \omega_k$.

Let us observe that, from \eqref{limit1}, it follows that:

$$\Big(\big[\widehat{B}\,\big]^{\omega^{*}} \big[\Gamma_N\big]^{\omega^*}\Big)^{(k)} \rightarrow \Big(\theta^{\,\omega^{*}}\Big)^{(k)}$$
strongly in $L^{\infty}_{t \in I}L^2(\Omega^{*})H^{\alpha}(\Lambda^k \times \Lambda^k)$ as $N \rightarrow \infty$.
From this convergence, we may deduce that whenever $k \geq 2$, $\big(\theta^{\,\omega^{*}}\big)^{(k)}$ does not depend on $\omega_2,\omega_3, \ldots, \omega_k$. The claim now follows.
\end{proof}

\begin{remark}
\label{theta_omega_star_boundA}
If we use \eqref{BN1bound} from Remark \ref{BN1} and Proposition \ref{limit2}, and if we let $N_1 \rightarrow \infty$, it follows that:
\begin{equation}
\label{theta_omega_star_bound1}
\big\|\theta^{\,\omega^{*}}\big\|_{L^{\infty}_{t \in I} L^2(\Omega^{*}) \mathcal{H}^{\alpha}_{\xi}} \lesssim_{\,T,\,\xi,\,\xi',\,\alpha} \big\|\Gamma(0)\big\|_{\mathcal{H}^{\alpha}_{\xi'}}.
\end{equation}
\end{remark}
%We can summarize the above discussion as follows:

Since $\theta^{\,\omega^{*}}$ constructed above is only known to be an element of $L^{\infty}_{t \in I} L^2(\Omega^{*}) \mathcal{H}^{\alpha}_{\xi}$, it is defined for \emph{almost every value of $t \in I$}. This is in contrast to \eqref{gammaNkomega}, where the functions were defined for all $t \in I$ by construction. In the discussion that follows, we will hence work with identities in the space $L^{\infty}_{t \in I} L^2(\Omega^{*}) \mathcal{H}^{\alpha}_{\xi}$ and in similar spaces, where the functions are defined for almost all values of $t \in I$. For a discussion about which additional assumptions we need to add in order to work with pointwise values for all $t \in I$, we refer the reader to Subsection \ref{additional_regularity}.

Let us now apply the strategy from \cite{CP4} and prove that the $\theta^{\,\omega^{*}}$ constructed in Proposition \ref{limit2} satisfies the following property:

\begin{proposition}
\label{theta_omega_star_equation}
For $\theta^{\,\omega^{*}}$ from Proposition \ref{limit2} and for all $\xi_0>0$, the following identity holds:
$$\big\|\theta^{\,\omega^{*}}-\big[\widehat{B}\,\big]^{\omega^{*}}\,\mathcal{U}(t)\,\Gamma(0)+i\int_{0}^{t} \big[\widehat{B}\,\big]^{\omega^{*}}\,\mathcal{U}(t-s)\,\theta^{\,\omega^{*}}(s)\,ds \,\big\|_{L^{\infty}_{t \in I} L^2(\Omega^{*}) \mathcal{H}^{\alpha}_{\xi_0}}=0.$$
\end{proposition}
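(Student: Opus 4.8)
The plan is to apply $\big[\widehat{B}\,\big]^{\omega^{*}}$ to the Duhamel formula for the truncated hierarchy and then pass to the limit $N\to\infty$, using Proposition \ref{limit2} to identify the limit and Proposition \ref{RandomizedEstimate2} to push $\big[\widehat{B}\,\big]^{\omega^{*}}$ through that limit. First I would record that, since $\big[\Gamma_N\big]^{\omega^{*}}$ is the explicit finite sum of Duhamel terms \eqref{gammaNkomega} solving \eqref{GammaNomegastar}, with $\big[\gamma_N^{(k)}\big]^{\omega^{*}}\equiv 0$ for $k>N$ (so the collision term at level $N$ vanishes), Duhamel's principle applied level by level gives, for all $t\in I$,
$$\big[\Gamma_N\big]^{\omega^{*}}(t)=\mathcal{U}(t)\,\Gamma_N(0)-i\int_0^t \mathcal{U}(t-s)\,\big(\big[\widehat{B}\,\big]^{\omega^{*}}\big[\Gamma_N\big]^{\omega^{*}}(s)\big)\,ds,$$
where $\Gamma_N(0)=\mathbf{P}_{\leq N}\Gamma(0)$ by \eqref{GammaN0}. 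Applying the bounded linear operator $\big[\widehat{B}\,\big]^{\omega^{*}}$ to both sides (and commuting it with the Bochner integral in $s$) yields
$$\big[\widehat{B}\,\big]^{\omega^{*}}\big[\Gamma_N\big]^{\omega^{*}}(t)=\big[\widehat{B}\,\big]^{\omega^{*}}\mathcal{U}(t)\,\Gamma_N(0)-i\int_0^t \big[\widehat{B}\,\big]^{\omega^{*}}\mathcal{U}(t-s)\,\big(\big[\widehat{B}\,\big]^{\omega^{*}}\big[\Gamma_N\big]^{\omega^{*}}(s)\big)\,ds.$$

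I would then let $N\to\infty$ in $L^{\infty}_{t\in I}L^2(\Omega^{*})\mathcal{H}^{\alpha}_{\xi_0}$ for a fixed $\xi_0\in(0,\xi)$. The left-hand side converges to $\theta^{\,\omega^{*}}$ by Proposition \ref{limit2} (strong convergence in $\mathcal{H}^{\alpha}_{\xi}$ implies it in the smaller space $\mathcal{H}^{\alpha}_{\xi_0}$). For the first term on the right, $\Gamma_N(0)=\mathbf{P}_{\leq N}\Gamma(0)\to\Gamma(0)$ in $\mathcal{H}^{\alpha}_{\xi'}$, and since each $\gamma_0^{(k+1)}$ is deterministic, Theorem \ref{RandomizedEstimate1}, the unitarity of $\mathcal{U}^{(k+1)}(t)$ on $H^{\alpha}$, and the summation bound $k\,\xi_0^k\lesssim(\xi')^{k+1}$ (as used in the proof of \eqref{RandomizedEstimate2III}) show that $\Sigma_0\mapsto\big[\widehat{B}\,\big]^{\omega^{*}}\mathcal{U}(t)\Sigma_0$ is bounded from $\mathcal{H}^{\alpha}_{\xi'}$ into $L^{\infty}_{t\in I}L^2(\Omega^{*})\mathcal{H}^{\alpha}_{\xi_0}$, so this term converges to $\big[\widehat{B}\,\big]^{\omega^{*}}\mathcal{U}(t)\Gamma(0)$.

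The integral term is the main point, and it is where the probabilistic bookkeeping is essential: one \emph{cannot} estimate $\big[\widehat{B}\,\big]^{\omega^{*}}$ applied to $\mathcal{U}(t-s)$ of a generic $\omega^{*}$-dependent sequence (cf. Example 1 in \cite{SoSt}). However, by the structure established in the proof of Proposition \ref{limit2} (see \eqref{B_GammaN_omegastar}), the level-$(k+1)$ component of $\big[\widehat{B}\,\big]^{\omega^{*}}\big[\Gamma_N\big]^{\omega^{*}}$ depends only on $\omega_{k+2},\ldots,\omega_N$, and the level-$(k+1)$ component of $\theta^{\,\omega^{*}}$ depends only on $\omega_{k+2},\omega_{k+3},\ldots$; in both cases it is independent of $\omega_{k+1}$, and the deterministic evolution $\mathcal{U}(t-s)$ preserves this independence. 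Hence the hypotheses of Proposition \ref{RandomizedEstimate2} are met at every level, and by \eqref{RandomizedEstimate2III} (together with the remark that dropping irrelevant $\Omega_j$-factors leaves the norms unchanged) the map $\Sigma\mapsto\big[\widehat{B}\,\big]^{\omega^{*}}\mathcal{U}(t-s)\Sigma$ is bounded from the subspace of such sequences in $L^2(\Omega^{*})\mathcal{H}^{\alpha}_{\xi}$ into $L^2(\Omega^{*})\mathcal{H}^{\alpha}_{\xi_0}$, uniformly in $t,s$. Since $\big[\widehat{B}\,\big]^{\omega^{*}}\big[\Gamma_N\big]^{\omega^{*}}\to\theta^{\,\omega^{*}}$ strongly in $L^{\infty}_{t\in I}L^2(\Omega^{*})\mathcal{H}^{\alpha}_{\xi}$ with all objects in this independence subspace, and $|t|\leq T$, the dominated convergence theorem lets us pass the limit under the integral, giving convergence of the integral term to $\int_0^t\big[\widehat{B}\,\big]^{\omega^{*}}\mathcal{U}(t-s)\,\theta^{\,\omega^{*}}(s)\,ds$.

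Combining the three limits gives the stated identity in $L^{\infty}_{t\in I}L^2(\Omega^{*})\mathcal{H}^{\alpha}_{\xi_0}$ for every $\xi_0\in(0,\xi)$. For such $\xi_0$ the displayed difference is therefore the zero element of $\mathcal{H}^{\alpha}_{\xi_0}$ for a.e. $t$, which forces each of its density-matrix components to vanish in $H^{\alpha}$ for a.e. $t$; consequently its $\mathcal{H}^{\alpha}_{\xi_0}$-norm is zero for \emph{every} $\xi_0>0$, which is the claim. The main obstacle, as indicated above, is precisely the middle step --- justifying that $\big[\widehat{B}\,\big]^{\omega^{*}}$ may be applied to $\theta^{\,\omega^{*}}$ and to $\big[\widehat{B}\,\big]^{\omega^{*}}\big[\Gamma_N\big]^{\omega^{*}}$ and interchanged with the $N\to\infty$ limit --- which works only because we have kept precise track of which randomization parameters occur in each component.
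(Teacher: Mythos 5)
Your proposal is correct and follows essentially the same route as the paper: passing to the limit in the Duhamel identity for the truncated hierarchy is exactly the paper's four-term triangle-inequality decomposition (your starting identity is the paper's vanishing fourth term), and you treat the remaining terms with the same ingredients — Proposition \ref{limit2}, Theorem \ref{RandomizedEstimate1} with unitarity for the initial-data term, and the independence bookkeeping from \eqref{B_GammaN_omegastar} together with estimate \eqref{RandomizedEstimate2III} of Proposition \ref{RandomizedEstimate2} for the integral term. The only cosmetic difference is your appeal to dominated convergence where the paper simply uses Minkowski's inequality and the factor $T$.
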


\begin{proof}
It suffices to show the claim for some $\xi_0 \in (0,\xi)$, where $\xi$ is as in \eqref{xi}. The general claim then follows. We note that, for all $N \in \mathbb{N}$:
\begin{equation}
\label{theta_omega_star_difference}
\big\|\theta^{\,\omega^{*}}-\big[\widehat{B}\,\big]^{\omega^{*}}\,\mathcal{U}(t)\,\Gamma(0)+i\int_{0}^{t} \big[\widehat{B}\,\big]^{\omega^{*}}\,\mathcal{U}(t-s)\,\theta^{\,\omega^{*}}(s)\,ds \,\big\|_{L^{\infty}_{t \in I} L^2(\Omega^{*}) \mathcal{H}^{\alpha}_{\xi_0}}
\end{equation}
$$\leq \big\|\theta^{\,\omega^{*}}-\big[\widehat{B}\,\big]^{\omega^{*}} \big[\Gamma_N\big]^{\omega^{*}}\big\|_{L^{\infty}_{t \in I} L^2(\Omega^{*}) \mathcal{H}^{\alpha}_{\xi_0}} + \big\|\big[\widehat{B}\,\big]^{\omega^{*}}\,\mathcal{U}(t)\,\big(\Gamma(0)-\Gamma_N(0)\big)\big\|_{L^{\infty}_{t \in I} L^2(\Omega^{*}) \mathcal{H}^{\alpha}_{\xi_0}} + $$
$$+ \big\|\int_{0}^{t} \big[\widehat{B}\,\big]^{\omega^{*}} \, \mathcal{U}(t-s) \big(\theta^{\,\omega^{*}}(s) -\big[\widehat{B}\,\big]^{\omega^{*}} \big[\Gamma_N\big]^{\omega^{*}}(s) \big) \, ds\big\|_{L^{\infty}_{t \in I} L^2(\Omega^{*}) \mathcal{H}^{\alpha}_{\xi_0}} +$$
$$+ \big\|\big[\widehat{B}\,\big]^{\omega^{*}} \big[\Gamma_N\big]^{\omega^{*}}(t)-\big[\widehat{B}\,\big]^{\omega^{*}}\,\mathcal{U}(t) \, \Gamma_N(0) \,+\, i \int_{0}^{t} \big[\widehat{B}\,\big]^{\omega^{*}} \, \mathcal{U}(t-s)\, \big[\widehat{B}\,\big]^{\omega^{*}} \big[\Gamma_N\big]^{\omega^{*}}(s)\, ds\big\|_{L^{\infty}_{t \in I} L^2(\Omega^{*}) \mathcal{H}^{\alpha}_{\xi_0}}.$$

Let us now show that each term converges to zero as $N \rightarrow \infty.$

\textbf{1)}
The first term on the right-hand side of \eqref{theta_omega_star_difference} converges to zero as $N \rightarrow \infty$ by using Proposition \ref{limit2} and the fact that $\xi_0<\xi$.

\textbf{2)}
For the second term, let first note that Theorem \ref{RandomizedEstimate1} and the unitarity of $\mathcal{U}^{(k+1)}(t)$ imply that for all $k \in \mathbb{N}$:
$$\big\|\big[B^{(k+1)}\big]^{\omega_{k+1}}\,\mathcal{U}^{(k+1)}(t)\,\big(\Gamma^{(k+1)}(0)-\Gamma_N^{(k+1)}(t)\big)\big\|_{L^{\infty}_{t \in I}L^2(\Omega^{*}) H^{\alpha}(\Lambda^k \times \Lambda^k)}$$ 
$$\leq C_0k \cdot \big\|\Gamma^{(k+1)}(0)-\Gamma_N^{(k+1)}(0)\big\|_{H^{\alpha}(\Lambda^{k+1} \times \Lambda^{k+1})}.$$
We recall that $\Gamma(0) \in \mathcal{H}^{\alpha}_{\xi'}$, for $\xi'$ as defined in \eqref{xi_prime}.
Since $\xi_0<\xi<\xi'$, it follows that the second term on the right-hand side of \eqref{theta_omega_star_difference} 
is $$\lesssim_{\,\xi_0,\,\xi',\,\alpha} \big\|\Gamma(0)-\Gamma_N(0)\big\|_{\mathcal{H}^{\alpha}_{\xi'}}=\big\|\mathbf{P}_{>N}\Gamma(0)\big\|_{\mathcal{H}^{\alpha}_{\xi'}}.$$
Since $\Gamma(0) \in \mathcal{H}^{\alpha}_{\xi'}$ by assumption, it follows that $\big\|\mathbf{P}_{>N}\Gamma(0)\big\|_{\mathcal{H}^{\alpha}_{\xi'}} \rightarrow 0$ as $N \rightarrow \infty.$

\textbf{3)} 
By Minkowski's inequality, the third term on the right-hand side of \eqref{theta_omega_star_difference} is:
\begin{equation}
\notag
\leq \int_{0}^{T} \big\|\big[\widehat{B}\,\big]^{\omega^{*}} \, \mathcal{U}(t-s) \big(\theta^{\,\omega^{*}}(s) -\big[\widehat{B}\,\big]^{\omega^{*}} \big[\Gamma_N\big]^{\omega^{*}}(s) \big)\big\|_{L^{\infty}_{t \in I} L^2(\Omega^{*}) \mathcal{H}^{\alpha}_{\xi_0}} \,ds.
\end{equation}
Let us now analyze the integrand.
Let us fix $s \in I$ and $k \in \mathbb{N}$ and consider:
$$\big\|\Big(\big[\widehat{B}\,\big]^{\omega^{*}} \, \mathcal{U}(t-s) \big(\theta^{\,\omega^{*}}(s) -\big[\widehat{B}\,\big]^{\omega^{*}} \big[\Gamma_N\big]^{\omega^{*}}(s) \big)\Big)^{(k)}\big\|_{L^{\infty}_{t \in I} L^2(\Omega^{*}) H^{\alpha}(\Lambda^k \times \Lambda^k)}$$
$$=\big\|\big[B^{(k+1)}\big]^{\omega_{k+1}} \, \mathcal{U}^{(k+1)}(t-s) \big(\big(\theta^{\,\omega^{*}}\big)^{(k+1)}(s) -\big[B_{k+2}\big]^{\omega_{k+2}} \big(\big[\Gamma_N\big]^{\omega^{*}}\big)^{(k+1)}(s) \big)\big\|_{L^{\infty}_{t \in I} L^2(\Omega^{*}) H^{\alpha}(\Lambda^k \times \Lambda^k)}.
$$
We note that, by Proposition \ref{limit2}, there is no $\omega_{k+1}$ dependence in $\big(\theta^{\,\omega^{*}}\big)^{(k+1)}(s)$. From formula \eqref{B_GammaN_omegastar} the proof of Proposition \ref{limit2}, we may also deduce that there is no $\omega_{k+1}$ dependence in $\big[B_{k+2}\big]^{\omega_{k+2}} \big(\big[\Gamma_N\big]^{\omega^{*}}\big)^{(k+1)}(s)$. In particular, we can apply the unitarity of $\mathcal{U}^{(k+1)}(t-s)$ and estimate \eqref{RandomizedEstimate2III} from Proposition \ref{RandomizedEstimate2} in order to deduce that the third term on the right-hand side of \eqref{theta_omega_star_difference} is:
%$$\leq C_0k \cdot \big\|\big(\theta^{\omega^{*}}\big)^{(k+1)}(s)-\big[B_{k+2}\big]^{\omega_{k+2}}\big(\big[\Gamma_N\big]^{\omega^{*}}\big)^{(k+1)}(s)\big\|_{L^2(\Omega^{*}) H^{\alpha}(\Lambda^{k+1} \times \Lambda^{k+1})}.$$
%Since $\xi>\xi_1$, it follows that the third term is:
$$\lesssim_{\,\xi_0,\,\xi,\,\alpha} T \cdot  \big\|\theta^{\,\omega^{*}}(s)-\big[\widehat{B}\,\big]^{\omega^{*}}\big[\Gamma_N\big]^{\omega^{*}}(s)\big\|_{L^{\infty}_{s \in I}L^2(\Omega^{*})\mathcal{H}^{\alpha}_{\xi}}.$$
By Proposition \ref{limit2}, this quantity converges to zero as $N \rightarrow \infty$. Consequently, the third term on the right-hand side of \eqref{theta_omega_star_difference} converges to zero as $N \rightarrow \infty$.

\textbf{4)}
Finally, we note that the fourth term on the right-hand side of \eqref{theta_omega_star_difference} equals zero by the construction of $\big[\Gamma_N\big]^{\omega^{*}}$.

The proposition now follows.
\end{proof}
In addition to the convergence of the sequence $\big(\big[\widehat{B}\,\big]^{\omega^{*}\,}\big[\Gamma_N\big]^{\omega^{*}}\big)_N$ obtained in Proposition 
\ref{limit2}, we can also obtain the convergence of the sequence $\big(\big[\Gamma_N\big]^{\omega^{*}}\big)_N$. As we will see, the analysis will be similar, but it will be simpler due to the fact that we already know the convergence of the sequence $\big(\big[\widehat{B}\,\big]^{\omega^{*}\,}\big[\Gamma_N\big]^{\omega^{*}}\big)_N$. A similar method was used in the deterministic setting in \cite{CP4}. We now prove the following result:

\begin{proposition}
\label{CauchySequence2}
The sequence $\big(\big[\Gamma_N\big]^{\omega^{*}}\big)_N$ is Cauchy in $L^{\infty}_{t \in I}L^2(\Omega^{*}) \mathcal{H}^{\alpha}_{\xi}$. Moreover, there exists $\big[\Gamma\,\big]^{\omega^{*}} \in L^{\infty}_{t \in I}  L^2(\Omega^{*}) \mathcal{H}^{\alpha}_{\xi}$ such that $\big[\Gamma_N\big]^{\omega^{*}} \rightarrow \big[\Gamma\,\big]^{\omega^{*}}$ strongly in $L^{\infty}_{t \in I} L^2(\Omega^{*}) \mathcal{H}^{\alpha}_{\xi}$ as $N \rightarrow \infty$. 

The obtained $\big[\Gamma\,\big]^{\omega^{*}}$ has the property that, for all $k \geq 2$, $\Big(\big[\Gamma\,\big]^{\omega^{*}}\Big)^{(k)}$ does not  depend on the random parameters $\omega_2, \omega_3, \ldots, \omega_k$. Furthermore, $\big[\Gamma\,\big]^{\omega^{*}}$ satisfies the bound:

\begin{equation}
\label{Gamma_omega_star_bound}
\big\|\big[\Gamma\,\big]^{\omega^{*}}\big\|_{L^{\infty}_{t \in I} L^2(\Omega^{*}) \mathcal{H}^{\alpha}_{\xi}} \lesssim_{\,\xi,\,\xi',\,\alpha} \big\|\Gamma(0)\big\|_{\mathcal{H}^{\alpha}_{\xi'}}.
\end{equation} 
\end{proposition}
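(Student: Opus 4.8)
The plan is to follow the strategy of \cite{CP4}: write $\big[\Gamma_N\big]^{\omega^{*}}$ in Duhamel integral form, subtract two such expressions, and control the difference using the Cauchy property of $\big(\big[\widehat{B}\,\big]^{\omega^{*}}\big[\Gamma_N\big]^{\omega^{*}}\big)_N$ already established in Proposition \ref{CauchySequence1}. First I would observe that, since $\big[\Gamma_N\big]^{\omega^{*}}$ solves \eqref{GammaNomegastar} (and, by the truncation \eqref{TruncatedRandomizedGP1assumption}, the collision term on the right-hand side also vanishes at every level $k\geq N$), Duhamel's principle yields, at every level $k$ and pointwise in $t\in I$,
\begin{equation}
\notag
\big[\Gamma_N\big]^{\omega^{*}}(t)=\mathcal{U}(t)\,\Gamma_N(0)-i\int_{0}^{t}\mathcal{U}(t-s)\,\big[\widehat{B}\,\big]^{\omega^{*}}\big[\Gamma_N\big]^{\omega^{*}}(s)\,ds .
\end{equation}
Both sides lie in $L^{\infty}_{t\in I}L^2(\Omega^{*})\mathcal{H}^{\alpha}_{\xi}$: the left side is a finite sum of terms $Duh_{\,j}^{\,\omega^{*}}(\Gamma(0))^{(k)}(t)$, controlled by Proposition \ref{DuhamelEstimate1} under the smallness assumptions \eqref{xi_prime}--\eqref{xi} exactly as in Proposition \ref{CauchySequence1}, while the integral term is controlled by \eqref{BN1bound} of Remark \ref{BN1} together with the fact that $\mathcal{U}(t-s)$ acts as an isometry on $L^2(\Omega^{*})\mathcal{H}^{\alpha}_{\xi}$ level by level (each $\mathcal{U}^{(k)}(t-s)$ is a Fourier multiplier, hence commutes with $S^{(k,\alpha)}$ and is unitary on $H^{\alpha}(\Lambda^k\times\Lambda^k)$).

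For the Cauchy property, fix $N_1<N_2$ and subtract the two Duhamel identities:
\begin{equation}
\notag
\big[\Gamma_{N_1}\big]^{\omega^{*}}(t)-\big[\Gamma_{N_2}\big]^{\omega^{*}}(t)=\mathcal{U}(t)\big(\Gamma_{N_1}(0)-\Gamma_{N_2}(0)\big)-i\int_{0}^{t}\mathcal{U}(t-s)\Big(\big[\widehat{B}\,\big]^{\omega^{*}}\big[\Gamma_{N_1}\big]^{\omega^{*}}(s)-\big[\widehat{B}\,\big]^{\omega^{*}}\big[\Gamma_{N_2}\big]^{\omega^{*}}(s)\Big)ds .
\end{equation}
For the first term I would use that $\Gamma_{N_1}(0)-\Gamma_{N_2}(0)$ is deterministic with components supported on the levels $N_1<k\leq N_2$, so the isometry property of $\mathcal{U}(t)$ and $\xi<\xi'$ bound its norm by $\big\|\mathbf{P}_{>N_1}\Gamma(0)\big\|_{\mathcal{H}^{\alpha}_{\xi'}}$, which tends to $0$ as $N_1\to\infty$ since $\Gamma(0)\in\mathcal{H}^{\alpha}_{\xi'}$. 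For the second term I would apply Minkowski's integral inequality to move the $L^2(\Omega^{*})\mathcal{H}^{\alpha}_{\xi}$ norm inside the $s$-integral, use the level-wise isometry of $\mathcal{U}(t-s)$, and bound the result by $T\cdot\big\|\big[\widehat{B}\,\big]^{\omega^{*}}\big[\Gamma_{N_1}\big]^{\omega^{*}}-\big[\widehat{B}\,\big]^{\omega^{*}}\big[\Gamma_{N_2}\big]^{\omega^{*}}\big\|_{L^{\infty}_{t\in I}L^2(\Omega^{*})\mathcal{H}^{\alpha}_{\xi}}$, which tends to $0$ uniformly in $N_2>N_1$ by Proposition \ref{CauchySequence1}. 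Since $L^{\infty}_{t\in I}L^2(\Omega^{*})\mathcal{H}^{\alpha}_{\xi}$ is a Banach space, $\big(\big[\Gamma_N\big]^{\omega^{*}}\big)_N$ then converges strongly in this space to some $\big[\Gamma\,\big]^{\omega^{*}}$.

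The remaining assertions are read off as in Proposition \ref{limit2}. From \eqref{gammaNkomega} and the definition \eqref{Duhamel_j} of $Duh_{\,j}^{\,\omega^{*}}$, the component $\big(\big[\Gamma_N\big]^{\omega^{*}}\big)^{(k)}$ depends only on $\omega_{k+1},\ldots,\omega_{N}$, hence not on $\omega_2,\ldots,\omega_k$ when $k\geq2$; since $L^2(\Omega^{*})$-convergence at each level $k$ preserves independence of a fixed coordinate, the limit $\big(\big[\Gamma\,\big]^{\omega^{*}}\big)^{(k)}$ has the same property. For the a priori bound \eqref{Gamma_omega_star_bound}, I would take the $L^{\infty}_{t\in I}L^2(\Omega^{*})\mathcal{H}^{\alpha}_{\xi}$ norm of the Duhamel identity for $\big[\Gamma_N\big]^{\omega^{*}}$: the free term contributes $\big\|\Gamma_N(0)\big\|_{\mathcal{H}^{\alpha}_{\xi}}\leq\big\|\Gamma(0)\big\|_{\mathcal{H}^{\alpha}_{\xi'}}$, and the Duhamel integral contributes $\lesssim T\cdot\big\|\big[\widehat{B}\,\big]^{\omega^{*}}\big[\Gamma_N\big]^{\omega^{*}}\big\|_{L^{\infty}_{t\in I}L^2(\Omega^{*})\mathcal{H}^{\alpha}_{\xi}}\lesssim_{\,T,\,\xi,\,\xi',\,\alpha}\big\|\Gamma(0)\big\|_{\mathcal{H}^{\alpha}_{\xi'}}$ by \eqref{BN1bound}, uniformly in $N$; letting $N\to\infty$ and absorbing the $T$-dependence (since $T$ is fixed in terms of $\xi,\xi'$ via \eqref{xi_prime}--\eqref{xi}) yields \eqref{Gamma_omega_star_bound}. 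I do not expect a serious obstacle here: the genuinely delicate input --- the bookkeeping of which $\omega_j$ occur in each Duhamel term, needed to apply Proposition \ref{RandomizedEstimate2} --- was already carried out in Proposition \ref{CauchySequence1}, so the only points requiring care are checking that the Duhamel identity holds at every level (including $k\geq N$) and applying the isometry property of $\mathcal{U}(t-s)$ level by level before summing over $k$.
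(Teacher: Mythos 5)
Your proposal is correct and follows essentially the same route as the paper's proof: subtract the Duhamel representations of $\big[\Gamma_{N_1}\big]^{\omega^{*}}$ and $\big[\Gamma_{N_2}\big]^{\omega^{*}}$, bound the free part by $\big\|\mathbf{P}_{>N_1}\Gamma(0)\big\|_{\mathcal{H}^{\alpha}_{\xi'}}$ and the integral part via Minkowski and unitarity by $T$ times the quantity controlled in Proposition \ref{CauchySequence1}, then conclude by completeness; the independence of $\big(\big[\Gamma\,\big]^{\omega^{*}}\big)^{(k)}$ from $\omega_2,\ldots,\omega_k$ and the a priori bound \eqref{Gamma_omega_star_bound} are obtained exactly as in the paper, from the explicit Duhamel expansion and from \eqref{BN1bound} together with the choice of $T$ in terms of $\xi,\xi',\alpha$.
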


\begin{proof}
Let us fix $N_1,N_2 \in \mathbb{N}$ with $N_1<N_2$. By construction of $\big[\Gamma_{N_j}\big]^{\omega^{*}}$, for $j=1,2$, it follows that:
$$\big\|\big[\Gamma_{N_1}\big]^{\omega^{*}}(t)-\big[\Gamma_{N_2}\big]^{\omega^{*}}(t)\big\|_{L^{\infty}_{t \in I} L^2(\Omega^{*})\mathcal{H}^{\alpha}_{\xi}} \leq \big\|\,\mathcal{U}(t)\,\big(\Gamma_{N_1}(0)-\Gamma_{N_2}(0)\big)\big\|_{L^{\infty}_{t \in I} L^2(\Omega^{*})\mathcal{H}^{\alpha}_{\xi}}+$$
$$+\big\|\int_{0}^{t}\,\mathcal{U}(t-s)\,\big[\widehat{B}\,\big]^{\omega^{*}} \big(\big[\Gamma_{N_1}\big]^{\omega^{*}}(s)-\big[\Gamma_{N_2}\big]^{\omega^{*}}(s)\big)\,ds\,\big\|_{L^{\infty}_{t \in I} L^2(\Omega^{*})\mathcal{H}^{\alpha}_{\xi}}.$$
We use the fact that $\Gamma_{N_1}(0)-\Gamma_{N_2}(0)$ does not depend on $\omega^{*}$, unitarity,  and the fact that $\xi<\xi'$ in order to deduce that the first term is:
\begin{equation}
\label{first_termGammaN1N2}
\leq \big\|\Gamma_{N_1}(0)-\Gamma_{N_2}(0)\big\|_{\mathcal{H}^{\alpha}_{\xi'}}.
\end{equation}
We can use Minkowski's inequality and unitarity in order to deduce that the second term is:
$$\leq \Big\| \int_{0}^{t} \big\|\big[\widehat{B}\,\big]^{\omega^{*}} \big(\big[\Gamma_{N_1}\big]^{\omega^{*}}(s)-\big[\Gamma_{N_2}\big]^{\omega^{*}}(s)\big) \big\|_{L^2(\Omega^{*}) \mathcal{H}^{\alpha}_{\xi}}\,ds\, \Big\|_{L^{\infty}_{t \in I}}$$
\begin{equation}
\label{second_termGammaN1N2}
\leq T \cdot \big\|\big[\widehat{B}\,\big]^{\omega^{*}} \big[\Gamma_{N_1}\big]^{\omega^{*}}(s)-\big[\widehat{B}\,\big]^{\omega^{*}} \big[\Gamma_{N_2}\big]^{\omega^{*}}(s)\big\|_{L^{\infty}_{s \in I} L^2(\Omega^{*}) \mathcal{H}^{\alpha}_{\xi}}.
\end{equation}
We recall from Proposition \ref{CauchySequence1} that the sequence $\big(\big[\widehat{B}\,\big]^{\omega^{*}} \big[\Gamma_N\big]^{\omega^{*}}\big)_N$ is Cauchy in $L^{\infty}_{t \in I} L^2(\Omega^{*}) \mathcal{H}^{\alpha}_{\xi}$.
Consequently, $\big(\big[\Gamma_N\big]^{\omega^{*}}\big)_N$ is Cauchy in $L^{\infty}_{t \in I} L^2(\Omega^{*}) \mathcal{H}^{\alpha}_{\xi}$. Since $L^{\infty}_{t \in I} L^2(\Omega^{*}) \mathcal{H}^{\alpha}_{\xi}$ is a Banach space, it follows that there exists $\big[\Gamma\,\big]^{\omega^{*}} \in L^{\infty}_{t \in I} L^2(\Omega^{*}) \mathcal{H}^{\alpha}_{\xi}$ such that:
\begin{equation}
\notag
\big[\Gamma_N\big]^{\omega^{*}} \rightarrow \big[\Gamma \,\big]^{\omega^{*}}
\end{equation}
strongly in $L^{\infty}_{t \in I} L^2(\Omega^{*}) \mathcal{H}^{\alpha}_{\xi}$ as $N \rightarrow \infty.$

%Proposition \ref{limit2} that $\big[\widehat{B}\,\big]^{\omega^{*}} \big[\Gamma_N\big]^{\omega^{*}} \rightarrow \theta^{\,\omega^{*}}$ strongly in $L^{\infty}_{t \in I} L^2(\Omega^{*}) \mathcal{H}^{\alpha}_{\xi}$ as $N \rightarrow \infty$. 

By construction of $\big[\Gamma_N\big]^{\omega^{*}}$, we know that, for all $k \in \mathbb{N}$ and for all $t \in I$:
$$\Big(\big[\Gamma_N\big]^{\omega^{*}}\Big)^{(k)}(t)=\mathcal{U}^{(k)}(t)\, \gamma_0^{(k)} -i \int_{0}^{t} 
\,\mathcal{U}^{(k)}\,(t-t_1) \big[B^{(k+1)}\big]^{\omega_{k+1}} \,\mathcal{U}^{(k+1)}(t_1)\, \gamma_0^{(k+1)}\, dt_1+...
$$
$$\cdots+(-i)^{N-k} \int_{0}^{t} \int_{0}^{t_1} \cdots \int_{0}^{t_{N-k}}
\mathcal{U}^{(k)}(t-t_1)\,\big[B^{(k+1)}\big]^{\omega_{k+1}} \, \mathcal{U}^{(k+1)}(t_1-t_2)
\,\big[B^{(k+2)}\big]^{\omega_{k+2}}\,\cdots$$ 
$$\cdots \big[B^{(N)}\big]^{\omega_{N}} \,\mathcal{U}^{(N)}(t_{N-k+1})\,\gamma_0^{(N)}\,dt_{N-k+1} \cdots dt_2 \, dt_1.
$$
which depends only on the random parameters $\omega_{k+1},\omega_{k+2},\ldots,\omega_N$. Hence, if $k \geq 2$ then $\big(\big[\Gamma_N\big]^{\omega^{*}}\big)^{(k)}$ does not depend on the random parameters $\omega_2, \omega_3, \ldots, \omega_k$. Since $\big[\Gamma_N\big]^{\omega^{*}} \rightarrow \big[\Gamma \,\big]^{\omega^{*}}$ strongly in $L^{\infty}_{t \in I} L^2(\Omega^{*}) \mathcal{H}^{\alpha}_{\xi}$ as $N \rightarrow \infty$, it follows that, for all $k \geq 2$ and for all $t \in I$, 
$\big(\big[\Gamma\big]^{\omega^{*}}\big)^{(k)}(t)$ does not depend on $\omega_2,\omega_3,\ldots,\omega_k$.

Let us now prove \eqref{Gamma_omega_star_bound}. We fix $N_1 \in \mathbb{N}$ as before. By using the same arguments as in \eqref{first_termGammaN1N2} and \eqref{second_termGammaN1N2}, it follows that:
$$\big\|\big[\Gamma_{N_1}\big]^{\omega^{*}}\big\|_{L^{\infty}_{t \in I} L^2(\Omega^{*}) \mathcal{H}^{\alpha}_{\xi}} 
\leq \big\|\Gamma_{N_1}(0)\big\|_{\mathcal{H}^{\alpha}_{\xi}} +
T \cdot \big\|\big[\widehat{B}\,\big]^{\omega^{*}} \big[\Gamma_N\,\big]^{\omega^{*}} \big\|_{L^{\infty}_{t \in I} L^2(\Omega^{*}) \mathcal{H}^{\alpha}_{\xi}}
.$$
By \eqref{BN1bound} from Remark \ref{BN1}, it follows that this quantity is:
$$\lesssim_{\,T,\,\xi,\,\xi',\,\alpha} \big\|\Gamma_{N_1}(0)\big\|_{\mathcal{H}^{\alpha}_{\xi}} + \big\|\Gamma(0)\big\|_{\mathcal{H}^{\alpha}_{\xi'}}.$$
Since $T$ is chosen to depend on $\xi,\xi'$, and $\alpha$, it follows that:
$$\big\|\big[\Gamma_{N_1}\big]^{\omega^{*}}\big\|_{L^{\infty}_{t \in I} L^2(\Omega^{*}) \mathcal{H}^{\alpha}_{\xi}} \lesssim_{\,\xi,\xi',\alpha} \big\|\Gamma(0)\big\|_{\mathcal{H}^{\alpha}_{\xi'}}.$$
We let $N_1 \rightarrow \infty$ and we use the fact that $\big[\Gamma_{N_1}\,\big]^{\omega^{*}} \rightarrow \big[\Gamma\,\big]^{\omega^{*}}$ in $L^{\infty}_{t \in I} L^2(\Omega^{*}) \mathcal{H}^{\alpha}_{\xi}$ in order to deduce \eqref{Gamma_omega_star_bound}.
The proposition now follows.
\end{proof}

Let us now find the equation which is satisfied by the limiting object $\big[\Gamma\,\big]^{\omega^{*}}$. In other words, we would like to find an analogue of Proposition \ref{theta_omega_star_equation} for the limit of $\big(\big[\Gamma_N\big]^{\omega^{*}}\big)_N$. We will now prove:

\begin{proposition}
\label{Gamma_omega_star_equation}
For $\big[\Gamma\,\big]^{\omega^{*}}$ from Proposition \ref{CauchySequence2} and for all $\xi_0>0$, the following identity holds:
$$\big\|\big[\Gamma\,\big]^{\omega^{*}}-\mathcal{U}(t)\,\Gamma(0)+i\int_{0}^{t} \mathcal{U}(t-s)\,\theta^{\,\omega^{*}}(s)\,ds \,\big\|_{L^{\infty}_{t \in I} L^2(\Omega^{*}) \mathcal{H}^{\alpha}_{\xi_0}}=0.$$
\end{proposition}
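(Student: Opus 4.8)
The plan is to follow the scheme of the proof of Proposition \ref{theta_omega_star_equation}, but the argument here is considerably simpler since the outermost operator is the free evolution $\mathcal{U}$ rather than the randomized collision operator $\big[\widehat{B}\,\big]^{\omega^{*}}$, so no randomized estimate will be needed. First I would reduce to proving the identity for a fixed $\xi_0\in(0,\xi)$, with $\xi$ as in \eqref{xi}; the general statement then follows since $\|\cdot\|_{\mathcal{H}^{\alpha}_{\xi_0}}$ is nondecreasing in $\xi_0$ and $\xi_0$ may always be taken smaller. Then, for each $N\in\mathbb{N}$, I would insert $\big[\Gamma_N\big]^{\omega^{*}}$ and use the triangle inequality to split the quantity to be estimated into four pieces, all measured in $L^{\infty}_{t\in I}L^2(\Omega^{*})\mathcal{H}^{\alpha}_{\xi_0}$: \textbf{(1)} $\big[\Gamma\,\big]^{\omega^{*}}-\big[\Gamma_N\big]^{\omega^{*}}$; \textbf{(2)} $\mathcal{U}(t)\big(\Gamma(0)-\Gamma_N(0)\big)$; \textbf{(3)} $\int_0^t\mathcal{U}(t-s)\big(\theta^{\,\omega^{*}}(s)-\big[\widehat{B}\,\big]^{\omega^{*}}\big[\Gamma_N\big]^{\omega^{*}}(s)\big)\,ds$; and \textbf{(4)} the Duhamel residual $\big[\Gamma_N\big]^{\omega^{*}}(t)-\mathcal{U}(t)\Gamma_N(0)+i\int_0^t\mathcal{U}(t-s)\big[\widehat{B}\,\big]^{\omega^{*}}\big[\Gamma_N\big]^{\omega^{*}}(s)\,ds$. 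The remainder of the proof consists of showing that each of these tends to zero as $N\to\infty$.

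Term \textbf{(1)} goes to zero by Proposition \ref{CauchySequence2}, which gives $\big[\Gamma_N\big]^{\omega^{*}}\to\big[\Gamma\,\big]^{\omega^{*}}$ strongly in $L^{\infty}_{t\in I}L^2(\Omega^{*})\mathcal{H}^{\alpha}_{\xi}$, together with $\xi_0<\xi$. For term \textbf{(2)}, I would use that $\Gamma(0)-\Gamma_N(0)=\mathbf{P}_{>N}\Gamma(0)$ is deterministic and that each $\mathcal{U}^{(k)}(t)$ is unitary on $H^{\alpha}(\Lambda^k\times\Lambda^k)$, so this term equals $\|\mathbf{P}_{>N}\Gamma(0)\|_{\mathcal{H}^{\alpha}_{\xi_0}}\le\|\mathbf{P}_{>N}\Gamma(0)\|_{\mathcal{H}^{\alpha}_{\xi'}}$, which converges to zero because $\Gamma(0)\in\mathcal{H}^{\alpha}_{\xi'}$. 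For term \textbf{(3)}, I would apply Minkowski's inequality in the time integral and again use that $\mathcal{U}(t-s)$ acts unitarily on each $\mathcal{H}^{\alpha}_{\xi_0}$-component pointwise in $\omega^{*}$, which bounds the term by $T\cdot\big\|\theta^{\,\omega^{*}}-\big[\widehat{B}\,\big]^{\omega^{*}}\big[\Gamma_N\big]^{\omega^{*}}\big\|_{L^{\infty}_{s\in I}L^2(\Omega^{*})\mathcal{H}^{\alpha}_{\xi}}$ (using $\xi_0<\xi$ at the last step), and this goes to zero by Proposition \ref{limit2}. In contrast to Proposition \ref{theta_omega_star_equation}, this step requires neither Proposition \ref{RandomizedEstimate2} nor the known random-parameter dependence of $\theta^{\,\omega^{*}}$, precisely because no collision operator reappears to the outside.

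The point needing the most care — the one I would flag as the main obstacle — is term \textbf{(4)}: one must check that it vanishes identically for every $N$. This follows from the fact that $\big[\Gamma_N\big]^{\omega^{*}}$ solves \eqref{GammaNomegastar}, i.e.\ $i\partial_t\big[\Gamma_N\big]^{\omega^{*}}+\widehat{\Delta}_{\pm}\big[\Gamma_N\big]^{\omega^{*}}=\big[\widehat{B}\,\big]^{\omega^{*}}\big[\Gamma_N\big]^{\omega^{*}}$ with data $\Gamma_N(0)$: its $k$-th component is exactly \eqref{TruncatedRandomizedGP1} for $k\le N$ and the trivial identity $0=0$ for $k>N$ by \eqref{TruncatedRandomizedGP1assumption}, so Duhamel's principle applied level by level yields $\big[\Gamma_N\big]^{\omega^{*}}(t)=\mathcal{U}(t)\Gamma_N(0)-i\int_0^t\mathcal{U}(t-s)\big[\widehat{B}\,\big]^{\omega^{*}}\big[\Gamma_N\big]^{\omega^{*}}(s)\,ds$, which is precisely the statement that term \textbf{(4)} is zero. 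Combining the four estimates and letting $N\to\infty$ gives the identity for $\xi_0\in(0,\xi)$, hence for all $\xi_0>0$. A minor bookkeeping point throughout is that all identities are read in $L^{\infty}_{t\in I}L^2(\Omega^{*})\mathcal{H}^{\alpha}_{\xi_0}$, i.e.\ for almost every $t$, which is consistent with the regularity of $\theta^{\,\omega^{*}}$ and $\big[\Gamma\,\big]^{\omega^{*}}$ coming from Propositions \ref{limit2} and \ref{CauchySequence2}.
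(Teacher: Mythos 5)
Your proposal is correct and follows essentially the same argument as the paper: the same reduction to $\xi_0\in(0,\xi)$, the same four-term decomposition via $\big[\Gamma_N\big]^{\omega^{*}}$, and the same treatment of each piece (Proposition \ref{CauchySequence2} for the first, unitarity for the second, Minkowski plus Proposition \ref{limit2} for the third, and the Duhamel construction of $\big[\Gamma_N\big]^{\omega^{*}}$ for the fourth). Your observation that no randomized estimate is needed here, unlike in Proposition \ref{theta_omega_star_equation}, is also exactly the point the paper makes.
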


\begin{proof}
As before, it suffices to show the claim for $\xi_0 \in (0,\xi)$, where $\xi$ is as in \eqref{xi}.
We note that:
$$\big\|\big[\Gamma\,\big]^{\omega^{*}}(t)-\mathcal{U}(t)\,\Gamma(0)+i\int_{0}^{t} \mathcal{U}(t-s)\,\theta^{\,\omega^{*}}(s)\,ds \,\big\|_{L^{\infty}_{t \in I} L^2(\Omega^{*}) \mathcal{H}^{\alpha}_{\xi_0}}$$
$$
\leq \big\|\big[\Gamma\,\big]^{\omega^{*}}(t)-\big[\Gamma_N\big]^{\omega^{*}}(t)\big\|_{L^{\infty}_{t \in I} L^2(\Omega^{*}) \mathcal{H}^{\alpha}_{\xi_0}} + \big\|\,\mathcal{U}(t)\big(\Gamma(0)-\Gamma_N(0)\big)\big\|_{L^{\infty}_{t \in I} L^2(\Omega^{*}) \mathcal{H}^{\alpha}_{\xi_0}}+
$$
$$+\big\|\int_{0}^{t}\,\mathcal{U}(t-s)\,\big(\theta^{\,\omega^{*}}(s)-\big[\widehat{B}\,\big]^{\omega^{*}}\big[\Gamma_N\big]^{\omega^{*}}(s)\big)\,ds\,\big\|_{L^{\infty}_{t \in I} L^2(\Omega^{*}) \mathcal{H}^{\alpha}_{\xi_0}}+$$
$$+\big\|\big[\Gamma_N\big]^{\omega^{*}}(t)-\mathcal{U}(t)\,\Gamma_N(0)+i\int_{0}^{t}\,\mathcal{U}(t-s)\,\big[\widehat{B}\,\big]^{\omega^{*}}\big[\Gamma_N\big]^{\omega^{*}}(s)\,ds\big\|_{L^{\infty}_{t \in I} L^2(\Omega^{*})\mathcal{H}^{\alpha}_{\xi_0}}.$$

\textbf{1)}
The first term converges to zero as $N \rightarrow \infty$ by using Proposition \ref{CauchySequence2} and the fact that $\xi_0<\xi.$

\textbf{2)}
By unitarity, the second term equals: 
$$\big\|\Gamma(0)-\Gamma_N(0)\big\|_{L^2(\Omega^{*}) \mathcal{H}^{\alpha}_{\xi_0}}= \big\|\mathbf{P}_{>N}\Gamma(0)\big\|_{L^2(\Omega^{*})\mathcal{H}^{\alpha}_{\xi_0}}=\big\|\mathbf{P}_{>N}\Gamma(0)\big\|_{\mathcal{H}^{\alpha}_{\xi_0}}$$
by the property of probability measures.
Since $\xi_0<\xi'$, this quantity is bounded from above by $\big\|\mathbf{P}_{>N}\Gamma(0)\big\|_{\mathcal{H}^{\alpha}_{\xi'}}$, which converges to zero as $N \rightarrow \infty$ since $\Gamma(0) \in \mathcal{H}^{\alpha}_{\xi'}$.

\textbf{3)}
By Minkowski's inequality, the third term is:
$$\leq \Big\| \int_{0}^{t} \big\|\theta^{\,\omega^{*}}(s)-\big[\widehat{B}\,\big]^{\omega^{*}}\big[\Gamma_N\big]^{\omega^{*}}(s)\big\|_{L^{\infty}_{t \in I} L^2(\Omega^{*})\mathcal{H}^{\alpha}_{\xi_0}}\,ds\,\Big\|_{L^{\infty}_{t \in I}}$$
$$\leq T \cdot \big\|\theta^{\,\omega^{*}}(s)-\big[\widehat{B}\,\big]^{\omega^{*}}\big[\Gamma_N\big]^{\omega^{*}}(s)\big\|_{L^{\infty}_{s \in I} L^2(\Omega^{*}) \mathcal{H}^{\alpha}_{\xi_0}},$$
which converges to zero as $N \rightarrow \infty$ by Proposition \ref{limit2}.

\textbf{4)}
The fourth term equals zero by construction of $\big[\Gamma_N\big]^{\omega^{*}}$.

The proposition now follows.
\end{proof}
We can now prove the main result of this section:
\begin{theorem}
\label{Theorem1} 
For $\big[\Gamma\,\big]^{\omega^{*}}$ constructed above and for all $\xi_0>0$, the following identity holds:
\begin{equation}
\label{Theorem1identity1}
\big\|\big[\Gamma\,\big]^{\omega^{*}}(t)-\mathcal{U}(t)\,\Gamma(0)+i \int_{0}^{t}\,\mathcal{U}(t-s)\,\big[\widehat{B}\,\big]^{\omega^{*}} \big[\Gamma\,\big]^{\omega^{*}}(s)\,ds\,\big\|_{L^{\infty}_{t \in I} L^2(\Omega^{*}) \mathcal{H}^{\alpha}_{\xi_0}}=0.
\end{equation}
In particular, for almost all $t \in I$, it is the case that for all $k \in \mathbb{N}$:
\begin{equation}
\label{Theorem1identity2}
\big\|\big(\big[\Gamma\,\big]^{\omega^{*}}(t)-\mathcal{U}(t)\,\Gamma(0)+i \int_{0}^{t}\,\mathcal{U}(t-s)\,
\big[\widehat{B}\,\big]^{\omega^{*}}\big[\Gamma\,\big]^{\omega^{*}}(s)\,ds\,\big)^{(k)}\big\|_{L^2(\Omega^{*}) H^{\alpha}(\Lambda^k \times \Lambda^k)}
=0.
\end{equation}
In other words, we obtain a solution to the independently randomized Gross-Pitaevskii hierarchy satisfying the a priori bound \eqref{Gamma_omega_star_bound}.
\end{theorem}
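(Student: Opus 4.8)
The plan is to combine the three preceding propositions --- Proposition \ref{limit2} (convergence of $\big[\widehat{B}\,\big]^{\omega^{*}}\big[\Gamma_N\big]^{\omega^{*}}$ to $\theta^{\,\omega^{*}}$), Proposition \ref{theta_omega_star_equation} (the fixed-point identity for $\theta^{\,\omega^{*}}$), and Proposition \ref{Gamma_omega_star_equation} (the relation linking $\big[\Gamma\,\big]^{\omega^{*}}$, $\theta^{\,\omega^{*}}$, and the free evolution of the data) --- into the single Duhamel identity \eqref{Theorem1identity1}. As in all four preceding propositions, it suffices to prove \eqref{Theorem1identity1} for $\xi_0 \in (0,\xi)$ with $\xi$ as in \eqref{xi}, since the $\mathcal{H}^{\alpha}_{\xi_0}$-norm is monotone decreasing in $\xi_0$ and hence the vanishing of the $\mathcal{H}^{\alpha}_{\xi_0}$-norm for one small value implies it for all larger values as well. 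The first step is to substitute Proposition \ref{Gamma_omega_star_equation} into the expression inside the norm in \eqref{Theorem1identity1}: modulo a null term, $\big[\Gamma\,\big]^{\omega^{*}}(t)$ equals $\mathcal{U}(t)\,\Gamma(0) - i\int_0^t \mathcal{U}(t-s)\,\theta^{\,\omega^{*}}(s)\,ds$, so the expression in \eqref{Theorem1identity1} reduces (in $L^{\infty}_{t\in I}L^2(\Omega^{*})\mathcal{H}^{\alpha}_{\xi_0}$) to
\begin{equation}
\notag
i\int_0^t \mathcal{U}(t-s)\,\Big(\big[\widehat{B}\,\big]^{\omega^{*}}\big[\Gamma\,\big]^{\omega^{*}}(s) - \theta^{\,\omega^{*}}(s)\Big)\,ds.
\end{equation}

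**Reducing to $\big[\widehat{B}\,\big]^{\omega^{*}}\big[\Gamma\,\big]^{\omega^{*}} = \theta^{\,\omega^{*}}$.** Thus the theorem follows once we show that $\big[\widehat{B}\,\big]^{\omega^{*}}\big[\Gamma\,\big]^{\omega^{*}}=\theta^{\,\omega^{*}}$ in $L^{\infty}_{t\in I}L^2(\Omega^{*})\mathcal{H}^{\alpha}_{\xi_0}$; the Duhamel integral of a vanishing integrand vanishes, after an application of Minkowski's inequality and the unitarity of $\mathcal{U}^{(k)}(t-s)$ exactly as in step 3 of the proof of Proposition \ref{Gamma_omega_star_equation}, which also shows the integral is controlled by $T$ times the $\mathcal{H}^{\alpha}_{\xi}$-norm of the integrand via \eqref{RandomizedEstimate2III}. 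To prove $\big[\widehat{B}\,\big]^{\omega^{*}}\big[\Gamma\,\big]^{\omega^{*}}=\theta^{\,\omega^{*}}$, I would estimate
\begin{equation}
\notag
\big\|\big[\widehat{B}\,\big]^{\omega^{*}}\big[\Gamma\,\big]^{\omega^{*}}-\theta^{\,\omega^{*}}\big\|_{L^{\infty}_{t\in I}L^2(\Omega^{*})\mathcal{H}^{\alpha}_{\xi_0}} \leq \big\|\big[\widehat{B}\,\big]^{\omega^{*}}\big(\big[\Gamma\,\big]^{\omega^{*}}-\big[\Gamma_N\big]^{\omega^{*}}\big)\big\|_{L^{\infty}_{t\in I}L^2(\Omega^{*})\mathcal{H}^{\alpha}_{\xi_0}} + \big\|\big[\widehat{B}\,\big]^{\omega^{*}}\big[\Gamma_N\big]^{\omega^{*}}-\theta^{\,\omega^{*}}\big\|_{L^{\infty}_{t\in I}L^2(\Omega^{*})\mathcal{H}^{\alpha}_{\xi_0}}.
\end{equation}
The second term tends to $0$ as $N\to\infty$ by Proposition \ref{limit2} together with $\xi_0<\xi$. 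For the first term the key observation, already exploited in the proof of Proposition \ref{theta_omega_star_equation}, is that for each fixed $k$ the density matrix $\big(\big[\Gamma\,\big]^{\omega^{*}}-\big[\Gamma_N\big]^{\omega^{*}}\big)^{(k+1)}$ is independent of $\omega_{k+1}$ (by the last assertions of Propositions \ref{limit2} and \ref{CauchySequence2}, both components being independent of $\omega_2,\dots,\omega_{k+1}$ for $k+1\geq 2$), so Proposition \ref{RandomizedEstimate2}, specifically the bound \eqref{RandomizedEstimate2III}, applies and gives
\begin{equation}
\notag
\big\|\big[\widehat{B}\,\big]^{\omega^{*}}\big(\big[\Gamma\,\big]^{\omega^{*}}-\big[\Gamma_N\big]^{\omega^{*}}\big)\big\|_{L^{\infty}_{t\in I}L^2(\Omega^{*})\mathcal{H}^{\alpha}_{\xi_0}} \lesssim_{\,\xi,\,\xi_0,\,\alpha} \big\|\big[\Gamma\,\big]^{\omega^{*}}-\big[\Gamma_N\big]^{\omega^{*}}\big\|_{L^{\infty}_{t\in I}L^2(\Omega^{*})\mathcal{H}^{\alpha}_{\xi}},
\end{equation}
which tends to $0$ by Proposition \ref{CauchySequence2}. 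Hence both terms vanish in the limit, and since the left-hand side is independent of $N$, it is zero.

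**Conclusion and the pointwise statement.** Assembling the pieces proves \eqref{Theorem1identity1} for $\xi_0\in(0,\xi)$ and hence for all $\xi_0>0$. For \eqref{Theorem1identity2}, note that \eqref{Theorem1identity1} says the $L^{\infty}_{t\in I}$ norm of the function $t\mapsto \big\|(\cdots)(t)\big\|_{L^2(\Omega^{*})\mathcal{H}^{\alpha}_{\xi_0}}$ is zero, so this function is zero for a.e. $t\in I$; for each such $t$, the definition \eqref{Space2} of the $L^2(\Omega^{*})\mathcal{H}^{\alpha}_{\xi_0}$-norm as a weighted sum over $k$ with strictly positive weights $\xi_0^k$ forces every summand $\big\|(\cdots)^{(k)}(t)\big\|_{L^2(\Omega^{*})H^{\alpha}(\Lambda^k\times\Lambda^k)}$ to vanish, which is \eqref{Theorem1identity2}. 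The a priori bound \eqref{Gamma_omega_star_bound} was already established in Proposition \ref{CauchySequence2}. The only real subtlety --- and the step I expect to be the crux --- is the justification that $\big[\widehat{B}\,\big]^{\omega^{*}}$ can be applied to the difference $\big[\Gamma\,\big]^{\omega^{*}}-\big[\Gamma_N\big]^{\omega^{*}}$ with a quantitative bound: one cannot invoke Theorem \ref{RandomizedEstimate1} naively because the argument carries nontrivial $\omega^{*}$-dependence, and the resolution is precisely the independence-of-$\omega_{k+1}$ bookkeeping that underlies Proposition \ref{RandomizedEstimate2}. Everything else is a repackaging of the limiting arguments already carried out in Propositions \ref{theta_omega_star_equation} and \ref{Gamma_omega_star_equation}.
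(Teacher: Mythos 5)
Your proof is correct, and its overall architecture matches the paper's: both arguments reduce the theorem to the identity $\big[\widehat{B}\,\big]^{\omega^{*}}\big[\Gamma\,\big]^{\omega^{*}}=\theta^{\,\omega^{*}}$ in $L^{\infty}_{t \in I} L^2(\Omega^{*})\mathcal{H}^{\alpha}_{\xi_0}$ (this is \eqref{Theorem1B} in the paper) and then substitute it into Proposition \ref{Gamma_omega_star_equation}, disposing of the Duhamel integral of the vanishing difference by Minkowski's inequality and unitarity. Where you genuinely diverge is in how \eqref{Theorem1B} is obtained. The paper applies \eqref{RandomizedEstimate2III} to the whole expression in Proposition \ref{Gamma_omega_star_equation} --- using that $\gamma_0^{(k)}$, $\big(\theta^{\,\omega^{*}}\big)^{(k)}$, and $\big(\big[\Gamma\,\big]^{\omega^{*}}\big)^{(k)}$ are independent of $\omega_2,\ldots,\omega_k$ --- and then subtracts the identity of Proposition \ref{theta_omega_star_equation}. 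You instead pass $\big[\widehat{B}\,\big]^{\omega^{*}}$ through the limit $\big[\Gamma_N\big]^{\omega^{*}} \rightarrow \big[\Gamma\,\big]^{\omega^{*}}$, applying \eqref{RandomizedEstimate2III} to the difference $\big[\Gamma\,\big]^{\omega^{*}}-\big[\Gamma_N\big]^{\omega^{*}}$ (whose $(k+1)$-st component is indeed independent of $\omega_{k+1}$, by Proposition \ref{CauchySequence2} and the explicit Duhamel formula for $\big[\Gamma_N\big]^{\omega^{*}}$), and then identify the limit with $\theta^{\,\omega^{*}}$ via Proposition \ref{limit2} and uniqueness of limits. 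Both routes rest on exactly the same randomized estimate and the same independence bookkeeping; yours has the small structural advantage that Proposition \ref{theta_omega_star_equation} is never actually used (despite being announced in your strategy paragraph), while the paper's version reuses that proposition and avoids reintroducing the truncations. One cosmetic slip: bounding $\int_0^t \mathcal{U}(t-s)(\cdots)\,ds$ by $T$ times the norm of the integrand requires only Minkowski and unitarity; \eqref{RandomizedEstimate2III} enters only when a collision operator stands in front of the integral, as in Proposition \ref{theta_omega_star_equation}, not in the step you cite.
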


\begin{proof}
We note that \eqref{Theorem1identity1} implies \eqref{Theorem1identity2}. We now prove \eqref{Theorem1identity1}.
It suffices to prove the claim for $\xi_0 \in (0,\xi)$, where $\xi$ is as in \eqref{xi}. Let us recall that by Proposition \ref{limit2} and Proposition \ref{CauchySequence2}, we know that for all $k \geq 2$, $\big(\theta^{\,\omega^{*}}\big)^{(k)}$ and $(\big[\Gamma\,\big]^{\omega^{*}}\big)^{(k)}$ do not depend on $\omega_2,\omega_3, \ldots, \omega_k$. Hence, we can use estimate \eqref{RandomizedEstimate2III} from Proposition \ref{RandomizedEstimate2} in order to deduce that:
\begin{equation}
\label{Theorem1A}
\big\|\big[\widehat{B}\,\big]^{\omega^{*}} \big[\Gamma\,\big]^{\omega^{*}}(t)-\big[\widehat{B}\,\big]^{\omega^{*}}\,\mathcal{U}(t)\,\Gamma(0)+i\int_{0}^{t}\big[\widehat{B}\,\big]^{\omega^{*}}\,\mathcal{U}(t-s)\,\theta^{\,\omega^{*}}(s)\,ds\big\|_{L^{\infty}_{t \in I} L^2(\Omega^{*}) \mathcal{H}^{\alpha}_{\xi_0}}
\end{equation}
$$\lesssim_{\,\xi_0,\,\xi,\,\alpha} \big\|\big[\Gamma\,\big]^{\omega^{*}}(t)-\mathcal{U}(t)\,\Gamma(0)+i\int_{0}^{t}\,\mathcal{U}(t-s)\,\theta^{\,\omega^{*}}(s)\,ds\big\|_{L^{\infty}_{t \in I} L^2(\Omega^{*}) \mathcal{H}^{\alpha}_{\xi}}=0$$
The last equality follows from Proposition \ref{Gamma_omega_star_equation}.

By \eqref{Theorem1A} and Proposition \ref{theta_omega_star_equation}, it follows that:
\begin{equation}
\label{Theorem1B}
\big\|\big[\widehat{B}\,\big]^{\omega^{*}}\big[\Gamma\,\big]^{\,\omega^{*}}(t)-\theta^{\,\omega^{*}}(t)\big\|_{L^{\infty}_{t \in I} L^2(\Omega^{*}) \mathcal{H}^{\alpha}_{\xi_0}}=0.
\end{equation}
Moreover, we observe that, by Minkowski's inequality and unitarity:

\begin{equation}
\label{Theorem1C}
\big\|\int_{0}^{t}\,\mathcal{U}(t-s)\,\big(\theta^{\,\omega^{*}}(s)-\big[\widehat{B}\,\big]^{\omega^{*}} \big[\Gamma\,\big]^{\omega^{*}}(s)\big)\,ds\,\big\|_{L^{\infty}_{t \in I} L^2(\Omega^{*}) \mathcal{H}^{\alpha}_{\xi_0}}
\end{equation}
$$\leq T \cdot \big\|\theta^{\,\omega^{*}}-\big[\widehat{B}\,\big]^{\omega^{*}}\big[\Gamma\,\big]^{\omega^{*}}\big\|_{L^{\infty}_{t \in I} L^2(\Omega^{*}) \mathcal{H}^{\alpha}_{\xi_0}}$$

This quantity equals zero by \eqref{Theorem1B}.
Finally, we can combine Proposition \ref{Gamma_omega_star_equation} and \eqref{Theorem1C} in order to deduce that:
$$\big\|\big[\Gamma\,\big]^{\omega^{*}}(t)-\mathcal{U}(t)\,\Gamma(0)+i\int_{0}^{t} \,\mathcal{U}(t-s)\,\big[\widehat{B}\,\big]^{\omega^{*}} \big[\Gamma\,\big]^{\omega^{*}}(s)\,ds\,\big\|_{L^{\infty}_{t \in I}L^2(\Omega^{*})\mathcal{H}^{\alpha}_{\xi_0}}=0.$$
Theorem \ref{Theorem1} now follows.
\end{proof}

\begin{remark}
\label{BhatGamma_omega_starA}
By using the identity \eqref{Theorem1B} and the estimate \eqref{theta_omega_star_bound1} from Remark \ref{theta_omega_star_boundA}, we can deduce that:

\begin{equation}
\label{BhatGamma_omega_star1}
\big\|\big[\widehat{B}\,\big]^{\omega^{*}}\big[\Gamma\,\big]^{\,\omega^{*}}\big\|_{L^{\infty}_{t \in I} L^2(\Omega^{*}) \mathcal{H}^{\alpha}_{\xi}} \lesssim_{\,T,\,\xi,\,\xi',\,\alpha} \big\|\Gamma(0)\big\|_{\mathcal{H}^{\alpha}_{\xi'}}.
\end{equation}
Hence, we obtain a similar a priori estimate as in \eqref{Gamma_omega_star_bound}. As before, the $T$ dependence in the implied constant can be omitted since $T$ is chosen to depend on $\xi,\xi'$ and $\alpha$.
We note that \eqref{BhatGamma_omega_star1} is a probabilistic version of the a priori bound that was assumed in the work of Klainerman and Machedon \cite{KM}. Analogous estimates were obtained in the deterministic setting in \cite{CP4}.
\end{remark}

\subsection{The case of additional regularity in the initial data $\Gamma(0)$}
\label{additional_regularity}

\medskip

In this subsection, we will show that the identity \eqref{Theorem1identity2} in Theorem \ref{Theorem1} can be improved to hold for \emph{all times $t \in I$} if one is willing to take slightly more regular initial data $\Gamma(0)$. Furthermore, in this case, the solution $\big[\Gamma\,\big]^{\omega^{*}}$ is continuous in time as opposed to being only bounded and measurable in time. More precisely, we will prove:

\begin{theorem}
\label{Theorem2}
Let $\alpha_0>\alpha>\frac{3}{4}$ be given.
There exists a constant $\widetilde{C_1}=\widetilde{C_1}(\alpha,\alpha_0)>0$ and a universal constant $\widetilde{C_2}>0$ such that, whenever $\xi,\xi'>0$ with $\xi \in (0,\xi')$ and $T>0$ satisfy the following assumptions:
\begin{itemize}
\item[i)] $\frac{\widetilde{C_1}T}{\xi'}<1$
\item[ii)] $\frac{\widetilde{C_2} \,\xi}{\xi'}<1,$
\end{itemize}
and whenever $\Gamma(0) \in \mathcal{H}^{\alpha_0}_{\xi'}$, then there exists $\big[\Gamma\,\big]^{\omega^{*}} \in C_{t \in I} L^2(\Omega^{*}) \mathcal{H}^{\alpha}_{\xi}$ such that, for all  $t \in I$ and for all $\xi_0>0:$
\begin{equation}
\label{Theorem2identity1}
\big\|\big[\Gamma\,\big]^{\omega^{*}}(t)-\mathcal{U}(t)\,\Gamma(0)+i \int_{0}^{t}\,\mathcal{U}(t-s)\,\big[\widehat{B}\,\big]^{\omega^{*}} \big[\Gamma\,\big]^{\omega^{*}}(s)\,ds\,\big\|_{L^2(\Omega^{*}) \mathcal{H}^{\alpha}_{\xi_0}}=0.
\end{equation}
In particular, for all $t \in I$, it is the case that for all $k \in \mathbb{N}$:
\begin{equation}
\label{Theorem2identity2}
\big\|\big(\big[\Gamma\,\big]^{\omega^{*}}(t)-\mathcal{U}(t)\,\Gamma(0)+i \int_{0}^{t}\,\mathcal{U}(t-s)\,
\big[\widehat{B}\,\big]^{\omega^{*}}\big[\Gamma\,\big]^{\omega^{*}}(s)\,ds\,\big)^{(k)}\big\|_{L^2(\Omega^{*}) H^{\alpha}(\Lambda^k \times \Lambda^k)}
\end{equation}
\begin{equation}
\notag
=0.
\end{equation}
Moreover, $\big[\Gamma\,\big]^{\omega^{*}}$ satisfies the a priori bound:
\begin{equation}
\label{Theorem2_a_priori_bound}
\big\|\big[\Gamma\,\big]^{\omega^{*}}\big\|_{L^{\infty}_{t \in I} L^2(\Omega^{*}) \mathcal{H}^{\alpha}_{\xi}} \lesssim_{\,\xi,\,\xi',\alpha,\alpha_0} \big\|\Gamma(0)\big\|_{\mathcal{H}^{\alpha}_{\xi'}}.
\end{equation}
\end{theorem}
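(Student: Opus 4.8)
The plan is to mimic the proof of Theorem \ref{Theorem1}, but to upgrade the convergence from $L^\infty_{t\in I}$ to $C_{t\in I}$ by establishing equicontinuity in time of the approximating sequence $\big(\big[\Gamma_N\big]^{\omega^*}\big)_N$, and then invoking the (probabilistic) Arzel\`a--Ascoli-type argument encoded in the spaces \eqref{Space2C}. First I would run the truncation construction exactly as in Subsection \ref{A local-in-time result for initial data of regularity alpha}, but with the constants $\widetilde{C_1},\widetilde{C_2}$ adapted so that the Duhamel estimate of Proposition \ref{DuhamelEstimate1} can be applied at the higher regularity $\alpha_0$: since $\Gamma(0)\in\mathcal{H}^{\alpha_0}_{\xi'}$, all the Cauchy-sequence arguments (Propositions \ref{CauchySequence1}, \ref{limit2}, \ref{CauchySequence2}, and Theorem \ref{Theorem1}) go through verbatim and produce $\big[\Gamma\,\big]^{\omega^*}\in L^\infty_{t\in I}L^2(\Omega^*)\mathcal{H}^{\alpha_0}_{\xi}$ satisfying \eqref{Theorem2identity1} for almost every $t$, together with the a priori bound \eqref{Theorem2_a_priori_bound} (in fact with $\alpha$ replaced by $\alpha_0$, which is stronger). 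So the only genuinely new content is continuity in time at the lower regularity $\alpha$.

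The key tool for that is the difference estimate \eqref{DifferenceBound_Introduction} from Lemma \ref{DifferenceBound}: for $\beta_0>\beta$ one has $\|(\mathcal{U}^{(k)}(t+\delta)-\mathcal{U}^{(k)}(t))\sigma_0^{(k)}\|_{H^\beta}\le C\delta^r\|\sigma_0^{(k)}\|_{H^{\beta_0}}$. I would apply this with $\beta=\alpha$, $\beta_0=\alpha_0$ to each free-evolution factor appearing in the Duhamel terms $Duh_{\,j}^{\,\omega^*}(\Gamma(0))^{(k)}$, telescoping the difference $Duh_{\,j}^{\,\omega^*}(\Gamma(0))^{(k)}(t+\delta)-Duh_{\,j}^{\,\omega^*}(\Gamma(0))^{(k)}(t)$ into a sum over which factor the increment is applied to (plus the contribution of the shifted time integration region, which is bounded directly by $\delta$ times the integrand). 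Each term is then controlled by the randomized estimate Theorem \ref{RandomizedEstimate1}/Proposition \ref{RandomizedEstimate2} together with \eqref{DifferenceBound_Introduction}, yielding a bound of the form $C\delta^{r}(C'T)^j C''^k \|\gamma_0^{(k+j)}\|_{H^{\alpha_0}}$ after using \eqref{Integral_Identity} and Stirling's formula \eqref{Stirling's formula} exactly as in the proof of Proposition \ref{DuhamelEstimate1}. Summing over $j$ and then against $\xi^k$ (this is where assumptions i) and ii) on $\widetilde{C_1},\widetilde{C_2},\xi,\xi',T$ are used, exactly as \eqref{xi_prime}--\eqref{xi}) gives
\begin{equation}
\notag
\big\|\big[\Gamma_N\big]^{\omega^*}(t+\delta)-\big[\Gamma_N\big]^{\omega^*}(t)\big\|_{L^2(\Omega^*)\mathcal{H}^{\alpha}_{\xi}}\le C\,\delta^{r}\,\big\|\Gamma(0)\big\|_{\mathcal{H}^{\alpha_0}_{\xi'}}
\end{equation}
uniformly in $N$ and in $t$, which is precisely the equicontinuity in the sense of Definition \ref{equicontinuousOmegastar}.

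With uniform equicontinuity plus the strong $L^\infty_{t\in I}L^2(\Omega^*)\mathcal{H}^\alpha_\xi$ convergence $\big[\Gamma_N\big]^{\omega^*}\to\big[\Gamma\,\big]^{\omega^*}$ already in hand from Proposition \ref{CauchySequence2}, the limit inherits the equicontinuity bound, hence $t\mapsto\big\|\big[\Gamma\,\big]^{\omega^*}(t)\big\|_{L^2(\Omega^*)\mathcal{H}^\alpha_\xi}$ is continuous, i.e. $\big[\Gamma\,\big]^{\omega^*}\in C_{t\in I}L^2(\Omega^*)\mathcal{H}^\alpha_\xi$ as claimed. Finally, since $\big[\Gamma\,\big]^{\omega^*}$ and both $\mathcal{U}(t)\Gamma(0)$ and $t\mapsto\int_0^t\mathcal{U}(t-s)\big[\widehat B\,\big]^{\omega^*}\big[\Gamma\,\big]^{\omega^*}(s)\,ds$ are now continuous in $t$ with values in $L^2(\Omega^*)\mathcal{H}^\alpha_{\xi_0}$, the almost-everywhere identity \eqref{Theorem1identity1} of Theorem \ref{Theorem1} promotes to hold for \emph{every} $t\in I$, giving \eqref{Theorem2identity1} and then \eqref{Theorem2identity2}; the a priori bound \eqref{Theorem2_a_priori_bound} is just \eqref{Gamma_omega_star_bound} (with $\alpha$ in place of $\alpha_0$, using $\mathcal{H}^{\alpha_0}_{\xi'}\hookrightarrow\mathcal{H}^{\alpha}_{\xi'}$). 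I expect the main obstacle to be the bookkeeping in the telescoping step: one must apply \eqref{DifferenceBound_Introduction} to a single $\mathcal{U}^{(k+\ell)}$ factor while keeping the remaining factors unitary and making sure the randomized estimate still applies to the resulting (non-unitarily-evolved) density matrix at regularity $\alpha$ — here one must check that the relevant intermediate density matrix carries only random parameters $\omega_{k+\ell+1},\dots$, independent of $\omega_{k+1}$, so that Proposition \ref{RandomizedEstimate2} is legitimately applicable, exactly the point that was already delicate in Propositions \ref{limit2} and \ref{theta_omega_star_equation}. A secondary technical point is that $r<1$ in \eqref{DifferenceBound_Introduction}, so the equicontinuity modulus is $\delta^r$ rather than linear; this is harmless since we only need continuity, not Lipschitz continuity.
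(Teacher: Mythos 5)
Your proposal is correct and follows essentially the same route as the paper: equicontinuity of $\big(\big[\Gamma_N\big]^{\omega^{*}}\big)_N$ via Lemma \ref{DifferenceBound} applied at the gap $\alpha_0>\alpha$, splitting the Duhamel difference into the propagator-increment term over $[0,t]$ plus the shifted integration region $[t,t+\delta]$, then passing continuity to the limit and to each term of the Duhamel identity (using Proposition \ref{RandomizedEstimate2} and the independence structure of the random parameters, exactly the delicate point you flag) so that the a.e.\ identity of Theorem \ref{Theorem1} holds for every $t$. The only cosmetic remark is that no genuine telescoping over factors is needed, since only the outermost free evolution $\mathcal{U}^{(k)}(t-t_1)$ and the outer integration limit depend on $t$; your decomposition then reduces to the paper's two terms $\mathcal{I}+\mathcal{II}$.
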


\begin{remark}
\label{strong solution}
In the terminology used in the study of nonlinear dispersive equations \cite{Tao}, the solution $\big[\Gamma\,\big]^{\omega^{*}}$ in Theorem \ref{Theorem2} can be thought of as a \textbf{strong solution} to the independently randomized GP hierarchy, whereas the solution in Theorem \ref{Theorem1} can be thought of as a \textbf{weak solution}. In other words, by adding additional regularity in the initial data $\Gamma(0)$, it is possible to upgrade the weak solution to a strong solution. 
\end{remark}

The following definition will be useful in the discussion that follows:

\begin{definition}
\label{equicontinuousOmegastar}
A sequence $(F_N)_{N \in \mathbb{N}}$ in $C_{t \in I}L^2(\Omega^{*})\mathcal{H}^{\alpha}_{\xi}$ is said to be \textbf{equicontinuous in} $L^2(\Omega^{*})\mathcal{H}^{\alpha}_{\xi}$ if for all $\epsilon>0$, there exists $\delta>0$, such that for all $N \in \mathbb{N}$, and for all $t_1, t_2 \in I$,  $|t_1-t_2| \leq \delta$ implies that $\big\|F_N(t_1)-F_N(t_2)\big\|_{L^2(\Omega^{*}) \mathcal{H}^{\alpha}_{\xi}} \leq \epsilon.$
\end{definition}

In the proof of Theorem \ref{Theorem2}, we will use the following result:

\begin{lemma}
\label{DifferenceBound}
Let us fix $\beta_0>\beta>0$. Suppose that $k \in \mathbb{N}$ and suppose that $\sigma_0^{(k)} \in H^{\beta_0}(\Lambda^k \times \Lambda^k)$. Then, there exist constants $C=C(\beta,\beta_0)>0$ and $r=r(\beta,\beta_0)>0$, independent of $k$ such that for all $t \in \mathbb{R}$ and  $\delta>0$, the following bound holds:
$$\big\|\big(\,\mathcal{U}^{(k)}(t+\delta)-\mathcal{U}^{(k)}(t)\big) \sigma_0^{(k)}\big\|_{H^{\beta}(\Lambda^k \times \Lambda^k)} \leq C \delta^{r} \, \cdot \, \big\|\sigma_0^{(k)}\big\|_{H^{\beta_0}(\Lambda^k \times \Lambda^k)}.$$

\end{lemma}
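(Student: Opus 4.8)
The plan is to reduce the claim to a single-frequency estimate via the Fourier transform and the definition of $S^{(k,\beta)}$. First I would pass to the Fourier side: by Plancherel and the definitions \eqref{free_evolution} and \eqref{FractionalDerivative}, for each pair of frequency vectors $(\vec{\xi}_k,\vec{\xi}'_k)$ the Fourier coefficient of $\mathcal{U}^{(k)}(t)\sigma_0^{(k)}$ is
$e^{-it(|\vec{\xi}_k|^2-|\vec{\xi}'_k|^2)}\,\widehat{\sigma}_0^{(k)}(\vec{\xi}_k;\vec{\xi}'_k)$,
where I abbreviate $|\vec{\xi}_k|^2 = \sum_{j=1}^k |\xi_j|^2$. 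Hence
$$\big\|\big(\mathcal{U}^{(k)}(t+\delta)-\mathcal{U}^{(k)}(t)\big)\sigma_0^{(k)}\big\|_{H^\beta}^2 = \sum_{\vec{\xi}_k,\vec{\xi}'_k} \langle\vec{\xi}_k\rangle^{2\beta}\langle\vec{\xi}'_k\rangle^{2\beta}\, \big|e^{-i\delta(|\vec{\xi}_k|^2-|\vec{\xi}'_k|^2)}-1\big|^2\, \big|\widehat{\sigma}_0^{(k)}(\vec{\xi}_k;\vec{\xi}'_k)\big|^2,$$
using $|e^{-i(t+\delta)m}-e^{-itm}| = |e^{-i\delta m}-1|$, so the bound is uniform in $t$. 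Here $\langle\vec{\xi}_k\rangle^\beta$ is shorthand for $\prod_{j=1}^k\langle\xi_j\rangle^\beta$.

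The heart of the matter is the elementary inequality $|e^{i\theta}-1| \leq 2^{1-s}|\theta|^{s}$ for all $\theta\in\mathbb{R}$ and all $s\in[0,1]$, which follows by interpolating between $|e^{i\theta}-1|\leq 2$ and $|e^{i\theta}-1|\leq|\theta|$. Apply this with $\theta = \delta(|\vec{\xi}_k|^2-|\vec{\xi}'_k|^2)$ and a parameter $s\in(0,1]$ to be chosen:
$$\big|e^{-i\delta(|\vec{\xi}_k|^2-|\vec{\xi}'_k|^2)}-1\big|^2 \leq 4^{1-s}\,\delta^{2s}\,\big||\vec{\xi}_k|^2-|\vec{\xi}'_k|^2\big|^{2s} \leq 4^{1-s}\,\delta^{2s}\,\big(|\vec{\xi}_k|^2+|\vec{\xi}'_k|^2\big)^{2s}.$$
Then I would absorb the frequency factor $\big(|\vec{\xi}_k|^2+|\vec{\xi}'_k|^2\big)^{2s}$ into the gap between the $H^{\beta_0}$ and $H^{\beta}$ weights. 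Since $|\vec{\xi}_k|^2+|\vec{\xi}'_k|^2 \leq \sum_{j}\langle\xi_j\rangle^2 + \sum_j\langle\xi'_j\rangle^2$, and using the product structure of the weights together with the crude bound that a sum of $2k$ positive terms raised to the power $2s$ is controlled by $(2k)^{2s}$ times the corresponding product-type weight, one gets
$$\langle\vec{\xi}_k\rangle^{2\beta}\langle\vec{\xi}'_k\rangle^{2\beta}\big(|\vec{\xi}_k|^2+|\vec{\xi}'_k|^2\big)^{2s} \lesssim_{k,s} \langle\vec{\xi}_k\rangle^{2\beta_0}\langle\vec{\xi}'_k\rangle^{2\beta_0}$$
provided $\beta + s \leq \beta_0$, i.e. choosing $s := \min(1,\beta_0-\beta)>0$. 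The $k$-dependence coming from the $(2k)^{2s}$ factor is a nuisance that the statement claims to avoid; the cleaner route is to note that $\big(|\vec{\xi}_k|^2+|\vec{\xi}'_k|^2\big)^{s} \le \sum_{j=1}^k\langle\xi_j\rangle^{2s} + \sum_{j=1}^k\langle\xi'_j\rangle^{2s} \le \prod_{j=1}^k\langle\xi_j\rangle^{2s}\prod_{j=1}^k\langle\xi'_j\rangle^{2s}$ (each factor is $\ge 1$), which gives the product weight bound with constant exactly $1$, hence no $k$-dependence. This is the step I expect to require the most care — getting a $k$-independent constant rather than settling for $C(k)$.

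Combining, with $r := s = \min(1,\beta_0-\beta)$ and $C := 2$,
$$\big\|\big(\mathcal{U}^{(k)}(t+\delta)-\mathcal{U}^{(k)}(t)\big)\sigma_0^{(k)}\big\|_{H^\beta}^2 \leq 4\,\delta^{2r}\sum_{\vec{\xi}_k,\vec{\xi}'_k}\langle\vec{\xi}_k\rangle^{2\beta_0}\langle\vec{\xi}'_k\rangle^{2\beta_0}\big|\widehat{\sigma}_0^{(k)}(\vec{\xi}_k;\vec{\xi}'_k)\big|^2 = 4\,\delta^{2r}\,\big\|\sigma_0^{(k)}\big\|_{H^{\beta_0}}^2,$$
and taking square roots yields the lemma. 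Finally I would double-check the edge case $\delta$ large, where the bound is only useful for $\delta\lesssim 1$ anyway but remains valid since the left side is trivially $\le 2\|\sigma_0^{(k)}\|_{H^\beta}\le 2\|\sigma_0^{(k)}\|_{H^{\beta_0}}$; one can merge the two regimes by adjusting $C$ if a globally clean statement is wanted, though as stated the inequality with $C\delta^r$ holds for all $\delta>0$ from the interpolation bound directly.
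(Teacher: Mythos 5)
Your overall strategy is the same as the paper's: pass to the Fourier side (the paper does the equivalent reduction $\mathcal{U}^{(k)}(t+\delta)-\mathcal{U}^{(k)}(t)=\mathcal{U}^{(k)}(t)\big(\mathcal{U}^{(k)}(\delta)-I^{(k)}\big)$ plus unitarity), interpolate the multiplier between $|e^{i\theta}-1|\le 2$ and $|e^{i\theta}-1|\lesssim|\theta|$, and absorb the resulting quadratic frequency factor into the gap $\beta_0-\beta$. However, the absorption step as you wrote it contains two errors. First, the inequality $\sum_{j=1}^k\langle\xi_j\rangle^{2s}+\sum_{j=1}^k\langle\xi'_j\rangle^{2s}\le\prod_{j=1}^k\langle\xi_j\rangle^{2s}\prod_{j=1}^k\langle\xi'_j\rangle^{2s}$ is false: taking all frequencies equal to zero, the left side is $2k$ and the right side is $1$; a sum of $2k$ numbers each at least $1$ is not dominated by their product. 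The correct $k$-independent route is the one the paper uses: $|\vec{\xi}_k|^2+|\vec{\xi}'_k|^2\le\prod_{j=1}^k\langle\xi_j\rangle^{2}\cdot\prod_{j=1}^k\langle\xi'_j\rangle^{2}$, which holds with constant exactly $1$ because expanding the product yields every term of the sum plus nonnegative terms; one then raises this inequality to the relevant power and never passes through the intermediate sum.

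Second, your exponent bookkeeping is off by a factor of $2$. In the squared $H^\beta$ norm the multiplier bound contributes $(|\vec{\xi}_k|^2+|\vec{\xi}'_k|^2)^{2s}\le\prod_j\langle\xi_j\rangle^{4s}\prod_j\langle\xi'_j\rangle^{4s}$, so absorbing it into $\prod_j\langle\xi_j\rangle^{2(\beta_0-\beta)}\prod_j\langle\xi'_j\rangle^{2(\beta_0-\beta)}$ requires $2s\le\beta_0-\beta$, not $s\le\beta_0-\beta$. With your choice $s=\min(1,\beta_0-\beta)$, the displayed inequality $\langle\vec{\xi}_k\rangle^{2\beta}\langle\vec{\xi}'_k\rangle^{2\beta}(|\vec{\xi}_k|^2+|\vec{\xi}'_k|^2)^{2s}\lesssim\langle\vec{\xi}_k\rangle^{2\beta_0}\langle\vec{\xi}'_k\rangle^{2\beta_0}$ already fails for $k=1$, $\xi'_1=0$, $|\xi_1|\to\infty$: the left side grows like $\langle\xi_1\rangle^{2\beta+4s}$ while the right side is $\langle\xi_1\rangle^{2\beta+2(\beta_0-\beta)}$, and $4s>2(\beta_0-\beta)$ whenever $\beta_0-\beta<2$. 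The correct choice is $r=s=\min\{1,(\beta_0-\beta)/2\}$, which is exactly what the paper takes ($r=(\beta_0-\beta)/2$ if $\beta_0\le\beta+2$, and $r=1$ otherwise). Since the lemma only asserts the existence of some $r(\beta,\beta_0)>0$, both slips are repairable without changing your architecture, but as written the key display and its justification are incorrect.
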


\begin{proof}(of Lemma \ref{DifferenceBound})

We first observe that, by the semigroup property:
$$\mathcal{U}^{(k)}(t+\delta)=\,\mathcal{U}^{(k)}(t) \, \mathcal{U}^{(k)}(\delta).$$
Consequently, by unitarity of $\mathcal{U}^{(k)}(t)$:
$$\big\|\big(\,\mathcal{U}^{(k)}(t+\delta)-\mathcal{U}^{(k)}(t)\big) \sigma_0^{(k)}\big\|_{H^{\beta}(\Lambda^k \times \Lambda^k)}=\big\|\big(\,\mathcal{U}^{(k)}(\delta)-I^{(k)}\big) \sigma_0^{(k)}\big\|_{H^{\beta}(\Lambda^k \times \Lambda^k)}.$$
Here, by $I^{(k)}$, we denote the identity operator on density matrices of order $k$.
We observe that:
\begin{equation}
\label{difference12}
\Big(\big(\,\mathcal{U}^{(k)}(\delta)-I^{(k)}\big) \sigma_0^{(k)} \Big)\,\widehat{\,}\,\,(\vec{\xi}_k; \vec{\xi}'_k)=\big(e^{-i\delta(|\vec{\xi}_k|^2-|\vec{\xi}'_k|^2)}-1\big) \cdot \widehat{\sigma}_0^{(k)}(\vec{\xi}_k;\vec{\xi}'_k).
\end{equation}
Let us note that:
\begin{equation}
\label{difference1}
\big|e^{-i\delta(|\vec{\xi}_k|^2-|\vec{\xi}'_k|^2)}-1\big| \leq 2.
\end{equation}
Moreover, we can use the Mean Value Theorem to deduce that:
\begin{equation}
\label{difference2}
e^{-i\delta(|\vec{\xi}_k|^2-|\vec{\xi}'_k|^2)}-1= \Big(\cos\big(\delta(|\vec{\xi}_k|^2-|\vec{\xi}'_k|^2)\big)-1\Big) - i \sin \big(\delta (|\vec{\xi}_k|^2-|\vec{\xi}'_k|^2)\big)
\end{equation}
$$=\mathcal{O}\Big(\delta (|\vec{\xi}_k|^2-|\vec{\xi}'_k|^2)\Big)=\mathcal{O}\Big(\delta (|\vec{\xi}_k|^2+|\vec{\xi}'_k|^2)\Big)=$$
$$=\mathcal{O}\Big(\delta \cdot \langle \xi_1 \rangle^{2} \cdots \langle \xi_k \rangle^{2} \cdot \langle \xi'_1 \rangle^{2} \cdots \langle \xi'_k \rangle^{2} \Big).
$$
Here, the implied constant is independent of $k$.
The latter estimate follows directly from the fact that:
$$|\vec{\xi}_k|^2+|\vec{\xi}'_k|^2 \leq \langle \xi_1 \rangle^2 \cdots \langle \xi_k \rangle^2 \cdot \langle \xi'_1 \rangle^2 \cdots \langle \xi'_k \rangle^2,$$
with no implied constant in the estimate.

We now interpolate between the estimates in \eqref{difference1} and \eqref{difference2}, and use Plancherel's theorem in \eqref{difference12} in order to conclude that, for all $r \in [0,1]$:
$$\big\|\big(\,\mathcal{U}^{(k)}(\delta)-I^{(k)}\big) \sigma_0^{(k)}\big\|_{H^{\beta}(\Lambda^k \times \Lambda^k)} \leq C(r) \cdot \delta^{r} \cdot \big\|\sigma_0^{(k)}\big\|_{H^{\beta+2r}(\Lambda^k \times \Lambda^k)}.$$
Here, $C(r)$ is a positive constant depending on $r$.
The lemma now follows if we take $r:=\frac{\beta_0-\beta}{2}$ whenever $\beta_0 \leq \beta +2$ and $r=1$ otherwise. In each case, we take $C=C(r)$, depending on $r$. We note that both $r$ and $C$ are independent of $k$.
\end{proof}

\begin{proof}(of Theorem \ref{Theorem2})
We will show that $\big[\Gamma\,\big]^{\omega^{*}}$, constructed in Proposition \ref{CauchySequence2}, has the wanted properties. As was noted above, the time interval might possibly be smaller due to the additional dependence of $T$ on $\alpha_0$. Furthermore, $\xi'$ should be chosen to be smaller in terms of $\xi$ due to the additional $\alpha_0$ dependence. The a priori bound \eqref{Theorem2_a_priori_bound} follows from the proof of \eqref{Gamma_omega_star_bound}. The $\alpha_0$ dependence in the implied constant comes from the fact that $T$ depends on $\alpha_0$. Similarly, we might have to choose $\xi$ to be smaller in terms of $\xi'$. For simplicity of exposition, we will not explicitly mention these distinctions in the discussion below.

As we will see, the continuity in time will follow as a result of the assumption of additional regularity. We note that \eqref{Theorem2identity1} implies \eqref{Theorem2identity2}. Moreover, it suffices to prove \eqref{Theorem2identity1} for $\xi_0 \in (0,\xi)$, where $\xi$ is as in the statement of the theorem. Let us henceforth in the proof consider such a $\xi_0$.

\medskip

The proof of the theorem will be divided into several steps.

\medskip

\textbf{Step 1:} The sequence $\big(\big[\Gamma_N\,\big]^{\omega^{*}}\big)_N$, constructed as in \eqref{GammaN}, is equicontinuous in $L^2(\Omega^{*}) \mathcal{H}^{\alpha}_{\xi}$.

Let us recall the construction of $\big(\big[\Gamma_N\,\big]^{\omega^{*}}\big)_N$ from \eqref{GammaN}. This is done on the time interval $[0,T]$, for $T$ satisfying \eqref{xi}, where $\xi$ and $\xi'$ are assumed to satisfy \eqref{xi_prime}. This construction applies if we take $T$ to be smaller, due to the additional $\alpha_0$ dependence. We can take $T$ to be a function of $\xi,\xi',\alpha,\alpha_0$ by the assumptions $i)$ and $ii)$ of Theorem \ref{Theorem2} .

\medskip

Let us fix $N \in \mathbb{N}$ and let us consider $\big[\Gamma_N\,\big]^{\omega^{*}}$. Let us take $k \in \mathbb{N}$, $t \in I$ and $\delta>0$ small. We observe that:
\begin{equation}
\label{GammaNomega_delta}
\big\|\big(\big[\Gamma_N\,\big]^{\omega^{*}}\big)^{(k)}(t+\delta)-\big(\big[\Gamma_N\,\big]^{\omega^{*}}\big)^{(k)}(t)\big\|_{L^2(\Omega^{*}) H^{\alpha}(\Lambda^k \times \Lambda^k)}
\end{equation}
$$\leq \big\|\sum_{j=0}^{N-k} \big\{Duh_{\,j}^{\,\omega^{*}}(\Gamma_N(0))^{(k)}(t+\delta)- Duh_{\,j}^{\,\omega^{*}}(\Gamma_N(0))^{(k)}(t) \big\}\big\|_{L^2(\Omega^{*}) H^{\alpha}(\Lambda^k \times \Lambda^k)}$$
$$\leq \sum_{j=0}^{N-k} \big\|Duh_{\,j}^{\,\omega^{*}}(\Gamma_N(0))^{(k)}(t+\delta)- Duh_{\,j}^{\,\omega^{*}}(\Gamma_N(0))^{(k)}(t)\big\|_{L^2(\Omega^{*}) H^{\alpha}(\Lambda^k \times \Lambda^k)}.$$
By construction:
\begin{equation}
\notag
%\label{difference_delta}
Duh_{\,j}^{\,\omega^{*}}(\Gamma_N(0))^{(k)}(t+\delta)- Duh_{\,j}^{\,\omega^{*}}(\Gamma_N(0))^{(k)}(t)=
\end{equation}
$$=(-i)^j \int_0^t \int_0^{t_1} \cdots \int_0^{t_{j-1}} \big(\,\mathcal{U}^{(k)}(t+\delta-t_1)-\,\mathcal{U}^{(k)}(t-t_1)\big) \big[B^{(k+1)}\big]^{\omega_{k+1}} \, \mathcal{U}^{(k+1)}(t_1-t_2) \, \big[B^{(k+2)}\big]^{\omega_{k+2}} \cdots $$
$$\cdots \,\mathcal{U}^{(k+j-1)}(t_{j-1}-t_j) \, \big[B^{(k+j)}\big]^{\omega_{k+j}} \, \mathcal{U}^{(k+j)}(t_j) \gamma_0^{(k+j)}\, dt_j \, dt_{j-1} \,\cdots\, dt_2 \, dt_1$$
$$+(-i)^j \int_{t}^{t+\delta} \int_0^{t_1} \cdots \int_0^{t_{j-1}} \,\mathcal{U}^{(k)}(t+\delta-t_1) \, \big[B^{(k+1)}\big]^{\omega_{k+1}} \, \mathcal{U}^{(k+1)}(t_1-t_2) \, \big[B^{(k+2)}\big]^{\omega_{k+2}} \cdots $$
$$\cdots \,\mathcal{U}^{(k+j-1)}(t_{j-1}-t_j) \, \big[B^{(k+j)}\big]^{\omega_{k+j}} \, \mathcal{U}^{(k+j)}(t_j) \gamma_0^{(k+j)}\, dt_j \, dt_{j-1} \,\cdots\, dt_2 \, dt_1$$
$$=: \mathcal{I} + \mathcal{II}.$$
Let us now estimate the quantities $\mathcal{I}$ and $\mathcal{II}$ in the norm $\big\|\cdot\big\|_{L^2(\Omega^{*})H^{\alpha}(\Lambda^k \times \Lambda^k)}.$

\medskip

By Minkowski's inequality:

$$\big\|\,\mathcal{I}\,\big\|_{L^2(\Omega^{*}) H^{\alpha}(\Lambda^k \times \Lambda^k)} \leq \int_0^t \int_0^{t_1} \cdots \int_0^{t_{j-1}} \big\| \big(\,\mathcal{U}^{(k)}(t+\delta-t_1)-\,\mathcal{U}^{(k)}(t-t_1)\big) \, \big[B^{(k+1)}\big]^{\omega_{k+1}} \, \mathcal{U}^{(k+1)}(t_1-t_2)$$
$$\big[B^{(k+2)}\big]^{\omega_{k+2}} \cdots \,\mathcal{U}^{(k+j-1)}(t_{j-1}-t_j) \, \big[B^{(k+j)}\big]^{\omega_{k+j}} \, \mathcal{U}^{(k+j)}(t_j) \gamma_0^{(k+j)}\big\|_{L^2(\Omega^{*}) H^{\alpha}(\Lambda^k \times \Lambda^k)}\, dt_j \, dt_{j-1} \,\cdots\, dt_2 \, dt_1,$$
which, by Lemma \ref{DifferenceBound} is:
$$\lesssim_{\,\alpha,\,\alpha_0} \delta^{r} \cdot \int_0^t \int_0^{t_1} \cdots \int_0^{t_{j-1}} \big\| \big[B^{(k+1)}\big]^{\omega_{k+1}} \, \mathcal{U}^{(k+1)}(t_1-t_2) \, \big[B^{(k+2)}\big]^{\omega_{k+2}} \cdots $$
$$\cdots \,\mathcal{U}^{(k+j-1)}(t_{j-1}-t_j) \, \big[B^{(k+j)}\big]^{\omega_{k+j}} \, \mathcal{U}^{(k+j)}(t_j) \gamma_0^{(k+j)}\big\|_{L^2(\Omega^{*}) H^{\alpha_0}(\Lambda^k \times \Lambda^k)}\, dt_j \, dt_{j-1} \,\cdots\, dt_2 \, dt_1$$
for some $r=r(\alpha,\alpha_0) \in (0,1)$.
By iteratively applying Theorem \ref{RandomizedEstimate1}, as in the proof of Proposition \ref{DuhamelEstimate1}, and by applying the identity \eqref{Integral_Identity}, it follows that this quantity is:
$$\leq \delta^{r} \cdot \frac{T^j}{j!}\, \cdot \, (C_0')^j \, \cdot \, k \, \cdot \, (k+1) \,  \cdots  \, (k+j-1) \, \cdot \big\|\gamma_0^{(k+j)}\big\|_{H^{\alpha_0}(\Lambda^{k+j} \times \Lambda^{k+j})}.$$
Here, $C_0'=C_0'(\alpha_0)>0$ is the implied constant which we obtain for the regularity $\alpha_0$ in Theorem \ref{RandomizedEstimate1}. The estimates in the proof of Proposition \ref{DuhamelEstimate1} further imply that this quantity is:
\begin{equation}
\label{I}
\leq \delta^r \cdot (C_1' T)^j \cdot \, ({C'_2})^k \cdot \big\| \gamma_0^{(k+j)}\big\|_{H^{\alpha_0}(\Lambda^{k+j} \times \Lambda^{k+j})}
\end{equation}
for some constant $C_1'=C_1'(\alpha_0)>0$ and for some universal constant $C'_2>0$ (which can be taken to be the universal constant $C_2$ from Proposition \ref{DuhamelEstimate1}). 

\medskip

Let us fix $t \in I$ and $\delta>0$ such that $t+\delta \in I$.
We note that:

\begin{equation}
\label{integral_bound}
\int_{0}^{t+\delta} \int_{0}^{t_1} \cdots \int_{0}^{t_{j-1}} \,dt_j\,dt_{j-1} \cdots \,dt_2 \, dt_1 = \int_{t}^{t+\delta} \frac{t_1^{j-1}}{(j-1)!} \, dt_1 \leq \delta \cdot \frac{T^{j-1}}{(j-1)!}.
\end{equation}
%$$=\frac{(t+\delta)^j}{j!}-\frac{t^j}{j!}.$$
By the Mean Value Theorem, it follows that $(t+\delta)^j - t^j= \delta \cdot j \cdot t_0^{j-1}$ for some $t_0 \in (t,t+\delta).$

By Minkowski's inequality:
$$\big\|\,\mathcal{II}\,\big\|_{L^2(\Omega^{*}) H^{\alpha}(\Lambda^k \times \Lambda^k)} \leq 
\int_{t}^{t+\delta} \int_0^{t_1} \cdots \int_0^{t_{j-1}} \big\|\,\mathcal{U}^{(k)}(t+\delta-t_1) \, \big[B^{(k+1)}\big]^{\omega_{k+1}} \, \mathcal{U}^{(k+1)}(t_1-t_2) \, \big[B^{(k+2)}\big]^{\omega_{k+2}} \cdots $$
$$\cdots \,\mathcal{U}^{(k+j-1)}(t_{j-1}-t_j) \, \big[B^{(k+j)}\big]^{\omega_{k+j}} \, \mathcal{U}^{(k+j)}(t_j) \gamma_0^{(k+j)}\,\big\|_{L^2(\Omega^{*}) H^{\alpha}(\Lambda^k \times \Lambda^k)}\, dt_j \, dt_{j-1} \,\cdots\, dt_2 \, dt_1
$$
which by iteratively applying Theorem \ref{RandomizedEstimate1} is:
$$\leq \int_{t}^{t+\delta} \int_{0}^{t_1} \cdots \int_{0}^{t_{j-1}} C_0^j \, \cdot \, k \, \cdot \, (k+1) \cdots (k+j-1) \, \cdot \, \big\|\gamma_0^{(k+j)}\big\|_{H^{\alpha}(\Lambda^{k+j} \times \Lambda^{k+j})} \,dt_j \, dt_{j-1} \cdots \,dt_2 \, dt_1$$
$$\leq \int_{t}^{t+\delta} \int_{0}^{t_1} \cdots \int_{0}^{t_{j-1}} C_0^j \,\cdot\, k \, \cdot \, (k+j-1)^{j-1}\, \cdot \, \big\|\gamma_0^{(k+j)}\big\|_{H^{\alpha}(\Lambda^{k+j} \times \Lambda^{k+j})} \,dt_j \, dt_{j-1} \cdots \,dt_2 \, dt_1$$
for the implied constant $C_0=C_0(\alpha)$ obtained in Theorem \ref{RandomizedEstimate1} corresponding to the regularity $\alpha$.
By \eqref{integral_bound}, this quantity is:
$$\leq \delta \cdot \frac{T^{j-1}}{(j-1)!} \, \cdot  \,  C_0^j \,\cdot\, k \, \cdot \, (k+j-1)^{j-1} \, \cdot 
\big\|\gamma_0^{(k+j)}\big\|_{H^{\alpha}(\Lambda^{k+j} \times \Lambda^{k+j})}.$$
By arguing as in the estimate for $\big\|\,\mathcal{II}\,\big\|_{L^2(\Omega^{*})H^{\alpha}(\Lambda^k \times \Lambda^k)}$, it follows that, for the same universal constant $C'_2>0$, and for some $C_1=C_1(\alpha)$ (which is the same as the constant $C_1$ in Proposition \ref{DuhamelEstimate1}), we obtain:
$$\big\|\,\mathcal{II}\,\big\|_{L^2(\Omega^{*})H^{\alpha}(\Lambda^k \times \Lambda^k)} \leq \delta \cdot T^{j-1} \cdot C_0 \cdot C_1^{j-1} \cdot \,k \, \cdot \,  (C'_2)^k \cdot \big\|\gamma_0^{(k+j)}\big\|_{H^{\alpha}(\Lambda^{k+j} \times \Lambda^{k+j})}$$ 
\begin{equation}
\label{II}
\leq \frac{C_0 \delta}{C_1 T} \cdot (C_1T)^j \cdot \,(2C'_2)^k \cdot \big\|\gamma_0^{(k+j)}\big\|_{H^{\alpha}(\Lambda^{k+j} \times \Lambda^{k+j})}.
\end{equation}
We now define:
\begin{equation}
\label{C1tildeC2tilde}
\widetilde{C_1}:=\,\max\{C_1,C_1'\},\widetilde{C_2}:=2C'_2.
\end{equation}
We note that then $\widetilde{C_1}$ depends on $\alpha$ and on $\alpha_0$, and is independent of $j$ and $k$, whereas $\widetilde{C_2}$ is a universal constant. 

\medskip

Combining \eqref{I} and \eqref{II}, it follows that:
$$\big\| Duh_{\,j}^{\,\omega^{*}}(\Gamma_N(0))^{(k)}(t+\delta)- Duh_{\,j}^{\,\omega^{*}}(\Gamma_N(0))^{(k)}(t)\big\|_{L^2(\Omega^{*})H^{\alpha}(\Lambda^k \times \Lambda^k)}$$
\begin{equation}
\label{Difference_Estimate}
\lesssim_{\,\alpha,\,\alpha_0\,,\,T} \delta^{r} \cdot (\widetilde{C_1}T)^j \cdot \widetilde{C_2}^k \cdot \big\|\gamma_0^{(k+j)}\big\|_{H^{\alpha_0}(\Lambda^{k+j} \times \Lambda^{k+j})}.
\end{equation}
Here, we used the fact that that, by construction, $r \leq 1$.

From \eqref{GammaNomega_delta} and \eqref{Difference_Estimate}, it follows that:
\begin{equation}
\label{Difference_Estimate_42b}
\big\|\big[\Gamma_N\,\big]^{\omega^{*}}(t+\delta)-\big[\Gamma_N\,\big]^{\omega^{*}}(t)\big\|_{L^2(\Omega^{*}) \mathcal{H}^{\alpha}_{\xi}}=
\end{equation}
$$=\sum_{k=0}^{\infty} \,\xi^k \cdot  \big\|\big(\big[\Gamma_N\,\big]^{\omega^{*}}\big)^{(k)}(t+\delta)-\big(\big[\Gamma_N\,\big]^{\omega^{*}}\big)^{(k)}(t)\big\|_{L^2(\Omega^{*}) H^{\alpha}(\Lambda^k \times \Lambda^k)}$$
$$\lesssim_{\,\alpha,\,\alpha_0,\,T} \delta^{r} \cdot \sum_{k=0}^{\infty} \,\xi^k \cdot \sum_{j=0}^{N-k} (\widetilde{C_1}T)^j \cdot \widetilde{C_2}^k \cdot \big\|\gamma_0^{(k+j)}\big\|_{H^{\alpha_0}(\Lambda^{k+j} \times \Lambda^{k+j})}$$
$$= \delta^{r} \cdot \sum_{k=0}^{\infty} \sum_{j=0}^{N-k} \Big(\frac{\widetilde{C_2}\,\xi}{\xi'}\Big)^k \cdot \Big(\frac{\widetilde{C_1} \,T}{\xi'}\Big)^j \cdot \, (\xi')^{k+j} \, \cdot \big\|\gamma_0^{(k+j)}\big\|_{H^{\alpha_0}(\Lambda^{k+j} \times \Lambda^{k+j})}$$
$$\leq \delta^{r} \cdot \sum_{k=0}^{\infty} \sum_{j=0}^{N-k} \Big(\frac{\widetilde{C_2}\,\xi}{\xi'}\Big)^k \cdot \Big(\frac{\widetilde{C_1} \,T}{\xi'}\Big)^j \cdot \big\|\Gamma(0)\big\|_{\mathcal{H}^{\alpha_0}_{\xi'}}$$
$$\leq \delta^{r} \cdot \sum_{j=0}^{\infty} \Big(\frac{\widetilde{C_1} \,T}{\xi'}\Big)^j \cdot \sum_{k=0}^{\infty} \Big(\frac{\widetilde{C_2}\,\xi}{\xi'}\Big)^k \cdot \big\|\Gamma(0)\big\|_{\mathcal{H}^{\alpha_0}_{\xi'}}.$$
If we take $\xi,\xi'$, and $T$ to satisfy: $\frac{\widetilde{C_1} \,T}{\xi'}<1$ and $\frac{\widetilde{C_2}\,\xi}{\xi'}<1$, it follows that the above sum is:
$$\lesssim_{\,\xi,\,\xi',\,T,\,\alpha,\,\alpha_0} \delta^{r} \, \cdot \, \big\|\Gamma(0)\big\|_{\mathcal{H}^{\alpha_0}_{\xi'}}.$$
In particular, if the assumptions $i)$ and $ii)$ are satisfied with $\widetilde{C_1}$ and $\widetilde{C_2}$ as defined in \eqref{C1tildeC2tilde}, it follows that:
\begin{equation}
\label{equicontinuity}
\big\|\big[\Gamma_N\,\big]^{\omega^{*}}(t+\delta)-\big[\Gamma_N\,\big]^{\omega^{*}}(t)\big\|_{L^2(\Omega^{*})\mathcal{H}^{\alpha}_{\xi}} \lesssim_{\,\xi,\,\xi',\,\alpha,\,\alpha_0} \delta^{r} \, \cdot \, \big\|\Gamma(0)\big\|_{\mathcal{H}^{\alpha_0}_{\xi'}}.
\end{equation}
Here, we use the fact that $T$ can be chosen to be a function of $\xi,\xi',\alpha,\alpha_0$.

Since $\Gamma(0) \in \mathcal{H}^{\alpha_0}_{\xi'}$, the equicontinuity of $\big(\big[\Gamma_N\,\big]^{\omega^{*}}\big)_N$ in $L^2(\Omega^{*}) \mathcal{H}^{\alpha}_{\xi}$ now follows from \eqref{equicontinuity}.

\medskip

\textbf{Step 2:} The limit $\big[\Gamma\,\big]^{\omega^{*}}$ of $\big(\big[\Gamma_N\,\big]^{\omega^{*}}\big)_N$ belongs to $C_{t \in I} L^2(\Omega^{*}) \mathcal{H}^{\alpha}_{\xi}.$
\medskip

We recall that, from Proposition \ref{CauchySequence2}:

$$\big[\Gamma_N\,\big]^{\omega^{*}} \rightarrow \big[\Gamma\,\big]^{\omega^{*}}$$
as $N \rightarrow \infty$ in $L^{\infty}_{t \in I} L^2(\Omega^{*}) \mathcal{H}^{\alpha}_{\xi}.$
The fact that $\big[\Gamma\,\big]^{\omega^{*}} \in C_{t \in I} L^2(\Omega^{*}) \mathcal{H}^{\alpha}_{\xi}$ now follows from the equicontinuity result in Step 1.
In particular, the following bound holds by letting $N \rightarrow \infty$ in \eqref{equicontinuity}:

\begin{equation}
\label{equicontinuity2}
\big\|\big[\Gamma\,\big]^{\omega^{*}}(t+\delta)-\big[\Gamma\,\big]^{\omega^{*}}(t)\big\|_{L^2(\Omega^{*})\mathcal{H}^{\alpha}_{\xi}} \lesssim_{\,\xi,\,\xi',\,\alpha,\,\alpha_0} \delta^{r} \, \cdot \, \big\|\Gamma(0)\big\|_{\mathcal{H}^{\alpha_0}_{\xi'}}.
\end{equation}

\medskip

\textbf{Step 3:} $\mathcal{U}(t) \, \Gamma(0)$ belongs to $C_{t \in I} L^2(\Omega^{*}) \mathcal{H}^{\alpha}_{\xi_0}$.

\medskip

From Lemma \ref{DifferenceBound}, it follows that:

\begin{equation}
\notag
\big\|\,\mathcal{U}(t+\delta)\,\Gamma(0)-\mathcal{U}(t)\,\Gamma(0)\big\|_{L^2(\Omega^{*}) \mathcal{H}^{\alpha}_{\xi_0}} \lesssim_{\,\alpha,\,\alpha_0} \delta^r \, \cdot \,  \big\|\Gamma(0)\big\|_{\mathcal{H}^{\alpha_0}_{\xi_0}}.
\end{equation}
Here $r=r(\alpha,\alpha_0)>0$ is as above. Hence, $\big\|\,\mathcal{U}(t+\delta)\,\Gamma(0)-\mathcal{U}(t)\,\Gamma(0)\big\|_{L^2(\Omega^{*}) \mathcal{H}^{\alpha}_{\xi_0}}$ converges to zero as $\delta$ converges to zero since $\Gamma(0) \in \mathcal{H}^{\alpha_0}_{\xi_0} \subseteq \mathcal{H}^{\alpha_0}_{\xi'}$. It follows that $\mathcal{U}(t)\,\Gamma(0)$ belongs to $C_{t \in I} L^2(\Omega^{*}) \mathcal{H}^{\alpha}_{\xi_0}$.

\medskip

\textbf{Step 4:} $\big[\widehat{B}\,\big]^{\omega^{*}} \big[\Gamma\,\big]^{\omega^{*}}$ belongs to $C_{t \in I} L^2(\Omega^{*}) \mathcal{H}^{\alpha}_{\xi_0}$.

\medskip

We know from Proposition \ref{CauchySequence2} that $\big(\big[\Gamma\,\big]^{\omega^{*}}\big)^{(k)}$ does not depend on $\omega_2,\omega_3, \ldots, \omega_k$, whenever $k \geq 2$. In particular, we can use estimate \eqref{RandomizedEstimate2III} from Proposition \ref{RandomizedEstimate2} in order to deduce that: 

%for all $N \in \mathbb{N}$ and for all $k \in \mathbb{N}$:
%$$\big\| \big(\big[\widehat{B}\,\big]^{\omega^{*}} \big[\Gamma_N\,\big]^{\omega^{*}}\big)^{(k)}(t_1)-\big[\widehat{B}\,\big]^{\omega^{*}} \big[\Gamma_N\,\big]^{\omega^{*}}\big)^{(k)}(t_2)\big\|_{L^2(\Omega^{*}) H^{\alpha}(\Lambda^{k} \times \Lambda^{k})}=$$
%$$=\big\|\big[B_{k+1}\,\big]^{\omega_{k+1}}\Big( \big(\big[\Gamma_N\,\big]^{\omega^{*}}\big)^{(k+1)}(t_1)-\big(\big[\Gamma_N\,\big]^{\omega^{*}}\big)^{(k+1)}(t_2)\Big)\big\|_{L^2(\Omega^{*}) H^{\alpha}(\Lambda^{k} \times \Lambda^{k})}.$$
$$\big\|\big[\widehat{B}\,\big]^{\omega^{*}} \big[\Gamma\,\big]^{\omega^{*}}(t_1)-\big[\widehat{B}\,\big]^{\omega^{*}} \big[\Gamma\,\big]^{\omega^{*}}(t_2)\big\|_{L^2(\Omega^{*}) \mathcal{H}^{\alpha}_{\xi_0}} \lesssim_{\,\xi,\,\xi_0,\,\alpha} \big\|\big[\Gamma\,\big]^{\omega^{*}}(t_1)-\big[\Gamma\,\big]^{\omega^{*}}(t_2)\big\|_{L^2(\Omega^{*}) \mathcal{H}^{\alpha}_{\xi}}.$$
Since $\big[\Gamma\,\big]^{\omega^{*}}$ belongs to $C_{t \in I} L^2(\Omega^{*}) \mathcal{H}^{\alpha}_{\xi}$, it follows that $\big[\widehat{B}\,\big]^{\omega^{*}} \big[\Gamma\,\big]^{\omega^{*}}$ belongs to $C_{t \in I} L^2(\Omega^{*}) \mathcal{H}^{\alpha}_{\xi_0}$.

\medskip

\textbf{Step 5:} $\int_{0}^{t} \, \mathcal{U}(t-s) \, \big[\widehat{B}\,\big]^{\omega^{*}} \big[\Gamma\,\big]^{\omega^{*}}(s)\, ds$ belongs to $C_{t \in I} L^2(\Omega^{*}) \mathcal{H}^{\alpha}_{\xi_0}$.

\medskip

We note that:

$$\big\|\int_{0}^{t+\delta} \,\mathcal{U}(t+\delta-s) \, \big[\widehat{B}\,\big]^{\omega^{*}} \big[\Gamma\,\big]^{\omega^{*}}(s) \,ds - \int_{0}^{t} \,\mathcal{U}(t-s) \, \big[\widehat{B}\,\big]^{\omega^{*}} \big[\Gamma\,\big]^{\omega^{*}}(s) \,ds \,\big\|_{L^2(\Omega^{*}) \mathcal{H}^{\alpha}_{\xi}}$$
$$=\big\|\int_{-\delta}^{t} \,\mathcal{U}(t-s) \, \big[\widehat{B}\,\big]^{\omega^{*}} \big[\Gamma\,\big]^{\omega^{*}}(s+\delta) \,ds - \int_{0}^{t} \,\mathcal{U}(t-s) \, \big[\widehat{B}\,\big]^{\omega^{*}} \big[\Gamma\,\big]^{\omega^{*}}(s) \,ds \,\big\|_{L^2(\Omega^{*}) \mathcal{H}^{\alpha}_{\xi}}$$
$$\leq \big\|\int_{-\delta}^{0} \,\mathcal{U}(t-s)\,\big[\widehat{B}\,\big]^{\omega^{*}}\big[\Gamma\,\big]^{\omega^{*}}(s+\delta)\, ds\, \big\|_{L^2(\Omega^{*}) \mathcal{H}^{\alpha}_{\xi_0}}+$$
$$\big\| \int_{0}^{t} \, \mathcal{U}(t-s)\, \big(\big[\widehat{B}\,\big]^{\omega^{*}}\big[\Gamma\,\big]^{\omega^{*}}(s+\delta)-\big[\widehat{B}\,\big]^{\omega^{*}}\big[\Gamma\,\big]^{\omega^{*}}(s)\big)\big\|_{L^2(\Omega^{*}) \mathcal{H}^{\alpha}_{\xi_0}}.$$
By Minkowski's inequality and unitarity, this expression is:
$$\leq \delta \cdot \big\|\big[\widehat{B}\,\big]^{\omega^{*}} \big[\Gamma\,\big]^{\omega^{*}} \big\|_{L^{\infty}_{t \in I} L^2(\Omega^{*}) \mathcal{H}^{\alpha}_{\xi_0}} + \int_{0}^{t} \big\|\big[\widehat{B}\,\big]^{\omega^{*}} \big(\big[\Gamma\,\big]^{\omega^{*}}(s+\delta) - \big[\Gamma\,\big]^{\omega^{*}}(s)\big)\big\|_{L^2(\Omega^{*}) \mathcal{H}^{\alpha}_{\xi_0}}\, ds.$$
We can again use Proposition \ref{CauchySequence2} and estimate \eqref{RandomizedEstimate2III}
from Proposition \ref{RandomizedEstimate2} in order to see that this is:
$$\lesssim_{\,\xi,\,\xi_0,\,\alpha} \delta \, \cdot \, \big\|\big[\Gamma\,\big]^{\omega^{*}} \big\|_{L^{\infty}_{t \in I} L^2(\Omega^{*}) \mathcal{H}^{\alpha}_{\xi}} + \int_{0}^{t} \big\|\big[\Gamma\,\big]^{\omega^{*}}(s+\delta) - \big[\Gamma\,\big]^{\omega^{*}}(s)\big\|_{L^2(\Omega^{*}) \mathcal{H}^{\alpha}_{\xi}}\, ds.$$
By \eqref{equicontinuity2}, this expression is:
\begin{equation}
\notag
%\label{Step5Theorem2}
\lesssim_{\,\xi,\,\xi',\,\alpha,\,\alpha_0} \delta \, \cdot \, \big\|\big[\Gamma\,\big]^{\omega^{*}} \big\|_{L^{\infty}_{t \in I} L^2(\Omega^{*}) \mathcal{H}^{\alpha}_{\xi}} + \delta^{r} \, \cdot \, T \, \cdot \, \big\|\Gamma(0)\big\|_{\mathcal{H}^{\alpha_0}_{\xi'}}.
\end{equation}
This quantity converges to zero as $\delta$ converges to zero. Hence, it follows that the Duhamel term $\int_{0}^{t}\,\mathcal{U}(t-s)\,\big[\widehat{B}\,\big]^{\omega^{*}} \big[\Gamma\,\big]^{\omega^{*}}(s)\,ds$ belongs to $C_{t \in I} L^2(\Omega^{*}) \mathcal{H}^{\alpha}_{\xi_0}$.

\medskip

\textbf{Step 6:} Conclusion of the proof.

\medskip

We recall from Theorem \ref{Theorem1} that $\big[\Gamma\,\big]^{\omega^{*}}$ satisfies:

\begin{equation}
\notag
\big\|\big[\Gamma\,\big]^{\omega^{*}}(t)-\mathcal{U}(t)\,\Gamma(0)+i \int_{0}^{t}\,\mathcal{U}(t-s)\,\big[\widehat{B}\,\big]^{\omega^{*}} \big[\Gamma\,\big]^{\omega^{*}}(s)\,ds\,\big\|_{L^{\infty}_{t \in I} L^2(\Omega^{*}) \mathcal{H}^{\alpha}_{\xi_0}}=0.
\end{equation}
If we combine the results of Step 2, Step 3 and Step 5, it follows that:
\begin{equation}
\notag
\big[\Gamma\,\big]^{\omega^{*}}(t)-\mathcal{U}(t)\,\Gamma(0)+i \int_{0}^{t}\,\mathcal{U}(t-s)\,\big[\widehat{B}\,\big]^{\omega^{*}} \big[\Gamma\,\big]^{\omega^{*}}(s)\,ds\, \in C_{t \in I} L^2(\Omega^{*}) \mathcal{H}^{\alpha}_{\xi_0}.
\end{equation}
We may hence conclude that, \emph{for all $t \in I$}:
\begin{equation}
\notag
\big\|\big[\Gamma\,\big]^{\omega^{*}}(t)-\mathcal{U}(t)\,\Gamma(0)+i \int_{0}^{t}\,\mathcal{U}(t-s)\,\big[\widehat{B}\,\big]^{\omega^{*}} \big[\Gamma\,\big]^{\omega^{*}}(s)\,ds\,\big\|_{L^2(\Omega^{*}) \mathcal{H}^{\alpha}_{\xi_0}}=0.
\end{equation}
Theorem \ref{Theorem2} now follows.
\end{proof}

\section{The dependently randomized Gross-Pitaevskii hierarchy}
\label{The dependently randomized Gross-Pitaevskii hierarchy}
In this section, we will apply the above techniques in the context of the \emph{dependently randomized Gross-Pitaevskii hierarchy}:

\begin{equation}
\label{RandomizedGP2}
\begin{cases}
i \partial_t \gamma^{(k)} + (\Delta_{\vec{x}_k}-\Delta_{\vec{x}'_k}) \gamma^{(k)}=\sum_{j=1}^{k}[B_{j,k+1}]^{\omega}(\gamma^{(k+1)})\\
\gamma^{(k)}\big|_{t=0}=\gamma^{(k)}_0.
\end{cases}
\end{equation}
Here, $\omega \in \Omega$ is a fixed element of the probability space.
The spatial domain is again given by $\Lambda=\mathbb{T}^3$, unless it is specified otherwise. 
We will now redo the above analysis in the context of \eqref{RandomizedGP2}. As we will see, a lot of the arguments will be quite similar, but there will be some notable differences. 
As in \cite{SoSt}, a big difference in dependently randomized setting is the fact that one has to work in an appropriate \emph{non-resonant class} of density matrices. The precise definition is given in Definition \ref{Non-resonant} below. As was noted in \cite{SoSt}, in this way, one can obtain good estimates on the associated Duhamel terms. These estimates are summarized in Proposition \ref{non-resonant}. We note that the range for the regularity exponent $\alpha$ has now been changed to $\alpha \geq 0$.

A difficulty which arises in the dependently randomized context is the fact that we can not apply the result of Proposition \ref{RandomizedEstimate2}, which we used in the independently randomized context in order to estimate terms that contain different random parameters. In the dependently randomized context, this type of argument is not possible since there is only one random parameter. We see this difficulty in the proof of Proposition \ref{theta_omega_equation}, which is an analogue of Proposition \ref{theta_omega_star_equation} in the dependently randomized setting. In other words, in Proposition \ref{theta_omega_equation}, we show that the limit $\theta^{\,\omega}$ of $\big(\big[\widehat{B}\,\big]^{\omega} \big[\Gamma_N\big]^{\omega}\big)_N$, obtained from the solutions $\big[\Gamma_N\big]^{\omega}$ of the associated truncated hierarchies, satisfies the equation:
\begin{equation}
\label{theta_omega_equation_Introduction}
\big\|\theta^{\,\omega}-\big[\widehat{B}\,\big]^{\omega}\,\mathcal{U}(t)\,\Gamma(0)+i\int_{0}^{t} \big[\widehat{B}\,\big]^{\omega}\,\mathcal{U}(t-s)\,\theta^{\,\omega}(s)\,ds \,\big\|_{L^{\infty}_{t \in I} L^2(\Omega) \mathcal{H}^{\alpha}_{\xi_0}}=0,
\end{equation}
for all $\xi_0>0$.
The precise definition of $\big[\Gamma_N\big]^{\omega}$ is given in \eqref{GammaNGP2} below. We recall the definition of the operator $\big[\widehat{B}\,\big]^{\omega}$ from \eqref{B_hat_omega}. 

In the proof of \eqref{theta_omega_equation_Introduction}, it is useful to show that:
\begin{equation}
\notag
%\label{theta_omega_equation_Introduction2}
\big\|\int_{0}^{t} \big[\widehat{B}\,\big]^{\omega} \, \mathcal{U}(t-s) \big(\theta^{\,\omega}(s) -\big[\widehat{B}\,\big]^{\omega} \big[\Gamma_N\big]^{\omega}(s) \big) \, ds\big\|_{L^{\infty}_{t \in I} L^2(\Omega) \mathcal{H}^{\alpha}_{\xi_0}}=0,
\end{equation}
for all $\xi_0>0$. This is analogous to the estimate of the third term on the right-hand side of \eqref{theta_omega_star_difference} in the proof of Proposition \ref{theta_omega_star_equation}. In the independently randomized setting, it was possible to prove the claim by applying Proposition \ref{RandomizedEstimate2}, whereas in the dependently randomized setting
we can not apply this method and we have to argue directly by writing out the terms from the initial data. This is done in the third step in the proof of Proposition \ref{theta_omega_equation} below.

Another place where we have to explicitly write out the terms is in estimating the difference of two Duhamel expansions. Let us note that, in the independently randomized setting, the analogous estimate was done in Step 1 of the proof of Theorem \ref{Theorem2}. The estimate on the difference of two Duhamel expansions in the dependently randomized setting is given in Lemma \ref{Duhamel_Expansion_Difference} and it relies on the explicit expansion from \cite{SoSt}, which is recalled in Lemma \ref{Duhamel_Expansion}. A concrete example of the expansion in Lemma \ref{Duhamel_Expansion_Difference} is given in Example \ref{Example} below. We use Lemma \ref{Duhamel_Expansion_Difference} in the proof of the equicontinuity properties of $\big(\big[\widehat{B}\,\big]^{\omega} \big[\Gamma_N\,\big]^{\omega}\big)_N$ in Step 4 of the proof of Theorem \ref{Theorem2GP2}.

All throughout this section, the implied constants $C_0,C_1,$ etc. will in general denote different constants than the ones used in Section \ref{The truncated Gross-Pitaevskii hierarchy corresponding to RandomizedGP1}. For each constant, we will explicitly note on which parameters it depends. 
Let us now define the new notation which we will use in the context of the hierarchy \eqref{RandomizedGP2}.

We first define Duhamel iteration terms for \eqref{RandomizedGP2}. Similarly as in \eqref{Duhamel_0} and \eqref{Duhamel_j}, we define for fixed $\omega \in \Omega$ and $\Gamma(0)=(\gamma_0^{(k)})_k$ the following quantities:

If $j=0$:
\begin{equation}
\label{Duhamel_02}
Duh_{\,0}^{\,\omega}(\Gamma(0))^{(k)}(t):=\mathcal{U}^{(k)}(t)\,\gamma_0^{(k)}.    
\end{equation}

If $j \geq 1$:

\begin{equation}
\label{Duhamel_j2}
Duh_{\,j}^{\,\omega}(\Gamma(0))^{(k)}(t):=
\end{equation}
$$(-i)^j \int_0^t \int_0^{t_1} \cdots \int_0^{t_{j-1}} \,\mathcal{U}^{(k)}(t-t_1) \,[B^{(k+1)}]^{\omega} \, \mathcal{U}^{(k+1)}(t_1-t_2)\,[B^{(k+2)}]^{\omega}\, \cdots
$$ 
\begin{equation}
\notag
\cdots \,\mathcal{U}^{(k+j-1)}(t_{j-1}-t_j)\,[B^{(k+j)}]^{\omega}\,\mathcal{U}^{(k+j)}(t_j)\,\gamma_0^{(k+j)} \, dt_j\,dt_{j-1}\,\cdots\,dt_2\,dt_1.
\end{equation}

The quantity $Duh_{\,j}^{\,\omega}(\Gamma(0))$ corresponds to an iterated Duhamel expansion in the hierarchy \eqref{RandomizedGP2}.
As in Proposition \ref{DuhamelEstimate1}, we would like to obtain an estimate for these Duhamel terms. Due to the dependent randomization, and in light of the counterexamples from Subsection 6.1 of \cite{SoSt}, it is not possible to iterate Theorem \ref{RandomizedEstimate1} as in the context of the independently randomized GP hierarchy \eqref{RandomizedGP1}. Instead, we need to work in a more restrictive class of density matrices. We need the following definition, for fixed $\alpha \geq 0$:

\begin{definition}
\label{Non-resonant}
Suppose that  $\Sigma=(\sigma^{(m)})_m$ is a sequence, where each $\sigma^{(m)}$ is a density matrix of order $m$ on $\mathbb{T}^3$. We say that $\Sigma$ belongs to the non-resonant class $\mathcal{N}$ if:
\begin{itemize}
\item[i)]
for all $m \in \mathbb{N}$, and for all frequencies $(\xi_1,\ldots,\xi_m,\xi'_1,\ldots,\xi'_m) \in (\mathbb{Z}^3)^m \times (\mathbb{Z}^3)^m$, the Fourier coefficient $\widehat{\sigma}^{(m)}(\xi_1,\ldots,\xi_m;\xi'_1,\ldots,\xi'_m)$ equals zero unless
$$|\xi_1|>|\xi_2|>\cdots>|\xi_m|>|\xi'_1|>|\xi'_2|>\cdots > |\xi'_m|.$$
\item[ii)] There exists $C_1>0$ such that:
$$\|S^{(m,\alpha)} \sigma^{(m)}\|_{L^2(\Lambda^m \times \Lambda^m)} \leq C_1^m.$$
\end{itemize}
\end{definition}

\begin{remark}
\label{Frequency order}
We note that, when defining the non-resonant class in Definition \ref{Non-resonant}, it is possible to take any fixed order of the frequencies. In fact, the order can vary with $m$. We take the order $|\xi_1|>|\xi_2|>\cdots>|\xi_m|>|\xi'_1|>|\xi'_2|>\cdots>|\xi'_m|$ for concreteness. For more details on this point, we refer the reader to Subsection 6.3 of \cite{SoSt}.
\end{remark}
\begin{remark}
\label{Non_resonant_Td}
The class $\mathcal{N}$ can be analogously defined for density matrices on $\mathbb{T}^d$. The only change is that, in the general case, the frequencies belong to $\mathbb{Z}^d$, and so $\mathbb{Z}^3$ gets replaced by $\mathbb{Z}^d$ in the definition. For a further discussion about this class on $\mathbb{T}^d$, we refer the reader to Remark \ref{Remark_higher_dimensions2} below.
\end{remark}

\begin{remark}
\label{Non_resonant_class_comparison}
Let us note that $ii)$ in Definition \ref{Non-resonant} follows immediately if we know that $\Sigma \in \mathcal{H}^{\alpha}_{\xi}$, for some $\xi>0$.
\end{remark}

Let us recall the following result, which can be immediately deduced from Proposition 6.2 in \cite{SoSt}:

\begin{proposition}
\label{non-resonant}
Suppose that $\alpha \geq 0$. Then, there exists $C_0>0$, depending only on $\alpha$ such that for all $(\sigma^{(m)})_m \in \mathcal{N}$, for all $k,j \in \mathbb{N},\ell_1,\ldots \ell_{j}, n_1, \ldots, n_{j} \in \mathbb{N}$, with ${\ell}_1<n_1 \leq k+j, \ldots, {\ell}_{j}<n_{j} \leq k+j$ and for all $t_1,t_2, \ldots, t_{j+1} \in \mathbb{R}$, the following bound holds:
$$\big\|S^{(k,\alpha)}\,\mathcal{U}^{(k)}(t_1-t_2)\,[B^{\pm}_{{\ell}_1,n_1}]^{\omega}\,\mathcal{U}^{(k+1)}(t_2-t_3)\,[B^{\pm}_{{\ell}_2,n_2}]^{\omega} \cdots$$
$$\cdots \mathcal{U}^{(k+j-1)}(t_{j}-t_{j+1}) \,[B^{\pm}_{{\ell}_{j},n_{j}}]^{\omega} \,\sigma^{(k+j)}\big\|_{L^2(\Omega \times \Lambda^k \times \Lambda^k)} \leq C_0^{\,k+j} \cdot \|S^{(k+j,\alpha)} \sigma^{(k+j)}\|_{L^2(\Lambda^{k+j} \times \Lambda^{k+j})}.$$
\end{proposition}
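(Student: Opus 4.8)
The statement to prove is Proposition \ref{non-resonant}, which is advertised to follow immediately from Proposition 6.2 in \cite{SoSt}. The plan is therefore essentially a reduction to the result already available in \cite{SoSt}, together with a careful check that the non-resonant structure is preserved under the compositions that appear in the chain of operators. First I would record the three ingredients I need: (i) the single-level randomized estimate together with the combinatorial analysis of \cite{SoSt} that shows, for $(\sigma^{(m)})_m \in \mathcal{N}$, that a collision operator $[B^{\pm}_{\ell,n}]^{\omega}$ applied to a non-resonant density matrix gives a controlled output; (ii) the observation, discussed in Subsection 6.3 of \cite{SoSt} and in Remark \ref{Frequency order}, that the non-resonance condition $i)$ of Definition \ref{Non-resonant} is exactly the condition that makes the pairing of frequencies under randomization deterministic, so that one may iterate; and (iii) the unitarity of the free evolutions $\mathcal{U}^{(m)}(t)$, which commute with $S^{(m,\alpha)}$ and do not change the $L^2(\Lambda^m\times\Lambda^m)$-norm of a fixed density matrix.

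The main body of the argument proceeds by downward iteration on the chain, exactly as in the proof of Proposition \ref{DuhamelEstimate1} but using Proposition 6.2 of \cite{SoSt} in place of Theorem \ref{RandomizedEstimate1}. Concretely: starting from $\sigma^{(k+j)}$, one applies $[B^{\pm}_{\ell_j,n_j}]^{\omega}$, then $\mathcal{U}^{(k+j-1)}(t_j-t_{j+1})$, and so on down to $\mathcal{U}^{(k)}(t_1-t_2)$, each time invoking the non-resonant randomized estimate to peel off one $[B^{\pm}]^{\omega}$ at the cost of a factor $C_0$ and dropping one order. Because all the random variables $h_{\zeta}$ live over the single probability space $\Omega$ and the non-resonance condition forces the frequency pairings, the product structure over $\Omega$ that caused trouble in Example 1 of Subsection 6.1 of \cite{SoSt} does not arise here — this is precisely what Proposition 6.2 of \cite{SoSt} encodes, and it is why the whole chain can be estimated at once rather than level by level with independent randomizations. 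Unitarity of each $\mathcal{U}^{(m)}(t_m - t_{m+1})$ then removes all the time-dependence from the estimate. The bookkeeping of the constant gives $\prod_{m=k}^{k+j-1} C_0 = C_0^{j}$, and absorbing the polynomial-in-$m$ losses (of the type $k\cdot(k+1)\cdots$ that appear when $[B^{(m+1)}]^\omega=\sum_{\ell=1}^m [B_{\ell,m+1}]^\omega$) into an exponential factor $C_0^{k+j}$, after possibly enlarging $C_0$, yields the claimed bound $C_0^{k+j}\cdot \|S^{(k+j,\alpha)}\sigma^{(k+j)}\|_{L^2(\Lambda^{k+j}\times\Lambda^{k+j})}$.

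The step I expect to be the main obstacle — or at least the one requiring the most care — is verifying that the iteration is \emph{legitimate}: namely, that after applying $[B^{\pm}_{\ell_j,n_j}]^{\omega}$ and then the free evolution to a non-resonant $\sigma^{(k+j)}$, the resulting density matrix of order $k+j-1$ still satisfies (an appropriate version of) the frequency-ordering hypothesis needed to apply the estimate at the next level. In \cite{SoSt} this is handled by the combinatorial analysis that tracks which of the four randomized Fourier multipliers $h_{\xi_1}h_{\xi_1 - \xi_{m+1}+\xi'_{m+1}}h_{\xi_{m+1}}h_{\xi'_{m+1}}$ survive after taking the $L^2(\Omega)$ expectation; the key point is that the strict ordering in Definition \ref{Non-resonant} $i)$ prevents the cancellations/resonances that would otherwise block the bound. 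I would cite Proposition 6.2 of \cite{SoSt} for this and restrict myself to explaining the reduction rather than reproving the combinatorics. The remaining details — commuting $S^{(k,\alpha)}$ past $\mathcal{U}^{(k)}$, applying Minkowski's inequality to move norms inside, and using Fubini to justify iterating the $L^2(\Omega)$ estimate as in the proof of Theorem 5.2 of \cite{SoSt} — are routine. So the proof is short: state the reduction, invoke Proposition 6.2 of \cite{SoSt} with the chain $(\ell_1,n_1),\dots,(\ell_j,n_j)$ and the evaluation parameters $t_1,\dots,t_{j+1}$, and collect constants.
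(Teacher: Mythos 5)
Your reduction lands on the right citation, but the mechanism you describe for exploiting it is the wrong one, and it is precisely the mechanism the paper goes out of its way to rule out. You propose a ``downward iteration on the chain, exactly as in the proof of Proposition \ref{DuhamelEstimate1}, using Proposition 6.2 of \cite{SoSt} in place of Theorem \ref{RandomizedEstimate1}'': peel off one $[B^{\pm}_{\ell_m,n_m}]^{\omega}$ per step at the cost of a factor $C_0$. In the dependently randomized setting this peeling is not available. The $L^2(\Omega)$ norm in the conclusion is a single integral over $\Omega$ of the full composition; there is no intermediate stage at which one can take an $L^2(\Omega)$ norm and continue, because every collision operator in the chain carries the \emph{same} random parameter $\omega$. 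After one application of $[B^{\pm}_{\ell_j,n_j}]^{\omega}$ the resulting object depends on $\omega$, and a single-level randomized estimate applied to the next collision operator acting on an $\omega$-dependent density matrix is exactly what Example 1 of Subsection 6.1 of \cite{SoSt} shows to be false. (This is also why Proposition \ref{RandomizedEstimate2} requires the remaining random parameters to be \emph{independent} of the one in the collision operator — a hypothesis that is never satisfied here.) Relatedly, the ``main obstacle'' you single out — checking that the intermediate objects remain non-resonant — is a symptom of the wrong setup: the intermediate objects are random and their Fourier supports are smeared out by the convolution structure of $B^{\pm}$, so they do not lie in $\mathcal{N}$ (Definition \ref{Non-resonant} applies to deterministic sequences with strictly ordered frequency support), and no version of the argument asks them to.

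The correct route, and the reason the paper can state Proposition \ref{non-resonant} as an immediate consequence of Proposition 6.2 of \cite{SoSt}, is that the cited result already bounds the \emph{entire} composition in one stroke: one expands the whole chain on the Fourier side as in Lemma \ref{Duhamel_Expansion}, obtaining a single sum weighted by a long product of Bernoulli variables $h_{\zeta}(\omega)$, and the non-resonance of the \emph{initial} datum $\sigma^{(k+j)}$ is what controls the expectation of that entire product (the strict frequency ordering prevents uncontrolled pairings among the $h$'s coming from different levels). The free evolutions contribute only unimodular phases on the Fourier side, which is why arbitrary $t_1,\dots,t_{j+1}$ are allowed. So the proof really is a one-line citation; your sketch should drop the iteration entirely rather than present it as the main body. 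A minor further point: the combinatorial factors $k\cdot(k+1)\cdots$ you propose to absorb into $C_0^{k+j}$ do not arise here, since the statement concerns individual operators $[B^{\pm}_{\ell_m,n_m}]^{\omega}$ rather than the full sums $[B^{(m+1)}]^{\omega}$; those factors enter only later, in Proposition \ref{DuhamelEstimate2}.
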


%From Proposition \ref{non-resonant}, we can prove an estimate on the Duhamel terms $Duh_{\,j}^{\,\omega}(\Gamma(0))^{(k)}(t)$. In particular, we can argue as in the proof of Proposition \ref{DuhamelEstimate1}. The only difference is that the step in which we iteratively apply Proposition \ref{RandomizedEstimate1} gets replaced with one application of Proposition \ref{non-resonant}. The result that then follows is:

%\begin{proposition}
%\label{DuhamelEstimate2}
%Suppose that $T>0$ and let $I:=[0,T]$. Let us fix $\alpha \geq 0$ and let us fix $\Gamma(0)=(\gamma_0^{(k)})_k$ to be non-resonant, in the sense defined above. There exists $C_1>0$, depending only on $\alpha$ and a universal constant $C_2>0$ such that for all $j,k \in \mathbb{N}$:
%\begin{equation}
%\notag
%\big\|Duh_{\,j}^{\,\omega} (\Gamma(0))^{(k)}(t) \big\|_{L^{\infty}_{t \in I} L^2(\Omega) H^{\alpha}(\Lambda^k \times \Lambda^k)} \leq (C_1T)^j \, \cdot \, C_2^k \, \cdot \, \big\|\gamma_0^{(k+j)}\big\|_{H^{\alpha}(\Lambda^{k+j} \times \Lambda^{k+j})}.
%\end{equation} 
%\end{proposition}

It is possible to prove an analogue of Proposition \ref{DuhamelEstimate1} in the dependently randomized setting.
This can be done by replacing the step in which we iteratively apply Theorem \ref{RandomizedEstimate1} with just one application of Proposition \ref{non-resonant}. In what follows, we will not use this this bound, but we will use a similar argument outlined as above to deduce the following fact:

\begin{proposition} 
\label{DuhamelEstimate2}
Suppose that $\alpha \geq 0$. Then, there exist constants $C_1,C_2>0$, depending on $\alpha$ such that for all  $(\sigma^{(m)})_m \in \mathcal{N}$, $j,k \in \mathbb{N}$, and $t \in I$, the following estimate holds:
\begin{equation}
\notag
\big\|\int_{0}^{t} \int_{0}^{t_1} \cdots \int_{0}^{t_{j-1}}
\big[B^{(k+1)}\big]^{\omega}\,\mathcal{U}^{(k+1)}(t-t_1)\,\big[B^{(k+2)}\big]^{\omega} \, \mathcal{U}^{(k+2)}(t_1-t_2)
\,\big[B^{(k+3)}\big]^{\omega}\,\cdots$$ 
$$\cdots \big[B^{(k+j+1)}\big]^{\omega} \, \mathcal{U}^{(k+j+1)}(t_{j}) \,\sigma^{(k+j+1)}\,dt_j \cdots dt_2 \, dt_1 \big\|_{L^2(\Omega) H^{\alpha}(\Lambda^k \times \Lambda^k)}
\end{equation}
\begin{equation}
\notag
%\label{Termj_bound}
\leq (C_1T)^j \cdot C_2^k \cdot \big\|S^{(k+j+1,\alpha)}\sigma^{(k+j+1)}\big\|_{L^2(\Lambda^{k+j+1} \times \Lambda^{k+j+1})}
\end{equation}
\end{proposition}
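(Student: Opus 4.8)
The plan is to mimic the proof of Proposition \ref{DuhamelEstimate1}, but to replace the iterative application of Theorem \ref{RandomizedEstimate1} (which is unavailable here, as explained in the text) by a single application of Proposition \ref{non-resonant}. First I would fix $t \in I$ and $j,k \in \mathbb{N}$, and expand each full randomized collision operator $[B^{(k+\ell)}]^{\omega}$ appearing in the integrand as a sum of the elementary operators $[B^{\pm}_{\ell',n}]^{\omega}$ using \eqref{Bomegak+1} and \eqref{Bjkrandomized}; concretely $[B^{(k+\ell)}]^{\omega} = \sum_{j'=1}^{k+\ell-1}\big([B^{+}_{j',k+\ell}]^{\omega}-[B^{-}_{j',k+\ell}]^{\omega}\big)$. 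This turns the integrand into a sum of at most $2^{j+1}\cdot k(k+1)\cdots(k+j)$ terms, each of the precise form to which Proposition \ref{non-resonant} applies (with the operators acting at levels $k+1$ through $k+j+1$, integrand $\sigma^{(k+j+1)}$ which lies in $\mathcal{N}$, and arbitrary real times).

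Next I would move the $L^2(\Omega)H^{\alpha}$ norm inside the $t_1,\dots,t_j$ integrals by Minkowski's inequality (as in the proof of Proposition \ref{DuhamelEstimate1}), and inside the finite sum over elementary-operator choices by the triangle inequality. For each resulting term, Proposition \ref{non-resonant} gives the bound $C_0^{\,k+j+1}\cdot\|S^{(k+j+1,\alpha)}\sigma^{(k+j+1)}\|_{L^2(\Lambda^{k+j+1}\times\Lambda^{k+j+1})}$, uniformly in the time variables. Collecting the combinatorial count of terms, the integrand is therefore bounded in norm by
\begin{equation}
\notag
2^{j+1}\cdot k(k+1)\cdots(k+j)\cdot C_0^{\,k+j+1}\cdot\big\|S^{(k+j+1,\alpha)}\sigma^{(k+j+1)}\big\|_{L^2(\Lambda^{k+j+1}\times\Lambda^{k+j+1})},
\end{equation}
which is independent of $t_1,\dots,t_j$. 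Then the iterated time integral contributes exactly $\frac{t^j}{j!}\le\frac{T^j}{j!}$ by the identity \eqref{Integral_Identity}.

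Finally I would absorb the combinatorial factors into the claimed constants exactly as in Proposition \ref{DuhamelEstimate1}: by Stirling's formula \eqref{Stirling's formula} one has $\frac{1}{j!}\cdot k(k+1)\cdots(k+j)\lesssim \frac{(k+j)^{j+1}}{(j/e)^j}\lesssim e^j(k+j)\big(1+\tfrac{k}{j}\big)^j(k+j)^{?}$; more carefully, writing $k(k+1)\cdots(k+j)\le (k+j)^{j+1}$ and $\frac{(k+j)^{j}}{j^{j}}=\big(1+\tfrac{k}{j}\big)^{j}\le C_2^{\,k}$ for a universal $C_2$, and $(k+j)\le 2^{k+j}$, one gets a bound of the form $(C_1 T)^j\cdot C_2^k\cdot\|S^{(k+j+1,\alpha)}\sigma^{(k+j+1)}\|_{L^2}$ with $C_1,C_2$ depending only on $\alpha$ (through $C_0$). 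Taking the supremum over $t\in I$ finishes the proof. The only real point requiring care — the ``main obstacle'' — is the bookkeeping that shows each of the finitely many terms produced by expanding the collision operators genuinely has the shape demanded by Proposition \ref{non-resonant}, namely that the operators act successively at consecutive levels and that the lower index of each $B^{\pm}_{\ell',n}$ is strictly smaller than its upper index $n\le k+j+1$; this is immediate from the definition \eqref{Bk+1_sum}, but should be stated explicitly. Everything else is the same counting argument as in Proposition \ref{DuhamelEstimate1}, now with one extra level ($k+j+1$ rather than $k+j$) because the statement keeps the outermost collision operator rather than applying it to $\gamma_0$.
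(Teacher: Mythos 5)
Your proposal is correct and follows essentially the same route as the paper's proof: expand the full collision operators into $2^{j+1}\cdot k(k+1)\cdots(k+j)$ elementary terms, apply Proposition \ref{non-resonant} to each (with the leading free evolution taken to be the identity, i.e.\ $t_1=t_2$ in that proposition), use the integral identity \eqref{Integral_Identity}, and absorb the combinatorial factors via Stirling's formula into $(C_1T)^j\cdot C_2^k$ with the $\alpha$-dependence entering through $C_0^{k+j+1}$. The only cosmetic difference is that the paper bounds $k(k+1)\cdots(k+j)$ by $k\cdot(k+j)^j$ rather than $(k+j)^{j+1}$, which changes nothing.
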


\begin{proof}(of Proposition \ref{DuhamelEstimate2}) Suppose that $\alpha$ and $(\sigma_m)_m$ are as above.
We use Proposition \ref{non-resonant}, with $t_1=t_2$, as well as the integral identity given by \eqref{Integral_Identity} in order to deduce that the expression on the left-hand side in the statement is:

\begin{equation}
\label{Proposition4.3_auxiliary_bound1}
\leq \frac{T^j}{j!} \cdot 2^{j+1} \cdot  C_0^{k+j+1} \cdot k \cdot (k+1) \cdots (k+j) \cdot \big\|S^{(k+j+1,\alpha)} \sigma^{(k+j+1)}\big\|_{L^2(\Lambda^{k+j+1} \times \Lambda^{k+j+1})}.
\end{equation}
Here, $C_0$ is the implied constant from Proposition \ref{non-resonant}, which we recall depends only on $\alpha$. The factor of $k \cdot (k+1) \cdots (k+j)$ is obtained when we use 
\eqref{Bomegak+1}, for $\ell=k,\ldots,k+j$ and we write each collision operator $\big[B^{(\ell+1)}\big]^{\omega}$ as the sum of $\ell$ terms $\sum_{r=1}^{\ell} \big[B_{r,\ell+1}\big]^{\omega}$. The factor of $2^{j+1}$ is obtained when we separately analyze each term in the above expansion. In particular, we fix $r_{\ell}$ for $\ell=k,k+1,\ldots,k+j$ such that each $r_{\ell} \in \{1,2,\ldots,\ell\}$. We write each of the obtained collision operators $\big[B_{r_{\ell},\ell+1}\big]^{\omega}$ in this configuration as $\big[B_{r_{\ell},\ell+1}\big]^{\omega}=\big[B_{r_{\ell},\ell+1}^{+}\big]^{\omega} - \big[B_{r_{\ell},\ell+1}^{-}\big]^{\omega}$ according to \eqref{Bjkrandomized}. In this way, we obtain $2^{j+1}$ terms, each of which is of the form that can be estimated by Proposition \ref{non-resonant}.

We can bound the expression in \eqref{Proposition4.3_auxiliary_bound1} by:
\begin{equation}
\label{Proposition4.3_auxiliary_bound}
\leq \frac{T^j}{j!} \cdot  2^{j+1} \cdot C_0^{k+j+1} \cdot k \cdot (k+j)^j \cdot \big\|S^{(k+j+1,\alpha)} \sigma^{(k+j+1)}\big\|_{L^2(\Lambda^{k+j+1} \times \Lambda^{k+j+1})}.
\end{equation}
We again use Stirling's Formula \eqref{Stirling's formula} as in earlier arguments and we deduce that this quantity is:
\begin{equation}
\label{Termj_bound_Proposition4.3}
\leq (C_1T)^j \cdot C_2^k \cdot \big\|S^{(k+j+1,\alpha)}\sigma^{(k+j+1)}\big\|_{L^2(\Lambda^{k+j+1} \times \Lambda^{k+j+1})}
\end{equation}
for some constants $C_1,C_2>0$, which both depend on $\alpha$. We note that the reason why both $C_1$ and $C_2$ depend on $\alpha$ is because $C_0$ depends on $\alpha$ and it appears with a power of $k+j+1$.
\end{proof}

\begin{remark}
\label{Remark_higher_dimensions2}
Arguing similarly as in Remark \ref{Remark_higher_dimensions1}, by
the discussions in Subsection \ref{Setup of the problem} and Subsection \ref{Statement of the problem. Main Results}, we note that the analysis that we will present in this section can be generalized to the case $\Lambda=\mathbb{T}^d$. The condition $\alpha \geq 0$ does not change. The only difference is that we have to define the non-resonant class $\mathcal{N}$ as in Remark \ref{Non_resonant_Td}.
\end{remark}

\subsection{The truncated Gross-Pitaevskii hierarchy corresponding to \eqref{RandomizedGP2}}
\label{The truncated Gross-Pitaevskii hierarchy corresponding to RandomizedGP2}

Similarly as in Subsection \ref{The truncated Gross-Pitaevskii hierarchy corresponding to RandomizedGP1}, we will now consider the \emph{truncated Gross-Pitaevskii hierarchy corresponding to \eqref{RandomizedGP2}}, evolving from \emph{non-resonant initial data}. More precisely, we fix $N \in \mathbb{N}$ and $\omega \in \Omega$ and we look for a sequence $\big(\big[\gamma_N^{(k)}\big]^{\omega}\big)_k$ of density matrices, all of which are parametrized by $\omega \in \Omega$ and which solve, for all $k \in \mathbb{N}$ with $k \leq N$:

\begin{equation}
\label{TruncatedRandomizedGP2}
\begin{cases}
i \partial_t \, \big[\gamma_N^{(k)}\big]^{\omega}+(\Delta_{\vec{x}_k}-\Delta_{\vec{x}'_k}) \, \big[\gamma_N^{(k)}\big]^{\omega}=\sum_{j=1}^{k} [B_{j,k+1}]^{\omega}\big(\big[\gamma_N^{(k+1)}\big]^{\omega}\big)\\
\big[\gamma_N^{(k)}\big]^{\omega} \big|_{t=0} = \gamma_0^{(k)}.
\end{cases}
\end{equation}
Furthermore, we impose the condition that, for $k>N$:
\begin{equation}
\notag
%\label{TruncatedRandomizedGP2assumption}
\big[\gamma_N^{(k)}\big]^{\omega} \equiv 0.
\end{equation}
The initial data is taken to be \emph{non-resonant} in the sense that $\Gamma(0)=(\gamma_0^{(1)},\gamma_0^{(2)},\ldots)=(\gamma_0^{(k)})_{k} \in \mathcal{N}$.  As in \eqref{GammaN0}, we consider:
\begin{equation}
\label{GammaN0GP2}
\Gamma_N(0):=\mathbf{P}_{\leq N} \Gamma(0)=\big(\gamma_0^{(1)},\gamma_0^{(2)}, \ldots, \gamma_0^{(N)},0,0,\ldots\big).
\end{equation}
$\Gamma_N(0)$ then also belongs to the non-resonant class $\mathcal{N}$. Here, we used the definition of the projection operator $\mathbf{P}_{\leq N}$ given in \eqref{P<=N}.
Arguing as for \eqref{gammaNkomega}, we can see that an explicit solution to \eqref{TruncatedRandomizedGP2} is given by:

\begin{equation}
\label{gammaNkomega2}
\big[\gamma_N^{(k)}\big]^{\omega}(t)=\sum_{j=0}^{N-k} Duh_{\,j}^{\,\omega} (\Gamma(0))^{(k)}(t).
\end{equation}
We now define $\big[\Gamma_N\,\big]^{\omega}=\Big(([\Gamma_N]^{\omega})^{(k)}\Big)_k$ by:
\begin{equation}
\label{GammaNGP2}
\big(\big[\Gamma_N\,\big]^{\omega}\big)^{(k)}:=\big[\gamma_N^{(k)}\big]^{\omega}.
\end{equation}

By construction, $\big[\Gamma_N\big]^{\omega}$ solves:

\begin{equation}
\notag
%\label{GammaNomegastar2}
\begin{cases}
i \partial_t \, \big[\Gamma_N \big]^{\omega} + \widehat{\Delta}_{\pm} \, \big[\Gamma_N \big]^{\omega} = \big[\widehat{B} \,\big]^{\omega} \big[\Gamma_N \big]^{\omega}\\
\big[\Gamma_N \big]^{\omega} \big|_{t=0}=\Gamma_N(0).
\end{cases}
\end{equation}
Here, $\Gamma_N(0)$ is defined as in \eqref{GammaN0GP2}.
We would now like to take the limit as $N \rightarrow \infty$ in $\big[\Gamma_N\,\big]^{\omega}$.

Let us fix some more notation. For the rest of this section, unless we specify otherwise, we suppose that $\alpha \geq 0$. As before, we consider a time $T>0$ and the time interval $I:=[0,T]$ and for this $T$, we first choose $\xi'>0$ sufficiently large such that: 
\begin{equation}
\label{xi_prime2}
\frac{C_1T}{\xi'}<1.
\end{equation} 
Having chosen $\xi'$, we choose $\xi \in (0,\xi')$ sufficiently small such that: 
\begin{equation}
\label{xi2}
\frac{C_2 \,\xi}{\xi'}<1.
\end{equation}
$C_1$ and $C_2$ are now the constants from Proposition \ref{DuhamelEstimate2} (and not the constants from Proposition \ref{DuhamelEstimate1} as in the independently randomized setting in Section \ref{The independently randomized Gross-Pitaevskii hierarchy}). Conversely, given $\xi,\xi'>0$ with $\xi \in (0,\xi')$, which satisfy \eqref{xi2}, we can find $T>0$ which satisfies \eqref{xi_prime2}. Unless it is otherwise noted, we will assume that $T,\xi,\xi'$ satisfy the above assumptions.

\subsection{A local-in-time result for non-resonant initial data $\Gamma(0)$ of regularity $\alpha$}
\label{A local-in-time result for non-resonant initial data of regularity alpha}
Throughout this subsection, in addition to the other conventions, we fix the initial data $\Gamma(0)$ to belong to $\mathcal{H}^{\alpha}_{\xi'}$ and to be non-resonant. In other words, 
\begin{equation}
\label{Gamma0_nonresonant}
\Gamma(0) \in \mathcal{H}^{\alpha}_{\xi'} \cap \mathcal{N}.
\end{equation}

Keeping in mind \eqref{xi_prime2}, \eqref{xi2} and \eqref{Gamma0_nonresonant}, we now prove the following analogue of Proposition \ref{CauchySequence1}:

\begin{proposition}
\label{CauchySequence1GP2}
The sequence $\big(\big[\widehat{B}\,\big]^{\omega} \big[\Gamma_N\big]^{\omega}\big)_N$ is Cauchy in $L^{\infty}_{t \in I} L^2(\Omega) \mathcal{H}^{\alpha}_{\xi}$. 
\end{proposition}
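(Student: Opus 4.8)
\textbf{Proof proposal for Proposition \ref{CauchySequence1GP2}.}

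The plan is to mimic closely the proof of Proposition \ref{CauchySequence1}, replacing the iterated application of Theorem \ref{RandomizedEstimate1} (which is unavailable in the dependently randomized setting) by the single Duhamel estimate of Proposition \ref{DuhamelEstimate2}, which is tailored to non-resonant initial data. First I would fix $t \in I$ and $N_1 < N_2$ in $\mathbb{N}$ and, exactly as in \eqref{BhatGammaNdifference}, use the fact that $\Gamma_{N_1}(0)$ and $\Gamma_{N_2}(0)$ agree in all components of order $\leq N_1$ to see that for each $k$,
\begin{equation}
\notag
\Big(\big[\widehat{B}\,\big]^{\omega} \big(\big[\Gamma_{N_1}\big]^{\omega}-\big[\Gamma_{N_2}\big]^{\omega}\big)\Big)^{(k)}(t)
= -\sum_{j=N_1-k}^{N_2-k-1}\big[B^{(k+1)}\big]^{\omega} Duh_{\,j}^{\,\omega}(\Gamma_{N_2}(0))^{(k+1)}(t),
\end{equation}
where the lower Duhamel terms cancel. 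Here I would note that $\big[B^{(k+1)}\big]^{\omega} Duh_{\,j}^{\,\omega}(\Gamma_{N_2}(0))^{(k+1)}(t)$ is, by \eqref{Duhamel_02} and \eqref{Duhamel_j2}, precisely an expression of the form estimated in Proposition \ref{DuhamelEstimate2} (with the roles of $k$ and $k+j+1$ there played by $k$ and $k+1+j$, and $\sigma^{(k+j+1)} = \gamma_0^{(k+j+1)}$). The hypothesis \eqref{Gamma0_nonresonant} guarantees $\Gamma(0) \in \mathcal{N}$, hence $\Gamma_{N_2}(0) \in \mathcal{N}$ as remarked after \eqref{GammaN0GP2}, so Proposition \ref{DuhamelEstimate2} applies.

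Next I would take the $L^2(\Omega) H^\alpha(\Lambda^k \times \Lambda^k)$ norm, use the triangle inequality to pass it inside the sum over $j$, and apply Proposition \ref{DuhamelEstimate2} to bound the $j$-th term by $(C_1 T)^j \cdot C_2^k \cdot \|\gamma_0^{(k+j+1)}\|_{H^\alpha(\Lambda^{k+j+1}\times\Lambda^{k+j+1})}$. Multiplying by $\xi^k$, summing over $k$, and reorganizing the double sum in terms of $\xi'$ exactly as in the proof of Proposition \ref{CauchySequence1}, I would write the bound as
\begin{equation}
\notag
\big\|\big[\widehat{B}\,\big]^{\omega}\big(\big[\Gamma_{N_1}\big]^{\omega}-\big[\Gamma_{N_2}\big]^{\omega}\big)(t)\big\|_{L^2(\Omega)\mathcal{H}^{\alpha}_{\xi}}
\lesssim \frac{C_2}{\xi'} \sum_{k,j:\,k+j+1>N_1} \Big(\tfrac{C_2\xi}{\xi'}\Big)^k \Big(\tfrac{C_1T}{\xi'}\Big)^j (\xi')^{k+j+1} \|\gamma_0^{(k+j+1)}\|_{H^\alpha},
\end{equation}
and since in this sum $k+j+1 > N_1$, bound this by $\big(\sum_j (C_1T/\xi')^j\big)\big(\sum_k (C_2\xi/\xi')^k\big)\cdot \|\mathbf{P}_{>N_1}\Gamma(0)\|_{\mathcal{H}^\alpha_{\xi'}}$, using the definition \eqref{Space3} of the $L^2(\Omega)\mathcal{H}^\alpha_{\xi'}$ norm and \eqref{PgreaterN}. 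The geometric series converge by the assumptions \eqref{xi_prime2} and \eqref{xi2}, giving $\lesssim_{T,\xi,\xi',\alpha} \|\mathbf{P}_{>N_1}\Gamma(0)\|_{\mathcal{H}^\alpha_{\xi'}}$ uniformly in $t$, hence after taking the supremum over $t \in I$,
\begin{equation}
\notag
\big\|\big[\widehat{B}\,\big]^{\omega}\big(\big[\Gamma_{N_1}\big]^{\omega}-\big[\Gamma_{N_2}\big]^{\omega}\big)\big\|_{L^{\infty}_{t\in I}L^2(\Omega)\mathcal{H}^{\alpha}_{\xi}} \lesssim_{T,\xi,\xi',\alpha} \big\|\mathbf{P}_{>N_1}\Gamma(0)\big\|_{\mathcal{H}^{\alpha}_{\xi'}}.
\end{equation}
Since $\Gamma(0) \in \mathcal{H}^\alpha_{\xi'}$, the right-hand side tends to $0$ as $N_1 \to \infty$, which establishes the Cauchy property.

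The one genuinely new point compared to the independently randomized case — and the step I would be most careful about — is the verification that Proposition \ref{DuhamelEstimate2} is literally applicable to each summand $\big[B^{(k+1)}\big]^{\omega} Duh_{\,j}^{\,\omega}(\Gamma_{N_2}(0))^{(k+1)}(t)$: one must check that prepending the extra collision operator $\big[B^{(k+1)}\big]^\omega$ and the free evolution factors matches the precise operator string in the statement of Proposition \ref{DuhamelEstimate2} (including that the time integration is over the correct simplex, so that \eqref{Integral_Identity} produces the factorial gain $T^j/j!$ used inside that proposition), and that the non-resonance of the fixed initial datum $\gamma_0^{(k+j+1)}$ is what is being exploited — not any structural property of intermediate Duhamel iterates, which need not be non-resonant. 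Because Proposition \ref{DuhamelEstimate2} is stated exactly for expansions of this shape evolving from a non-resonant $\sigma$, this is really just bookkeeping; there is no combinatorial recombination and no iteration of randomized estimates, so no analogue of the difficulties flagged in Example 1 of \cite{SoSt} arises here. The remainder is the same geometric-series manipulation as in Proposition \ref{CauchySequence1}.
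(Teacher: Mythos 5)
Your proposal is correct and follows essentially the same route as the paper's proof: the same cancellation of the lower Duhamel terms, a single application of Proposition \ref{DuhamelEstimate2} (rather than iterating Theorem \ref{RandomizedEstimate1}) to each summand, and the same reorganization of the double sum into geometric series controlled by \eqref{xi_prime2} and \eqref{xi2}, yielding the bound by $\|\mathbf{P}_{>N_1}\Gamma(0)\|_{\mathcal{H}^{\alpha}_{\xi'}}$. Your added care about the exact shape of the operator string and the role of non-resonance of the initial data matches what the paper relies on.
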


\begin{proof} Let us fix $t \in I, k \in \mathbb{N}$ and $N_1,N_2 \in \mathbb{N}$, with $N_1<N_2$. Similarly as in \eqref{BhatGammaNdifference}, we can deduce that the following identity holds:

\begin{equation}
\notag
\Big(\big[\widehat{B}\,\big]^{\omega} \big(\big[\Gamma_{N_1}\big]^{\omega}-\big[\Gamma_{N_2}\big]^{\omega}\big)\Big)^{(k)}(t)=
\end{equation}
\begin{equation}
\notag
=-\sum_{j=N_1-k}^{N_2-k-1}\big[B^{(k+1)}\big]^{\omega} Duh_{\,j}^{\,\omega}(\Gamma_{N_2}(0))^{(k+1)}(t).
\end{equation}
Hence, given $j \in \{N_1-k,N_1-k+1,\ldots,N_2-k-1\}$, we need to estimate:
\begin{equation}
\notag
%\label{BhatGammaNdifference_Termj}
\big\|\big[B^{(k+1)}\big]^{\omega} Duh_{\,j}^{\,\omega}(\Gamma_{N_2}(0))^{(k+1)}(t)\big\|_{L^{\infty}_{t \in I} L^2(\Omega) H^{\alpha}(\Lambda^k \times \Lambda^k)}=
\end{equation}
\begin{equation}
\notag
=\big\|\int_{0}^{t} \int_{0}^{t_1} \cdots \int_{0}^{t_{j-1}}
\big[B^{(k+1)}\big]^{\omega}\,\mathcal{U}^{(k+1)}(t-t_1)\,\big[B^{(k+2)}\big]^{\omega} \, \mathcal{U}^{(k+2)}(t_1-t_2)
\,\big[B^{(k+3)}\big]^{\omega}\,\cdots$$ 
$$\cdots \big[B^{(k+j+1)}\big]^{\omega} \, \mathcal{U}^{(k+j+1)}(t_{j}) \,\gamma_0^{(k+j+1)}\,dt_j \cdots dt_2 \, dt_1 \big\|_{L^{\infty}_{t \in I} L^2(\Omega) H^{\alpha}(\Lambda^k \times \Lambda^k)},
\end{equation}
which by Proposition \ref{DuhamelEstimate2} is:
\begin{equation}
\notag
%\label{Termj_bound}
\leq (C_1T)^j \cdot C_2^k \cdot \big\|S^{(k+j+1,\alpha)}\gamma_0^{(k+j+1)}\big\|_{L^2(\Lambda^{k+j+1} \times \Lambda^{k+j+1})}.
\end{equation}
We note that here, the constants $C_1$ and $C_2$ are as in Proposition \ref{DuhamelEstimate2}. 

As a result, we may deduce that:
$$\sum_{k=1}^{\infty} \xi^k \cdot \big\| \big(\big[\widehat{B}\,\big]^{\omega} \big(\big[\Gamma_{N_1}\big]^{\omega}-\big[\Gamma_{N_2}\big]^{\omega}\big)\big)^{(k)}(t)\big\|_{L^2(\Omega)H^{\alpha}(\Lambda^k \times \Lambda^k)}$$
$$\leq \frac{1}{\xi'} \cdot \sum_{k=1}^{\infty} \sum_{j=N_1-k}^{N_2-k-1} \Big(\frac{C_1T}{\xi'}\Big)^j \cdot \Big(\frac{C_2 \, \xi}{\xi'}\Big)^k \cdot (\xi')^{k+j+1} \cdot \big\|\gamma_0^{(k+j+1)}\big\|_{H^{\alpha}(\Lambda^{k+j+1} \times \Lambda^{(k+j+1)})}.
$$
Since in the above sum, it is the case that $j+k+1>N_1$, it follows that:
$$\big\|\big[\widehat{B}\,\big]^{\omega} \big(\big[\Gamma_{N_1}\big]^{\omega}-\big[\Gamma_{N_2}\big]^{\omega}\big)(t)\big\|_{L^2(\Omega) \mathcal{H}^{\alpha}_{\xi}}$$
$$\leq \frac{1}{\xi'} \cdot \sum_{j=N_1-k}^{N_2-k-1} \Big(\frac{C_1T}{\xi'}\Big)^j \cdot \sum_{k=1}^{\infty} \Big(\frac{C_2 \, \xi}{\xi'}\Big)^k \cdot \big\|\mathbf{P}_{>N_1} \Gamma(0)\big\|_{\mathcal{H}^{\alpha}_{\xi'}},$$
which by \eqref{xi_prime2} and \eqref{xi2} is:
\begin{equation}
\label{CauchySequence1GP2_bound}
\lesssim_{\,T,\,\xi,\,\xi',\,\alpha}  \big\|\mathbf{P}_{>N_1} \Gamma(0)\big\|_{\mathcal{H}^{\alpha}_{\xi'}}.
\end{equation}
This quantity converges to zero as $N_1 \rightarrow \infty$ since $\Gamma(0) \in \mathcal{H}^{\alpha}_{\xi'}$. The claim now follows.
\end{proof}

\begin{remark}
\label{BN1GP2}
The same methods used to prove \eqref{CauchySequence1GP2_bound} allow us to deduce:
\begin{equation}
\label{BN1GP2bound}
\big\|\big[\widehat{B}\,\big]^{\omega} \big[\Gamma_{N_1}\big]^{\omega}(t)\big\|_{L^{\infty}_{t \in I}L^2(\Omega) \mathcal{H}^{\alpha}_{\xi}} \lesssim_{\,T,\,\xi,\,\xi',\,\alpha} \big\|\Gamma(0)\big\|_{\mathcal{H}^{\alpha}_{\xi'}}
\end{equation}
uniformly in $N_1$.
\end{remark}

From Proposition \ref{CauchySequence1GP2}, we can now deduce the following result:

\begin{proposition}
\label{limit2GP2}
There exists $\theta^{\,\omega} \in L^{\infty}_{t \in I} L^2(\Omega) \mathcal{H}^{\alpha}_{\xi}$ such that $\big[\widehat{B}\,\big]^{\omega} \big[\Gamma_N \big]^{\omega} \rightarrow \theta^{\,\omega}$ strongly in $L^{\infty}_{t \in I} L^2(\Omega) \mathcal{H}^{\alpha}_{\xi}$ as $N \rightarrow \infty$. 
\end{proposition}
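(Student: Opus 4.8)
The plan is to mirror the argument already used for Proposition \ref{limit2} in the independently randomized setting, which is purely a completeness argument. The key observation is that we have just established, in Proposition \ref{CauchySequence1GP2}, that the sequence $\big(\big[\widehat{B}\,\big]^{\omega}\big[\Gamma_N\big]^{\omega}\big)_N$ is Cauchy in $L^{\infty}_{t \in I} L^2(\Omega)\mathcal{H}^{\alpha}_{\xi}$. So the whole content of the proposition is: (i) this space is a Banach space, hence the Cauchy sequence has a limit, which we name $\theta^{\,\omega}$; and (ii) that limit again lies in $L^{\infty}_{t \in I}L^2(\Omega)\mathcal{H}^{\alpha}_{\xi}$, which is automatic from completeness. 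The fact that $L^{\infty}_{t \in I}L^2(\Omega)\mathcal{H}^{\alpha}_{\xi}$ is a Banach space was already recorded in Section \ref{Notation} (the norm is \eqref{Space3B}), so I can simply invoke it.

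Concretely, I would write: since $L^{\infty}_{t \in I}L^2(\Omega)\mathcal{H}^{\alpha}_{\xi}$ is a Banach space and $\big(\big[\widehat{B}\,\big]^{\omega}\big[\Gamma_N\big]^{\omega}\big)_N$ is Cauchy in it by Proposition \ref{CauchySequence1GP2}, there exists a unique $\theta^{\,\omega}$ in this space to which the sequence converges strongly. That is the entire argument. Unlike the independently randomized case, there is no extra statement to prove about the random-parameter dependence of the individual components $\big(\theta^{\,\omega}\big)^{(k)}$: in the dependently randomized hierarchy there is a single random parameter $\omega$, so every $\big(\big[\widehat{B}\,\big]^{\omega}\big[\Gamma_N\big]^{\omega}\big)^{(k)}$ depends on $\omega$ and nothing finer is claimed. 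This is precisely why the statement of Proposition \ref{limit2GP2} is shorter than that of Proposition \ref{limit2} — the second sentence of Proposition \ref{limit2} has no analogue here.

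There is essentially no obstacle in this particular step; it is a one-line consequence of completeness plus the preceding proposition. The only thing worth a sentence of care is making explicit that the convergence is in the strong topology of $L^{\infty}_{t \in I}L^2(\Omega)\mathcal{H}^{\alpha}_{\xi}$ (not merely componentwise or in a weaker space), since the later propositions — the analogue of Proposition \ref{theta_omega_star_equation}, namely Proposition \ref{theta_omega_equation}, and the passage to the limiting equation — will invoke this strong convergence when estimating terms such as $\theta^{\,\omega}(s) - \big[\widehat{B}\,\big]^{\omega}\big[\Gamma_N\big]^{\omega}(s)$. I would also note, as a remark paralleling Remark \ref{theta_omega_star_boundA}, that combining the uniform bound \eqref{BN1GP2bound} from Remark \ref{BN1GP2} with this strong convergence and letting $N \to \infty$ yields the a priori estimate $\big\|\theta^{\,\omega}\big\|_{L^{\infty}_{t \in I} L^2(\Omega)\mathcal{H}^{\alpha}_{\xi}} \lesssim_{\,T,\,\xi,\,\xi',\,\alpha} \big\|\Gamma(0)\big\|_{\mathcal{H}^{\alpha}_{\xi'}}$, which will be needed downstream.
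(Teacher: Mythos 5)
Your argument is exactly the paper's: Proposition \ref{CauchySequence1GP2} gives the Cauchy property, and completeness of the Banach space $L^{\infty}_{t \in I} L^2(\Omega)\mathcal{H}^{\alpha}_{\xi}$ immediately yields the strong limit $\theta^{\,\omega}$. Your added observations (no analogue of the random-parameter statement from Proposition \ref{limit2}, and the a priori bound via \eqref{BN1GP2bound}) match what the paper records separately in Remark \ref{theta_omega_boundA}, so there is nothing to correct.
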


\begin{proof}
This result immediately follows from Proposition \ref{CauchySequence1GP2} and from the fact that $L^{\infty}_{t \in I} L^2(\Omega) \mathcal{H}^{\alpha}_{\xi}$ is a Banach space.
\end{proof}

\begin{remark}
\label{theta_omega_boundA}
If we use \eqref{BN1GP2bound} from Remark \ref{BN1GP2} and Proposition \ref{limit2GP2}, and if we let $N_1 \rightarrow \infty$, it follows that:
\begin{equation}
\label{theta_omega_bound1}
\big\|\theta^{\,\omega}\big\|_{L^{\infty}_{t \in I} L^2(\Omega) \mathcal{H}^{\alpha}_{\xi}} \lesssim_{\,T,\,\xi,\,\xi',\,\alpha} \big\|\Gamma(0)\big\|_{\mathcal{H}^{\alpha}_{\xi'}}.
\end{equation}
\end{remark}

Before we prove an analogue of Proposition \ref{theta_omega_star_equation} for the dependently randomized GP hierarchy \eqref{RandomizedGP2}, we need to first prove the following lemma:

\begin{lemma}
\label{BhatSigma}
Suppose that $\Sigma^{\,\omega}=(\sigma^{(k)})_k$ is a sequence of density matrices depending on time and on the random parameter $\omega \in \Omega$ such that:
$$\big\|\Sigma^{\,\omega}\big\|_{L^{\infty}_{t \in I} L^2(\Omega) \mathcal{H}^{\alpha}_{\xi}}=0.$$ 
Then, it is the case that: 
\begin{equation}
\label{BhatSigmabound}
\big\|\big[\widehat{B}\,\big]^{\omega} \Sigma^{\,\omega} \big\|_{L^{\infty}_{t \in I} L^2(\Omega) \mathcal{H}^{\alpha}_{\xi}}=0
\end{equation}
and
\begin{equation}
\label{mathcalUSigma}
\big\|\,\mathcal{U}(s)\, \Sigma^{\,\omega}(t) \big\|_{L^{\infty}_{s \in I} L^{\infty}_{t \in I} L^2(\Omega) \mathcal{H}^{\alpha}_{\xi}}=0.
\end{equation}
\end{lemma}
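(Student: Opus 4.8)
The hypothesis is that $\Sigma^{\,\omega}$, viewed as an element of the Banach space $L^{\infty}_{t \in I} L^2(\Omega) \mathcal{H}^{\alpha}_{\xi}$, has zero norm; equivalently, for almost every $t \in I$ and for every $k \in \mathbb{N}$, the density matrix $\sigma^{(k)}(t)$ vanishes in $L^2(\Omega) H^{\alpha}(\Lambda^k \times \Lambda^k)$, i.e. $\sigma^{(k)}(t,\omega,\cdot,\cdot) = 0$ in $H^{\alpha}(\Lambda^k \times \Lambda^k)$ for $p$-almost every $\omega$. The strategy is simply to propagate this vanishing through the two operators $[\widehat{B}\,]^{\omega}$ and $\mathcal{U}(s)$ by using that each component of these operators is bounded from the relevant Sobolev-type space to itself (up to an adjustment of the weight $\xi$, which does not matter since the target norm is zero anyway).

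\textbf{Step 1: the estimate \eqref{BhatSigmabound}.} Fix a time $t$ at which $\|\Sigma^{\,\omega}(t)\|_{L^2(\Omega)\mathcal{H}^{\alpha}_{\xi}} = 0$ (this holds for a.e.\ $t \in I$). For each $k \in \mathbb{N}$ we have, by the definition \eqref{B_hat_omega} of $[\widehat{B}\,]^{\omega}$,
$$\big([\widehat{B}\,]^{\omega}\Sigma^{\,\omega}\big)^{(k)}(t) = [B^{(k+1)}]^{\omega}\sigma^{(k+1)}(t).$$
Since $\sigma^{(k+1)}(t,\omega,\cdot,\cdot) = 0$ in $H^{\alpha}(\Lambda^{k+1}\times\Lambda^{k+1})$ for $p$-a.e.\ $\omega$, and since each $[B_{j,k+1}]^{\omega}$ is, for each fixed $\omega$, a well-defined linear map on Fourier coefficients (cf.\ \eqref{Bjkomega}, \eqref{Bjkomega2}, \eqref{Bjkrandomized}), applying $[B^{(k+1)}]^{\omega} = \sum_{j=1}^k [B_{j,k+1}]^{\omega}$ to the zero density matrix gives the zero density matrix. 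Hence $\big([\widehat{B}\,]^{\omega}\Sigma^{\,\omega}\big)^{(k)}(t) = 0$ in $L^2(\Omega)H^{\alpha}(\Lambda^k\times\Lambda^k)$ for every $k$, so the $L^2(\Omega)\mathcal{H}^{\alpha}_{\xi}$-norm of $[\widehat{B}\,]^{\omega}\Sigma^{\,\omega}(t)$ vanishes. Taking the supremum over the full-measure set of such $t$ yields \eqref{BhatSigmabound}. (Alternatively, one may invoke Theorem \ref{RandomizedEstimate1}, or Proposition \ref{non-resonant} with $j=0$, to get $\|[B^{(k+1)}]^{\omega}\sigma^{(k+1)}(t)\|_{L^2(\Omega)H^{\alpha}(\Lambda^k\times\Lambda^k)} \le C_0 k \cdot \|\sigma^{(k+1)}(t)\|_{L^2(\Omega)H^{\alpha}(\Lambda^{k+1}\times\Lambda^{k+1})} = 0$, then sum $\xi^k$ times this over $k$ after noting $k\xi^k$ can be absorbed; but since the right-hand side is already zero, the crude argument above suffices.)

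\textbf{Step 2: the estimate \eqref{mathcalUSigma}.} For a fixed $s\in I$, the operator $\mathcal{U}(s)$ acts componentwise as $\mathcal{U}^{(k)}(s)$, and by the unitarity of $\mathcal{U}^{(k)}(s)$ on $H^{\alpha}(\Lambda^k\times\Lambda^k)$ — which follows directly from \eqref{free_evolution}, since it only multiplies Fourier coefficients by unimodular factors, and commutes with $S^{(k,\alpha)}$ — we have, for a.e.\ $t\in I$ and every $k$,
$$\big\|\big(\mathcal{U}(s)\Sigma^{\,\omega}(t)\big)^{(k)}\big\|_{L^2(\Omega)H^{\alpha}(\Lambda^k\times\Lambda^k)} = \big\|\mathcal{U}^{(k)}(s)\sigma^{(k)}(t)\big\|_{L^2(\Omega)H^{\alpha}(\Lambda^k\times\Lambda^k)} = \big\|\sigma^{(k)}(t)\big\|_{L^2(\Omega)H^{\alpha}(\Lambda^k\times\Lambda^k)} = 0.$$
Multiplying by $\xi^k$ and summing over $k$ gives $\|\mathcal{U}(s)\Sigma^{\,\omega}(t)\|_{L^2(\Omega)\mathcal{H}^{\alpha}_{\xi}} = 0$ for a.e.\ $t$, and then taking the supremum over $t\in I$ and over $s\in I$ yields \eqref{mathcalUSigma}. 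This completes the proof of the lemma.

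\textbf{Expected main obstacle.} There is no serious analytic obstacle here; the lemma is essentially bookkeeping. The only point requiring a little care is the measure-theoretic one: the hypothesis $\|\Sigma^{\,\omega}\|_{L^{\infty}_{t\in I}L^2(\Omega)\mathcal{H}^{\alpha}_{\xi}} = 0$ is a statement about an essential supremum over $t$ and an $L^2$-norm over $\Omega$, so the vanishing of each $\sigma^{(k)}(t,\omega,\cdot,\cdot)$ holds only for $t$ in a full-measure set and $\omega$ in a full-measure set. One should check that the (countably many, indexed by $k$) exceptional null sets in $t$ can be unioned into a single null set, and likewise that applying the fixed-$\omega$ operators $[B_{j,k+1}]^{\omega}$ and $\mathcal{U}^{(k)}(s)$ to density matrices defined only up to $\omega$-null sets is harmless — which it is, since these operators act pointwise in $\omega$ and the conclusion is again an "equals zero'' statement in the same norms. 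I would mention this explicitly but not belabor it.
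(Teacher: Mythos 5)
Your proposal is correct and takes essentially the same route as the paper: the paper also reduces the hypothesis to pointwise vanishing in $(t,\omega)$ (working with the individual Fourier coefficients and assembling a common full-measure subset of $I\times\Omega$), and then observes that every summand in the definition of $\big[B^{\pm}_{j,k+1}\big]^{\omega}$, and the unimodular multiplier defining $\mathcal{U}^{(k)}(\tau)$, vanish there — precisely the bookkeeping you describe, including the countable-union-of-null-sets point. Your use of unitarity of $\mathcal{U}^{(k)}(s)$ in Step 2 is just a compact restatement of the paper's Fourier-multiplier argument.
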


We note that in the conclusion \eqref{BhatSigmabound} of the lemma, the parameter $\omega$ in $\big[\widehat{B}\,\big]^{\omega}$ and in $\Sigma^{\,\omega}$ are the same. Each term in $\Sigma$ is a density matrix $\sigma^{(k)}=\sigma^{(k)}(t,\omega,\vec{x}_k;\vec{x}'_k)$ which has order $k$. We suppress the $t$ and $\omega$ dependence for simplicity of notation.

Let us now prove the lemma:

\begin{proof}

Let us first prove \eqref{BhatSigmabound}.

Since $\big\|\Sigma^{\,\omega}\big\|_{L^{\infty}_{t \in I} L^2(\Omega) \mathcal{H}^{\alpha}_{\xi}}=0$, it follows that:
$$\big\|\Sigma^{\,\omega}\big\|_{L^{\infty}_{t \in I} L^{\infty}(\Omega) \mathcal{H}^{\alpha}_{\xi}}=0.$$
In particular, we note that, for all $k \in \mathbb{N}$:
$$\big\|S^{(k,\alpha)} \sigma^{(k)}(t,\omega,\vec{x}_k;\vec{x}'_k)\big\|_{L^{\infty}_{t \in I} L^{\infty}(\Omega) L^2(\Lambda^k \times \Lambda^k)}=0.$$
On the Fourier transform side, we hence obtain, for all $k \in \mathbb{N}$:
$$\big\|\langle \vec{\xi}_k \rangle^{\alpha} \cdot \langle \vec{\xi}'_k \rangle^{\alpha} \cdot \widehat{\sigma}^{(k)}(t,\omega,\vec{\xi}_k; \vec{\xi}'_k) \big\|_{L^{\infty}_{t \in I} L^{\infty}(\Omega) L^2(\mathbb{Z}^{3k} \times \mathbb{Z}^{3k})}=0.$$
The latter identity is equivalent to:
$$\big\|\widehat{\sigma}^{(k)}(t,\omega,\vec{\xi}_k; \vec{\xi}'_k) \big\|_{L^{\infty}_{t \in I} L^{\infty}(\Omega) L^2(\mathbb{Z}^{3k} \times \mathbb{Z}^{3k})}=0.$$
Let $\nu$ denote the measure on $I \times \Omega$ obtained by taking the product of the Lebesgue measure on $I$ and the probability measure $p$ on $\Omega$.
It follows that, for all $k \in \mathbb{N}$, and for all $(\vec{\xi}_k; \vec{\xi}'_k) \in \mathbb{Z}^{3k} \times \mathbb{Z}^{3k}$, there exists a set $\mathcal{E}_{\vec{\xi}_k;\vec{\xi}'_k}\subseteq I \times \Omega$ such that:
\begin{itemize}
\item[i)] $\nu\big( (I \times \Omega) \setminus \mathcal{E}_{\vec{\xi}_k;\vec{\xi}'_k}\big)=0.$
\item[ii)] $\widehat{\sigma}^{(k)}(t,\omega,\vec{\xi}_k;\vec{\xi}'_k)=0$ for all $(t,\omega) \in \mathcal{E}_{\vec{\xi}_k;\vec{\xi}'_k}$, 
\end{itemize}
Let us define:
\begin{equation}
\label{mathcalE}
\mathcal{E}:=\bigcup_{k \in \mathbb{N}} \bigcup_{(\vec{\xi}_k;\vec{\xi}'_k) \in \mathbb{Z}^{3k} \times \mathbb{Z}^{3k}} \mathcal{E}_{\vec{\xi}_k;\vec{\xi}'_k}.
\end{equation}
Then $\mathcal{E} \subseteq I \times \Omega$ is $\nu-$measurable and $\nu\big((I \times \Omega) \setminus \mathcal{E}\big)=0$.

We will now show that, for all $(t,\omega) \in \mathcal{E}$, it is the case that for all $k \in \mathbb{N}$ and $(\vec{\xi}_k,\vec{\xi}'_k) \in \mathbb{Z}^{3k} \times \mathbb{Z}^{3k}$:
\begin{equation}
\label{Bsigma}
\big(\big[B^{(k+1)}\big]^{\omega}\sigma^{(k+1)}\big)\,\widehat{\,}\,(t,\omega,\vec{\xi}_k;\vec{\xi}'_k)=0.
\end{equation} 
Let us note that \eqref{Bsigma} implies the claim given in \eqref{BhatSigmabound}. Namely, from \eqref{Bsigma}, we can use Plancherel's Theorem and deduce that for all $(t,\omega) \in \mathcal{E}$:
$$\big\|S^{(k,\alpha)}\big[B^{(k+1)}\big]^{\omega} \sigma^{(k+1)}(t,\omega)\big\|_{L^2(\Lambda^k \times \Lambda^k)}=0.$$ 
Since this identity holds for all $k \in \mathbb{N}$ and since $\nu \big((I \times \Omega) \setminus \mathcal{E} \big)=0$, it follows that:
$$\big\|\big[\widehat{B}\,\big]^{\omega} \Sigma^{\,\omega} \big\|_{L^{\infty}_{t \in I} L^{\infty}(\Omega) \mathcal{H}^{\alpha}_{\xi}}=0.$$
Consequently:
$$\big\|\big[\widehat{B}\,\big]^{\omega} \Sigma^{\,\omega}\big\|_{L^{\infty}_{t \in I} L^2(\Omega) \mathcal{H}^{\alpha}_{\xi}}=0,$$
as was claimed in \eqref{BhatSigmabound}.

We now need to prove \eqref{Bsigma}. Let us fix $(t,\omega) \in \mathcal{E}$ and $(\vec{\xi}_k;\vec{\xi}'_k) \in \mathbb{Z}^{3k} \times \mathbb{Z}^{3k}$.
We observe that:
$$\big(\big[B^{+}_{1,k+1}\big]^{\omega} \sigma^{(k+1)}\big)\,\widehat{\,}\,(t,\omega,\vec{\xi}_k;\vec{\xi}'_k)=$$
$$\sum_{\mu,\mu' \in \mathbb{Z}^3} \widehat{\sigma}^{(k+1)}(t,\omega,\xi_1-\mu+\mu',\xi_2,\ldots,\xi_k,\mu;\vec{\xi}'_k,\mu') \cdot h_{\xi_1}(\omega) \cdot h_{\xi_1-\mu+\mu'}(\omega) \cdot h_{\mu}(\omega) \cdot h_{\mu'}(\omega).$$
By construction, each summand equals to zero so the whole sum equals to zero.
The proof that 
$$\big(\big[B^{\pm}_{j,k+1}\big]^{\omega} \sigma^{(k+1)}\big)\,\widehat{\,}\,(t,\omega,\vec{\xi}_k;\vec{\xi}'_k)=0$$ for all $j \in \{1,2,\ldots,k\}$ is analogous. The identity \eqref{Bsigma} now follows.

We now prove \eqref{mathcalUSigma} by using a similar argument. Namely, let us take $(t,\omega) \in \mathcal{E}$, for $\mathcal{E}$ defined in \eqref{mathcalE}. We observe that  for all $k \in \mathbb{N},(\vec{\xi}_k,\vec{\xi}'_k) \in \mathbb{Z}^{3k} \times \mathbb{Z}^{3k}$ and $\tau \in I$:
$$\big(\,\mathcal{U}^{(k)}(\tau)\,\sigma^{(k)}\big)\,\widehat{\,}\,(t,\omega,\vec{\xi}_k;\vec{\xi}'_k)= e^{-i\tau(|\vec{\xi}_k|^2-|\vec{\xi}'_k|^2)} \cdot \widehat{\sigma}^{(k)}(t,\omega,\vec{\xi}_k;\vec{\xi}'_k)=0,$$
by definition of $\mathcal{E}$. The claim \eqref{mathcalUSigma} now follows by arguing as in the proof of \eqref{BhatSigmabound}.

\end{proof}
\begin{remark}
We note from the proof that the parameter $\xi$ in the statement of Lemma \ref{BhatSigma} could have been replaced by any other $\xi_0>0$.
\end{remark}

We can now prove an analogue of Proposition \ref{theta_omega_star_equation} for the hierarchy \eqref{RandomizedGP2}.

\begin{proposition}
\label{theta_omega_equation}
For $\theta^{\,\omega}$, constructed in Proposition \ref{limit2GP2}, and for all $\xi_0>0$, the following identity holds:
$$\big\|\theta^{\,\omega}-\big[\widehat{B}\,\big]^{\omega}\,\mathcal{U}(t)\,\Gamma(0)+i\int_{0}^{t} \big[\widehat{B}\,\big]^{\omega}\,\mathcal{U}(t-s)\,\theta^{\,\omega}(s)\,ds \,\big\|_{L^{\infty}_{t \in I} L^2(\Omega) \mathcal{H}^{\alpha}_{\xi_0}}=0.$$
\end{proposition}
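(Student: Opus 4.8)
The strategy is to mimic the proof of Proposition \ref{theta_omega_star_equation} in the independently randomized case, replacing the applications of Proposition \ref{RandomizedEstimate2} (which are unavailable here, since there is only one random parameter $\omega$) by a direct truncation argument built on Proposition \ref{DuhamelEstimate2} and Lemma \ref{BhatSigma}. As in the independently randomized case, it suffices to prove the claim for some fixed $\xi_0 \in (0,\xi)$, where $\xi$ is as in \eqref{xi2}; the general statement then follows since the $\mathcal{H}^{\alpha}_{\xi_0}$ norm is monotone in $\xi_0$. First I would fix $N \in \mathbb{N}$ and split, by the triangle inequality,
\begin{equation}
\notag
\big\|\theta^{\,\omega}-\big[\widehat{B}\,\big]^{\omega}\,\mathcal{U}(t)\,\Gamma(0)+i\int_{0}^{t} \big[\widehat{B}\,\big]^{\omega}\,\mathcal{U}(t-s)\,\theta^{\,\omega}(s)\,ds \,\big\|_{L^{\infty}_{t \in I} L^2(\Omega) \mathcal{H}^{\alpha}_{\xi_0}}
\end{equation}
into four pieces in complete analogy with \eqref{theta_omega_star_difference}: the difference $\theta^{\,\omega}-\big[\widehat{B}\,\big]^{\omega}\big[\Gamma_N\big]^{\omega}$, the contribution of $\mathbf{P}_{>N}\Gamma(0)$ coming from $\big[\widehat{B}\,\big]^{\omega}\mathcal{U}(t)(\Gamma(0)-\Gamma_N(0))$, the Duhamel integral of $\theta^{\,\omega}(s)-\big[\widehat{B}\,\big]^{\omega}\big[\Gamma_N\big]^{\omega}(s)$, and a fourth term that vanishes identically by the construction of $\big[\Gamma_N\big]^{\omega}$ (it is precisely the truncated hierarchy equation \eqref{TruncatedRandomizedGP2} recombined via \eqref{gammaNkomega2}). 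Then I would show each of the first three terms tends to $0$ as $N \to \infty$.

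The first term goes to $0$ by Proposition \ref{limit2GP2} together with $\xi_0 < \xi$. The second term is controlled using Proposition \ref{non-resonant} (or just Theorem \ref{RandomizedEstimate1}, since a single collision operator applied to the deterministic tail is all that is needed) and the unitarity of $\mathcal{U}^{(k+1)}$, which bounds it by a constant times $\big\|\mathbf{P}_{>N}\Gamma(0)\big\|_{\mathcal{H}^{\alpha}_{\xi'}}\to 0$, using $\xi_0 < \xi'$; here one needs $\mathbf{P}_{>N}\Gamma(0)$ to stay in the non-resonant class $\mathcal{N}$, which it does by Remark \ref{Non_resonant_class_comparison} and the fact that the frequency-support condition i) of Definition \ref{Non-resonant} is preserved under projection. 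The main obstacle is the third term, $\big\|\int_0^t \big[\widehat{B}\,\big]^{\omega}\mathcal{U}(t-s)\big(\theta^{\,\omega}(s)-\big[\widehat{B}\,\big]^{\omega}\big[\Gamma_N\big]^{\omega}(s)\big)\,ds\big\|$: in the independently randomized case one simply applied estimate \eqref{RandomizedEstimate2III} to $\big[\widehat{B}\,\big]^{\omega^{*}}$ acting on the difference (whose components had no dependence on the relevant $\omega_{k+1}$), but here $\omega$ appears both in the collision operator and in $\big[\Gamma_N\big]^{\omega}$, so that shortcut is not available.

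To handle the third term I would follow the approach sketched in the introduction for Proposition \ref{theta_omega_equation}: introduce, for $M > N$, the truncation $\big[\Phi_N^M\big]^{\omega} := \big[\widehat{B}\,\big]^{\omega}\big[\Gamma_M\big]^{\omega}-\big[\widehat{B}\,\big]^{\omega}\big[\Gamma_N\big]^{\omega}$, whose components are \emph{finite} sums of Duhamel expansion terms built from non-resonant initial data. On each such term I would commute the outermost $\big[\widehat{B}\,\big]^{\omega}\mathcal{U}(t-s)$ inside and recognize the resulting object as exactly a Duhamel expansion of the type estimated in Proposition \ref{DuhamelEstimate2} (applied to the non-resonant sequence $\Gamma(0)$, using also that $\Gamma(0)\in\mathcal{H}^{\alpha}_{\xi'}$ so that condition ii) of Definition \ref{Non-resonant} holds by Remark \ref{Non_resonant_class_comparison}); summing the geometric series in $j$ and $k$ with the constraints \eqref{xi_prime2}, \eqref{xi2} gives a bound by $C_{T,\xi,\xi',\alpha}\,\big\|\mathbf{P}_{>N}\Gamma(0)\big\|_{\mathcal{H}^{\alpha}_{\xi'}}$, uniformly in $M$. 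Letting $M\to\infty$ and invoking Proposition \ref{limit2GP2} (so $\big[\widehat{B}\,\big]^{\omega}\big[\Gamma_M\big]^{\omega}\to\theta^{\,\omega}$) upgrades this to a bound on $\big\|\theta^{\,\omega}(s)-\big[\widehat{B}\,\big]^{\omega}\big[\Gamma_N\big]^{\omega}(s)\big\|$, hence on the whole third term after a Minkowski's-inequality time integration and a factor of $T$. Thus the third term also tends to $0$ as $N\to\infty$. Combining the four estimates, the left-hand side is $\le \epsilon_N$ with $\epsilon_N\to 0$, which being independent of $N$ forces it to be $0$. Lemma \ref{BhatSigma} is invoked at the end (or implicitly) to ensure that the almost-everywhere identities in $L^2(\Omega)\mathcal{H}^{\alpha}_{\xi_0}$ behave well under the collision operator and the free evolution, so that the resulting zero-norm statement is meaningful and stable under the operations performed.
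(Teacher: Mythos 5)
Your proposal is correct and follows essentially the same route as the paper: the same four-term decomposition as in \eqref{theta_omega_differenceGP2}, the same treatment of the first, second, and fourth terms, and for the third term the same device of truncating to $\big[\Phi_N^M\big]^{\omega}$, estimating the composite expansions (with the outer $\big[\widehat{B}\,\big]^{\omega}\,\mathcal{U}(t-s)$ absorbed into the Duhamel expansion) via Proposition \ref{non-resonant}/Proposition \ref{DuhamelEstimate2} uniformly in $M$, and invoking Lemma \ref{BhatSigma} to justify substituting the explicit series for $\theta^{\,\omega}-\big[\widehat{B}\,\big]^{\omega}\big[\Gamma_N\big]^{\omega}$. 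The only loose phrasing is the claim that the limit $M\to\infty$ ``upgrades this to a bound on $\big\|\theta^{\,\omega}(s)-\big[\widehat{B}\,\big]^{\omega}\big[\Gamma_N\big]^{\omega}(s)\big\|$'': the limit must be taken with the outer collision operator already applied (as your earlier sentences correctly indicate), since $\big[\widehat{B}\,\big]^{\omega}$ cannot be estimated after the fact on an $\omega$-dependent argument.
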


\begin{proof}
It suffices to show the claim for $\xi_0=\xi$, where $\xi$ is as in \eqref{xi2}. Let us henceforth set $\xi_0=\xi$.
We note that, for all $N \in \mathbb{N}$:
\begin{equation}
\label{theta_omega_differenceGP2}
\big\|\theta^{\,\omega}-\big[\widehat{B}\,\big]^{\omega}\,\mathcal{U}(t)\,\Gamma(0)+i\int_{0}^{t} \big[\widehat{B}\,\big]^{\omega}\,\mathcal{U}(t-s)\,\theta^{\,\omega}(s)\,ds \,\big\|_{L^{\infty}_{t \in I} L^2(\Omega) \mathcal{H}^{\alpha}_{\xi}}
\end{equation}
$$\leq \big\|\theta^{\,\omega}-\big[\widehat{B}\,\big]^{\omega} \big[\Gamma_N\big]^{\omega}\big\|_{L^{\infty}_{t \in I} L^2(\Omega) \mathcal{H}^{\alpha}_{\xi}} + \big\|\big[\widehat{B}\,\big]^{\omega}\,\mathcal{U}(t)\,\big(\Gamma(0)-\Gamma_N(0)\big)\big\|_{L^{\infty}_{t \in I} L^2(\Omega) \mathcal{H}^{\alpha}_{\xi}} + $$
$$+ \big\|\int_{0}^{t} \big[\widehat{B}\,\big]^{\omega} \, \mathcal{U}(t-s) \big(\theta^{\,\omega}(s) -\big[\widehat{B}\,\big]^{\omega} \big[\Gamma_N\big]^{\omega}(s) \big) \, ds\big\|_{L^{\infty}_{t \in I} L^2(\Omega) \mathcal{H}^{\alpha}_{\xi}} +$$
$$+ \big\|\big[\widehat{B}\,\big]^{\omega} \big[\Gamma_N\big]^{\omega}(t)-\big[\widehat{B}\,\big]^{\omega}\,\mathcal{U}(t) \, \Gamma_N(0) \,+\, i \int_{0}^{t} \big[\widehat{B}\,\big]^{\omega} \, \mathcal{U}(t-s)\, \big[\widehat{B}\,\big]^{\omega} \big[\Gamma_N\big]^{\omega}(s)\, ds\big\|_{L^{\infty}_{t \in I} L^2(\Omega) \mathcal{H}^{\alpha}_{\xi}}.$$
We will show that each term converges to zero as $N \rightarrow \infty.$

\textbf{1)}
The first term converges to zero as $N \rightarrow \infty$ by using Proposition \ref{limit2GP2}.

\textbf{2)}
For the second term, we observe that, for all $k \in \mathbb{N}$:

$$\Big(\big[\widehat{B}\,\big]^{\omega}\,\mathcal{U}(t)\,\big(\Gamma(0)-\Gamma_N(0)\big)\Big)^{(k)}= \big[B^{(k+1)}\big]^{\omega} \,\mathcal{U}^{(k+1)}(t)\,\big(\Gamma(0)-\Gamma_N(0)\big)^{(k+1)}=$$
\begin{equation}
\notag
=
\begin{cases}
\big[B^{(k+1)}\big]^{\omega} \, \mathcal{U}^{(k+1)}(t)\,\gamma_0^{(k+1)}\,\,\mbox{if}\,\,k \geq N\\
0\,\,\mbox{otherwise}.
\end{cases}
\end{equation}
Consequently:
$$\big\|\big[\widehat{B}\,\big]^{\omega} \,\mathcal{U}(t)\,\big(\Gamma(0)-\Gamma_N(0)\big)\big\|_{L^{\infty}_{t \in I} L^2(\Omega) \mathcal{H}^{\alpha}_{\xi}} = \sum_{k \geq N} \xi^k \, \cdot \, \big\|\big[B^{(k+1)}\big]^{\omega}\,\mathcal{U}^{(k+1)}(t)\,\gamma_0^{(k+1)}\big\|_{L^2(\Omega) H^{\alpha}(\Lambda^k \times \Lambda^k)}.$$
By Theorem \ref{RandomizedEstimate1}, this expression is:
$$\lesssim_{\,\alpha} \sum_{k \geq N} k \cdot \xi^k \cdot \big\|S^{(k+1,\alpha)} \gamma_0^{(k+1)}\big\|_{L^2(\Lambda^{k+1} \times \Lambda^{k+1})} \lesssim_{\,\xi,\,\xi'} \big\|\mathbf{P}_{>N}\Gamma(0)\big\|_{\mathcal{H}^{\alpha}_{\xi'}}.$$
In the last inequality, we used the assumption that $\xi<\xi'$.
Since $\Gamma(0) \in \mathcal{H}^{\alpha}_{\xi'}$, it follows that $\big\|\mathbf{P}_{>N}\Gamma(0)\big\|_{\mathcal{H}^{\alpha}_{\xi'}} \rightarrow 0$ as $N \rightarrow \infty$. In particular, the second term on the right-hand side of \eqref{theta_omega_differenceGP2} converges to zero as $N \rightarrow \infty$.

\textbf{3)}
For the third term, let us first observe that, by Minkowski's inequality:

$$\big\|\int_{0}^{t} \big[\widehat{B}\,\big]^{\omega} \, \mathcal{U}(t-s) \big(\theta^{\,\omega}(s) -\big[\widehat{B}\,\big]^{\omega} \big[\Gamma_N\big]^{\omega}(s) \big) \, ds\big\|_{L^{\infty}_{t \in I} L^2(\Omega) \mathcal{H}^{\alpha}_{\xi}}$$
$$\leq \int_{0}^{t} \big\|\big[\widehat{B}\,\big]^{\omega} \, \mathcal{U}(t-s) \big(\theta^{\,\omega}(s) -\big[\widehat{B}\,\big]^{\omega} \big[\Gamma_N\big]^{\omega}(s) \big)\big\|_{L^{\infty}_{t \in I} L^2(\Omega) \mathcal{H}^{\alpha}_{\xi}} \, ds$$
$$ \leq T \cdot \big\|\big[\widehat{B}\,\big]^{\omega} \, \mathcal{U}(t-s) \big(\theta^{\,\omega}(s) -\big[\widehat{B}\,\big]^{\omega} \big[\Gamma_N\big]^{\omega}(s) \big)\big\|_{L^{\infty}_{s \in I} L^{\infty}_{t \in I} L^2(\Omega) \mathcal{H}^{\alpha}_{\xi}}.$$
Hence, we need to show:

\begin{equation}
\label{Third_term_boundGP2}
\big\|\big[\widehat{B}\,\big]^{\omega} \, \mathcal{U}(t-s) \big(\theta^{\,\omega}(s) -\big[\widehat{B}\,\big]^{\omega} \big[\Gamma_N\big]^{\omega}(s) \big)\big\|_{L^{\infty}_{s \in I} L^{\infty}_{t \in I} L^2(\Omega) \mathcal{H}^{\alpha}_{\xi}} \rightarrow 0,
\end{equation}
as $N \rightarrow \infty$.

The difficulty in showing \eqref{Third_term_boundGP2} lies in the fact that there is $\omega$ dependence in $\big[\widehat{B}\,\big]^{\omega}$ and in $\big(\theta^{\,\omega} -\big[\widehat{B}\,\big]^{\omega} \big[\Gamma_N\big]^{\omega}\big)$. We will have to expand out all of the terms. It will also be important to use the fact that we know explicitly how the operators $\big[\widehat{B}\,\big]^{\omega}$ and $\mathcal{U}(t-s)$ act on the Fourier domain.

Let us first note that, formally, by \eqref{gammaNkomega2} and by Proposition \ref{limit2GP2}, for all $k \in \mathbb{N}$ with $k \leq N -1$:
\begin{equation}
\label{identity}
\Big(\theta^{\,\omega}(t) -\big[\widehat{B}\,\big]^{\omega} \big[\Gamma_N\big]^{\omega}(t)\Big)^{(k+1)}=
\end{equation}
$$\sum_{j=N-k-1}^{\infty} (-1)^j \int_{0}^{t} \int_{0}^{t_1} \cdots \int_{0}^{t_{j-1}}
\big[B^{(k+2)}\big]^{\omega} \, \mathcal{U}^{(k+2)}(t-t_1) \, \big[B^{(k+3)}\big]^{\omega} \, \mathcal{U}^{(k+3)}(t_1-t_2)\,
\,\cdots \,$$ 
$$\cdots\, \mathcal{U}^{(k+j+1)}(t_{j-1}-t_j) \big[B^{(k+j+2)}\big]^{\omega} \, \mathcal{U}^{(k+j+2)}(t_{j}) \,\gamma_0^{(k+j+2)}\,dt_j \cdots dt_2 \, dt_1.$$
In the above sum, if $j=0$, the summand is taken to be $\big[B_{k+2}\big]^{\omega}\,\mathcal{U}^{(k+2)}(t)\, \gamma_0^{(k+2)}$, without any integrals in time. We use this convention in the discussion that follows.

The identity \eqref{identity} is made rigorous as follows. We fix $M > N$. Then, we know that, for all $k \in \mathbb{N}$ with $k \leq N-1$:

\begin{equation}
\label{M_N_identity}
\Big(\big[\widehat{B}\,\big]^{\omega} \big[\Gamma_M\big]^{\omega}(t)-\big[\widehat{B}\,\big]^{\omega} \big[\Gamma_N\big]^{\omega}(t)\Big)^{(k+1)}=
\end{equation}
$$\sum_{j=N-k-1}^{M-k-2} (-1)^j \int_{0}^{t} \int_{0}^{t_1} \cdots \int_{0}^{t_{j-1}}
\big[B^{(k+2)}\big]^{\omega} \, \mathcal{U}^{(k+2)}(t-t_1) \, \big[B^{(k+3)}\big]^{\omega} \, \mathcal{U}^{(k+3)}(t_1-t_2)\,
\,\cdots \,$$ 
$$\cdots\, \mathcal{U}^{(k+j+1)}(t_{j-1}-t_j) \big[B^{(k+j+2)}\big]^{\omega} \, \mathcal{U}^{(k+j+2)}(t_{j}) \,\gamma_0^{(k+j+2)}\,dt_j \cdots dt_2 \, dt_1.$$
This is a well-defined finite sum, which we obtain as a difference of two well-defined finite sums.

We define, for $M>N$:
\begin{equation}
\label{Phi_N^M}
\big[\Phi_{N}^{M}\big]^{\omega}:=\big[\widehat{B}\,\big]^{\omega} \big[\Gamma_M\big]^{\omega}-\big[\widehat{B}\,\big]^{\omega} \big[\Gamma_N\big]^{\omega}
\end{equation}
and:
\begin{equation}
\label{Phi_N}
\big[\Phi_{N}\big]^{\omega}:=\theta^{\,\omega}-\big[\widehat{B}\,\big]^{\omega} \big[\Gamma_N\big]^{\omega}.
\end{equation}
By Proposition \ref{limit2GP2}, we know that $\big[\Phi_{N}^{M}\big]^{\omega} \rightarrow \big[\Phi_{N}\big]^{\omega}$ in $L^{\infty}_{t \in I} L^2(\Omega) \mathcal{H}^{\alpha}_{\xi}$, as $M \rightarrow \infty$.
If we take $k \leq N-1$, then $\big(\big[\Phi_{N}^{M}\big]^{\omega}\big)^{(k+1)}$ in particular equals to the right-hand side of \eqref{M_N_identity}. This is an element of $L^{\infty}_{t \in I} L^2(\Omega) H^{\alpha}(\Lambda^{k+1} \times \Lambda^{k+1}).$

Let us furthermore note that the right-hand side of \eqref{M_N_identity} converges to the right-hand side of \eqref{identity} in $L^{\infty}_{t \in I} L^2(\Omega) H^{\alpha} (\Lambda^{k+1} \times \Lambda^{k+1})$ as $M \rightarrow \infty$. 
Moreover, the series given on the right-hand side of \eqref{identity} is well-defined and it converges absolutely with respect to $\|\cdot\|_{L^{\infty}_{t \in I} L^2(\Omega) H^{\alpha}(\Lambda^{k+1} \times \Lambda^{k+1})}$. More precisely, for $k \leq N-1,$ the difference of these two expressions estimated in $\big\|\cdot\big\|_{L^{\infty}_{t \in I} L^2(\Omega) H^{\alpha} (\Lambda^{k+1} \times \Lambda^{k+1})}$ equals to:
$$\big\|\sum_{j=M-k-1}^{\infty} (-1)^j \int_{0}^{t} \int_{0}^{t_1} \cdots \int_{0}^{t_{j-1}}
\big[B^{(k+2)}\big]^{\omega} \, \mathcal{U}^{(k+2)}(t-t_1) \, \big[B^{(k+3)}\big]^{\omega} \, \mathcal{U}^{(k+3)}(t_1-t_2)\,
\,\cdots \,$$ 
$$\cdots\, \mathcal{U}^{(k+j+1)}(t_{j-1}-t_j) \big[B^{(k+j+2)}\big]^{\omega} \, \mathcal{U}^{(k+j+2)}(t_{j}) \,\gamma_0^{(k+j+2)}\,dt_j \cdots dt_2 \, dt_1\big\|_{L^{\infty}_{t \in I} L^2(\Omega)H^{\alpha}(\Lambda^{k+1} \times \Lambda^{k+1})}.$$
By Proposition \ref{DuhamelEstimate2}, this quantity is:
$$\leq \frac{C_2^{k+1}}{(\xi')^{k+2}} \cdot \sum_{j=M-k-1}^{\infty} \Big(\frac{C_1T}{\xi'}\Big)^j \cdot (\xi')^{k+j+2} \cdot \big\|\gamma_0^{(k+j+2)}\big\|_{H^{\alpha}(\Lambda^{k+j+2} \times \Lambda^{(k+j+2)})}
$$
$$\leq \frac{C_2^{k+1}}{(\xi')^{k+2}}  \cdot \big\|\Gamma(0)\big\|_{\mathcal{H}^{\alpha}_{\xi'}} \cdot \sum_{j=M-k-1}^{\infty} \Big(\frac{C_1T}{\xi'}\Big)^j.$$
This quantity converges to zero as $M \rightarrow \infty$ since $\Gamma(0) \in \mathcal{H}^{\alpha}_{\xi'}$ and $\frac{C_1T}{\xi'} \in (0,1)$.

By the convergence result of Proposition \ref{limit2GP2}, we observe that $\big(\big[\Phi_{N}^{M}\big]^{\omega}\big)^{(k+1)} \rightarrow \big(\big[\Phi_{N}\big]^{\omega}\big)^{(k+1)}$ in $L^{\infty}_{t \in I} L^2(\Omega) H^{\alpha}(\Lambda^{k+1} \times \Lambda^{k+1})$ as $M \rightarrow \infty$. The identity \eqref{identity} now follows from the above arguments. Equality here should be taken in the sense of $L^{\infty}_{t \in I} L^2(\Omega) H^{\alpha}(\Lambda^{k+1} \times \Lambda^{k+1})$, i.e. we note that for $k \leq N-1$:

$$\big\|\big(\big[\Phi_{N}\big]^{\omega}\big)^{(k+1)}-\sum_{j=N-k-1}^{\infty} (-1)^j \int_{0}^{t} \int_{0}^{t_1} \cdots \int_{0}^{t_{j-1}}
\big[B^{(k+2)}\big]^{\omega} \, \mathcal{U}^{(k+2)}(t-t_1) \, \,\cdots \,$$ 
$$\cdots\, \big[B^{(k+j+2)}\big]^{\omega} \, \mathcal{U}^{(k+j+2)}(t_{j}) \,\gamma_0^{(k+j+2)}\,dt_j \cdots dt_2 \, dt_1 \big\|_{L^{\infty}_{t \in I} L^2(\Omega) H^{\alpha}(\Lambda^{k+1} \times \Lambda^{k+1})}=0.$$
We observe that, by construction:
$$\big\|\theta^{\,\omega}-\big[\widehat{B}\,\big]^{\omega} \big[\Gamma_N\big]^{\omega}-\big[\Phi_{N}\big]^{\omega}\big\|_{L^{\infty}_{t \in I} L^2(\Omega) \mathcal{H}^{\alpha}_{\xi}}=0.$$
By applying \eqref{mathcalUSigma} and \eqref{BhatSigmabound} from Lemma \ref{BhatSigma}, it follows that:
$$\big\|\big[\widehat{B}\,\big]^{\omega} \,\mathcal{U}(t-s)\, \big(\theta^{\,\omega}-\big[\widehat{B}\,\big]^{\omega} \big[\Gamma_N\big]^{\omega}-\big[\Phi_{N}\big]^{\omega}\big)(s)\big\|_{L^{\infty}_{s \in I} L^{\infty}_{t \in I} L^2(\Omega) \mathcal{H}^{\alpha}_{\xi}}=0$$
Hence, 
$$\big\|\big[\widehat{B}\,\big]^{\omega} \,\mathcal{U}(t-s)\, \big(\theta^{\,\omega}-\big[\widehat{B}\,\big]^{\omega} \big[\Gamma_N\big]^{\omega}\big)(s)\big\|_{L^{\infty}_{s \in I} L^{\infty}_{t \in I} L^2(\Omega) \mathcal{H}^{\alpha}_{\xi}}=$$
$$=\big\|\big[\widehat{B}\,\big]^{\omega} \,\mathcal{U}(t-s)\, \big[\Phi_{N}\big]^{\omega}(s)\big\|_{L^{\infty}_{s \in I} L^{\infty}_{t \in I} L^2(\Omega) \mathcal{H}^{\alpha}_{\xi}}.$$
In particular, \eqref{Third_term_boundGP2} will follow if we prove that:

\begin{equation}
\label{Third_term_boundGP2B}
\big\|\big[\widehat{B}\,\big]^{\omega} \,\mathcal{U}(t-s)\, \big[\Phi_{N}\big]^{\omega}(s)\big\|_{L^{\infty}_{s \in I} L^{\infty}_{t \in I} L^2(\Omega) \mathcal{H}^{\alpha}_{\xi}} \rightarrow 0,
\end{equation}
as $N \rightarrow \infty$.

In order to prove \eqref{Third_term_boundGP2B}, let us first estimate:
$$\big\|\big[\widehat{B}\,\big]^{\omega} \,\mathcal{U}(t-s)\, \big[\Phi_{N}^{M}\big]^{\omega}(s)\big\|_{L^{\infty}_{s \in I} L^{\infty}_{t \in I} L^2(\Omega) \mathcal{H}^{\alpha}_{\xi}}$$
for fixed $M>N$.

For fixed $s,t \in I$, and $k \leq N-1$, we can write:
$$\Big(\big[\widehat{B}\,\big]^{\omega} \, \mathcal{U}(t-s) \, \big[\Phi_{N}^{M}\big]^{\omega}(s)\Big)^{(k)}=$$
$$=\sum_{j=N-k-1}^{M-k-2} (-1)^j \int_{0}^{s} \int_{0}^{t_1} \cdots \int_{0}^{t_{j-1}}
\big[B^{(k+1)}\big]^{\omega}\,\mathcal{U}^{(k+1)}(t-s)\,\big[B^{(k+2)}\big]^{\omega} \, \mathcal{U}^{(k+2)}(s-t_1) \, \big[B^{(k+3)}\big]^{\omega}$$ 
$$\, \mathcal{U}^{(k+3)}(t_1-t_2)\,\cdots\, \mathcal{U}^{(k+j+1)}(t_{j-1}-t_j) \big[B^{(k+j+2)}\big]^{\omega} \, \mathcal{U}^{(k+j+2)}(t_{j}) \,\gamma_0^{(k+j+2)}\,dt_j \cdots dt_2 \, dt_1.$$
Arguing as in the proof of \eqref{Proposition4.3_auxiliary_bound} in Proposition \ref{DuhamelEstimate2}, it follows that:
\begin{equation}
\notag
\Big\|\Big(\big[\widehat{B}\,\big]^{\omega} \, \mathcal{U}(t-s) \, \big[\Phi_{N}^{M}\big]^{\omega}(s) \Big)^{(k)}\Big\|_{L^2(\Omega) H^{\alpha}(\Lambda^k \times \Lambda^k)} \leq
\end{equation} 
$$\leq \sum_{j=N-k-1}^{M-k-2} \frac{T^j}{j!} \cdot 2^{j+2} \cdot C_0^{k+j+2} \cdot k \cdot (k+j+1)^{j+1} \cdot \big\|S^{(k+j+2,\alpha)} \gamma_0^{(k+j+2)}\big\|_{L^2(\Lambda^{k+j+2} \times \Lambda^{k+j+2})}.$$
Here, $C_0$ is the constant from Proposition \ref{non-resonant}.

By applying Stirling's formula \eqref{Stirling's formula} as in \eqref{Termj_bound_Proposition4.3}, this is:
\begin{equation}
\label{k1bound}
\leq \sum_{j=N-k-1}^{M-k-2}  \frac{j+1}{T} \cdot (C_1T)^{j+1} \cdot C_2^k \cdot \big\|S^{(k+j+2,\alpha)}\gamma_0^{(k+j+2)}\big\|_{L^2(\Lambda^{k+j+2} \times \Lambda^{k+j+2})}.
\end{equation}
Here, $C_1$ and $C_2$ are the same constants as in Proposition \ref{DuhamelEstimate2}.

By an analogous argument, for $k \geq N$
\begin{equation}
\notag
\Big\|\Big(\big[\widehat{B}\,\big]^{\omega} \, \mathcal{U}(t-s) \, \big[\Phi_{N}^{M}\big]^{\omega}(s) \Big)^{(k)}\Big\|_{L^2(\Omega) H^{\alpha}(\Lambda^k \times \Lambda^k)} \leq
\end{equation}
\begin{equation}
\label{k2bound}
\leq \sum_{j=0}^{M-k-2}  \frac{j+1}{T} \cdot (C_1T)^{j+1} \cdot C_2^k \cdot \big\|S^{(k+j+2,\alpha)}\gamma_0^{(k+j+2)}\big\|_{L^2(\Lambda^{k+j+2} \times \Lambda^{k+j+2})},
\end{equation}
for the same constants $C_1$ and $C_2$. Here, we used the fact that, for $k \geq N$, it is the case that $\big(\big[\Phi_{N}^{M}\big]^{\omega}\big)^{(k+1)}=\big(\big[\widehat{B}\,\big]^{\omega} \big[\Gamma_M\big]^{\omega}\big)^{(k+1)}$, since $\big(\big[\widehat{B}\,\big]^{\omega} \big[\Gamma_N\big]^{\omega}\big)^{(k+1)}=0$. By our convention for sums from Section \ref{Notation}, the sum in \eqref{k2bound} equals zero if $k \geq M-1$.

From \eqref{k1bound} and \eqref{k2bound}, it follows that:
\begin{equation}
\label{Important_Calculation}
\big\|\big[\widehat{B}\,\big]^{\omega} \, \mathcal{U}(t-s) \big(\, \big[\Phi_{N}^{M}\big]^{\omega}(s) \big)\big\|_{L^{\infty}_{s \in I} L^{\infty}_{t \in I} L^2(\Omega) \mathcal{H}^{\alpha}_{\xi}}
\end{equation}
$$\leq \sum_{k=1}^{N-1}  \sum_{j=N-k-1}^{M-k-2} \xi^k \cdot  \frac{j+1}{T} \cdot (C_1T)^{j+1} \cdot C_2^k \cdot \big\|S^{(k+j+2,\alpha)}\gamma_0^{(k+j+2)}\big\|_{L^2(\Lambda^{k+j+2} \times \Lambda^{k+j+2})}+$$
$$+\sum_{k=N}^{\infty} \sum_{j=0}^{M-k-2}  \xi^k \cdot \frac{j+1}{T} \cdot (C_1T)^{j+1} \cdot C_2^k \cdot \big\|S^{(k+j+2,\alpha)}\gamma_0^{(k+j+2)}\big\|_{L^2(\Lambda^{k+j+2} \times \Lambda^{k+j+2})}$$
$$\leq \frac{C_1}{(\xi')^2} \cdot \sum_{j+k \geq N-1} \Big(\frac{C_2 \xi}{\xi'}\Big)^k \cdot (j+1) \cdot \Big(\frac{C_1 T}{\xi'}\Big)^j \cdot (\xi')^{k+j+2} \cdot \big\|S^{(k+j+2,\alpha)}\gamma_0^{(k+j+2)}\big\|_{L^2(\Lambda^{k+j+2} \times \Lambda^{k+j+2})}$$
$$\leq \frac{C_1}{(\xi')^2} \cdot \big\|\Gamma(0)\big\|_{\mathcal{H}^{\alpha}_{\xi'}} \cdot \sum_{j+k \geq N-1} \Big(\frac{C_2 \xi}{\xi'}\Big)^k \cdot (j+1) \cdot \Big(\frac{C_1 T}{\xi'}\Big)^j$$ 
$$\leq \frac{C_1}{(\xi')^2} \cdot \big\|\Gamma(0)\big\|_{\mathcal{H}^{\alpha}_{\xi'}} \cdot \Big\{\sum_{j\geq \lfloor \frac{N-1}{2} \rfloor; \, k \in \mathbb{N}} \Big(\frac{C_2 \xi}{\xi'}\Big)^k \cdot (j+1) \cdot \Big(\frac{C_1 T}{\xi'}\Big)^j+
\sum_{j \in \mathbb{N}; \, k\geq \lfloor\frac{N-1}{2}\rfloor} \Big(\frac{C_2 \xi}{\xi'}\Big)^k \cdot (j+1) \cdot \Big(\frac{C_1 T}{\xi'}\Big)^j\Big\}.$$
We recall from \eqref{xi_prime2} and \eqref{xi2} that $\frac{C_2 \xi}{\xi'},\frac{C_1 T}{\xi'} \in (0,1)$.
Hence, the above expression is:
$$\leq \frac{C_1}{(\xi')^2} \cdot  \big\|\Gamma(0)\big\|_{\mathcal{H}^{\alpha}_{\xi'}} \cdot \Big\{
\big\{\sup_{j \geq \big\lfloor \frac{N-1}{2} \big\rfloor} (j+1) \cdot \Big(\frac{C_1 T}{\xi'}\Big)^j \big\}  \cdot \sum_{k \in \mathbb{N}} \Big(\frac{C_2 \xi}{\xi'}\Big)^k + \Big(\frac{C_2 \xi}{\xi'}\Big)^{\big\lfloor \frac{N-1}{2} \big\rfloor} \cdot
\sum_{j \in \mathbb{N}} \Big(\frac{C_1 T}{\xi'}\Big)^j\Big\}.$$
This quantity converges to zero as $N \rightarrow \infty$. Let us note that the convergence is uniform in $M$.

We note that, by construction $\big[\theta_{N}\big]^{\omega}$ is obtained as the limit of $\big[\theta_{N}^{M}\big]^{\omega}$ as $M \rightarrow \infty$ in $L^{\infty}_{t \in I} L^2(\Omega) \mathcal{H}^{\alpha}_{\xi}$. The order $k$ limiting object $\big(\big[\theta_{N}\big]^{\omega}\big)^{(k)}$ is then given as an infinite sum. We can formally obtain each component of $\big[\widehat{B}\,\big]^{\omega} \, \mathcal{U}(t-s) \big(\, \big[\Phi_{N}\big]^{\omega}(s) \big)$ by working on the Fourier domain and using the explicit definition of $\big[\widehat{B}\,\big]^{\omega}$ and $\mathcal{U}(t-s)$. In this way, we do not have to appeal to any more general continuity properties of the operators that we are using.  

This step is made rigorous by noting that, from the calculation given in \eqref{Important_Calculation},  the infinite sum which defines:

$$\big[\widehat{B}\,\big]^{\omega} \, \mathcal{U}(t-s) \big(\, \big[\Phi_{N}\big]^{\omega}(s) \big)$$
converges absolutely with respect to $\big\|\cdot\big\|_{L^{\infty}_{s \in I} L^{\infty}_{t \in I} L^2(\Omega) \mathcal{H}^{\alpha}_{\xi}}$. Hence, the construction outlined above is well-defined in this space if we replace $\big[\Phi_{N}^{M}\big]^{\omega}$ by $\big[\Phi_{N}\big]^{\omega}$, i.e. if we formally let $M=\infty$. Moreover,
$$\big\|\big[\widehat{B}\,\big]^{\omega} \, \mathcal{U}(t-s) \big(\, \big[\Phi_{N}\big]^{\omega}(s) \big)\big\|_{L^{\infty}_{s \in I} L^{\infty}_{t \in I} L^2(\Omega) \mathcal{H}^{\alpha}_{\xi}}$$
equals
$$\lim_{M \rightarrow \infty} \big\|\big[\widehat{B}\,\big]^{\omega} \, \mathcal{U}(t-s) \big(\, \big[\Phi_{N}^{M}\big]^{\omega}(s) \big)\big\|_{L^{\infty}_{s \in I} L^{\infty}_{t \in I} L^2(\Omega) \mathcal{H}^{\alpha}_{\xi}},$$
which by \eqref{Important_Calculation} converges to zero as $N \rightarrow \infty$. 

The identity \eqref{Third_term_boundGP2B} now follows. As we noted above, this identity implies \eqref{Third_term_boundGP2}. The latter identity, in turn, implies that the third term on the right-hand side of \eqref{theta_omega_differenceGP2} converges to zero as $N \rightarrow \infty$.

\textbf{4)}
Finally, we note that the fourth term on the right-hand side of \eqref{theta_omega_differenceGP2} equals zero by the construction of $\big[\Gamma_N\big]^{\omega}$.

The proposition now follows.
\end{proof}

Let us now prove the analogue of  the convergence result given in Proposition \ref{CauchySequence2} for \eqref{RandomizedGP2}.

\begin{proposition}
\label{CauchySequence2GP2}
The sequence $\big(\big[\Gamma_N\big]^{\omega}\big)_N$ is Cauchy in $L^{\infty}_{t \in I}L^2(\Omega) \mathcal{H}^{\alpha}_{\xi}$. Moreover, there exists $\big[\Gamma\,\big]^{\omega} \in L^{\infty}_{t \in I}  L^2(\Omega) \mathcal{H}^{\alpha}_{\xi}$ such that $\big[\Gamma_N\big]^{\omega} \rightarrow \big[\Gamma\,\big]^{\omega}$ strongly in $L^{\infty}_{t \in I} L^2(\Omega) \mathcal{H}^{\alpha}_{\xi}$ as $N \rightarrow \infty$.

\medskip

The obtained $\big[\Gamma\,\big]^{\omega}$ satisfies the a priori bound:

\begin{equation}
\label{Gamma_omega_bound}
\big\|\big[\Gamma\,\big]^{\omega}(t)\big\|_{L^{\infty}_{t \in I} L^2(\Omega) \mathcal{H}^{\alpha}_{\xi}} \lesssim_{\,\xi,\,\xi',\,\alpha} \big\|\Gamma(0)\big\|_{\mathcal{H}^{\alpha}_{\xi'}}.
\end{equation}
\end{proposition}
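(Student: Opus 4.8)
The plan is to mirror the argument used for Proposition \ref{CauchySequence2} in the independently randomized setting, the essential difference being that there is no need to keep track of which random parameters occur (only $\omega$ is present), and that the relevant Duhamel bound is the one provided through Proposition \ref{CauchySequence1GP2} rather than through Proposition \ref{DuhamelEstimate1}.

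First I would fix $N_1,N_2\in\mathbb{N}$ with $N_1<N_2$ and record, from the construction of $\big[\Gamma_N\big]^{\omega}$ in \eqref{gammaNkomega2}--\eqref{GammaNGP2} via Duhamel iteration, the identity
$$\big[\Gamma_{N_1}\big]^{\omega}(t)-\big[\Gamma_{N_2}\big]^{\omega}(t)=\mathcal{U}(t)\big(\Gamma_{N_1}(0)-\Gamma_{N_2}(0)\big)-i\int_{0}^{t}\mathcal{U}(t-s)\,\big[\widehat{B}\,\big]^{\omega}\big(\big[\Gamma_{N_1}\big]^{\omega}(s)-\big[\Gamma_{N_2}\big]^{\omega}(s)\big)\,ds$$
in $L^{\infty}_{t\in I}L^2(\Omega)\mathcal{H}^{\alpha}_{\xi}$. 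For the first term, $\Gamma_{N_1}(0)-\Gamma_{N_2}(0)$ is deterministic and supported in orders $>N_1$, so by unitarity of $\mathcal{U}^{(k)}$ and the fact that $\xi<\xi'$ its norm is $\leq\big\|\mathbf{P}_{>N_1}\Gamma(0)\big\|_{\mathcal{H}^{\alpha}_{\xi'}}$, exactly as in \eqref{first_termGammaN1N2}. For the second term, Minkowski's inequality together with unitarity bounds it by $T\cdot\big\|\big[\widehat{B}\,\big]^{\omega}\big[\Gamma_{N_1}\big]^{\omega}-\big[\widehat{B}\,\big]^{\omega}\big[\Gamma_{N_2}\big]^{\omega}\big\|_{L^{\infty}_{s\in I}L^2(\Omega)\mathcal{H}^{\alpha}_{\xi}}$, as in \eqref{second_termGammaN1N2}. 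Both right-hand sides tend to $0$ as $N_1\to\infty$ uniformly in $N_2>N_1$: the first since $\Gamma(0)\in\mathcal{H}^{\alpha}_{\xi'}$, and the second by Proposition \ref{CauchySequence1GP2}, which asserts precisely that $\big(\big[\widehat{B}\,\big]^{\omega}\big[\Gamma_N\big]^{\omega}\big)_N$ is Cauchy in $L^{\infty}_{t\in I}L^2(\Omega)\mathcal{H}^{\alpha}_{\xi}$. Hence $\big(\big[\Gamma_N\big]^{\omega}\big)_N$ is Cauchy in the Banach space $L^{\infty}_{t\in I}L^2(\Omega)\mathcal{H}^{\alpha}_{\xi}$ and therefore converges strongly to some $\big[\Gamma\,\big]^{\omega}$ in that space.

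For the a priori bound \eqref{Gamma_omega_bound} I would apply the same two-term splitting to $\big[\Gamma_{N_1}\big]^{\omega}$ itself (comparing with the zero sequence): the Duhamel representation yields
$$\big\|\big[\Gamma_{N_1}\big]^{\omega}\big\|_{L^{\infty}_{t\in I}L^2(\Omega)\mathcal{H}^{\alpha}_{\xi}}\leq\big\|\Gamma_{N_1}(0)\big\|_{\mathcal{H}^{\alpha}_{\xi}}+T\cdot\big\|\big[\widehat{B}\,\big]^{\omega}\big[\Gamma_{N_1}\big]^{\omega}\big\|_{L^{\infty}_{t\in I}L^2(\Omega)\mathcal{H}^{\alpha}_{\xi}}.$$
Since $\xi<\xi'$ we have $\big\|\Gamma_{N_1}(0)\big\|_{\mathcal{H}^{\alpha}_{\xi}}\leq\big\|\Gamma(0)\big\|_{\mathcal{H}^{\alpha}_{\xi'}}$, and \eqref{BN1GP2bound} from Remark \ref{BN1GP2} bounds the second summand by $\lesssim_{\,T,\,\xi,\,\xi',\,\alpha}\big\|\Gamma(0)\big\|_{\mathcal{H}^{\alpha}_{\xi'}}$ uniformly in $N_1$; as $T$ is chosen as a function of $\xi,\xi',\alpha$ through \eqref{xi_prime2}--\eqref{xi2}, the $T$-dependence of the implicit constant is dropped. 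Letting $N_1\to\infty$ and using the strong convergence $\big[\Gamma_{N_1}\big]^{\omega}\to\big[\Gamma\,\big]^{\omega}$ then gives \eqref{Gamma_omega_bound}.

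Since every ingredient is already in place — Proposition \ref{CauchySequence1GP2} for the Cauchy property, Remark \ref{BN1GP2} for the uniform bound, and unitarity together with Minkowski for the elementary manipulations — I do not expect a serious obstacle here. The only point requiring a little care is that all estimates be uniform in $N_2$ (for the Cauchy conclusion) and in $N_1$ (for the a priori bound), which is automatic from the Cauchy property established in Proposition \ref{CauchySequence1GP2}; one should also note that, in contrast with the independently randomized case, there is no statement about independence of the limit from certain random parameters that needs to be verified, since only the single parameter $\omega$ occurs throughout.
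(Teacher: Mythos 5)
Your proposal is correct and follows essentially the same route as the paper's proof: the Duhamel representation splits the difference $\big[\Gamma_{N_1}\big]^{\omega}-\big[\Gamma_{N_2}\big]^{\omega}$ into an initial-data term controlled by unitarity and $\xi<\xi'$, plus an integral term controlled by Minkowski, unitarity, and the Cauchy property from Proposition \ref{CauchySequence1GP2}, and the a priori bound follows from the same splitting together with \eqref{BN1GP2bound} and passage to the limit. No gaps.
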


\begin{proof}
The proof is similar to that of Proposition \ref{CauchySequence2}.
Let us fix $N_1,N_2 \in \mathbb{N}$ with $N_1<N_2$. By construction of $\big[\Gamma_{N_j}\big]^{\omega}$, for $j=1,2$, it follows that:
$$\big\|\big[\Gamma_{N_1}\big]^{\omega}(t)-\big[\Gamma_{N_2}\big]^{\omega}(t)\big\|_{L^{\infty}_{t \in I} L^2(\Omega)\mathcal{H}^{\alpha}_{\xi}} \leq \big\|\,\mathcal{U}(t)\,\big(\Gamma_{N_1}(0)-\Gamma_{N_2}(0)\big)\big\|_{L^{\infty}_{t \in I} L^2(\Omega)\mathcal{H}^{\alpha}_{\xi}}+$$
$$+\big\|\int_{0}^{t}\,\mathcal{U}(t-s)\,\big[\widehat{B}\,\big]^{\omega} \big(\big[\Gamma_{N_1}\big]^{\omega}(s)-\big[\Gamma_{N_2}\big]^{\omega}(s)\big)\,ds\,\big\|_{L^{\infty}_{t \in I} L^2(\Omega)\mathcal{H}^{\alpha}_{\xi}}$$
\begin{equation}
\label{CauchySequence2GP2_bound}
\leq \big\|\Gamma_{N_1}(0)-\Gamma_{N_2}(0)\big\|_{\mathcal{H}^{\alpha}_{\xi'}} +  T \cdot \big\|\big[\widehat{B}\,\big]^{\omega} \big[\Gamma_{N_1}\big]^{\omega}(s)-\big[\widehat{B}\,\big]^{\omega} \big[\Gamma_{N_2}\big]^{\omega}(s)\big\|_{L^{\infty}_{s \in I} L^2(\Omega) \mathcal{H}^{\alpha}_{\xi}}.
\end{equation}
Here, we used the unitarity of the free evolution operator $\mathcal{U}(t)$. 

Let us recall that $\Gamma(0) \in \mathcal{H}^{\alpha}_{\xi'}$. Furthermore, let us recall from Proposition \ref{limit2GP2} that $\big[\widehat{B}\,\big]^{\omega} \big[\Gamma_N\big]^{\omega} \rightarrow \theta^{\,\omega}$ strongly in $L^{\infty}_{t \in I} L^2(\Omega) \mathcal{H}^{\alpha}_{\xi}$ as $N \rightarrow \infty$. As a result, it follows that $\big(\big[\Gamma_N\big]^{\omega}\big)_N$ is Cauchy in $L^{\infty}_{t \in I} L^2(\Omega) \mathcal{H}^{\alpha}_{\xi}$. Since $L^{\infty}_{t \in I} L^2(\Omega) \mathcal{H}^{\alpha}_{\xi}$ is a Banach space, it follows that there exists $\big[\Gamma\,\big]^{\omega} \in L^{\infty}_{t \in I} L^2(\Omega) \mathcal{H}^{\alpha}_{\xi}$ such that:
\begin{equation}
\notag
\big[\Gamma_N\big]^{\omega} \rightarrow \big[\Gamma \,\big]^{\omega}
\end{equation}
strongly in $L^{\infty}_{t \in I} L^2(\Omega) \mathcal{H}^{\alpha}_{\xi}$ as $N \rightarrow \infty.$
Let us now prove \eqref{Gamma_omega_bound}. We fix $N_1$.
By arguing as in \eqref{CauchySequence2GP2_bound}, we note that:
$$\big\|\big[\Gamma_{N_1}\big]^{\omega}\big\|_{L^{\infty}_{t \in I} L^2(\Omega) \mathcal{H}^{\alpha}_{\xi}} 
\leq \big\|\Gamma_{N_1}(0)\big\|_{\mathcal{H}^{\alpha}_{\xi}} +
T \cdot \big\|\big[\widehat{B}\,\big]^{\omega} \big[\Gamma_N\,\big]^{\omega} \big\|_{L^{\infty}_{t \in I} L^2(\Omega) \mathcal{H}^{\alpha}_{\xi}}
.$$
By \eqref{BN1GP2bound} from Remark \ref{BN1GP2}, it follows that this quantity is:
$$\lesssim_{\,T,\,\xi,\,\xi',\,\alpha} \big\|\Gamma_{N_1}(0)\big\|_{\mathcal{H}^{\alpha}_{\xi}} + \big\|\Gamma(0)\big\|_{\mathcal{H}^{\alpha}_{\xi'}}.$$
We recall that $T$ is chosen to depend on $\xi,\xi'$, and $\alpha$. Hence:
$$\big\|\big[\Gamma_{N_1}\big]^{\omega}\big\|_{L^{\infty}_{t \in I} L^2(\Omega) \mathcal{H}^{\alpha}_{\xi}} \lesssim_{\,\xi,\xi',\alpha} \big\|\Gamma(0)\big\|_{\mathcal{H}^{\alpha}_{\xi'}}.$$
We let $N_1 \rightarrow \infty$ and we use the fact that $\big[\Gamma_{N_1}\,\big]^{\omega} \rightarrow \big[\Gamma\,\big]^{\omega}$ in $L^{\infty}_{t \in I} L^2(\Omega) \mathcal{H}^{\alpha}_{\xi}$ in order to deduce \eqref{Gamma_omega_bound}.

\end{proof}

Let us now find the equation which is satisfied by the limiting object $\big[\Gamma\,\big]^{\omega}$. We will prove the following analogue of Proposition \ref{Gamma_omega_star_equation}.

\begin{proposition}
\label{Gamma_omega_equation}
For $\big[\Gamma\,\big]^{\omega}$ constructed in Proposition \ref{CauchySequence2GP2} and for all $\xi_0>0$, the following identity holds:
$$\big\|\big[\Gamma\,\big]^{\omega}(t)-\mathcal{U}(t)\,\Gamma(0)+i\int_{0}^{t} \mathcal{U}(t-s)\,\theta^{\,\omega}(s)\,ds \,\big\|_{L^{\infty}_{t \in I} L^2(\Omega) \mathcal{H}^{\alpha}_{\xi_0}}=0.$$
\end{proposition}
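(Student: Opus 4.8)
The plan is to follow verbatim the structure used for Proposition \ref{Gamma_omega_star_equation} in the independently randomized setting, replacing the spaces $L^2(\Omega^{*})$ by $L^2(\Omega)$ and the operators $\big[\widehat B\,\big]^{\omega^{*}}$ by $\big[\widehat B\,\big]^{\omega}$; the proof is in fact simpler here because no random-parameter bookkeeping is needed. As before it suffices to prove the claim for a single $\xi_0 \in (0,\xi)$, with $\xi$ as in \eqref{xi2}, and then let the general $\xi_0$ follow since $\mathcal{H}^{\alpha}_{\xi_0} \supseteq \mathcal{H}^{\alpha}_{\xi'}$-type inclusions are monotone in the weight. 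First I would fix $N \in \mathbb{N}$ and insert the telescoping decomposition: bound the quantity
\[
\big\|\big[\Gamma\,\big]^{\omega}(t)-\mathcal{U}(t)\,\Gamma(0)+i\int_{0}^{t} \mathcal{U}(t-s)\,\theta^{\,\omega}(s)\,ds \,\big\|_{L^{\infty}_{t \in I} L^2(\Omega) \mathcal{H}^{\alpha}_{\xi_0}}
\]
by the sum of four terms exactly as in the proof of Proposition \ref{Gamma_omega_star_equation}:
(1) $\big\|\big[\Gamma\,\big]^{\omega}-\big[\Gamma_N\big]^{\omega}\big\|_{L^{\infty}_{t \in I} L^2(\Omega)\mathcal{H}^{\alpha}_{\xi_0}}$;
(2) $\big\|\mathcal{U}(t)\big(\Gamma(0)-\Gamma_N(0)\big)\big\|_{L^{\infty}_{t \in I} L^2(\Omega)\mathcal{H}^{\alpha}_{\xi_0}}$;
(3) $\big\|\int_0^t \mathcal{U}(t-s)\big(\theta^{\,\omega}(s)-\big[\widehat B\,\big]^{\omega}\big[\Gamma_N\big]^{\omega}(s)\big)\,ds\big\|_{L^{\infty}_{t \in I} L^2(\Omega)\mathcal{H}^{\alpha}_{\xi_0}}$;
(4) the term $\big\|\big[\Gamma_N\big]^{\omega}(t)-\mathcal{U}(t)\Gamma_N(0)+i\int_0^t \mathcal{U}(t-s)\big[\widehat B\,\big]^{\omega}\big[\Gamma_N\big]^{\omega}(s)\,ds\big\|$, which vanishes identically by the construction of $\big[\Gamma_N\big]^{\omega}$ in \eqref{gammaNkomega2}--\eqref{GammaNGP2}.

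Next I would dispatch the four pieces. Term (1) tends to $0$ as $N\to\infty$ by the strong convergence $\big[\Gamma_N\big]^{\omega}\to\big[\Gamma\,\big]^{\omega}$ in $L^{\infty}_{t\in I}L^2(\Omega)\mathcal{H}^{\alpha}_{\xi}$ from Proposition \ref{CauchySequence2GP2} together with $\xi_0<\xi$ (the weight $\xi_0^k \le \xi^k$ pointwise). Term (2) equals $\big\|\mathbf{P}_{>N}\Gamma(0)\big\|_{\mathcal{H}^{\alpha}_{\xi_0}}$ by unitarity of $\mathcal{U}(t)$ and the fact that $\Omega$ carries a probability measure (so the $L^2(\Omega)$-norm of a deterministic object is itself); since $\xi_0<\xi'$ this is $\le \big\|\mathbf{P}_{>N}\Gamma(0)\big\|_{\mathcal{H}^{\alpha}_{\xi'}}\to 0$ because $\Gamma(0)\in\mathcal{H}^{\alpha}_{\xi'}$. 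Term (3): by Minkowski's inequality in the time integral and unitarity,
\[
\text{(3)} \le T\cdot \big\|\theta^{\,\omega}(s)-\big[\widehat B\,\big]^{\omega}\big[\Gamma_N\big]^{\omega}(s)\big\|_{L^{\infty}_{s\in I} L^2(\Omega)\mathcal{H}^{\alpha}_{\xi_0}} \le T\cdot \big\|\theta^{\,\omega}-\big[\widehat B\,\big]^{\omega}\big[\Gamma_N\big]^{\omega}\big\|_{L^{\infty}_{t\in I} L^2(\Omega)\mathcal{H}^{\alpha}_{\xi}},
\]
which tends to $0$ as $N\to\infty$ by Proposition \ref{limit2GP2}. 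Term (4) is zero as noted. Letting $N\to\infty$ in the four-term inequality yields the claim for this $\xi_0$, and then for all $\xi_0>0$.

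I expect there to be essentially no serious obstacle here: unlike Proposition \ref{theta_omega_equation}, where term (3) forced us to expand Duhamel series explicitly because the operator $\big[\widehat B\,\big]^{\omega}$ and the density matrix both depend on the \emph{same} $\omega$, in the present proposition term (3) is handled purely by Minkowski plus unitarity plus the already-established convergence $\big[\widehat B\,\big]^{\omega}\big[\Gamma_N\big]^{\omega}\to\theta^{\,\omega}$ — there is no composition of $\big[\widehat B\,\big]^{\omega}$ with a limiting object that still needs to be controlled. The only minor point to be careful about is the passage between $L^2(\Omega)$ on deterministic data and the $\mathcal{H}^{\alpha}_{\xi'}$ norm in term (2), and the monotonicity of the $\mathcal{H}^{\alpha}_{\cdot}$ weights used to reduce to $\xi_0\in(0,\xi)$; both are routine and already used repeatedly above. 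Hence the proof is a direct transcription of the argument for Proposition \ref{Gamma_omega_star_equation}, and I would write it in that telescoped four-step form.
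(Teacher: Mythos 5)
Your proposal is correct and follows essentially the same route as the paper: the same four-term telescoping decomposition through $\big[\Gamma_N\big]^{\omega}$, with term (1) handled by Proposition \ref{CauchySequence2GP2}, term (2) by unitarity and the probability-measure property plus $\big\|\mathbf{P}_{>N}\Gamma(0)\big\|_{\mathcal{H}^{\alpha}_{\xi'}}\to 0$, term (3) by Minkowski, unitarity and Proposition \ref{limit2GP2}, and term (4) vanishing by construction of $\big[\Gamma_N\big]^{\omega}$. The only cosmetic difference is that the paper reduces to $\xi_0=\xi$ rather than some $\xi_0\in(0,\xi)$, which changes nothing.
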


\begin{proof}
As before, it suffices to show the claim for $\xi_0=\xi$, where $\xi$ is as in \eqref{xi2}.
We note that:
$$\big\|\big[\Gamma\,\big]^{\omega}(t)-\mathcal{U}(t)\,\Gamma(0)+i\int_{0}^{t} \mathcal{U}(t-s)\,\theta^{\,\omega}(s)\,ds \,\big\|_{L^{\infty}_{t \in I} L^2(\Omega) \mathcal{H}^{\alpha}_{\xi}}$$
$$
\leq \big\|\big[\Gamma\,\big]^{\omega}(t)-\big[\Gamma_N\big]^{\omega}(t)\big\|_{L^{\infty}_{t \in I} L^2(\Omega) \mathcal{H}^{\alpha}_{\xi}} + \big\|\,\mathcal{U}(t)\big(\Gamma(0)-\Gamma_N(0)\big)\big\|_{L^{\infty}_{t \in I} L^2(\Omega) \mathcal{H}^{\alpha}_{\xi}}+
$$
$$+\big\|\int_{0}^{t}\,\mathcal{U}(t-s)\,\big(\theta^{\,\omega}(s)-\big[\widehat{B}\,\big]^{\omega}\big[\Gamma_N\big]^{\omega}(s)\big)\,ds\,\big\|_{L^{\infty}_{t \in I} L^2(\Omega) \mathcal{H}^{\alpha}_{\xi}}+$$
$$+\big\|\big[\Gamma_N\big]^{\omega}(t)-\mathcal{U}(t)\,\Gamma_N(0)+i\int_{0}^{t}\,\mathcal{U}(t-s)\,\big[\widehat{B}\,\big]^{\omega}\big[\Gamma_N\big]^{\omega}(s)\,ds\big\|_{L^{\infty}_{t \in I} L^2(\Omega)\mathcal{H}^{\alpha}_{\xi}}.$$

\textbf{1)}
The first term converges to zero as $N \rightarrow \infty$ by using Proposition \ref{CauchySequence2GP2}.

\textbf{2)}
By unitarity, the second term equals: 
$$\big\|\Gamma(0)-\Gamma_N(0)\big\|_{L^2(\Omega) \mathcal{H}^{\alpha}_{\xi}}= \big\|\mathbf{P}_{>N}\Gamma(0)\big\|_{L^2(\Omega)\mathcal{H}^{\alpha}_{\xi}}=\big\|\mathbf{P}_{>N}\Gamma(0)\big\|_{\mathcal{H}^{\alpha}_{\xi}}$$
by the property of probability measures.
Since $\xi<\xi'$, this quantity is bounded from above by $\big\|\mathbf{P}_{>N}\Gamma(0)\big\|_{\mathcal{H}^{\alpha}_{\xi'}}$, which converges to zero as $N \rightarrow \infty$ since $\Gamma(0) \in \mathcal{H}^{\alpha}_{\xi'}$.

\textbf{3)}
By Minkowski's inequality and unitarity, the third term is:
%$$\leq \Big\| \int_{0}^{t} \big\|\theta^{\,\omega^{*}}(s)-\big[\widehat{B}\,\big]^{\omega^{*}}\big[\Gamma_N\big]^{\omega^{*}}(s)\big\|_{L^{\infty}_{t \in I} L^2(\Omega^{*})\mathcal{H}^{\alpha}_{\xi_1}}\,ds\,\Big\|_{L^{\infty}_{t \in I}}$$
$$\leq T \cdot \big\|\theta^{\,\omega}(s)-\big[\widehat{B}\,\big]^{\omega}\big[\Gamma_N\big]^{\omega}(s)\big\|_{L^{\infty}_{s \in I} L^2(\Omega) \mathcal{H}^{\alpha}_{\xi}},$$
which converges to zero as $N \rightarrow \infty$ by Proposition \ref{limit2GP2}.

\textbf{4)}
The fourth term equals to zero by construction of $\big[\Gamma_N\big]^{\omega}$.
The proposition now follows.
\end{proof}

We can now state and prove the main result of this section:
\begin{theorem}
\label{Theorem1GP2} 
For $\big[\Gamma\,\big]^{\omega}$ constructed in Proposition \ref{CauchySequence2GP2} and for all $\xi_0>0$, the following identity holds:
\begin{equation}
\label{Theorem1identity1GP2}
\big\|\big[\Gamma\,\big]^{\omega}(t)-\mathcal{U}(t)\,\Gamma(0)+i \int_{0}^{t}\,\mathcal{U}(t-s)\,\big[\widehat{B}\,\big]^{\omega} \big[\Gamma\,\big]^{\omega}(s)\,ds\,\big\|_{L^{\infty}_{t \in I} L^2(\Omega) \mathcal{H}^{\alpha}_{\xi_0}}=0.
\end{equation}
In particular, for almost all $t \in I$, it is the case that for all $k \in \mathbb{N}$:
\begin{equation}
\label{Theorem1identity2GP2}
\big\|\big(\big[\Gamma\,\big]^{\omega}(t)-\mathcal{U}(t)\,\Gamma(0)+i \int_{0}^{t}\,\mathcal{U}(t-s)\,
\big[\widehat{B}\,\big]^{\omega}\big[\Gamma\,\big]^{\omega}(s)\,ds\,\big)^{(k)}\big\|_{L^2(\Omega) H^{\alpha}(\Lambda^k \times \Lambda^k)}
=0.
\end{equation}
Hence, $\big[\Gamma\,\big]^{\omega}$ is a solution to the dependently randomized Gross-Pitaevskii hierarchy which satisfies the a priori bound \eqref{Gamma_omega_bound}.
\end{theorem}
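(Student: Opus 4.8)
The plan is to combine the convergence results established for the two auxiliary sequences into an equation for the limit $[\Gamma\,]^{\omega}$, mimicking the argument used in the independently randomized setting in Theorem \ref{Theorem1}. First I would reduce to proving \eqref{Theorem1identity1GP2} for $\xi_0=\xi$, where $\xi$ satisfies \eqref{xi2}; the general claim for all $\xi_0>0$ then follows since the $\mathcal{H}^{\alpha}_{\xi_0}$ norms are monotone in $\xi_0$ and $\Gamma(0)$ sits in every $\mathcal{H}^{\alpha}_{\xi_0}$ it needs to. I would also note at the outset that \eqref{Theorem1identity1GP2} immediately implies \eqref{Theorem1identity2GP2}: the vanishing of a sum of nonnegative terms $\sum_k \xi_0^k \|\cdots^{(k)}\|_{L^2(\Omega)H^{\alpha}}$ forces each term to vanish for almost every $t$, and a countable intersection of full-measure sets is full measure.

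The core of the argument is to show that $[\widehat{B}\,]^{\omega}[\Gamma\,]^{\omega}=\theta^{\,\omega}$ in the space $L^{\infty}_{t\in I}L^2(\Omega)\mathcal{H}^{\alpha}_{\xi_0}$. Here I want to apply Lemma \ref{BhatSigma} rather than Proposition \ref{RandomizedEstimate2}, since there is only one random parameter. The plan: from Proposition \ref{CauchySequence2GP2} we have $[\Gamma_N]^{\omega}\to[\Gamma\,]^{\omega}$ strongly; combining this with Proposition \ref{Gamma_omega_equation}, the sequence $\Sigma^{\,\omega}_N:=[\Gamma\,]^{\omega}-\mathcal{U}(t)\Gamma(0)+i\int_0^t\mathcal{U}(t-s)\theta^{\,\omega}(s)\,ds$ (which is actually independent of $N$) has zero norm in $L^{\infty}_{t\in I}L^2(\Omega)\mathcal{H}^{\alpha}_{\xi_0}$. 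I would then want to relate $[\widehat{B}\,]^{\omega}[\Gamma\,]^{\omega}$ to $[\widehat{B}\,]^{\omega}\big(\mathcal{U}(t)\Gamma(0)-i\int_0^t\mathcal{U}(t-s)\theta^{\,\omega}(s)\,ds\big)$. Since the difference of these two is $[\widehat{B}\,]^{\omega}$ applied to a sequence of zero norm, Lemma \ref{BhatSigma}\eqref{BhatSigmabound} gives that $[\widehat{B}\,]^{\omega}[\Gamma\,]^{\omega}$ equals $[\widehat{B}\,]^{\omega}\mathcal{U}(t)\Gamma(0)-i\int_0^t[\widehat{B}\,]^{\omega}\mathcal{U}(t-s)\theta^{\,\omega}(s)\,ds$ in the relevant norm (using also \eqref{mathcalUSigma} to handle the free evolution inside the Duhamel integral, plus Minkowski's inequality to pull $[\widehat{B}\,]^{\omega}\mathcal{U}(t-s)$ through the $s$-integral). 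Comparing with Proposition \ref{theta_omega_equation}, the right-hand side is exactly $\theta^{\,\omega}$; hence $\big\|[\widehat{B}\,]^{\omega}[\Gamma\,]^{\omega}-\theta^{\,\omega}\big\|_{L^{\infty}_{t\in I}L^2(\Omega)\mathcal{H}^{\alpha}_{\xi_0}}=0$, the analogue of \eqref{Theorem1B}.

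With that identity in hand, I would finish as in the proof of Theorem \ref{Theorem1}. By Minkowski's inequality and unitarity of $\mathcal{U}$,
\begin{equation}
\notag
\Big\|\int_0^t\mathcal{U}(t-s)\big(\theta^{\,\omega}(s)-[\widehat{B}\,]^{\omega}[\Gamma\,]^{\omega}(s)\big)\,ds\Big\|_{L^{\infty}_{t\in I}L^2(\Omega)\mathcal{H}^{\alpha}_{\xi_0}}\le T\cdot\big\|\theta^{\,\omega}-[\widehat{B}\,]^{\omega}[\Gamma\,]^{\omega}\big\|_{L^{\infty}_{t\in I}L^2(\Omega)\mathcal{H}^{\alpha}_{\xi_0}}=0,
\end{equation}
so replacing $\theta^{\,\omega}$ by $[\widehat{B}\,]^{\omega}[\Gamma\,]^{\omega}$ inside the Duhamel term in Proposition \ref{Gamma_omega_equation} costs nothing, which yields exactly \eqref{Theorem1identity1GP2}. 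The a priori bound is \eqref{Gamma_omega_bound} from Proposition \ref{CauchySequence2GP2}.

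The main obstacle is the step where I pass $[\widehat{B}\,]^{\omega}$ through equalities of zero-norm objects: since $[\widehat{B}\,]^{\omega}$ is not bounded on $\mathcal{H}^{\alpha}_{\xi_0}$ into itself (it only maps into a space with a smaller weight), one cannot simply say "apply a continuous operator to both sides." The correct move is precisely Lemma \ref{BhatSigma}, which is designed for this: it shows that if a sequence $\Sigma^{\,\omega}$ has zero norm then so does $[\widehat{B}\,]^{\omega}\Sigma^{\,\omega}$ and $\mathcal{U}(s)\Sigma^{\,\omega}(t)$, via the Fourier-level argument on a full-measure set. I need to be careful that the objects I apply it to — the difference between $[\Gamma\,]^{\omega}$ and its Duhamel representative, and the difference $\theta^{\,\omega}-[\widehat{B}\,]^{\omega}[\Gamma\,]^{\omega}$ after the integral manipulation — genuinely have zero norm, and that the $\xi$ in Lemma \ref{BhatSigma} may be taken to be $\xi_0$ as the remark after that lemma notes. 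Everything else is the same bookkeeping as in Section \ref{The independently randomized Gross-Pitaevskii hierarchy}.
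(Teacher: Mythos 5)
Your proposal is correct and follows essentially the same route as the paper's proof: it derives $\big\|[\widehat{B}\,]^{\omega}[\Gamma\,]^{\omega}-\theta^{\,\omega}\big\|_{L^{\infty}_{t\in I}L^2(\Omega)\mathcal{H}^{\alpha}_{\xi}}=0$ by applying Lemma \ref{BhatSigma} to the zero-norm sequence furnished by Proposition \ref{Gamma_omega_equation} and comparing with Proposition \ref{theta_omega_equation}, and then uses Minkowski's inequality and unitarity to replace $\theta^{\,\omega}$ by $[\widehat{B}\,]^{\omega}[\Gamma\,]^{\omega}$ in the Duhamel term, exactly as in \eqref{Important_Bound} and \eqref{Theorem1CGP2}. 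The only cosmetic remark is that the passage to arbitrary $\xi_0>0$ (including $\xi_0>\xi$) rests on the componentwise vanishing you describe when deducing \eqref{Theorem1identity2GP2}, not on monotonicity of the $\mathcal{H}^{\alpha}_{\xi_0}$ norms, but this does not affect the argument.
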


\begin{proof}
Let us note that \eqref{Theorem1identity1GP2} implies \eqref{Theorem1identity2GP2}. We now prove \eqref{Theorem1identity1GP2}.
It suffices to prove the claim for $\xi_0=\xi$, where $\xi$ is as in \eqref{xi2}.
By Proposition \ref{Gamma_omega_equation} and by \eqref{BhatSigmabound} from Lemma \ref{BhatSigma}, it follows that:
\begin{equation}
\notag
%\label{Theorem1AGP2}
\big\|\big[\widehat{B}\,\big]^{\omega} \big[\Gamma\,\big]^{\omega}(t)-\big[\widehat{B}\,\big]^{\omega} \,\mathcal{U}(t)\,\Gamma(0)+i\int_{0}^{t}\big[\widehat{B}\,\big]^{\omega}\,\mathcal{U}(t-s)\,\theta^{\,\omega}(s)\,ds\big\|_{L^{\infty}_{t \in I} L^2(\Omega) \mathcal{H}^{\alpha}_{\xi}}=0.
\end{equation}
We now apply Proposition \ref{theta_omega_equation} in order to deduce that:
\begin{equation}
\label{Important_Bound}
\big\|\big[\widehat{B}\,\big]^{\omega}\big[\Gamma\,\big]^{\,\omega}(t)-\theta^{\,\omega}(t)\big\|_{L^{\infty}_{t \in I} L^2(\Omega) \mathcal{H}^{\alpha}_{\xi}}=0.
\end{equation}
Consequently, by Minkowski's inequality and unitarity:

\begin{equation}
\label{Theorem1CGP2}
\big\|\int_{0}^{t}\,\mathcal{U}(t-s)\,\big(\theta^{\,\omega}(s)-\big[\widehat{B}\,\big]^{\omega} \big[\Gamma\,\big]^{\omega}(s)\big)\,ds\,\big\|_{L^{\infty}_{t \in I} L^2(\Omega) \mathcal{H}^{\alpha}_{\xi}}
\end{equation}
$$\leq T \cdot \big\|\theta^{\,\omega}-\big[\widehat{B}\,\big]^{\omega}\big[\Gamma\,\big]^{\omega}\big\|_{L^{\infty}_{t \in I} L^2(\Omega) \mathcal{H}^{\alpha}_{\xi}}=0.$$
Finally, Proposition \ref{Gamma_omega_equation} and \eqref{Theorem1CGP2} imply:
$$\big\|\big[\Gamma\,\big]^{\omega}(t)-\mathcal{U}(t)\,\Gamma(0)+i\int_{0}^{t} \,\mathcal{U}(t-s)\,\big[\widehat{B}\,\big]^{\omega} \big[\Gamma\,\big]^{\omega}(s)\,ds\,\big\|_{L^{\infty}_{t \in I}L^2(\Omega)\mathcal{H}^{\alpha}_{\xi}}=0.$$
Theorem \ref{Theorem1GP2} now follows.
\end{proof}

\begin{remark}
\label{BhatGamma_omega_A}
From the identity \eqref{Important_Bound} and from the estimate \eqref{theta_omega_bound1} in Remark \ref{theta_omega_boundA}, we can deduce the following analogue of Remark \ref{BhatGamma_omega_starA} in the dependently randomized setting:

\begin{equation}
\label{BhatGamma_omega_1}
\big\|\big[\widehat{B}\,\big]^{\omega}\big[\Gamma\,\big]^{\,\omega}\big\|_{L^{\infty}_{t \in I} L^2(\Omega) \mathcal{H}^{\alpha}_{\xi}} \lesssim_{\,T,\,\xi,\,\xi',\,\alpha} \big\|\Gamma(0)\big\|_{\mathcal{H}^{\alpha}_{\xi'}}.
\end{equation}
As was the case in Remark \ref{BhatGamma_omega_starA}, this is a probabilistic version of the a priori bound that was assumed in the work of Klainerman and Machedon \cite{KM}.
\end{remark}

\subsection{The case of additional regularity in the initial data $\Gamma(0)$}
\label{additional_regularityGP2}

\medskip

In this section, we will argue as in Subsection \ref{additional_regularity} and we will show that the identity \eqref{Theorem1identity2GP2} in Theorem \ref{Theorem1GP2} holds for \emph{all times $t \in I$} if one is willing to take slightly more regular initial data $\Gamma(0)$. In other words, we prove an analogue of Theorem \ref{Theorem2} for the dependently randomized GP hierarchy \eqref{RandomizedGP2}. The main result of this section is the following:

\begin{theorem}
\label{Theorem2GP2}
Let $\alpha_0>\alpha \geq 0$ be given.
There exist constants $\widetilde{C_1}=\widetilde{C_1}(\alpha,\alpha_0)>0$ and $\widetilde{C_2}=\widetilde{C_2}(\alpha,\alpha_0)>0$, such that, whenever $\xi,\xi'>0$ with $\xi \in (0,\xi')$ and $T>0$ satisfy the following assumptions:
\begin{itemize}
\item[i)] $\frac{\widetilde{C_1}T}{\xi'}<1$
\item[ii)] $\frac{\widetilde{C_2} \,\xi}{\xi'}<1$
\end{itemize}
and whenever $\Gamma(0) \in \mathcal{H}^{\alpha_0}_{\xi'} \, \cap \, \mathcal{N}$, then there exists $\big[\Gamma\,\big]^{\omega} \in C_{t \in I} L^2(\Omega) \mathcal{H}^{\alpha}_{\xi}$ such that, for all  $t \in I=[0,T]$ and for all $\xi_0>0:$
\begin{equation}
\label{Theorem2identity1GP2}
\big\|\big[\Gamma\,\big]^{\omega}(t)-\mathcal{U}(t)\,\Gamma(0)+i \int_{0}^{t}\,\mathcal{U}(t-s)\,\big[\widehat{B}\,\big]^{\omega} \big[\Gamma\,\big]^{\omega}(s)\,ds\,\big\|_{L^2(\Omega) \mathcal{H}^{\alpha}_{\xi_0}}=0.
\end{equation}
In particular, for all $t \in I$, it is the case that for all $k \in \mathbb{N}$:
\begin{equation}
\label{Theorem2identity2GP2}
\big\|\big(\big[\Gamma\,\big]^{\omega}(t)-\mathcal{U}(t)\,\Gamma(0)+i \int_{0}^{t}\,\mathcal{U}(t-s)\,
\big[\widehat{B}\,\big]^{\omega}\big[\Gamma\,\big]^{\omega}(s)\,ds\,\big)^{(k)}\big\|_{L^2(\Omega) H^{\alpha}(\Lambda^k \times \Lambda^k)}
\end{equation}
\begin{equation}
\notag
=0.
\end{equation}
Moreover, $\big[\Gamma\,\big]^{\omega}$ satisfies the a priori bound:
\begin{equation}
\label{Theorem2GP2_a_priori_bound}
\big\|\big[\Gamma\,\big]^{\omega}\big\|_{L^{\infty}_{t \in I} L^2(\Omega) \mathcal{H}^{\alpha}_{\xi}} \lesssim_{\,\xi,\,\xi',\alpha,\alpha_0} \big\|\Gamma(0)\big\|_{\mathcal{H}^{\alpha}_{\xi'}}.
\end{equation}
\end{theorem}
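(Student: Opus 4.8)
The strategy is to mirror the proof of Theorem \ref{Theorem2}, which established the analogous statement in the independently randomized setting, but with two changes dictated by the structure of the dependently randomized hierarchy \eqref{RandomizedGP2}: we work throughout in the non-resonant class $\mathcal{N}$, and we replace every place where the proof of Theorem \ref{Theorem2} invoked the multi-parameter estimate of Proposition \ref{RandomizedEstimate2} by an argument based on the explicit Duhamel expansions and Proposition \ref{DuhamelEstimate2} (together with Proposition \ref{non-resonant}). We claim that $\big[\Gamma\,\big]^{\omega}$, the object already constructed in Proposition \ref{CauchySequence2GP2}, has all the required properties once $T$ is taken slightly smaller and $\xi$ slightly smaller in terms of $\xi'$ to absorb the additional $\alpha_0$-dependence. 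Note that since $\Gamma(0) \in \mathcal{H}^{\alpha_0}_{\xi'} \cap \mathcal{N}$, the truncated data $\Gamma_N(0) = \mathbf{P}_{\leq N}\Gamma(0)$ again lies in $\mathcal{H}^{\alpha_0}_{\xi'} \cap \mathcal{N}$, so the entire machinery of Subsection \ref{A local-in-time result for non-resonant initial data of regularity alpha} applies verbatim, and the a priori bound \eqref{Theorem2GP2_a_priori_bound} follows from the proof of \eqref{Gamma_omega_bound} (the $\alpha_0$-dependence of the implied constant entering only through the dependence of $T$ on $\alpha_0$).

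\textbf{The equicontinuity step.} The heart of the argument is to show that $\big(\big[\Gamma_N\,\big]^{\omega}\big)_N$ is equicontinuous in $L^2(\Omega)\mathcal{H}^{\alpha}_{\xi}$, i.e.\ that $\big\|\big[\Gamma_N\,\big]^{\omega}(t+\delta) - \big[\Gamma_N\,\big]^{\omega}(t)\big\|_{L^2(\Omega)\mathcal{H}^{\alpha}_{\xi}} \lesssim_{\,\xi,\,\xi',\,\alpha,\,\alpha_0} \delta^{r}\,\|\Gamma(0)\|_{\mathcal{H}^{\alpha_0}_{\xi'}}$ for some $r=r(\alpha,\alpha_0) \in (0,1)$, uniformly in $N$. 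Using \eqref{gammaNkomega2} and splitting each difference $Duh_{\,j}^{\,\omega}(\Gamma_N(0))^{(k)}(t+\delta) - Duh_{\,j}^{\,\omega}(\Gamma_N(0))^{(k)}(t)$ into the term $\mathcal{I}$ where $\delta$ is absorbed by the outermost propagator and the term $\mathcal{II}$ coming from the extra time-integration region $[t,t+\delta]$, exactly as in Step 1 of the proof of Theorem \ref{Theorem2}: for $\mathcal{I}$ one applies Lemma \ref{DifferenceBound} to trade a factor $\delta^{r}$ against passing from $H^{\alpha}$ to $H^{\alpha_0}$ in the outermost factor, and then estimates the remaining Duhamel expansion by Proposition \ref{non-resonant} (with repeated $t$'s) plus \eqref{Integral_Identity} and Stirling, giving a bound $\delta^{r}(C_1'T)^j (C_2')^k \|\gamma_0^{(k+j+1)}\|_{H^{\alpha_0}}$; for $\mathcal{II}$ one uses the elementary integral bound $\int_{t}^{t+\delta}\frac{t_1^{j-1}}{(j-1)!}\,dt_1 \leq \delta\cdot\frac{T^{j-1}}{(j-1)!}$ together with Proposition \ref{non-resonant} at regularity $\alpha$, producing a bound $\frac{C_0\delta}{C_1T}(C_1T)^j(2C_2')^k\|\gamma_0^{(k+j+1)}\|_{H^{\alpha}}$. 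Setting $\widetilde{C_1}:=\max\{C_1,C_1'\}$ and $\widetilde{C_2}:=2C_2'$, summing $\xi^k$ over $k$ and over $j$, and using assumptions i) and ii) so that $\frac{\widetilde{C_1}T}{\xi'},\frac{\widetilde{C_2}\xi}{\xi'}\in(0,1)$, the two geometric series converge and yield the claimed equicontinuity bound. Here $\widetilde{C_1},\widetilde{C_2}$ depend on $\alpha$ and $\alpha_0$ because $C_0$ from Proposition \ref{non-resonant} appears with power $k+j+1$. This is the step I expect to be the main obstacle: unlike in the independently randomized case, one cannot iterate a clean single-level estimate, so one must keep careful track of the non-resonance hypothesis through the whole expansion and verify that Proposition \ref{non-resonant} (with its exponential-in-$k+j$ constant) still produces summable series after the $\delta^r$-for-regularity trade.

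\textbf{Passing to the limit and concluding.} Once equicontinuity of $\big(\big[\Gamma_N\,\big]^{\omega}\big)_N$ is established, letting $N\to\infty$ in that bound (using the strong convergence $\big[\Gamma_N\big]^{\omega}\to\big[\Gamma\,\big]^{\omega}$ in $L^{\infty}_{t\in I}L^2(\Omega)\mathcal{H}^{\alpha}_{\xi}$ from Proposition \ref{CauchySequence2GP2}) shows $\big[\Gamma\,\big]^{\omega}\in C_{t\in I}L^2(\Omega)\mathcal{H}^{\alpha}_{\xi}$ with the corresponding modulus-of-continuity estimate \eqref{equicontinuity2}. Next one checks, for a fixed $\xi_0\in(0,\xi)$, that each of the three remaining pieces in $\big[\Gamma\,\big]^{\omega}(t)-\mathcal{U}(t)\Gamma(0)+i\int_0^t\mathcal{U}(t-s)\big[\widehat{B}\,\big]^{\omega}\big[\Gamma\,\big]^{\omega}(s)\,ds$ lies in $C_{t\in I}L^2(\Omega)\mathcal{H}^{\alpha}_{\xi_0}$: the free term $\mathcal{U}(t)\Gamma(0)$ by Lemma \ref{DifferenceBound} and $\Gamma(0)\in\mathcal{H}^{\alpha_0}_{\xi_0}$; the term $\big[\widehat{B}\,\big]^{\omega}\big[\Gamma\,\big]^{\omega}$ by noting that for any two times $t_1,t_2$ one has $\big\|\big[\widehat{B}\,\big]^{\omega}\big[\Gamma\,\big]^{\omega}(t_1)-\big[\widehat{B}\,\big]^{\omega}\big[\Gamma\,\big]^{\omega}(t_2)\big\|_{L^2(\Omega)\mathcal{H}^{\alpha}_{\xi_0}} = \big\|\big[\widehat{B}\,\big]^{\omega}\big(\big[\Gamma\,\big]^{\omega}(t_1)-\big[\Gamma\,\big]^{\omega}(t_2)\big)\big\|_{L^2(\Omega)\mathcal{H}^{\alpha}_{\xi_0}} \lesssim_{\,T,\,\xi,\,\xi',\,\alpha} \big\|\big[\Gamma\,\big]^{\omega}(t_1)-\big[\Gamma\,\big]^{\omega}(t_2)\big\|_{\mathcal{H}^{\alpha}_{\xi}}$ via the method of Proposition \ref{CauchySequence1GP2}/\ref{DuhamelEstimate2} (or directly Theorem \ref{RandomizedEstimate1} applied level by level, since only one collision operator acts), which is small by continuity of $\big[\Gamma\,\big]^{\omega}$; and the Duhamel term by the standard splitting $\int_0^{t+\delta}-\int_0^t = \int_{-\delta}^0\mathcal{U}(t-s)(\cdots)(s+\delta)\,ds + \int_0^t\mathcal{U}(t-s)\big(\big[\widehat{B}\,\big]^{\omega}\big[\Gamma\,\big]^{\omega}(s+\delta)-\big[\widehat{B}\,\big]^{\omega}\big[\Gamma\,\big]^{\omega}(s)\big)\,ds$, bounding the first by $\delta\,\|\big[\widehat{B}\,\big]^{\omega}\big[\Gamma\,\big]^{\omega}\|_{L^{\infty}_{t\in I}L^2(\Omega)\mathcal{H}^{\alpha}_{\xi_0}}$ (which is finite by \eqref{BhatGamma_omega_1}) and the second by \eqref{equicontinuity2}. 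Since all three pieces are continuous in $t$ with values in $L^2(\Omega)\mathcal{H}^{\alpha}_{\xi_0}$, their sum is too; but by Theorem \ref{Theorem1GP2} this sum vanishes for almost every $t$, hence by continuity it vanishes for every $t\in I$, which is precisely \eqref{Theorem2identity1GP2}. Finally \eqref{Theorem2identity1GP2} trivially implies \eqref{Theorem2identity2GP2} by selecting the order-$k$ component, completing the proof.
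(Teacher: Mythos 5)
Your overall architecture (equicontinuity of $\big[\Gamma_N\big]^{\omega}$ via the $\mathcal{I}/\mathcal{II}$ splitting with Lemma \ref{DifferenceBound} and Proposition \ref{non-resonant}, passage to the limit, continuity of the free term, and the ``continuous $+$ vanishing a.e.\ $\Rightarrow$ vanishing everywhere'' conclusion) matches the paper, as does the treatment of the a priori bound. But there is a genuine gap at the point you treat most casually: the continuity in time of $\big[\widehat{B}\,\big]^{\omega}\big[\Gamma\,\big]^{\omega}$. You assert
$$\big\|\big[\widehat{B}\,\big]^{\omega}\big(\big[\Gamma\,\big]^{\omega}(t_1)-\big[\Gamma\,\big]^{\omega}(t_2)\big)\big\|_{L^2(\Omega)\mathcal{H}^{\alpha}_{\xi_0}}\lesssim \big\|\big[\Gamma\,\big]^{\omega}(t_1)-\big[\Gamma\,\big]^{\omega}(t_2)\big\|_{L^2(\Omega)\mathcal{H}^{\alpha}_{\xi}},$$
``via Theorem \ref{RandomizedEstimate1} applied level by level.'' This is precisely the estimate that is \emph{not} available in the dependently randomized setting: Theorem \ref{RandomizedEstimate1} (and its consequence \eqref{RandomizedEstimate2III}) requires the input density matrix to be independent of the random parameter appearing in the collision operator, whereas $\big[\Gamma\,\big]^{\omega}(t_1)-\big[\Gamma\,\big]^{\omega}(t_2)$ depends on the \emph{same} $\omega$ as $\big[\widehat{B}\,\big]^{\omega}$. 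Example 1 of Subsection 6.1 of \cite{SoSt}, cited repeatedly in the paper, shows that such an operator bound can fail for $\omega$-dependent inputs; the proof of Theorem \ref{RandomizedEstimate1} averages over $\omega$ with the Fourier coefficients of the input treated as constants, and this breaks down when those coefficients themselves involve the $h_{\zeta}(\omega)$. This failure is the entire reason the dependently randomized analysis (the non-resonant class, Proposition \ref{non-resonant}, the expansion arguments in Proposition \ref{theta_omega_equation}) is structured differently from the independently randomized one, and it is named in the paper as the main new difficulty of this theorem.

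The paper circumvents it by never applying $\big[\widehat{B}\,\big]^{\omega}$ to the limit object. Instead it proves directly that the sequence $\big(\big[\widehat{B}\,\big]^{\omega}\big[\Gamma_N\big]^{\omega}\big)_N$ is equicontinuous in $L^2(\Omega)\mathcal{H}^{\alpha}_{\xi}$, by writing each difference $\big[B^{(k+1)}\big]^{\omega}Duh_{\,j}^{\,\omega}(\Gamma_N(0))^{(k+1)}(t+\delta)-\big[B^{(k+1)}\big]^{\omega}Duh_{\,j}^{\,\omega}(\Gamma_N(0))^{(k+1)}(t)$ out explicitly in terms of the non-resonant initial data, using the formula for the difference of two Duhamel expansions in Lemma \ref{Duhamel_Expansion_Difference} (the content of Sub-subsection \ref{An explicit formula for the difference of two Duhamel expansions}, which requires its own combinatorial argument and the frequency bound \eqref{Estimate_On_F2}). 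This yields continuity of $\theta^{\,\omega}=\lim_N\big[\widehat{B}\,\big]^{\omega}\big[\Gamma_N\big]^{\omega}$, hence of $\int_0^t\mathcal{U}(t-s)\,\theta^{\,\omega}(s)\,ds$; the identity for all $t$ is first obtained with $\theta^{\,\omega}$ in the Duhamel term via Proposition \ref{Gamma_omega_equation}, and only afterwards is $\theta^{\,\omega}$ replaced by $\big[\widehat{B}\,\big]^{\omega}\big[\Gamma\,\big]^{\omega}$ using \eqref{Important_Bound} together with Minkowski and unitarity. Your argument needs this additional ingredient --- the equicontinuity of $\big(\big[\widehat{B}\,\big]^{\omega}\big[\Gamma_N\big]^{\omega}\big)_N$ established from the explicit expansions rather than from an operator bound --- in order to close.
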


\begin{remark}
We note that the constants $\widetilde{C_1}$ and $\widetilde{C_2}$ are 
chosen in such a way that they satisfy the constraints \eqref{widetildeC1C2A} and \eqref{widetildeC1C2B} below. These constraints are given in terms of $\alpha$ and $\alpha_0$ and hence $\widetilde{C_1}$ and $\widetilde{C_2}$ can be chosen to depend only on $\alpha$ and $\alpha_0$.
\end{remark}

\begin{remark}
With the same terminology as in Remark \ref{strong solution}, the solution $\big[\Gamma\,\big]^{\omega}$ in Theorem \ref{Theorem2GP2} can be thought of as a \textbf{strong solution} to the dependently randomized GP hierarchy, whereas the solution in Theorem \ref{Theorem1GP2} can be thought of as a \textbf{weak solution}. 
\end{remark}

\subsubsection{\textbf{An explicit formula for the difference of two Duhamel expansions}}
\label{An explicit formula for the difference of two Duhamel expansions}

The proof of Theorem \ref{Theorem2GP2} requires an additional step as compared to the proof of Theorem \ref{Theorem2}. The difficulty lies in the fact that the operator $\big[\widehat{B}\,\big]^{\omega} \big[\Gamma_N\big]^{\omega}$ has $\omega$ dependence in both $\big[\widehat{B}\,\big]^{\omega}$ and in $\big[\Gamma_N\big]^{\omega}$ and hence can not be estimated directly. The way that we are going to estimate the Duhamel expansions in which such operators occur is to explicitly write these expansions out in terms of the initial data $\Gamma(0)$. 

Identities of this type were already obtained in Subsection 6.2 of \cite{SoSt} and they require some combinatorial analysis of the frequencies. In this context, we treat all of the frequencies as formal objects and not as elements of $\mathbb{Z}^3$. In other words, the frequency $\xi_1$ and $\xi_2$ will be thought of as different formal objects even though they can be equal as elements of $\mathbb{Z}^3$. 
Furthermore, we recall the convention stated in Section \ref{Notation} by which we extend the definition of  $\big[B_{j,k+1}\,\big]^{\omega}$ to density matrices $\sigma^{(\ell)}$ of order $\ell>k+1$ by  acting only in the variables $\vec{x}_{k+1}$ and $\vec{x}'_{k+1}$.

Let us now summarize the analysis from \cite{SoSt}. We can restate formula (69) from Subsection 6.2 in \cite{SoSt} in the form of the following lemma:

\begin{lemma}
\label{Duhamel_Expansion}
Let us fix $k,j \in \mathbb{N}$ and $\ell_1,\ldots \ell_j, n_1, \ldots, n_j \in \mathbb{N}$, with $\ell_1<n_1 \leq k+j+1, \ell_2<n_2 \leq k+j+1, \ldots, 
\ell_{j+1}< n_{j+1} \leq k+j+1$. Furthermore, we fix $t,t_1,t_2, \ldots, t_j \in \mathbb{R}$ and a choice of $\pm$ signs for each of the collision operators. Then, for all $k+j+1$-particle density matrices $\sigma^{(k+j+1)}$:
\begin{equation}
\label{Duhamel_Expansion_j+1} 
\Big(\,\big[B^{\pm}_{\ell_1,n_1}\big]^{\omega}\,\mathcal{U}^{(k+1)}(t-t_1)\big[B^{\pm}_{\ell_2,n_2}\big]^{\omega}\,\mathcal{U}^{(k+2)}(t_1-t_2) \big[B^{\pm}_{\ell_3,n_3}\big]^{\omega} \cdots 
\end{equation}
\begin{equation}
\notag
\cdots \big[B^{\pm}_{\ell_j,n_j}\big]^{\omega} \, \mathcal{U}^{(k+j)}(t_{j-1}-t_j) \,\big[B^{\pm}_{\ell_{j+1},n_{j+1}}\big]^{\omega}\,\sigma^{(k+j+1)}
\Big)\,\,\widehat{}\,\,(\xi_1,\ldots,\xi_k; \xi'_1,\ldots,\xi'_k)=
\end{equation}

$$\mathop{\sum_{\eta_1,\ldots,\eta_{k+j+1}}}_{\eta'_1,\ldots,\eta'_{k+j+1}}^{*} 
e^{i(t-t_1)(\cdots)} \cdot e^{i(t_1-t_2)(\cdots)} \cdots e^{i(t_{j}-t_{j+1})(\cdots)} \cdot \widehat{\sigma}^{(k+j+1)}(\eta_1,\ldots,\eta_{k+j+1}; \eta'_1, \ldots, \eta'_{k+j+1}) \cdot$$
$$
\cdot \mathop{\prod_{1 \leq r \leq k}}_{\xi_r \in \mathcal{A}}
\Big\{h_{\xi_r}(\omega) \cdot h_{\eta_{1}^{r,1}} (\omega) \cdot h_{\eta_{2}^{r,1}} (\omega) \cdots h_{\eta_{N_r}^{r,1}}(\omega) \Big\} \cdot \mathop{\prod_{1 \leq r \leq k}}_{\xi'_r \in \mathcal{B}}
\Big\{h_{\xi'_r}(\omega) \cdot h_{\eta_{1}^{r,2}} (\omega) \cdot h_{\eta_{2}^{r,2}} (\omega) \cdots h_{\eta_{M_r}^{r,2}}(\omega) \Big\}.$$
The notation is explained below.
\end{lemma}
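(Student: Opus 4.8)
The plan is to prove Lemma~\ref{Duhamel_Expansion} by unwinding the action of each randomized collision operator on the Fourier side, one operator at a time, and bookkeeping which randomization factors $h_\zeta(\omega)$ are produced and how the frequency arguments are relabelled. The key input is the Fourier-space formula for $[B^{\pm}_{j,k+1}]^{\omega}$ given in \eqref{Bjkomega}, \eqref{Bjkomega2}, and \eqref{Bjkrandomized}: applying $[B^{\pm}_{\ell,n}]^{\omega}$ introduces a sum over two new frequencies $\xi_n,\xi'_n$, multiplies by four Bernoulli factors (the ``old'' frequency in slot $\ell$, the ``new'' frequency that replaces it, and the two internal frequencies $\xi_n,\xi'_n$), and shifts the frequency in slot $\ell$ by $\pm(\xi'_n-\xi_n)$ while the free evolution operators $\mathcal{U}^{(m)}(t_{i}-t_{i+1})$ contribute the phase factors $e^{i(t_i-t_{i+1})(|\vec\xi_m|^2-|\vec\xi'_m|^2)}$. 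I would first set up the notation for the composite sum $\sum^{*}$: it ranges over all internal frequencies, subject to the linear constraints that come from the momentum shifts at each collision, and the effect of composing $j+1$ collision operators is that each external frequency $\xi_r$ ($r\le k$) in the output is a fixed $\pm1$ combination of some subset of the internal frequencies $\eta_i,\eta'_i$ of $\sigma^{(k+j+1)}$ — this is exactly the bookkeeping already carried out in Subsection~6.2 of \cite{SoSt} (their formula (69)).

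Concretely, I would proceed by induction on $j$. The base case $j=0$ is a single application of $[B^{\pm}_{\ell_1,n_1}]^{\omega}$ to $\sigma^{(k+1)}$, which is literally \eqref{Bjkomega}--\eqref{Bjkrandomized} after the convention that lets $[B^{\pm}_{j,k+1}]^{\omega}$ act on a density matrix of order $>k+1$ in the variables $\vec x_{k+1},\vec x'_{k+1}$; here the product over $\mathcal{A}$ and $\mathcal{B}$ has at most one ``active'' slot and the phase factors are absent or trivial, so the identity holds by inspection. For the inductive step I would peel off the leftmost collision operator $[B^{\pm}_{\ell_1,n_1}]^{\omega}\,\mathcal{U}^{(k+1)}(t-t_1)$, apply the induction hypothesis to the remaining composition acting on $\sigma^{(k+j+1)}$ to express its Fourier transform in the stated form at order $k+1$, then apply \eqref{Bjkomega}/\eqref{Bjkomega2} once more. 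The new momentum-shift constraint gets absorbed into the constraints defining $\sum^{*}$; the new phase factor $e^{i(t-t_1)(\cdots)}$ is produced by $\mathcal{U}^{(k+1)}(t-t_1)$ acting on the order-$(k+1)$ expression; and the four new $h$-factors get distributed into the products over $\mathcal{A}$ and $\mathcal{B}$ according to whether the slot touched is a $\xi$-slot or a $\xi'$-slot, with the ``new'' internal frequencies appended to the appropriate $\eta^{r,1}_\bullet$ or $\eta^{r,2}_\bullet$ list and $N_r$, $M_r$ updated accordingly. The main point to verify carefully is that the frequency entering each $h$-factor at the new collision is again a $\pm1$-combination of the $\eta$'s (so that it is a legitimate formal symbol), which follows because the momentum-shift is linear and the induction hypothesis already expresses the relevant external frequency that way.

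The step I expect to be the main obstacle is the precise combinatorial description of $\sum^{*}$ and of the index sets $\mathcal{A},\mathcal{B}$ together with the quantities $N_r,M_r$ and the sub-frequencies $\eta^{r,1}_i,\eta^{r,2}_i$ — i.e.\ keeping the bookkeeping consistent across the induction, especially tracking which collisions ``hit'' which external slot after all the intermediate momentum shifts. Since this is exactly the content of the analysis in Subsection~6.2 of \cite{SoSt} (the excerpt explicitly says Lemma~\ref{Duhamel_Expansion} is a restatement of their formula (69)), the honest plan is to reduce to that reference: state the definitions of $\sum^{*},\mathcal{A},\mathcal{B},N_r,M_r$ following \cite{SoSt}, then give the short inductive argument above to confirm that composing the Fourier-space formulas \eqref{Bjkomega}--\eqref{Bjkrandomized} with the free evolutions reproduces precisely that structure. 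I would close by noting that the choice of frequency ordering in the non-resonant class plays no role here (cf.\ Remark~\ref{Frequency order}); the lemma is a purely algebraic identity valid for arbitrary $\sigma^{(k+j+1)}$, and the constraints in $\sum^{*}$ are the only place where the frequencies interact, so no convergence issue arises at this stage.
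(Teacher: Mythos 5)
Your overall route coincides with the paper's treatment: the paper gives no independent proof of Lemma \ref{Duhamel_Expansion}, presenting it as a restatement of formula (69) from Subsection 6.2 of \cite{SoSt}, and your plan --- compose the Fourier-space formulas \eqref{Bjkomega}--\eqref{Bjkrandomized} with the free evolutions by induction on the number of collision operators, deferring the precise description of $\sum^{*}$, $\mathcal{A}$, $\mathcal{B}$, $N_r$, $M_r$ to \cite{SoSt} --- is exactly the argument being cited there (and it is also the inductive procedure the paper invokes later when proving Lemma \ref{Duhamel_Expansion_Difference}).

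There is, however, one step in your inductive scheme that, as written, does not land on the stated right-hand side. When you peel off $[B^{\pm}_{\ell_1,n_1}]^{\omega}\,\mathcal{U}^{(k+1)}(t-t_1)$ and apply the induction hypothesis to the inner composition, the outer collision contributes the four factors $h_{\xi_{\ell_1}}$, $h_{\xi_{\ell_1}\mp\nu\pm\nu'}$, $h_{\nu}$, $h_{\nu'}$ (with $\nu,\nu'$ the new summation frequencies), and simply ``appending the new internal frequencies to the appropriate list'' is not what happens. If the contracted slot, or either of the two newly created slots, is hit again by one of the inner collisions, then the corresponding factor ($h_{\xi_{\ell_1}\mp\nu\pm\nu'}$, $h_{\nu}$ or $h_{\nu'}$) also occurs as the head factor of that slot's product coming from the induction hypothesis; the two copies must be cancelled using $h_{\zeta}^2\equiv 1$ (the $h_{\zeta}$ are Bernoulli), after which the three sub-frequency lists are merged into the single list $\mathcal{A}_{\ell_1}$ (resp.\ $\mathcal{B}_{\ell_1}$), with the signs $\epsilon(\cdot)$ updated by the linear relation $\xi_{\ell_1}=(\xi_{\ell_1}\mp\nu\pm\nu')\pm\nu\mp\nu'$. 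Without this cancellation you retain Bernoulli factors evaluated at intermediate linear combinations that are not among the $\eta$'s, which is not of the asserted form; and your stated ``main point to verify'' --- that each new $h$-argument is a $\pm1$-combination of the $\eta$'s --- does not repair this, since the product in the lemma only allows $h$ at the external frequencies $\xi_r,\xi'_r$ and at the arguments $\eta^{r,1}_i,\eta^{r,2}_i$ of $\widehat{\sigma}^{(k+j+1)}$ themselves. The phenomenon is visible in Example \ref{Example}: the factor $h_{\xi'_3}$ produced by $[B^{+}_{1,2}]^{\omega}$ reappears from the expansion of $[B^{-}_{2,3}]^{\omega}$ and drops out, which is precisely why the final product contains only $h_{\xi_1}$, $h_{\xi'_2}$ and the $h_{\eta}$-factors. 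With this cancellation and list-merging added, your induction closes and reproduces the bookkeeping of Subsection 6.2 of \cite{SoSt}.
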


%The notation which we use in the above lemma is the same as in Subsection 6.2 of \cite{SoSt}, when we gave the precise form of the Duhamel expansion term. The only new ingredient is the function $F$ which we will define below. Before we do this, let us first summarize, for completeness, the notation from \cite{SoSt}.

In the above formula $(\cdots)$ denotes a real-valued expression given in terms of the frequencies. We say that $\xi_r \in \mathcal{A}$ for some $1 \leq r \leq k$ if one of the collision operators $[B_{\ell,n}^{+}]^{\omega}$ was applied to this frequency in the above expansion. Namely, this means that $\xi_r \in \mathcal{A}$ if $\xi_r$ does not appear in the list $(\eta_1,\ldots,\eta_{k+j+1};\eta'_1,\ldots,\eta'_{k+j+1})$. Here, we recall that we think of the frequencies as formal symbols and do not consider their values in $\mathbb{Z}^3$. Similarly, we say that $\xi'_r \in \mathcal{B}$ for some $1 \leq r \leq k$ if one of the collision operators $[B_{\ell,n}^{-}]^{\omega}$ was applied to this frequency. In particular, $\xi'_r$ then does not appear in the list $(\eta_1,\ldots,\eta_{k+j+1};\eta'_1,\ldots,\eta'_{k+j+1})$.

As in \cite{SoSt}, the summation
\begin{equation}
\label{Sigma_Star}
\mathop{\sum_{\eta_1,\ldots,\eta_{k+j+1}}}_{\eta'_1,\ldots,\eta'_{k+j+1}}^{*}(\cdots)
\end{equation}
is a symbol for the sum in $\eta_1,\ldots, \eta_{k+j+1}; \eta'_1,\ldots,\eta'_{k+j+1}$, which satisfy:
\begin{itemize}
\item[$i)$] The set $\big\{\eta_1,\ldots,\eta_{k+j+1},\eta'_1,\ldots,\eta'_{k+j+1}\big\}$ can be written as:
$$\Big(\mathop{\bigcup_{1 \leq r \leq k}}_{\xi_r \in \mathcal{A}}\big\{\eta^{r,1}_1,\ldots,\eta^{r,1}_{N_r}\big\}\Big) \cup \Big(\mathop{\bigcup_{1 \leq r \leq k}}_{\xi'_r \in \mathcal{B}}\big\{\eta^{r,2}_1,\ldots,\eta^{r,2}_{M_r}\big\}\Big)$$
(these represent the new frequencies obtained after applying the collision operators) and:

$$\big\{\xi_1,\ldots,\xi_k,\xi'_1,\ldots,\xi'_k\big\} \setminus \big(\mathcal{A}\cup \mathcal{B}\big).$$
(these represent the frequencies on which we do not apply the collision operators, which hence remain the same).
\item[$ii)$] Each $\xi_r \in \mathcal{A}$ can be written as:
$$\xi_r=\sum_{s=1}^{N_r}  \epsilon (\eta_s^{r,1}) \cdot \eta_s^{r,1},$$
for some fixed sequence of coefficients $\epsilon (\eta_s^{r,1}) \in \{1,-1\}.$
Here, we note that it is important to treat the frequency $\eta_s^{r,1}$ as a formal symbol and not as a value in $\mathbb{Z}^3$. The $\epsilon (\eta_s^{r,1})$ are uniquely determined by the collision operators that we apply. In this case, we write:
$$\mathcal{A}_{r}:=\big\{\eta_1^{r,1},\eta_2^{r,1}, \ldots, \eta_{N_{r}}^{r,1}\big\}$$
%$$\xi_{\ell_1}=\sum_{a \in \mathcal{A}_{\ell_1}} \epsilon(a) \cdot a.$$

\item[$iii)$] Similarly, each $\xi'_r \in \mathcal{B}$ can be written as:
$$\xi'_r= \sum_{s=1}^{M_r} \epsilon (\eta_{s}^{r,2}) \cdot  \eta_{s}^{r,2},$$
for some fixed sequence of coefficients $\epsilon (\eta_s^{r,2}) \in \{1,-1\}$ and in this case, we write:
$$\mathcal{B}_{r}:=\big\{\eta_1^{r,2},\eta_2^{r,2}, \ldots, \eta_{M_{r}}^{r,2}\big\}.$$ 
\end{itemize}

\begin{remark}
We note that the sets $\mathcal{A}$ and $\mathcal{B}$ and the numbers $N_r,M_r$, for $r=1,\ldots,k$, are uniquely determined by the choice of the collision operators that we are applying, i.e. by the choice of $\ell_r,n_r$ and by the choice of $\pm$ signs.
\end{remark}

The result that we want is an explicit formula for the \emph{difference of two Duhamel expansions}. We will prove the following claim:

\begin{lemma}
\label{Duhamel_Expansion_Difference}
%Let us fix $k,j \in \mathbb{N}$ and $\ell_1,\ldots \ell_j, n_1, \ldots, n_j \in \mathbb{N}$, with $\ell_1<n_1 \leq k+j+1, \ell_2<n_2 \leq k+j+1, \ldots, 
%\ell_{j+1}< n_{j+1} \leq k+j+1$. Furthermore, we fix $\delta>0$ and $t,t_1,t_2, \ldots, t_j \in \mathbb{R}$ and a choice of $\pm$ signs for each of the collision operators. Then, for all $k+j+1$-particle density matrices $\sigma^{(k+j+1)}$:
Let us fix $\delta>0$. With the assumptions and notation as in Lemma \ref{Duhamel_Expansion}, the following identity holds:
\begin{equation}
\label{Duhamel_Expansion_Difference1}
\Big(\,\big[B^{\pm}_{\ell_1,n_1}\big]^{\omega}\,\big\{\mathcal{U}^{(k+1)}(t+\delta)-\mathcal{U}^{(k+1)}(t)\big\}\big[B^{\pm}_{\ell_2,n_2}\big]^{\omega}\,\mathcal{U}^{(k+2)}(t_1-t_2) \big[B^{\pm}_{\ell_3,n_3}\big]^{\omega} \cdots 
\end{equation}
\begin{equation}
\notag
\cdots \big[B^{\pm}_{\ell_j,n_j}\big]^{\omega} \, \mathcal{U}^{(k+j)}(t_{j-1}-t_j) \,\big[B^{\pm}_{\ell_{j+1},n_{j+1}}\big]^{\omega}\,\sigma^{(k+j+1)}
\Big)\,\,\widehat{}\,\,(\xi_1,\ldots,\xi_k; \xi'_1,\ldots,\xi'_k)=
\end{equation}

$$\mathop{\sum_{\eta_1,\ldots,\eta_{k+j+1}}}_{\eta'_1,\ldots,\eta'_{k+j+1}}^{*} 
\{e^{-i\delta F}-1\} \cdot e^{it(\cdots)} \cdot e^{i(t_1-t_2)(\cdots)} \cdots e^{i(t_{j}-t_{j+1})(\cdots)} \cdot \widehat{\sigma}^{(k+j+1)}(\eta_1,\ldots,\eta_{k+j+1}; \eta'_1, \ldots, \eta'_{k+j+1}) \cdot$$
$$
\cdot \mathop{\prod_{1 \leq r \leq k}}_{\xi_r \in \mathcal{A}}
\Big\{h_{\xi_r}(\omega) \cdot h_{\eta_{1}^{r,1}} (\omega) \cdot h_{\eta_{2}^{r,1}} (\omega) \cdots h_{\eta_{N_r}^{r,1}}(\omega) \Big\} \cdot \mathop{\prod_{1 \leq r \leq k}}_{\xi'_r \in \mathcal{B}}
\Big\{h_{\xi'_r}(\omega) \cdot h_{\eta_{1}^{r,2}} (\omega) \cdot h_{\eta_{2}^{r,2}} (\omega) \cdots h_{\eta_{M_r}^{r,2}}(\omega) \Big\},$$
for some real-valued function $F=F(\eta_1,\ldots,\eta_{k+j+1};\eta'_1,\ldots,\eta'_{k+j+1})$, such that:
\begin{equation}
\label{Estimate_On_F}
|F| \leq C_3^{k+j+1} \cdot \big(|\eta_1|^2+\cdots+|\eta_{k+j+1}|^2+|\eta'_1|^2+\cdots+|\eta'_{k+j+1}|^2\big),
\end{equation}
for some universal constant $C_3>0$.
In particular, for all $\rho \in [0,1]$:
\begin{equation}
\label{Estimate_On_F2}
\big|e^{-i\delta F}-1\big| \lesssim_{\,\rho} \delta^{\rho} \cdot C_3^{(k+j+1) \cdot \rho} \cdot (|\eta_1|^2+\cdots+|\eta_{k+\ell}|^2+|\eta'_1|^2+\cdots+|\eta'_{k+\ell}|^2)^{\rho}.
\end{equation}
\end{lemma}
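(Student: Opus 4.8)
The plan is to obtain formula \eqref{Duhamel_Expansion_Difference1} directly from Lemma \ref{Duhamel_Expansion} by inserting the difference $\mathcal{U}^{(k+1)}(t+\delta)-\mathcal{U}^{(k+1)}(t)$ in place of $\mathcal{U}^{(k+1)}(t-t_1)$ at the first slot. First I would apply Lemma \ref{Duhamel_Expansion} with $t_1$ replaced by $0$, so that the first factor becomes $\mathcal{U}^{(k+1)}(t)$; this produces the sum $\sum^{*}$ with the leading exponential $e^{it(\cdots)}$ and all the other factors exactly as written. Since the operators $\big[B^{\pm}_{\ell_r,n_r}\big]^{\omega}$ and the free evolutions $\mathcal{U}^{(m)}$ act linearly, and since the first free evolution is the only place where $t$ (as opposed to the integration variables $t_1,\dots,t_{j+1}$) enters, the Fourier coefficient of the expression in \eqref{Duhamel_Expansion_Difference1} is obtained from the Fourier coefficient in \eqref{Duhamel_Expansion_j+1} by replacing the leading phase $e^{i(t-t_1)(\cdots)}$ with $e^{i(t+\delta)(\cdots)}-e^{it(\cdots)} = e^{it(\cdots)}\big(e^{i\delta(\cdots)}-1\big)$. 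Writing $-F$ for the real-valued exponent $(\cdots)$ appearing in the first slot (this is the quantity of the form $|\vec{\eta}|^2 - |\vec{\eta}'|^2$ restricted to the variables on which the first free evolution acts, read off from the structure of the collision operators), we get exactly the factor $\{e^{-i\delta F}-1\}$ in front of $e^{it(\cdots)}$, which is the claimed identity.

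Next I would establish the bound \eqref{Estimate_On_F} on $F$. Here $F$ is a signed sum of squares $\pm|\cdot|^2$ of a subset of the frequencies that label the relevant free-Schr\"odinger factor; each of those frequencies is, by item $ii)$ or $iii)$ in the description of $\sum^{*}$, a signed sum of at most (roughly) $N_r$ or $M_r$ of the $\eta$'s and $\eta'$'s, and these counts $N_r, M_r$ are bounded by the total number of frequencies, hence by $k+j+1$. Expanding the squares and using $|a+b|^2 \le 2|a|^2+2|b|^2$ repeatedly (at most $k+j+1$ times per frequency) together with the fact that there are at most $2(k+j+1)$ terms in the outer sum, one gets $|F| \le C_3^{k+j+1}\big(|\eta_1|^2+\cdots+|\eta_{k+j+1}|^2+|\eta'_1|^2+\cdots+|\eta'_{k+j+1}|^2\big)$ for a fixed universal constant $C_3$; the exponential loss $C_3^{k+j+1}$ is harmless since it will be absorbed later into the geometric series in $j$ and $k$.

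Finally, \eqref{Estimate_On_F2} follows by the same elementary interpolation used in the proof of Lemma \ref{DifferenceBound}: from $|e^{-i\delta F}-1|\le 2$ and, by the Mean Value Theorem, $|e^{-i\delta F}-1| = \mathcal{O}(\delta|F|)$, one interpolates to get $|e^{-i\delta F}-1|\lesssim_{\rho} (\delta|F|)^{\rho}$ for every $\rho\in[0,1]$, and then substitutes the bound \eqref{Estimate_On_F} for $|F|$. I do not expect any real obstacle here; the only point requiring a little care is bookkeeping — keeping track of exactly which frequencies enter $F$ and verifying that the number of summands and the depth of the nested substitutions are each at most $k+j+1$, so that the constant genuinely has the form $C_3^{k+j+1}$ with $C_3$ universal. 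The combinatorial structure needed for this is precisely the one already set up in Lemma \ref{Duhamel_Expansion} (and in Subsection 6.2 of \cite{SoSt}), so the argument is a short addendum to that lemma rather than a new construction.
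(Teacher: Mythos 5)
Your proposal is correct and follows essentially the same route as the paper's proof: on the Fourier side the difference of the two free evolutions factors as $e^{-itF}\{e^{-i\delta F}-1\}$ with $F$ the level-$(k+1)$ phase, $|F|$ is bounded through the signed-sum representation of the frequencies in items $ii)$ and $iii)$ of the $\sum^{*}$ definition (this is exactly where the paper's $C_3^{k+j+1}$ and its case distinction $\big[B^{+}_{\ell_1,n_1}\big]^{\omega}$ versus $\big[B^{-}_{\ell_1,n_1}\big]^{\omega}$ with the decomposition of $\mathcal{A}_{\ell_1}$, resp. $\mathcal{B}_{\ell_1}$, come from), and \eqref{Estimate_On_F2} follows by the same $\min(2,\delta|F|)$ interpolation as in Lemma \ref{DifferenceBound}. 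The only cosmetic difference is that you invoke Lemma \ref{Duhamel_Expansion} twice (at times $t+\delta$ and $t$) and subtract, while the paper applies it once to the tail with shifted indices and then acts explicitly with $\big[B^{\pm}_{\ell_1,n_1}\big]^{\omega}$ and the evolution difference; the bookkeeping you defer is precisely the explicit formula for $F$ given in the paper's two cases.
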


We now give a proof of Lemma \ref{Duhamel_Expansion}.

\begin{proof}
Let us consider two cases separately, depending on whether the first collision operator is $\big[B^{+}_{\ell_1,n_1}\big]^{\omega}$ or $\big[B^{-}_{\ell_1,n_1}\big]^{\omega}$.

\medskip

\textbf{Case 1:} Suppose that we are first applying the collision operator $\big[B^{+}_{\ell_1,n_1}\big]^{\omega}$. 
Since the collision operator $\big[B^{+}_{\ell_1,n_1}\big]^{\omega}$ is applied to the frequency $\xi_{\ell_1}$, it follows that $\xi_{\ell_1} \in \mathcal{A}$.  

We first use Lemma \ref{Duhamel_Expansion} (with appropriately shifted indices) to compute the expression:

\begin{equation}
\label{Duhamel_Expansion_j}
\Big(\,\big[B^{\pm}_{\ell_2,n_2}\big]^{\omega}\,\mathcal{U}^{(k+2)}(t_1-t_2) \big[B^{\pm}_{\ell_3,n_3}\big]^{\omega} \cdots
\end{equation}
\begin{equation}
\notag 
\cdots \big[B^{\pm}_{\ell_j,n_j}\big]^{\omega} \, \mathcal{U}^{(k+j)}(t_{j-1}-t_j) \,\big[B^{\pm}_{\ell_{j+1},n_{j+1}}\big]^{\omega}\,\sigma^{(k+j+1)}
\Big)\,\,\widehat{}\,\,(\zeta_1,\ldots,\zeta_{k+1};\zeta'_1,\ldots,\zeta'_{k+1}),
\end{equation}
for frequencies $(\zeta_1,\ldots,\zeta_{k+1};\zeta'_1,\ldots,\zeta'_{k+1})$.
We then recall the inductive procedure used in Subsection 6.2. of \cite{SoSt}. By using the argument from \cite{SoSt}, and the definition of the operators $\big[B_{\ell_1,n_1}^{+}\big]^{\omega}$ and $\mathcal{U}^{(k+1)}(t+\delta)-\mathcal{U}^{(k)}(t)$ on the Fourier domain, it is possible to deduce \eqref{Duhamel_Expansion_Difference1} from the formula for \eqref{Duhamel_Expansion_j}. We will omit the details and we refer the interested reader to Subsection 6.2 of \cite{SoSt}.
From this argument, it follows that we can write the set $\mathcal{A}_{\ell_1}$ (obtained from \eqref{Duhamel_Expansion_j+1} ) as a disjoint union of non-empty sets $\mathcal{A}^{1}_{\ell_1},\mathcal{A}^{2}_{\ell_1},\mathcal{A}^{3}_{\ell_1}$ such that if we set:
$$\nu_{\ell_1,n_1}:=\sum_{a \in \mathcal{A}^{1}_{\ell_1}}\epsilon (a) \cdot a$$
and 
$$\nu'_{\ell_1,n_1}:=-\sum_{a \in \mathcal{A}^{2}_{\ell_1}} \epsilon(a) \cdot a,$$
then we can take $F$ to be:
\begin{equation}
\label{F_Case1}
F=|\xi_1|^2+\cdots+|\xi_{\ell_1-1}|^2+|\xi_{\ell_1}-\nu_{\ell_1,n_1}+\nu'_{\ell_1,n_1}|^2+
\end{equation}
$$+|\xi_{\ell_1+1}|^2+\cdots+|\xi_k|^2+|\nu_{\ell_1,n_1}|^2-|\xi'_1|^2-\cdots-|\xi'_k|^2-|\nu'_{\ell_1,n_1}|^2,$$
and the relation \eqref{Duhamel_Expansion_Difference1} will hold.
It follows from \eqref{F_Case1} that:
$$|F| \leq |\xi_1|^2+\cdots+|\xi_{\ell_1-1}|^2+|\xi_{\ell_1}-\nu_{\ell_1,n_1}+\nu'_{\ell_1,n_1}|^2+
$$
$$+|\xi_{\ell_1+1}|^2+\cdots+|\xi_k|^2+|\nu_{\ell_1,n_1}|^2+|\xi'_1|^2+\cdots+|\xi'_k|^2+|\nu'_{\ell_1,n_1}|^2.$$
The estimate \eqref{Estimate_On_F} now follows by construction of the set $\{\eta_1,\ldots,\eta_{k+j+1};\eta'_1,\ldots,\eta'_{k+j+1}\}$. The power of the universal constant $C_3$ comes from each application of the \emph{fractional Leibniz rule}, i.e. by writing $\xi_r$ and $\xi'_r$ as in $i)$ and $ii)$ respectively. We note that there are at most $2(k+j+1)$ such applications of the fractional Leibniz rule. 

We observe that, in Case 1:

$$\xi_{\ell_1}-\nu_{\ell_1,n_1}+\nu'_{\ell_1,n_1}=\sum_{a \in \mathcal{A}_{\ell_1}^3} \epsilon (a) \cdot a.$$

\medskip

\textbf{Case 2:} Suppose now that we are first applying the collision operator $\big[B^{-}_{\ell_1,n_1}\big]^{\omega}$. 
Since the collision operator $\big[B^{-}_{\ell_1,n_1}\big]^{\omega}$ is applied to the frequency $\xi'_{\ell_1}$, it follows that $\xi'_{\ell_1} \in \mathcal{B}$.  

Arguing as in Case 1, it follows that we can write the set $\mathcal{B}_{\ell_1}$ (obtained from \eqref{Duhamel_Expansion_j+1}) as a disjoint union of non-empty sets $\mathcal{B}^{1}_{\ell_1},\mathcal{B}^{2}_{\ell_1},\mathcal{B}^{3}_{\ell_1}$ such that if we set:
$$\mu_{\ell_1,n_1}:=-\sum_{b \in \mathcal{B}^{1}_{\ell_1}}\epsilon (b) \cdot b$$
and 
$$\mu'_{\ell_1,n_1}:=\sum_{b \in \mathcal{B}^{2}_{\ell_1}} \epsilon(b) \cdot b,$$
then we can take $F$ to be:
\begin{equation}
\label{F_Case2}
F=|\xi_1|^2+\cdots+|\xi_k|^2+|\mu_{\ell_1,n_1}|^2-|\xi'_1|^2-\cdots-|\xi_{\ell_1-1}|^2
\end{equation}
$$-|\xi'_{\ell_1}-\mu'_{\ell_1,n_1}+\mu_{\ell_1,n_1}|^2-|\xi'_{\ell_1+1}|^2-\cdots-|\xi'_k|^2-|\mu'_{\ell_1,n_1}|^2$$
and the relation \eqref{Duhamel_Expansion_Difference1} will hold. The obtained $F$ will satisfy the estimate \eqref{Estimate_On_F} as in Case 1.
We note that, in Case 2:
$$\xi'_{\ell_1}-\mu'_{\ell_1,n_1}+\mu_{\ell_1,n_1}=\sum_{b \in \mathcal{B}_{\ell_1}^3} \epsilon(b) \cdot b.$$
The identity \eqref{Duhamel_Expansion_Difference1} now follows.

Finally, we note that \eqref{Estimate_On_F2} when $\rho=1$ follows from \eqref{Estimate_On_F} by applying the same argument we used to prove \eqref{difference2}. The estimate when $\rho=0$ follows since $F$ is real-valued. The claim for general $\rho \in [0,1]$ then follows by interpolation.

\end{proof}

In order to make the ideas in the proof of Lemma \ref{Duhamel_Expansion_Difference} more transparent, let us give a concrete example. We will modify Example 2 from Subsection 6.2 of \cite{SoSt}.

\begin{example}
\label{Example}
We suppose that $k=2, j=3$. As in \cite{SoSt}, for simplicity of notation, let us analyze the case when $t_1=t_2=t_3=t_4=0$. For the collision operators, we take $[B_{1,2}^{+}]^{\omega}, [B_{2,3}^{-}]^{\omega}, [B_{4,5}^{-}]^{\omega}$.
\\
\\
We note that:
$$\Big(\big[B_{1,2}^{+}\big]^{\omega} \big(\,\mathcal{U}^{(3)}(t+\delta)-\mathcal{U}^{(3)}(t)\big) \big[B_{2,3}^{-}\big]^{\omega} \big[B_{4,5}^{-}\big]^{\omega} \sigma^{(5)} \Big)\,\,\widehat{}\,\,(\xi_1,\xi_2;\xi'_1,\xi'_2)=$$
$$=\sum_{\xi_3,\xi'_3} \Big(\big(\,\mathcal{U}^{(3)}(t+\delta)-\mathcal{U}^{(3)}(t)\big) [B_{2,3}^{-}]^{\omega} [B_{4,5}^{-}]^{\omega} \sigma^{(5)}\Big)\,\,\widehat{}\,\,(\xi_1-\xi_3+\xi'_3,\xi_3,\xi_2; \xi'_1,\xi'_3,\xi'_2) \, \cdot$$
$$\cdot \, h_{\xi_1}(\omega) \cdot h_{\xi_1-\xi_3+\xi'_3}(\omega) \cdot h_{\xi_3}(\omega) \cdot h_{\xi'_3}(\omega)=$$
$$=\sum_{\xi_3,\xi'_3} \big\{e^{-i\delta(|\xi_1-\xi_3+\xi'_3|^2+|\xi_2|^2+|\xi_3|^2-|\xi'_1|^2-|\xi'_2|^2-|\xi'_3|^2)}-1\big\} \cdot e^{-it(|\xi_1-\xi_3+\xi'_3|^2+|\xi_2|^2+|\xi_3|^2-|\xi'_1|^2-|\xi'_2|^2-|\xi'_3|^2)} \cdot $$
$$\cdot \, \Big([B_{2,3}^{-}]^{\omega} [B_{4,5}^{-}]^{\omega} \sigma^{(5)}\Big)\,\,\widehat{}\,\,(\xi_1-\xi_3+\xi'_3,\xi_3,\xi_2; \xi'_1,\xi'_3,\xi'_2) \cdot h_{\xi_1}(\omega) \cdot h_{\xi_1-\xi_3+\xi'_3}(\omega) \cdot h_{\xi_3}(\omega) \cdot h_{\xi'_3}(\omega)$$
By using the expansions as in the mentioned example from \cite{SoSt}, this expression equals:
$$=\sum_{\xi_3,\xi'_3,\xi_4,\xi'_4,\xi_5,\xi'_5}
\big\{e^{-i\delta(|\xi_1-\xi_3+\xi'_3|^2+|\xi_2|^2+|\xi_3|^2-|\xi'_1|^2-|\xi'_2|^2-|\xi'_3|^2)}-1 \big\} \cdot e^{-it(|\xi_1-\xi_3+\xi'_3|^2+|\xi_2|^2+|\xi_3|^2-|\xi'_1|^2-|\xi'_2|^2-|\xi'_3|^2)} \, \cdot
$$
$$\cdot \, \widehat{\sigma}^{(5)}(\xi_1-\xi_3+\xi'_3,\xi_3,\xi_4,\xi_2,\xi_5; \xi'_1,\xi'_3-\xi'_4+\xi_4,\xi'_4,\xi'_2-\xi'_5+\xi_5,\xi'_5) \cdot$$
$$\cdot \, h_{\xi_1}(\omega) \cdot h_{\xi_1-\xi_3+\xi'_3}(\omega) \cdot h_{\xi_3}(\omega) \cdot h_{\xi'_3-\xi'_4+\xi_4}(\omega) \cdot h_{\xi_4}(\omega) \cdot h_{\xi'_4}(\omega) \, \cdot$$
$$\cdot \, h_{\xi'_2}(\omega) \cdot h_{\xi'_2-\xi'_5+\xi_5}(\omega) \cdot h_{\xi_5}(\omega) \cdot h_{\xi'_5}(\omega).$$
By construction, $\mathcal{A}=\{\xi_1\}$, $\mathcal{B}=\{\xi_2\}$. Furthermore,
$$\xi_1=(\xi_1-\xi_3+\xi'_3)+\xi_3+\xi_4-(\xi'_3-\xi'_4+\xi_4)-\xi'_4=\eta_{1}^{1,1}+\eta_{2}^{1,1}+\eta_{3}^{1,1}-\eta_{4}^{1,1}-\eta_{5}^{1,1}$$
and
\begin{equation}
\label{xi2prime}
\xi'_2=-\xi_5+(\xi'_2-\xi'_5+\xi_5)+\xi'_5=-\eta_{1}^{2,2}+\eta_{2}^{2,2}+\eta_{3}^{2,2}=-\eta_5+\eta_4'+\eta_5'.
\end{equation}
With notation as in the proof of Lemma \ref{Duhamel_Expansion_Difference}, we can take:

\begin{equation}
\label{nu12}
\nu_{1,2}:=\xi_3=\eta_{2}^{1,1}=\eta_2
\end{equation}
and
\begin{equation}
\label{nu12prime}
\nu'_{1,2}:=\xi'_3=-\xi_4+(\xi'_3-\xi'_4+\xi_4)+\xi'_4=-\eta_{3}^{1,1}+\eta_{4}^{1,1}+\eta_{5}^{1,1}=-\eta_3+\eta'_2+\eta'_3.
\end{equation}
We hence take:
$$F(\eta_1,\ldots,\eta_5;\eta'_1,\ldots,\eta'_5):=|\xi_1-\nu_{1,2}+\nu'_{1,2}|^2+|\xi_2|^2+|\nu_{1,2}|^2-|\xi'_1|^2-|\xi'_2|^2-|\nu'_{1,2}|^2=$$
$$=|\eta_1|^2+|\eta_4|^2+|\eta_2|^2-|\eta'_1|^2-|-\eta_5+\eta'_4+\eta'_5 |^2 - |-\eta_3+\eta'_2+\eta'_3|^2.$$
Here, we used \eqref{xi2prime}, \eqref{nu12}, \eqref{nu12prime} and the fact that, in the ebove expansion:
$$\xi_1-\nu_{1,2}+\nu'_{1,2}=\eta_1, \xi_2=\eta_4, \xi_1'=\eta_1'.$$
\end{example}

\subsubsection{\textbf{Proof of Theorem \ref{Theorem2GP2}}}
\label{Proof of Theorem2GP2}

\medskip

In the proof of Theorem \ref{Theorem2GP2}, we use the following analogue of Definition \ref{equicontinuousOmegastar}:

\begin{definition}
A sequence $(F_N)_{N \in \mathbb{N}}$ in $C_{t \in I}L^2(\Omega)\mathcal{H}^{\alpha}_{\xi}$ is said to be \textbf{equicontinuous in} $L^2(\Omega)\mathcal{H}^{\alpha}_{\xi}$ if for all $\epsilon>0$, there exists $\delta>0$ such that for all $N \in \mathbb{N}$, and for all $t_1, t_2 \in I$,  $|t_1-t_2| \leq \delta$ implies that $\big\|F_N(t_1)-F_N(t_2)\big\|_{L^2(\Omega) \mathcal{H}^{\alpha}_{\xi}} \leq \epsilon.$
\end{definition}

Let us now give the proof of Theorem \ref{Theorem2GP2}:

\begin{proof}(of Theorem \ref{Theorem2GP2})
We will show that $\big[\Gamma\,\big]^{\omega}$, constructed in Proposition \ref{CauchySequence2GP2} has the wanted properties. As before, we take into account the fact that $T$ can become smaller due to the dependence on $\alpha_0$. The a priori bound \eqref{Theorem2GP2_a_priori_bound} follows from \eqref{Gamma_omega_bound} in the same way that \eqref{Theorem2_a_priori_bound} followed from \eqref{Gamma_omega_star_bound}. The additional dependence of the implied constant on $\alpha_0$ follows from the fact that $T$ now also depends on $\alpha_0$. Moreover, we might also need to choose $\xi'$ to be smaller in terms of $\xi$ due to the additional $\alpha_0$ dependence. For simplicity of exposition, we will not explicitly emphasize this distinction in the discussion that follows. 

It suffices to prove \eqref{Theorem2identity1GP2} since this claim implies \eqref{Theorem2identity2GP2}. Furthermore, it suffices to prove \eqref{Theorem2identity1GP2} when $\xi_0 = \xi$, for $\xi$ as in the assumptions of the theorem. The claim  then will follow, for all $\xi_0$. We henceforth fix $\xi_0=\xi$.

\medskip

The proof of the theorem will be divided into several steps.

\medskip

\textbf{Step 1:} The sequence $\big(\big[\Gamma_N\,\big]^{\omega}\big)_N$, constructed as in \eqref{GammaNGP2}, is equicontinuous in $L^2(\Omega) \mathcal{H}^{\alpha}_{\xi}$.

\medskip

As in Step 1 of the proof of Theorem \ref{Theorem2}, the sequence $\big(\big[\Gamma_N\,\big]^{\omega}\big)_N$ is constructed on the time interval $[0,T]$, where $T$ is given by the assumptions of Theorem \ref{Theorem2GP2}. In particular, $T$ depends on $\alpha_0$. We will now prove the equicontinuity property.

\medskip

Let us fix $N \in \mathbb{N}$ and let us consider $\big
[\Gamma_N\,\big]^{\omega}$. We take $k \in \mathbb{N}$, $t \in I$ and $\delta>0$ small and we first observe that:
\begin{equation}
\label{GammaNomega_deltaGP2}
\big\|\big(\big[\Gamma_N\,\big]^{\omega}\big)^{(k)}(t+\delta)-\big(\big[\Gamma_N\,\big]^{\omega}\big)^{(k)}(t)\big\|_{L^2(\Omega) H^{\alpha}(\Lambda^k \times \Lambda^k)}
\end{equation}
%$$\leq \big\|\sum_{j=0}^{N-k} \big\{Duh_{\,j}^{\,\omega}(\Gamma_N(0))^{(k)}(t+\delta)- Duh_{\,j}^{\,\omega}(\Gamma_N(0))^{(k)}(t) \big\}\big\|_{L^2(\Omega) H^{\alpha}(\Lambda^k \times \Lambda^k)}$$
$$\leq \sum_{j=0}^{N-k} \big\|Duh_{\,j}^{\,\omega}(\Gamma_N(0))^{(k)}(t+\delta)- Duh_{\,j}^{\,\omega}(\Gamma_N(0))^{(k)}(t)\big\|_{L^2(\Omega) H^{\alpha}(\Lambda^k \times \Lambda^k)}.$$
By construction:
\begin{equation}
\notag
%\label{difference_deltaGP2}
Duh_{\,j}^{\,\omega}(\Gamma_N(0))^{(k)}(t+\delta)- Duh_{\,j}^{\,\omega}(\Gamma_N(0))^{(k)}(t)=
\end{equation}
$$=(-i)^j \int_0^t \int_0^{t_1} \cdots \int_0^{t_{j-1}} \big(\,\mathcal{U}^{(k)}(t+\delta-t_1)-\,\mathcal{U}^{(k)}(t-t_1)\big) \big[B^{(k+1)}\big]^{\omega} \, \mathcal{U}^{(k+1)}(t_1-t_2) \, \big[B^{(k+2)}\big]^{\omega} \cdots $$
$$\cdots \,\mathcal{U}^{(k+j-1)}(t_{j-1}-t_j) \, \big[B^{(k+j)}\big]^{\omega} \, \mathcal{U}^{(k+j)}(t_j) \gamma_0^{(k+j)}\, dt_j \, dt_{j-1} \,\cdots\, dt_2 \, dt_1$$
$$+(-i)^j \int_{t}^{t+\delta} \int_0^{t_1} \cdots \int_0^{t_{j-1}} \,\mathcal{U}^{(k)}(t+\delta-t_1) \, \big[B^{(k+1)}\big]^{\omega} \, \mathcal{U}^{(k+1)}(t_1-t_2) \, \big[B^{(k+2)}\big]^{\omega} \cdots $$
$$\cdots \,\mathcal{U}^{(k+j-1)}(t_{j-1}-t_j) \, \big[B^{(k+j)}\big]^{\omega} \, \mathcal{U}^{(k+j)}(t_j) \gamma_0^{(k+j)}\, dt_j \, dt_{j-1} \,\cdots\, dt_2 \, dt_1$$
$$=: \mathcal{I} + \mathcal{II}.$$
We now estimate the quantities $\mathcal{I}$ and $\mathcal{II}$ in the norm $\big\|\cdot\big\|_{L^2(\Omega)H^{\alpha}(\Lambda^k \times \Lambda^k)}.$

\medskip

By Minkowski's inequality:

$$\big\|\,\mathcal{I}\,\big\|_{L^2(\Omega) H^{\alpha}(\Lambda^k \times \Lambda^k)} \leq \int_0^t \int_0^{t_1} \cdots \int_0^{t_{j-1}} \big\| \big(\,\mathcal{U}^{(k)}(t+\delta-t_1)-\,\mathcal{U}^{(k)}(t-t_1)\big) \, \big[B^{(k+1)}\big]^{\omega} \, \mathcal{U}^{(k+1)}(t_1-t_2)$$
$$\big[B^{(k+2)}\big]^{\omega} \cdots \,\mathcal{U}^{(k+j-1)}(t_{j-1}-t_j) \, \big[B^{(k+j)}\big]^{\omega} \, \mathcal{U}^{(k+j)}(t_j) \gamma_0^{(k+j)}\big\|_{L^2(\Omega) H^{\alpha}(\Lambda^k \times \Lambda^k)}\, dt_j \, dt_{j-1} \,\cdots\, dt_2 \, dt_1,$$
which, by Lemma \ref{DifferenceBound} is:
$$\lesssim_{\,\alpha,\,\alpha_0} \delta^{r} \cdot \int_0^t \int_0^{t_1} \cdots \int_0^{t_{j-1}} \big\| \big[B^{(k+1)}\big]^{\omega} \, \mathcal{U}^{(k+1)}(t_1-t_2) \, \big[B^{(k+2)}\big]^{\omega} \, \mathcal{U}^{(k+2)}(t_2-t_3) \, \cdots $$
$$\cdots \,\mathcal{U}^{(k+j-1)}(t_{j-1}-t_j) \, \big[B^{(k+j)}\big]^{\omega} \, \mathcal{U}^{(k+j)}(t_j) \gamma_0^{(k+j)}\big\|_{L^2(\Omega) H^{\alpha_0}(\Lambda^k \times \Lambda^k)}\, dt_j \, dt_{j-1} \,\cdots\, dt_2 \, dt_1$$
for some $r=r(\alpha,\alpha_0) \in (0,1)$.
By using the fact that $\Gamma(0)$ was assumed to be non-resonant and by applying Proposition \ref{non-resonant} in regularity $\alpha_0$, it follows that this expression is:
$$\leq \delta^{r} \cdot  \int_0^t \int_0^{t_1} \cdots \int_0^{t_{j-1}} 2^j \, \cdot \, (C'_0)^{k+j} \, \cdot \, k \, \cdot \, (k+1) \, \cdots \, (k+j-1) \, \cdot \, \big\|\gamma_0^{(k+j)} \big\|_{H^{\alpha_0}(\Lambda^{k+j} \times \Lambda^{k+j})} \, dt_j \, dt_{j-1} \,\cdots\, dt_2 \, dt_1.$$
Here, $C'_0=C'_0(\alpha_0)>0$ is the implied constant obtained for the regularity $\alpha_0$ in Proposition \ref{non-resonant}.
Arguing as in the proof of Proposition \ref{DuhamelEstimate2}, we can deduce that:
\begin{equation}
\label{IGP2}
\big\|\,\mathcal{I}\,\big\|_{L^2(\Omega) H^{\alpha}(\Lambda^k \times \Lambda^k)} \leq \delta^r \cdot (C_1' T)^j \cdot \, ({C'_2})^k \cdot \big\| \gamma_0^{(k+j)}\big\|_{H^{\alpha_0}(\Lambda^{k+j} \times \Lambda^{k+j})}
\end{equation}
for some constants $C_1'=C_1'(\alpha,\alpha_0), C_2'=C_2'(\alpha,\alpha_0)>0$.

\medskip

We now estimate $\mathcal{II}$:
$$\big\|\,\mathcal{II}\,\big\|_{L^2(\Omega) H^{\alpha}(\Lambda^k \times \Lambda^k)} \leq 
\int_{t}^{t+\delta} \int_0^{t_1} \cdots \int_0^{t_{j-1}} \big\|\,\mathcal{U}^{(k)}(t+\delta-t_1) \, \big[B^{(k+1)}\big]^{\omega} \, \mathcal{U}^{(k+1)}(t_1-t_2) \, \big[B^{(k+2)}\big]^{\omega} \cdots $$
$$\cdots \,\mathcal{U}^{(k+j-1)}(t_{j-1}-t_j) \, \big[B^{(k+j)}\big]^{\omega} \, \mathcal{U}^{(k+j)}(t_j) \gamma_0^{(k+j)}\,\big\|_{L^2(\Omega) H^{\alpha}(\Lambda^k \times \Lambda^k)}\, dt_j \, dt_{j-1} \,\cdots\, dt_2 \, dt_1
$$
which by Proposition \ref{non-resonant} is:
$$\leq \int_{t}^{t+\delta} \int_{0}^{t_1} \cdots \int_{0}^{t_{j-1}} 2^j \, \cdot \,C_0^{k+j} \, \cdot \, k \, \cdot \, (k+1) \cdots (k+j-1) \, \cdot \, \big\|\gamma_0^{(k+j)}\big\|_{H^{\alpha}(\Lambda^{k+j} \times \Lambda^{k+j})} \,dt_j \, dt_{j-1} \cdots \,dt_2 \, dt_1.$$
Here $C_0=C_0(\alpha)$ is the implied constant for the regularity $\alpha$ in Proposition \ref{non-resonant}. 
We recall \eqref{integral_bound} and we argue as in \eqref{II}, given in Step 1 of the proof of Theorem \ref{Theorem2} to deduce that there exist $C_1''=C_1''(\alpha),C_2''=C_2''(\alpha)>0$ such that this quantity is:
\begin{equation}
\label{IIGP2}
\lesssim_{\,T} \delta \cdot (C_1''\,T)^j \cdot \,(C_2''\,)^k \cdot \big\|\gamma_0^{(k+j)}\big\|_{H^{\alpha}(\Lambda^{k+j} \times \Lambda^{k+j})}.
\end{equation}
We choose $\widetilde{C_1}$ and $\widetilde{C_2}$ to satisfy:
\begin{equation}
\label{widetildeC1C2A}
\widetilde{C_1} \geq \max\{C_1',C_1''\},\,\widetilde{C_2} \geq \max\{C_2',C_2''\}.
\end{equation}
We note that these constraints are given in terms of $\alpha$ and $\alpha_0$.
By using \eqref{IGP2} and \eqref{IIGP2}, it follows that:
$$\big\| Duh_{\,j}^{\,\omega}(\Gamma_N(0))^{(k)}(t+\delta)- Duh_{\,j}^{\,\omega}(\Gamma_N(0))^{(k)}(t)\big\|_{L^2(\Omega)H^{\alpha}(\Lambda^k \times \Lambda^k)}$$
\begin{equation}
\label{Difference_EstimateGP2}
\lesssim_{\,\alpha,\,\alpha_0,\,T} \delta^{r} \cdot (\widetilde{C_1}T)^j \cdot \widetilde{C_2}^k \cdot \big\|\gamma_0^{(k+j)}\big\|_{H^{\alpha_0}(\Lambda^{k+j} \times \Lambda^{k+j})}.
\end{equation}
Here, we used the fact that that, by construction, $r \leq 1$.

Using \eqref{GammaNomega_deltaGP2}, \eqref{Difference_EstimateGP2}, and arguing as for \eqref{Difference_Estimate_42b} in the proof of Theorem \ref{Theorem2}, it follows that:
$$\big\|\big[\Gamma_N\,\big]^{\omega}(t+\delta)-\big[\Gamma_N\,\big]^{\omega}(t)\big\|_{L^2(\Omega) \mathcal{H}^{\alpha}_{\xi}}$$
$$\lesssim_{\,\alpha,\,\alpha_0,\,T} \delta^{r} \cdot \sum_{j=0}^{\infty} \Big(\frac{\widetilde{C_1} \,T}{\xi'}\Big)^j \cdot \sum_{k=0}^{\infty} \Big(\frac{\widetilde{C_2}\,\xi}{\xi'}\Big)^k \cdot \big\|\Gamma(0)\big\|_{\mathcal{H}^{\alpha_0}_{\xi'}}.$$
By assumptions $i)$ and $ii)$ of the theorem, we take $\xi,\xi'$, and $T$ to satisfy $\frac{\widetilde{C_1} \,T}{\xi'}<1$ and $\frac{\widetilde{C_2}\,\xi}{\xi'}<1$. It then follows that the above sum is:
$$\lesssim_{\,\xi,\,\xi',\,T,\,\alpha,\alpha_0} \delta^{r} \, \cdot \, \big\|\Gamma(0)\big\|_{\mathcal{H}^{\alpha_0}_{\xi'}}.$$
Since we can choose $T$ to be a function of $\xi,\xi',\alpha,\alpha_0$, it follows that:
\begin{equation}
\label{equicontinuityGP2}
\big\|\big[\Gamma_N\,\big]^{\omega}(t+\delta)-\big[\Gamma_N\,\big]^{\omega}(t)\big\|_{L^2(\Omega)\mathcal{H}^{\alpha}_{\xi}} \lesssim_{\,\xi,\,\xi',\,\alpha,\,\alpha_0} \delta^{r} \, \cdot \, \big\|\Gamma(0)\big\|_{\mathcal{H}^{\alpha_0}_{\xi'}}.
\end{equation}
Since $\Gamma(0) \in \mathcal{H}^{\alpha_0}_{\xi'}$, the equicontinuity of $\big(\big[\Gamma_N\,\big]^{\omega}\big)_N$ in $L^2(\Omega) \mathcal{H}^{\alpha}_{\xi}$ now follows.

\medskip

\textbf{Step 2:} The limit $\big[\Gamma\,\big]^{\omega}$ of $\big(\big[\Gamma_N\,\big]^{\omega}\big)_N$ belongs to $C_{t \in I} L^2(\Omega) \mathcal{H}^{\alpha}_{\xi}.$
\medskip

This step is analogous to Step 2 of the proof of Theorem \ref{Theorem2}.
Namely, we recall that, by Proposition \ref{CauchySequence2GP2}:

$$\big[\Gamma_N\,\big]^{\omega} \rightarrow \big[\Gamma\,\big]^{\omega}$$
as $N \rightarrow \infty$ in $L^{\infty}_{t \in I} L^2(\Omega) \mathcal{H}^{\alpha}_{\xi}.$
The fact that $\big[\Gamma\,\big]^{\omega} \in C_{t \in I} L^2(\Omega) \mathcal{H}^{\alpha}_{\xi}$ now follows from the equicontinuity result in Step 1.
In particular, we note that:

\begin{equation}
\notag
%\label{equicontinuity2GP2}
\big\|\big[\Gamma\,\big]^{\omega}(t+\delta)-\big[\Gamma\,\big]^{\omega}(t)\big\|_{L^2(\Omega)\mathcal{H}^{\alpha}_{\xi}} \lesssim_{\,\xi,\,\xi',\,\alpha,\,\alpha_0} \delta^{r} \, \cdot \, \big\|\Gamma(0)\big\|_{\mathcal{H}^{\alpha_0}_{\xi'}}.
\end{equation}

\medskip

\textbf{Step 3:} $\mathcal{U}(t) \, \Gamma(0)$ belongs to $C_{t \in I} L^2(\Omega) \mathcal{H}^{\alpha}_{\xi_0}$.

\medskip

From Lemma \ref{DifferenceBound}, it follows that:

\begin{equation}
\notag
\big\|\,\mathcal{U}(t+\delta)\,\Gamma(0)-\mathcal{U}(t)\,\Gamma(0)\big\|_{L^2(\Omega) \mathcal{H}^{\alpha}_{\xi}} \lesssim_{\,\alpha,\,\alpha_0} \delta^r \, \cdot \,  \big\|\Gamma(0)\big\|_{\mathcal{H}^{\alpha_0}_{\xi}},
\end{equation}
for some $r=r(\alpha,\alpha_0)>0$. By assumption, we know that $\big\|\Gamma(0)\big\|_{\mathcal{H}^{\alpha_0}_{\xi}} \leq \big\|\Gamma(0)\big\|_{\mathcal{H}^{\alpha_0}_{\xi'}} <\infty$ and hence $\mathcal{U}(t) \, \Gamma(0) \in C_{t \in I} L^2(\Omega) \mathcal{H}^{\alpha}_{\xi_0}$.

\medskip

\textbf{Step 4:} $\big(\big[\widehat{B}\,\big]^{\omega} \big[\Gamma_N\,\big]^{\omega}\big)_N$ is equicontinuous in  $L^2(\Omega) \mathcal{H}^{\alpha}_{\xi}$.

\medskip

In this step, we cannot use the equicontinuity result from Step 1, as we did in the independently randomized setting, since there is $\omega$ dependence both in $\big[\widehat{B}\,\big]^{\omega}$ and in $\big[\Gamma_N \,\big]^{\omega}$. Instead, we have to argue directly by using Lemma \ref{Duhamel_Expansion_Difference}. Given $t$ in $I$ and $\delta>0$ small, we need to estimate the difference:
\begin{equation}
\label{BkappaGammaNomega_deltaGP2}
\big\|\big[\widehat{B}\,\big]^{\omega} \big[\Gamma_N\,\big]^{\omega}(t+\delta)-\big[\widehat{B}\,\big]^{\omega} \big[\Gamma_N\,\big]^{\omega}(t)\big\|_{L^2(\Omega)\mathcal{H}^{\alpha}_{\xi}}=
\end{equation}
$$\sum_{k=1}^{\infty} \xi^k \cdot \big\|\big(\big[\widehat{B}\,\big]^{\omega} \big[\Gamma_N\,\big]^{\omega}\big)^{(k)}(t+\delta)-\big(\big[\widehat{B}\,\big]^{\omega} \big[\Gamma_N\,\big]^{\omega}\big)^{(k)}(t)\big\|_{L^2(\Omega)H^{\alpha}(\Lambda^k \times \Lambda^k)}$$
in terms of the initial data $\Gamma(0)$.

We fix $k \in \mathbb{N}$ and we note that:
$$\big\|\big(\big[\widehat{B}\,\big]^{\omega} \big[\Gamma_N\,\big]^{\omega}\big)^{(k)}(t+\delta)-\big(\big[\widehat{B}\,\big]^{\omega} \big[\Gamma_N\,\big]^{\omega}\big)^{(k)}(t)\big\|_{L^2(\Omega)H^{\alpha}(\Lambda^k \times \Lambda^k)}$$
$$ \leq \sum_{j=0}^{N-k-1} \big\|\big[B^{(k+1)}\big]^{\omega} Duh_{\,j}^{\,\omega}(\Gamma_N(0))^{(k+1)}(t+\delta)- \big[B^{(k+1)}\big]^{\omega} Duh_{\,j}^{\,\omega}(\Gamma_N(0))^{(k+1)}(t)\big\|_{L^2(\Omega) H^{\alpha}(\Lambda^{k+1} \times \Lambda^{k+1})}.$$ 
Let us furthermore fix $j \in \{0,1,\ldots,N-k-1\}$. Then:
$$\big[B^{(k+1)}\big]^{\omega} Duh_{\,j}^{\,\omega}(\Gamma_N(0))^{(k+1)}(t+\delta)- \big[B^{(k+1)}\big]^{\omega} Duh_{\,j}^{\,\omega}(\Gamma_N(0))^{(k+1)}(t)=$$
$$=(-i)^j \int_0^t \int_0^{t_1} \cdots \int_0^{t_{j-1}} \big[B^{(k+1)}\big]^{\omega} \, \big(\,\mathcal{U}^{(k+1)}(t+\delta-t_1)-\,\mathcal{U}^{(k+1)}(t-t_1)\big) \big[B^{(k+2)}\big]^{\omega} \, \mathcal{U}^{(k+2)}(t_1-t_2) \, \big[B^{(k+3)}\big]^{\omega} \cdots $$
$$\cdots \,\mathcal{U}^{(k+j)}(t_{j-1}-t_j) \, \big[B^{(k+j+1)}\big]^{\omega} \, \mathcal{U}^{(k+j+1)}(t_j) \, \gamma_0^{(k+j+1)}\, dt_j \, dt_{j-1} \,\cdots\, dt_2 \, dt_1$$
$$+(-i)^j \int_{t}^{t+\delta} \int_0^{t_1} \cdots \int_0^{t_{j-1}} \big[B^{(k+1)}\big]^{\omega} \,\mathcal{U}^{(k+1)}(t+\delta-t_1) \, \big[B^{(k+2)}\big]^{\omega} \, \mathcal{U}^{(k+2)}(t_1-t_2) \, \big[B^{(k+3)}\big]^{\omega} \cdots $$
$$\cdots \,\mathcal{U}^{(k+j)}(t_{j-1}-t_j) \, \big[B^{(k+j+1)}\big]^{\omega} \, \mathcal{U}^{(k+j+1)}(t_j) \, \gamma_0^{(k+j+1)}\, dt_j \, dt_{j-1} \,\cdots\, dt_2 \, dt_1$$
$$=: \widetilde{\mathcal{I}} + \widetilde{\mathcal{II}}.$$
Let us now estimate the quantities $\widetilde{\mathcal{I}}$ and $\widetilde{\mathcal{II}}$ in the norm $\big\|\cdot\big\|_{L^2(\Omega)H^{\alpha}(\Lambda^k \times \Lambda^k)}.$

By Minkowski's inequality:

$$\big\|\,\widetilde{\mathcal{I}}\,\big\|_{L^2(\Omega) H^{\alpha}(\Lambda^k \times \Lambda^k)} \leq$$
$$\int_0^t \int_0^{t_1} \cdots \int_0^{t_{j-1}} \big\| \big[B^{(k+1)}\big]^{\omega} \, \big(\,\mathcal{U}^{(k+1)}(t+\delta-t_1)-\,\mathcal{U}^{(k+1)}(t-t_1)\big) \big[B^{(k+2)}\big]^{\omega} \, \mathcal{U}^{(k+2)}(t_1-t_2) \, \big[B^{(k+3)}\big]^{\omega} \cdots $$
$$\cdots
 \,\mathcal{U}^{(k+j)}(t_{j-1}-t_j) \, \big[B^{(k+j+1)}\big]^{\omega} \, \mathcal{U}^{(k+j+1)}(t_j) \, \gamma_0^{(k+j+1)}\,\big\|_{L^2(\Omega) H^{\alpha}(\Lambda^k \times \Lambda^k)}\, dt_j \, dt_{j-1} \,\cdots\, dt_2 \, dt_1.$$
Let us now consider the integrand in the above integral more closely. 
By using the identity \eqref{Duhamel_Expansion_Difference1}  and the bound \eqref{Estimate_On_F2} given in Lemma \ref{Duhamel_Expansion_Difference}, and by arguing in the same way as in the proof of Proposition \ref{non-resonant} (i.e. as in Proposition 6.2 of \cite{SoSt}), it follows that:
\begin{equation}
\label{difference_alpha_alpha0}
\big\| \big[B^{(k+1)}\big]^{\omega} \, \big(\,\mathcal{U}^{(k+1)}(t+\delta-t_1)-\,\mathcal{U}^{(k+1)}(t-t_1)\big) \big[B^{(k+2)}\big]^{\omega} \, \mathcal{U}^{(k+2)}(t_1-t_2) \, \big[B^{(k+3)}\big]^{\omega} \cdots 
\end{equation}
$$\cdots
 \,\mathcal{U}^{(k+j)}(t_{j-1}-t_j) \, \big[B^{(k+j+1)}\big]^{\omega} \, \mathcal{U}^{(k+j+1)}(t_j) \, \gamma_0^{(k+j+1)}\,\big\|_{L^2(\Omega) H^{\alpha}(\Lambda^k \times \Lambda^k)}$$
$$\lesssim_{\,\alpha,\,\alpha_0} \delta^{\rho} \cdot C_4^{k+j+1} \cdot k \cdot (k+1) \cdots (k+j) \cdot \big\|\gamma_0^{(k+j+1)}\big\|_{H^{\alpha_0}(\Lambda^{k+j+1} \times \Lambda^{k+j+1})},$$
for some $\rho=\rho(\alpha,\alpha_0) \in (0,1)$ and for some $C_4=C_4(\alpha,\alpha_0)>0$. In this step, we needed to use the assumption that $\Gamma(0) \in \mathcal{N}$. By arguing as in the proof of Proposition \ref{DuhamelEstimate2}, we note that \eqref{difference_alpha_alpha0} implies:
\begin{equation}
\label{ItildeGP2}
\big\|\,\widetilde{\mathcal{I}}\,\big\|_{L^2(\Omega) H^{\alpha}(\Lambda^k \times \Lambda^k)} \leq \delta^{\rho} \cdot (C_3' T)^j \cdot \, ({C'_4})^k \cdot \big\| \gamma_0^{(k+j+1)}\big\|_{H^{\alpha_0} (\Lambda^{k+j+1} \times \Lambda^{k+j+1})},
\end{equation}
for some constants $C_3'=C_3'(\alpha,\alpha_0), C_4'=C_4'(\alpha,\alpha_0)>0$.

By using the same argument we used to prove estimate the term $\mathcal{II}$ in \eqref{IIGP2}, it follows that:
\begin{equation}
\label{IItildeGP2}
\big\|\,\widetilde{\mathcal{II}}\,\big\|_{L^2(\Omega) H^{\alpha}(\Lambda^k \times \Lambda^k)} \lesssim_{\,T} \delta \cdot (C_3'' T)^{j} \cdot (C_4'')^{k} \cdot \big\| \gamma_0^{(k+j+1)}\big\|_{H^{\alpha} (\Lambda^{k+j+1} \times \Lambda^{k+j+1})},
\end{equation}
for some constants $C_3''=C_3''(\alpha), C_4''=C_4''(\alpha)>0$.
In addition to \eqref{widetildeC1C2A}, we choose $\widetilde{C_1}$ and $\widetilde{C_2}$ to satisfy:
\begin{equation}
\label{widetildeC1C2B}
\widetilde{C_1} \geq \max\{C_3',C_3''\},\,\widetilde{C_2} \geq \max\{C_4',C_4''\}.
\end{equation}
These constraints are given in terms of $\alpha$ and $\alpha_0$.
Let us note that, by \eqref{widetildeC1C2A} and \eqref{widetildeC1C2B}, we can choose $\widetilde{C_1}$ and $\widetilde{C_2}$ to depend on $\alpha$ and $\alpha_0$.

By \eqref{ItildeGP2} and \eqref{IItildeGP2}, it follows that:

$$\big\|\big[B_{k+1}\big]^{\omega} Duh_{\,j}^{\,\omega}(\Gamma_N(0))^{(k+1)}(t+\delta)- \big[B_{k+1}\big]^{\omega} Duh_{\,j}^{\,\omega}(\Gamma_N(0))^{(k+1)}(t)\big\|_{L^2(\Omega) H^{\alpha}(\Lambda^k \times \Lambda^k)} 
$$
\begin{equation}
\label{Difference_Estimate2GP2}
\lesssim_{\,\alpha,\,\alpha_0,\,T} \delta^{\rho} \cdot (\widetilde{C_1}T)^j \cdot \widetilde{C_2}^k \cdot \big\|\gamma_0^{(k+j)}\big\|_{H^{\alpha_0}(\Lambda^{k+j} \times \Lambda^{k+j})}.
\end{equation}
Similarly as before, we used the fact that that, by construction, $\rho \leq 1$.

Using \eqref{BkappaGammaNomega_deltaGP2} and \eqref{Difference_Estimate2GP2}, and arguing similarly as in Step 1 of the proof, it follows that:

$$\big\|\big[\widehat{B}\,\big]^{\omega} \big[\Gamma_N\,\big]^{\omega}(t+\delta)-\big[\widehat{B}\,\big]^{\omega} \big[\Gamma_N\,\big]^{\omega}(t)\big\|_{L^2(\Omega) \mathcal{H}^{\alpha}_{\xi}}$$
$$\leq \frac{\delta^{\rho}}{\xi'} \cdot \sum_{j=0}^{\infty} \Big(\frac{\widetilde{C_1} \,T}{\xi'}\Big)^j \cdot \sum_{k=0}^{\infty} \Big(\frac{\widetilde{C_2}\,\xi}{\xi'}\Big)^k \cdot \big\|\Gamma(0)\big\|_{\mathcal{H}^{\alpha_0}_{\xi'}}.$$
As in \eqref{equicontinuityGP2}, this quantity is:
\begin{equation}
\label{Step4Theorem2GP2}
\lesssim_{\,\xi,\,\xi',\,\alpha,\,\alpha_0} \delta^{\rho} \, \cdot \, \big\|\Gamma(0)\big\|_{\mathcal{H}^{\alpha_0}_{\xi'}},
\end{equation}
for $T$ as in the assumptions of the theorem.
The equicontinuity of $\big(\big[\widehat{B}\,\big]^{\omega} \big[\Gamma_N\,\big]^{\omega}\big)_N$ in $L^2(\Omega) \mathcal{H}^{\alpha}_{\xi}$ now follows since $\Gamma(0) \in \mathcal{H}^{\alpha_0}_{\xi'}$.

\medskip

\textbf{Step 5:} $\theta^{\,\omega}$ belongs to $C_{t \in I} L^2(\Omega) \mathcal{H}^{\alpha}_{\xi}$.

\medskip

We recall that $\theta^{\,\omega}$ was constructed in Proposition \ref{limit2GP2} as the strong limit of $\big[\widehat{B}\,\big]^{\omega} \big[\Gamma_N\,\big]^{\omega}$ when $N \rightarrow \infty$.
This step is now analogous to Step 2 and it follows immediately from the equicontinuity result in Step 4.
More precisely, from \eqref{Step4Theorem2GP2} it follows that, for all $t \in I$ and for all $\delta>0$:
\begin{equation}
\label{Step5Theorem2GP2}
\big\|\theta^{\,\omega}(t+\delta)-\theta^{\,\omega}(t)\big\|_{L^2(\Omega) \mathcal{H}^{\alpha}_{\xi}} \lesssim_{\,\xi,\,\xi',\,\alpha,\,\alpha_0}\delta^{\rho} \cdot \big\|\Gamma(0)\big\|_{\mathcal{H}^{\alpha_0}_{\xi'}}.
\end{equation}

\medskip

\textbf{Step 6:} $\int_{0}^{t} \, \mathcal{U}(t-s) \, \theta^{\,\omega}(s) \, ds$ belongs to $C_{t \in I} L^2(\Omega) \mathcal{H}^{\alpha}_{\xi}$.

\medskip 

We now use \eqref{Step5Theorem2GP2} and we argue analogously as in Step 5 of the proof of Theorem \ref{Theorem2}. In particular, by Minkowski's inequality and unitarity:

$$\big\|\int_{0}^{t+\delta} \,\mathcal{U}(t+\delta-s) \, \theta^{\,\omega}(t+\delta-s) \,ds - \int_{0}^{t} \,\mathcal{U}(t-s) \, \theta^{\,\omega}(t-s)\,ds \,\big\|_{L^2(\Omega) \mathcal{H}^{\alpha}_{\xi}}$$
$$\leq \int_{-\delta}^{0} \big\|\theta^{\,\omega}(s+\delta)\big\|_{L^2(\Omega) \mathcal{H}^{\alpha}_{\xi}} \, ds + \int_{0}^{t} \big\|\theta^{\,\omega}(s+\delta)-\theta^{\,\omega}(s)\big\|_{L^2(\Omega) \mathcal{H}^{\alpha}_{\xi}} \, ds$$
$$\lesssim_{\,\xi,\,\xi',\,\alpha,\,\alpha_0} \delta \cdot \big\|\theta^{\,\omega}\big\|_{L^{\infty}_{t \in I} L^2(\Omega) \mathcal{H}^{\alpha}_{\xi}} + T \cdot \delta^{\rho}  \cdot \big\|\Gamma(0)\big\|_{\mathcal{H}^{\alpha_0}_{\xi'}}.$$
Here, we used \eqref{Step5Theorem2GP2} in the estimate of the second term.
Since $\big\|\theta^{\,\omega}\big\|_{L^{\infty}_{t \in I}L^2(\Omega) \mathcal{H}^{\alpha}_{\xi}}$ and $\|\Gamma(0)\big\|_{\mathcal{H}^{\alpha_0}_{\xi'}}$ 
are finite, it follows that $\int_{0}^{t} \, \mathcal{U}(t-s) \, \theta^{\,\omega}(s) \, ds \in C_{t \in I} L^2(\Omega) \mathcal{H}^{\alpha}_{\xi_0}$. 

\medskip

\textbf{Step 7:} Conclusion of the proof.

\medskip

Let us recall that, by Proposition \ref{Gamma_omega_equation}:

\begin{equation}
\label{Step7A}
\big\|\big[\Gamma\,\big]^{\omega}(t)-\mathcal{U}(t)\,\Gamma(0)+i \int_{0}^{t}\,\mathcal{U}(t-s)\,\theta^{\,\omega}(s)\,ds\,\big\|_{L^{\infty}_{t \in I} L^2(\Omega) \mathcal{H}^{\alpha}_{\xi}}=0.
\end{equation}

By using Step 2, Step 3, and Step 6, it follows that:

\begin{equation}
\label{Step7B}
\big[\Gamma\,\big]^{\omega}(t)-\mathcal{U}(t)\,\Gamma(0)+i \int_{0}^{t}\,\mathcal{U}(t-s)\,\theta^{\,\omega}(s)\,ds \in C_{t \in I} L^2(\Omega) \mathcal{H}^{\alpha}_{\xi}.
\end{equation}

From \eqref{Step7A} and \eqref{Step7B}, we can deduce that \emph{for all} $t \in I$:

\begin{equation}
\label{Step7C}
\big\|\big[\Gamma\,\big]^{\omega}(t)-\mathcal{U}(t)\,\Gamma(0)+i \int_{0}^{t}\,\mathcal{U}(t-s)\,\theta^{\,\omega}(s)\,ds\,\big\|_{L^2(\Omega) \mathcal{H}^{\alpha}_{\xi}}=0.
\end{equation}

By Minkowski's inequality and unitarity, it follows that for all $t \in I$:
\begin{equation}
\label{Step7D}
\big\|\int_{0}^{t}\,\mathcal{U}(t-s)\,\big(\theta^{\,\omega}(s)-\big[\widehat{B}\,\big]^{\omega} \big[\Gamma\,\big]^{\omega}(s)\big)\,ds\,\big\|_{L^2(\Omega) \mathcal{H}^{\alpha}_{\xi}} 
\end{equation}
$$\leq \int_{0}^{t} \big\|\theta^{\,\omega}(s)-\big[\widehat{B}\,\big]^{\omega} \big[\Gamma\,\big]^{\omega}(s)\big\|_{L^2(\Omega) \mathcal{H}^{\alpha}_{\xi}} \, ds \leq T \cdot \big\|\theta^{\,\omega}-\big[\widehat{B}\,\big]^{\omega} \big[\Gamma\,\big]^{\omega}\big\|_{L^{\infty}_{t \in I} L^2(\Omega) \mathcal{H}^{\alpha}_{\xi}}=0$$ 
In the last equality, we used \eqref{Important_Bound}.

We combine \eqref{Step7C}  and \eqref{Step7D} and we conclude that, \emph{for all} $t \in I$:
\begin{equation}
\notag
\big\|\big[\Gamma\,\big]^{\omega}(t)-\mathcal{U}(t)\,\Gamma(0)+i \int_{0}^{t}\,\mathcal{U}(t-s)\,\big[\widehat{B}\,\big]^{\omega} \big[\Gamma\,\big]^{\omega}(s) \,ds\,\big\|_{L^2(\Omega) \mathcal{H}^{\alpha}_{\xi}}=0.
\end{equation}
The theorem now follows.
\end{proof}


\begin{thebibliography}{References:}
\bibitem{ABGT} R. Adami, C. Bardos, F. Golse, A. Teta, \emph{Towards a rigorous derivation of the cubic nonlinear Schr\"{o}dinger equation in dimension one}, Asymptot. Anal. \textbf{40} (2004), no.2, 93-108.
\bibitem{AGT} R. Adami, F. Golse, A. Teta, \emph{Rigorous derivation of the cubic NLS in dimension one}, J. Stat. Phys. \textbf{127} (2007), no. 6, 1193--1220.
\bibitem{AdCoKo} S. Adams, A. Collevecchio, W. K\"{o}nig, \emph{A variational formula for the free energy of an interacting many-particle system}, Ann. of Prob. \textbf{39} (2011), no. 2, 683--728.
\bibitem{ALSSY1} M. Aizenman, E.H. Lieb, R. Seiringer, J.P. Solovej, J. Yngvason, \emph{Bose-Einstein quantum phase transition in an optical lattice model}, Phys. Rev. A \textbf{70}, 023612 (2004).
\bibitem{ALSSY2} M. Aizenman, E.H. Lieb, R. Seiringer, J.P. Solovej, J. Yngvason, \emph{Bose-Einstein Condensation as a Quantum Phase Transition in an Optical Lattice}, Mathematical Physics of Quantum Mechanics,
Lecture Notes in Physics, Vol. \textbf{690}  (2006), pp 199--215.
\bibitem{AmmariNier1} Z. Ammari, F. Nier, \emph{Mean field limit for bosons and infinite dimensional phase-space analysis}, Ann. H. Poincar\'{e} \textbf{9}, 1503--1574 (2008).
\bibitem{AmmariNier2} Z. Ammari, F. Nier, \emph{Mean field propagation of Wigner measures and BBGKY hierarchies for general bosonic states}, J. Math. Pures Appl. \textbf{95}, 585--626 (2011).
\bibitem{Anapolitanos} I. Anapolitanos, \emph{Rate of Convergence towards the Hartree-von Neumann Limit in the Mean-Field Regime}, Lett. Math. Phys., Oct. 2011, Vol. 98, no. 1, 1--31.
\bibitem{CW} M.H. Anderson, J.R. Ensher, M.R. Matthews, C.E. Wieman, and E.A. Cornell, \emph{Observations of Bose-Einstein condensation in a dilute atomic vapor}, Science \textbf{269} (1995), 198--201.
\bibitem{AyacheTzvetkov} A. Ayache, N. Tzvetkov,  \emph{$L^p$ properties for Gaussian random series}, Trans. Amer. Math. Soc. \textbf{360} (2008), no. 8, 4425--4439.
\bibitem{BaCoSc1} J. Bardeen, L.N. Cooper, J.R. Schrieffer, \emph{Microscopic Theory of Superconductivity}, Phys. Rev. \textbf{102} (1957), 162--164. 
\bibitem{BaCoSc2} J. Bardeen, L.N. Cooper, J.R. Schrieffer, \emph{Theory of Superconductivity}, Phys. Rev. \textbf{108} (1957), no.5, 1175--1204.
\bibitem{BGM} C. Bardos, F. Golse, N. Mauser, \emph{Weak coupling limit of the $N$-particle Schr\"{o}dinger equation}, Methods. Appl. Anal. \textbf{7} (2000), 275--293.
\bibitem{Beckner2} W. Beckner, \emph{Multilinear embedding estimates for the fractional Laplacian}, Math. Res. Lett. \textbf{19} (2012), 175--189. 
\bibitem{Beckner} W. Beckner, \emph{Convolution estimates and the Gross-Pitaevskii hierarchy}, preprint (2011),\\  \texttt{http://arxiv.org/abs/1111.3857}.
\bibitem{BenArousKSch} G. Ben Arous, K. Kirkpatrick, B. Schlein, \emph{A Central Limit Theorem in Many-Body Quantum Dynamics},
Comm. Math. Phys. \textbf{321} (2013), no. 2, 371--417.
\bibitem{BePoSc1} N. Benedikter, M. Porta, B. Schlein, \emph{Mean-field Evolution of Fermionic Systems}, preprint (2013), \texttt{http://arxiv.org/abs/1305.2768}.
\bibitem{BePoSc2} N. Benedikter, M. Porta, B. Schlein, \emph{Mean-Field Dynamics of Fermions with Relativistic Dispersion}, preprint (2013), \texttt{http://arxiv.org/abs/1311.6270}.
\bibitem{BdOS} N. Benedikter, G. de Oliveira, B. Schlein, \emph{Quantitative derivation of the Gross-Pitaevskii equation},
preprint (2012), \texttt{http://arxiv.org/abs/1208.0373}.
\bibitem{BenyiOhPocovnicu1} A. Benyi, T. Oh, O. Pocovnicu, \emph{Wiener randomization on unbounded domains and an application to almost sure well-posedness of NLS}, preprint (2014), \texttt{http://arxiv.org/abs/1405.7326}, to appear in Excursions in Harmonic Analysis.
\bibitem{BenyiOhPocovnicu2} A. Benyi, T. Oh, O. Pocovnicu, \emph{On the probabilistic Cauchy theory of the cubic nonlinear Schr\"{o}dinger equation on $\mathbb{R}^d$, $d \geq 3$}, preprint (2014), \texttt{http://arxiv.org/abs/1405.7327}.
\bibitem{Bose} S.N. Bose, \emph{Plancks Gesetz und Lichtquantenhypothese}, Zeitschrift f\"{u}r Physik, \textbf{26} (1924), 178. 
\bibitem{B} J. Bourgain, \emph{Periodic nonlinear Schr\"{o}dinger equation and invariant measures}, Comm. Math. Phys. \textbf{166} (1994), no.1, 1--26.
\bibitem{B2} J. Bourgain, \emph{On the Cauchy problem and invariant measure problem for the periodic Zakharov system}, Duke Math. J. \textbf{76} (1994), 175--202.
\bibitem{B3} J. Bourgain, \emph{Invariant measures for the 2D-defocusing nonlinear Schr\"{o}dinger equation}, Comm. Math. Phys. \textbf{176} (1996), no.2, 421--445.
\bibitem{B4} J. Bourgain, \emph{Invariant measures for the Gross-Pitaevskii equation}, J. Math. Pures et Appl. \textbf{T76}, F. 8 (1997), 649--702.
\bibitem{1B7} J. Bourgain, \emph{Refinements of Strichartz's Inequality and applications to 2D-NLS with critical nonlinearity}, Int. Math. Research Notices, \textbf{5} (1998), 253-283.
\bibitem{1B24} J. Bourgain, \emph{Global well-posedness of the defocusing critical nonlinear Schr\"{o}dinger equation in the radial case}, J. Amer. Math. Soc., \textbf{12} no.1 (1999), 145-171.
\bibitem{BourgainBulut} J. Bourgain, A. Bulut,  \emph{Gibbs measure evolution in radial nonlinear wave and Schr\"{o}dinger equations on the ball}, C. R. Math. Acad. Sci. Paris \textbf{350} (2012), no. 11-12, 571Ð575.
\bibitem{BourgainBulut2} J. Bourgain, A. Bulut, 
\emph{Almost sure global well posedness for the radial nonlinear Schr\"{o}dinger equation on the unit ball II: the 3D case}, preprint (2012), \texttt{http://arxiv.org/abs/1302.5409}.
\bibitem{BourgainBulut3} J. Bourgain, A. Bulut, \emph{Invariant Gibbs measure evolution for the radial nonlinear wave equation on the 3D ball}, J. Funct. Anal. \textbf{266} (2014), no. 4, 2319--2340. 
\bibitem{BTT} N. Burq, L. Thomann, N. Tzvetkov, \emph{Global infinite energy solutions for the cubic wave equation}, preprint (2012), \texttt{http://arxiv.org/pdf/1210.2086.pdf}.
\bibitem{BurqTzvetkov} N. Burq, N. Tzvetkov, \emph{Invariant measure for a three dimensional nonlinear wave equation}, IMRN, no. 22, Article ID, rnm108 (2007), 26 pp.
\bibitem{BT1} N. Burq, N. Tzvetkov, \emph{Random data Cauchy theory for supercritical wave equations. I. Local theory}, Invent. Math. \textbf{173} (2008), no.3, 449-475.
\bibitem{1BT2} N. Burq, N. Tzvetkov, \emph{Random data Cauchy theory for supercritical wave equations. II. Local theory}, Invent. Math. \textbf{173} (2008), no.3, 477-496.
\bibitem{BurqandTzvetkov} N. Burq, N. Tzvetkov, \emph{Probabilistic well-posedness for the cubic wave equation}, J. Eur. Math. Soc. \textbf{16} (2014), no. 1, 1--30.
\bibitem{Cacciafesta_deSuzzoni} F. Cacciafesta, A.-S. de Suzzoni, \emph{Invariant measure for the Schr\"{o}dinger equation on the real line}, preprint (2014), \texttt{http://arxiv.org/abs/1405.5107}.
\bibitem{ChatterjeeDiaconis} S. Chatterjee, P. Diaconis, \emph{Fluctuations of the Bose-Einstein condensate}, J. Phys. A \textbf{47} (2014), no. 8, 085201, 23 pp. 
\bibitem{ChenLeeSchlein} L. Chen, J.O. Lee, B. Schlein, \emph{Rate of convergence towards Hartree dynamics}, J. Stat. Phys. \textbf{144} (2011), no.4, 872--903.
\bibitem{CHPS} T. Chen, C. Hainzl, N. Pavlovi\'{c}, R. Seiringer, \emph{Unconditional uniqueness for the cubic Gross-Pitaevskii hierarchy via quantum de Finetti}, preprint (2013), \texttt{http://arxiv.org/abs/1307.3168}.
\bibitem{ChHaPavSei2} T. Chen, C. Hainzl, N. Pavlovi\'{c}, R. Seiringer, \emph{On the well-posedness and scattering for the Gross-Pitaevskii hierarchy via quantum de Finetti}, Lett. Math. Phys. \textbf{104} (2014), no. 7, 871--891.
\bibitem{CP1} T. Chen, N. Pavlovi\'{c}, \emph{On the Cauchy problem for focusing and defocusing Gross-Pitaevskii hiearchies}, Discr. Contin. Dyn. Syst. \textbf{27} (2010), no. 2, 715--739.
\bibitem{CP_survey_article} T. Chen, N. Pavlovi\'{c}, \emph{Recent results on the Cauchy problem for focusing and defocusing Gross-Pitaevskii hierarchies}, Math. Model. Nat. Phenom. \textbf{5} (2010), no. 4, 54--72.
\bibitem{CP2} T. Chen, N. Pavlovi\'{c}, \emph{The quintic NLS as the mean field limit of a Boson gas with three-body interactions}, J. Funct. Anal. \textbf{260} (2011), no. 4, 959--997.
\bibitem{CP4} T. Chen, N. Pavlovi\'{c}, \emph{A new proof of existence of solutions for focusing and defocusing Gross-Pitaevskii hierarchies}, 
Proc. Amer. Math. Soc. \textbf{141} (2013), no. 1, 279Ð-293.
\bibitem{CP3} T. Chen, N. Pavlovi\'{c}, \emph{Higher order energy conservation and global well-posedness for Gross-Pitaevskii hierarchies}, preprint (2009), \texttt{http://arxiv.org/abs/1111.6222}.
\bibitem{CP} T. Chen, N. Pavlovi\'{c}, \emph{Derivation of the cubic NLS and Gross-Pitaevskii hierarchy from many-body dynamics in $d=2,3$ based on spacetime norms}, Ann. Henri Poincar\'{e} \textbf{15} (2014), no. 3, 543--588. 
\bibitem{CPT1} T. Chen, N. Pavlovi\'{c}, N. Tzirakis, \emph{Energy conservation and blowup of solutions for focusing and defocusing Gross-Pitaevskii hierarchies}, Ann. H. Poincar\'{e} (C), Anal. Non-Lin. \textbf{27} (2010), no. 5, 1271-1290.
\bibitem{CPT2} T. Chen, N. Pavlovi\'{c}, N. Tzirakis, \emph{Multilinear Morawetz identities for the Gross-Pitaevskii hierarchy}, Recent advances in harmonic analysis and partial differential equations, 39--62, Contemp. Math., \textbf{581}, Amer. Math. Soc., Providence, RI, 2012.
\bibitem{CPT2} T. Chen, N. Pavlovi\'{c}, N. Tzirakis, \emph{Multilinear Morawetz identities for the Gross-Pitaevskii hierarchy}, Recent advances in harmonic analysis and partial differential equations, 39--62, Contemp. Math., \textbf{581}, Amer. Math. Soc., Providence, RI, 2012.
\bibitem{CT} T. Chen, K. Taliaferro, \emph{Positive Semidefiniteness and Global Well-Posedness of Solutions to the Gross-Pitaevskii Hierarchy}, preprint (2013), \texttt{http://arxiv.org/abs/1305.1404}.
\bibitem{XC1} X. Chen, \emph{The Grillakis-Machedon-Margetis Second order corrections to mean field evolution for weakly interacting Bosons in the case of 3-body interactions}, preprint (2009),\\ \texttt{http://arxiv.org/abs/0911.4153}.
\bibitem{XC2} X. Chen, \emph{Second Order Corrections to Mean Field Evolution for Weakly Interacting Bosons in the Case of Three-body
Interactions}, Arch. Rational Mech. Anal. \textbf{203} (2012), 455--497.
\bibitem{XC3} X. Chen, \emph{Collapsing Estimates and the Rigorous Derivation of the 2d Cubic Nonlinear Schr\"{o}dinger Equation with Anisotropic Switchable Quadratic Traps}, J. Math. Pures Appl. (9) \textbf{98} (2012), no. 4, 450Ð-478.
\bibitem{XC4} X. Chen, \emph{On the Rigorous Derivation of the 3D Cubic Nonlinear Schr\"{o}dinger Equation with a Quadratic Trap}, Arch. Ration. Mech. Anal. \textbf{210} (2013), no. 2, 365--408.
\bibitem{ChenHolmer1} X. Chen, J. Holmer, \emph{On the rigorous derivation of the 2D cubic nonlinear Schr\"{o}dinger equation from 3D quantum many-body dynamics}, Arch. Ration. Mech. Anal. \textbf{210} (2013), no. 3, 909--954.
\bibitem{ChenHolmer2} X. Chen, J. Holmer, \emph{On the Klainerman-Machedon Conjecture of the Quantum BBGKY Hierarchy with Self-interaction}, preprint (2013), \texttt{http://arxiv.org/abs/1303.5385}.
\bibitem{ChenHolmer3} X. Chen, J. Holmer, \emph{Focusing Quantum Many-body Dynamics: The Rigorous Derivation of the 1D Focusing Cubic Nonlinear Schr\"{o}dinger Equation}, preprint (2013), \texttt{http://arxiv.org/abs/1308.3895}.
\bibitem{CS} X. Chen, P. Smith, \emph{On the unconditional uniqueness of solutions to the infinite radial Chern-Simons-Schr\"{o}dinger hierarchy}, preprint (2014), \texttt{http://arxiv.org/abs/1406.2649}.
\bibitem{CL} Z. Chen, C. Liu, \emph{On the Cauchy problem for Gross-Pitaevskii hierarchies}, J. of Math. Phys. \textbf{52}, Vol 3 (2011), 032103.
\bibitem{CKSTT} J. Colliander, M. Keel, G. Staffilani, H. Takaoka, T. Tao, \emph{Global well-posedness for Schr\"{o}dinger equations with derivative}, SIAM J. Math. Anal. \textbf{33} (2001), no.3, 649-669.
\bibitem{COh} J. Colliander, T. Oh, \emph{Almost sure well-posedness of the cubic nonlinear Schr\"{o}dinger equation below $L^2(\mathbb{T})$}, Duke Math. J. \textbf{161} (2012), no.3, 367--414.
\bibitem{Co} L.N. Cooper, \emph{Bound Electron Pairs in a Degenerate Fermi Gas}, Phys. Rev. \textbf{104} (1956), 1189--1190
\bibitem{CramerEisert} M. Cramer, J. Eisert, \emph{A quantum central limit theorem for non-equilibrium systems: exact relaxation of correlated states}, New. J. Phys. \textbf{12} (2009), 055020.
\bibitem{CushenHudson} C.D. Cushen, R.L. Hudson, \emph{A quantum-mechanical central limit theorem}, J. Appl. Prob. \textbf{8} (1971), 454. 
\bibitem{Ket} K.B. Davis, M.O. Mewes, M.R. Andrews, N.J. van Druten, D.S. Durfee, D.M. Kurn, and W. Ketterle, \emph{Bose-Einstein Condensation in a Gas of Sodium Atoms}, Phys. Rev. Let. \textbf{75} (1995), no. 22, 3969--3973.
\bibitem{deFinetti1} B. de Finetti, \emph{Funzione caratteristica di un fenomeno aleatorio}, Atti R. Accad. Naz. Lincei, Ser. 6, Mem. Cl. Sci. Fis. Mat. Natur. (1931).
\bibitem{deFinetti2} B. de Finetti, \emph{La pr'{e}vision: ses lois logiques, ses sources subiectives}, Ann. Inst. H. Poincar\'{e} \textbf{7} (1937), no. 1.
\bibitem{DengCui1} C. Deng, S. Cui, \emph{Random-data Cauchy problem for the Navier-Stokes equations on $\mathbb{T}^3$}, J. Diff. Eq. \textbf{251} (2011), no. 4-5, 902--917.
\bibitem{DengCui2} C. Deng, S. Cui, \emph{Random-data Cauchy problem for the periodic Navier-Stokes equations with initial data in negative-order Sobolev spaces}, J. Differential Equations \textbf{251} (2011), no. 4-5, 902--917.
\bibitem{Deng1} Y. Deng, \emph{Two dimensional NLS equation with random initial data}, Anal. PDE, \textbf{5} (2012), no. 5, 913--960.
\bibitem{Deng2} Y. Deng, \emph{Invariance of the Gibbs measure for the Benjamin-Ono equation}, preprint (2012),\\ \texttt{http://arxiv.org/abs/1210.1542}.
\bibitem{DengTzvetkovVisciglia} Y. Deng, N. Tzvetkov, N. Visciglia, \emph{Invariant measures and long time behaviour for the Benjamin-Ono equation III}, preprint (2014), \texttt{http://arxiv.org/abs/1405.4954}.
\bibitem{DiaconisFreedman} P. Diaconis, D. Freedman, \emph{Finite exchangeable sequences}, Ann. Probab. \textbf{8} (1980), 745--764.
\bibitem{Doob} J. L. Doob, \emph{Stochastic processes with an integral-valued parameter}, Trans. Amer. Math. Soc. \textbf{44} (1938), no. 1, 87--150.
\bibitem{Dynkin} E. B. Dynkin, \emph{Classes of equivalent random quantities}, Uspekhi Mat. Nauk (N.S.) \textbf{8} (1953), 125--130.
\bibitem{Einstein} A. Einstein, \emph{Quantentheorie des einatomigen idealen Gases}, Sitzungsberichte der Preussischen Akademie der Wissenschaften 1: 3. (1925).
\bibitem{EESY} A. Elgart, L. Erd\H{o}s, B. Schlein, H.-T. Yau, \emph{Gross-Pitaevskii Equation as the Mean Field Limit of Weakly Coupled Bosons}, Arch. Rational Mech. Anal. \textbf{179} (2006), 265--283.
\bibitem{ES} A. Elgart, B. Schlein, \emph{Mean Field Dynamics of Boson Stars}, Comm. Pure Appl. Math. \textbf{60} (2007), no.4, 500--545. 
\bibitem{ErdosSchlein} L. Erd\H{o}s, B. Schlein, \emph{Quantum dynamics with mean field interactions: a new approach}, J. Stat. Phys. \textbf{134} (2009), no.5, 859--870.
\bibitem{ESY1} L. Erd\H{o}s, B. Schlein, H.-T. Yau, \emph{Derivation of the Gross-Pitaevskii hierarchy for the dynamics of Bose-Einstein condensate}, Comm. Pure Appl. Math. \textbf{59} (2006), no. 12, 1659--1741.
\bibitem{ESY2} L. Erd\H{o}s, B. Schlein, H.-T. Yau, \emph{Derivation of the cubic non-linear Schr\"{o}dinger equation from quantum dynamics of many-body systems}, Invent. Math. \textbf{167} (2007), no. 3, 515--614.
\bibitem{ESY3} L. Erd\H{o}s, B. Schlein, H.-T. Yau, \emph{Rigorous derivation of the Gross-Pitaevskii equation}, Phys. Rev. Lett. \textbf{98} (2007), no.4, 040404.
\bibitem{ESY4} L. Erd\H{o}s, B. Schlein, H.-T. Yau, \emph{Rigorous derivation of the Gross-Pitaevskii equation with a large interaction potential}, J. Amer. Math. Soc. \textbf{22} (2009), no. 4, 1099--1156
\bibitem{ESY5} L. Erd\H{o}s, B. Schlein, H.-T. Yau, \emph{Derivation of the Gross-Pitaevskii equation for the dynamics of Bose-Einstein condensate}, Ann. of Math. (2), \textbf{172} (2010), no. 1, 291--370.
\bibitem{EY} L. Erd\H{o}s, H.-T. Yau, \emph{Derivation of the nonlinear Schr\"{o}dinger equation from a many body Coulomb system}, Adv. Theor. Math. Phys. \textbf{5} (2001), no.6, 1169--1205.
\bibitem{Federbush} P. Federbush, \emph{A partially alternate derivation of a result of Nelson}, J. Math. Phys. \textbf{10} (1969), 50-52.
\bibitem{Fichtner} K.-H. Fichtner, \emph{On the position distribution of the ideal Bose gas}, Math. Nachr. \textbf{151}, 59--67.
\bibitem{FGS} J. Fr\"{o}hlich, S. Graffi, S. Schwarz, \emph{Mean-field and classical limit of many body Schr\"{o}dinger dynamics for bosons}, Comm. Math. Phys. \textbf{271} (2007), no.3, 681--697.
\bibitem{FKP} J. Fr\"{o}hlich, A. Knowles, A. Pizzo, \emph{Atomism and quantization}, J. Phys. A \textbf{40} (2007), no. 12, 3033-3045.
\bibitem{FKS} J. Fr\"{o}hlich, A. Knowles, S. Schwarz, \emph{On the mean-field limit of bosons with Coulomb two-body interaction},  Comm. Math. Phys. \textbf{288} (2009), no. 3, 1023--1059.
\bibitem{FL} J. Fr\"{o}hlich, E. Lenzmann, \emph{Mean-field limit of quantum Bose gases and nonlinear Hartree equation}, Sem. \'{E}.D.P. 2003-2004, Exp. No. XIX, 26 pp., S\'{e}min. \'{E}qu. D\'{e}riv. Partielles, \'{E}cole Polytech., Palaiseau, 2004.
\bibitem{FrTsYau1} J. Fr\"{o}hlich, T.-P. Tsai, H.-T. Yau, \emph{On a classical limit of quantum theory and the non-linear Hartree equation}, Conf\'{e}rence Mosh\'{e} Flato 1999, Vol. I (Dijon), 189--207, Math. Phys. Stud., 21, Kluwer Acad. Publ., Dordrecht (2000). 
\bibitem{FrTsYau2} J. Fr\"{o}hlich, T.-P. Tsai, H.-T. Yau, \emph{On a classical limit of quantum theory and the non-linear Hartree equation}, GAFA 2000 (Tel Aviv, 1999). Geom. Funct. Anal. 2000, Special Volume, Part I, 57--78.
\bibitem{FrTsYau3} J. Fr\"{o}hlich, T.-P. Tsai, H.-T. Yau, \emph{On the point-particle (Newtonian) limit of the non-linear Hartree equation}, Comm. Math. Phys. \textbf{225} (2002), no. 2, 223--274.
\bibitem{GV1} J. Ginibre, G. Velo, \emph{The classical field limit of scattering theory for nonrelativistic many-boson systems I}, Comm. Math. Phys. \textbf{66} (1979), no. 1, 37--76.
\bibitem{GV2} J. Ginibre, G. Velo, \emph{The classical field limit of scattering theory for nonrelativistic many-boson systems II}, Comm. Math. Phys. \textbf{68} (1979), no. 1, 45--68.
\bibitem{Glimm} J. Glimm, \emph{Boson fields with non-linear self-interaction in two dimensions}, Comm. Math. Phys. \textbf{8} (1968), 12-25.
\bibitem{GVV} D. Goderis, A. Verbeure, P. Vets, \emph{About the mathematical theory of quantum fluctuations}, in \emph{Mathematical Methods in Statistical Mechanics}, Leuven Notes in Mathematical and Theoretical Physics, \textbf{1}. Leuven University Press, Leuven (1989).
\bibitem{GSS} P. Gressman, V. Sohinger, G. Staffilani, \emph{On the uniqueness of solutions to the 3D periodic Gross-Pitaevskii hierarchy}, J. Funct. Anal. \textbf{266} (2014), no. 7, 4705--4764.
\bibitem{GM} M. Grillakis, M. Machedon, \emph{Pair excitations and the mean field approximation of interacting Bosons, I}, 
Comm. Math. Phys. \textbf{324} (2013), no. 2, 601--636.
\bibitem{GMM1} M. Grillakis, M. Machedon, D. Margetis, \emph{Second order corrections to mean field evolution of weakly interacting bosons, I}, Comm. Math. Phys. \textbf{294} (2010), 273--301.
\bibitem{GMM2} M. Grillakis, M. Machedon, D. Margetis, \emph{Second order corrections to mean field evolution of weakly interacting bosons, II}, Adv. in Math.
\textbf{228} (2011), no. 3, 1788--1815.
\bibitem{Gross} E. Gross, \emph{Structure of a quantized vortex in boson systems}, Nuovo Cimento \textbf{20} (1961), 454--466.
\bibitem{LGross1} L. Gross, \emph{Logarithmic Sobolev inequalities}, Amer. J. Math. \textbf{97} (1975), no. 4, 1061--1083.
\bibitem{LGross2} L. Gross, \emph{Hypercontractivity and logarithmic Sobolev inequalities for the Clifford Dirichlet form}, Duke Math. J. \textbf{42} (1975), no. 3, 383--396.
\bibitem{Hayashi} M. Hayashi, \emph{Quantum estimation and the quantum central limit theorem}, Science and Technology \textbf{227} (2006), 95.
\bibitem{Hepp} K. Hepp, \emph{The classical limit for quantum mechanical correlation functions}, Comm. Math. Phys. \textbf{35} (1974), 265--277.
\bibitem{HeppLieb} K. Hepp, E.H. Lieb, \emph{Phase transitions in reservoir-driven open systems with applications to lasers and superconductors}, Helv. Phys. Acta \textbf{46} (1973), 573--603.
\bibitem{HewittSavage} E. Hewitt, L.J. Savage, \emph{Symmetric measures on Cartesian products}, Trans. Amer. Math. Soc. \textbf{80} (1955), 470--501.
\bibitem{HTX} Y. Hong, K. Taliaferro, Z. Xie, \emph{Unconditional Uniqueness of the cubic Gross-Pitaevskii Hierarchy with Low Regularity}, preprint (2014), \texttt{http://arxiv.org/abs/1402.5347}.
\bibitem{HudsonMoody} R.L. Hudson, G.R. Moody, \emph{Locally normal symmetric states and an analogue of de Finetti's theorem}, Z. Wahrscheinlichkeitstheorie und Verw. Gebiete \textbf{33} (1975/76), 343--351.
\bibitem{JaksicPautratPillet} V. Jaksi\'{c}, Y. Pautrat, C.-A. Pillet, \emph{A quantum central limit theorem for sums of iid random variables}, J. Math. Phys. \textbf{51} (2010), 015208.
\bibitem{Kakutani} S. Kakutani, \emph{Notes on infinite product measure spaces}, Proc. Imp. Acad. Vol. \textbf{19}, no. 3 (1943), 148--151.
\bibitem{KSS} K. Kirkpatrick, B. Schlein, G. Staffilani, \emph{Derivation of the cubic nonlinear Schr\"{o}dinger equation from quantum dynamics of many-body systems: the periodic case},  Amer. J. Math. \textbf{133} (2011), no. 1, 91--130. 
\bibitem{KM2} S. Klainerman, M. Machedon, \emph{Space-time estimates for null forms and the local existence theorem} Comm. Pure Appl. Math. \textbf{46} (1993), 169--177. 
\bibitem{KM} S. Klainerman, M. Machedon, \emph{On the uniqueness of solutions to the Gross-Pitaevskii hierarchy}, Comm. Math. Phys. \textbf{279} (2008), no.1, 169--185.
\bibitem{KP} A. Knowles, P. Pickl, \emph{Mean-field dynamics: singular potentials and rate of convergence}, Comm. Math. Phys. \textbf{298} (2010), no.1, 101--139.
\bibitem{Kolmogorov} A. N. Kolmogorov, \emph{Grundbegriffe der Wahrscheinlichkeitsrechnung}, (1933), Springer, Berlin.
\bibitem{Zwierlein1} M.J.H. Ku, A.T. Sommer, L.W. Cheuk, M.W. Zwierlein, \emph{Revealing the Superfluid Lambda Transition in the Universal Thermodynamics of a Unitary Fermi Gas}, Science, Vol. \textbf{335}, 3 February 2012.
\bibitem{Kuperberg} G. Kuperberg, \emph{A tracial quantum central limit theorem}, Trans. Amer. Math. Soc. \textbf{357} (2005), no.2, 459--471.
\bibitem{LRS} J. Lebowitz, H. Rose, E. Speer, \emph{Statistical mechanics of the nonlinear Schr\"{o}dinger equation}, J. Stat. Phys. \textbf{50}, (1988), no.3-4, 657--687.
\bibitem{Lee} J.O. Lee, \emph{Rate of convergence towards semi-relativistic Hartree dynamics}, Ann. H. Poincar\'{e} (C), \textbf{14} (2013), no.2, 313--346.
\bibitem{LewinNamRougerie} M. Lewin, P. T. Nam, N. Rougerie, \emph{Derivation of Hartree's theory for generic mean-field Bose systems}, Adv. Math. \textbf{254} (2014), 570--621.
\bibitem{LewinNamRougerie3} M. Lewin, P. T. Nam, N. Rougerie, \emph{The mean-field approximation and the non-linear Schr\"{o}dinger functional for trapped Bose gases}, preprint (2014), \texttt{http://arxiv.org/abs/1405.3220}.
\bibitem{LewinSabin1} M. Lewin, J. Sabin, \emph{The Hartree equation for infinitely many particles. I. Well-posedness theory}, preprint (2013), \texttt{http://arxiv.org/abs/1310.0603}, to appear in Comm. Math. Phys.
\bibitem{LewinSabin2} M. Lewin, J. Sabin, \emph{The Hartree equation for infinitely many particles. II. Dispersion and scattering in 2D}, preprint (2013), \texttt{http://arxiv.org/abs/1310.0604}.
\bibitem{LS} E. Lieb, R. Seiringer, \emph{Proof of Bose-Einstein condensation for dilute trapped gases}, Phys. Rev. Lett. \textbf{88} (2002), 170409-1-4.
\bibitem{LSSY} E. Lieb, R. Seiringer, J.P. Solovej, J. Yngvason, \emph{The mathematics of the Bose gas and its condensation}, Oberwolfach Seminars, \textbf{34}. Birkhauser Verlag, Basel, 2005.
\bibitem{LSY} E. Lieb, R. Seiringer, J.P. Yngvason, \emph{Bosons in a trap: a rigorous derivation of the Gross-Pitaevskii energy functional}, Phys. Rev. A \textbf{61} (2000), 043602.
\bibitem{LSY2} E. Lieb, R. Seiringer, J. Yngvason, \emph{A rigorous derivation of the Gross-Pitaevskii energy functional for a two-dimensional Bose gas. Dedicated to Joel L. Lebowitz}, Comm. Math. Phys. \textbf{224} (2001), no.1, 17--31.
\bibitem{Luhrmann} J. Luhrmann, \emph{Mean-field quantum dynamics with magnetic fields}, J. Math. Phys. \textbf{53} (2012), no. 2, 022105, 19 pp.
\bibitem{LM} J. Luhrmann, D. Mendelson, \emph{Random data Cauchy theory for nonlinear wave equations of power-type on $\mathbb{R}^3$}, preprint (2013), \texttt{http://arxiv.org/abs/1309.1225}.
\bibitem{MarcinkiewiczZygmund} J. Marcinkiewicz, A. Zygmund, \emph{Sur les foncions ind\'{e}pendantes}, Fund. Math., \textbf{28} (1937), 60--90., Reprinted in J\'{o}zef Marcinkiewicz, Collected papers, edited by 
Antoni Zygmund, Panstwowe Wydawnictwo Naukowe, Warsaw (1964), pp. 233--259.
\bibitem{McKeanVaninsky1} H. P. McKean, K. L. Vaninsky, \emph{Statistical mechanics of nonlinear wave equations}, Trends and perspectives in applied mathematics, 239--264, Appl. Math. Sci., \textbf{100}, Springer, New York, 1994. 
\bibitem{McKeanVaninsky2} H. P. McKean, K. L. Vaninsky, \emph{Action-angle variables for the cubic Schr\"{o}dinger equation}, Comm. Pure Appl. Math. \textbf{50} (1997), no. 6, 489--562. 
\bibitem{McKeanVaninsky3} H. P. McKean, K. L. Vaninsky, \emph{Cubic Schr\"{o}dinger: the petit canonical ensemble in action-angle variables}, Comm. Pure Appl. Math. \textbf{50} (1997), no. 7, 593--622. 
\bibitem{MichelangeliSchlein} A. Michelangeli, B. Schlein, \emph{Dynamical collapse of boson stars}, Comm. Math. Phys. \textbf{311} (2012), no. 3, 645--687. 
\bibitem{NORBS} A. Nahmod, T. Oh, L. Rey-Bellet, G. Staffilani, \emph{Invariant Wiener measures and almost sure global well-posedness for the periodic derivative NLS},  J. Eur. Math. Soc. (JEMS) \textbf{14} (2012), no. 4, 1275--1330.
\bibitem{NRBSS} A. Nahmod, L. Rey-Bellet, S. Sheffield, G. Staffilani, \emph{Absolute continuity of Gaussian measures under certain gauge transformations}, Math. Res. Lett. \textbf{18} (2011), no. 5, 875--887.
\bibitem{NahmodPavlovicStaffilani} A. Nahmod, N. Pavlovi\'{c}, G. Staffilani, \emph{Almost sure existence of global weak solutions for super-critical Navier-Stokes equations}, SIAM J. Math. Anal. \textbf{45} (2013), no. 6, 3431--3452. 
\bibitem{NS} A. Nahmod, G. Staffilani, \emph{Randomization in Nonlinear PDE and the Supercritical Periodic Quintic NLS in $3D$}, preprint (2013), \texttt{http://arxiv.org/abs/1308.1169}.
\bibitem{Nelson1} E. Nelson, \emph{The free Markoff field}, J. Funct. Anal. \textbf{12} (1973), 211--227.
\bibitem{Oh1} T. Oh, \emph{Gibbs measures and almost sure global well-posedness for coupled KdV systems}, Diff. Integ. Eq. \textbf{22} (2009), no. 7-8, 637--668.
\bibitem{Oh2} T. Oh, \emph{Invariance of the white noise for KdV}, Comm. Math. Phys. \textbf{292} (2009), no. 1, 217--236, \emph{Erratum: ``Invariance of the white noise for KdV"}, in preparation.
\bibitem{Oh3} T. Oh, \emph{Invariance of the Gibbs measure for the Schr\"{o}dinger-Benjamin-Ono System}, SIAM J. Math. Analysis, \textbf{41} (2009), no. 1, 2207--2225. 
\bibitem{OhSulem} T. Oh, C. Sulem, \emph{On the one-dimensional cubic nonlinear Schrodinger equation below $L^2$},  Kyoto J. Math. \textbf{52} (2012), no.1, 99--115.
\bibitem{PaleyZygmund1} R.E.A.C. Paley, A. Zygmund, \emph{On some series of functions 1}, Proc. Camb. Phil. Soc. \textbf{26} (1930), 337--357.
\bibitem{PaleyZygmund2} R.E.A.C. Paley, A. Zygmund, \emph{On some series of functions 2}, Proc. Camb. Phil. Soc. \textbf{26} (1930), 458--474.
\bibitem{PaleyZygmund3} R.E.A.C. Paley, A. Zygmund, \emph{On some series of functions 3}, Proc. Camb. Phil. Soc. \textbf{28} (1932), 190--205.
\bibitem{Pickl1} P. Pickl, \emph{Derivation of the time dependent Gross-Pitaevskii equation with external fields}, J. Stat. Phys. \textbf{140} (2010), no. 1, 76Ð-89.
\bibitem{Pickl2} P. Pickl, \emph{A simple derivation of mean field limits for quantum systems}, Lett. Math. Phys. \textbf{97} (2011), no.2, 151--164. 
\bibitem{Pitaevskii} L. Pitaevskii, \emph{Vortex lines in an imperfect Bose gas}, Sov. Phys. JETP \textbf{13} (1961), 451--454.
\bibitem{ProkofievSvistunov2} N. Prokof'ev, B. Svistunov, \emph{Bold diagrammatic Monte Carlo technique: When the sign problem is welcome}, Phys. Rev. Lett. \textbf{99} (2007), 250201.
\bibitem{ProkofievSvistunov3} N. Prokof'ev, B. Svistunov, \emph{Bold diagrammatic Monte Carlo: A generic sign-problem tolerant technique for polaron models and possibly interacting many-body problems}, Phys. Rev. B \textbf{77} (2008), 125101.
\bibitem{Rademacher} H. Rademacher, \emph{Einige S\"{a}tze \"{u}ber Reihen von allgemeinen Orthogonalfunktionen}. Math Ann. \textbf{87} (1922), 112--138.
\bibitem{Rafler} M. Rafler, \emph{Gaussian Loop- and P\'{o}lya processes: A point process approach}, Ph. D. thesis, Univ. Potsdam.
\bibitem{Richards} G. Richards, \emph{Invariance of the Gibbs measure for the periodic quartic gKdV}, preprint (2012), \texttt{http://arxiv.org/abs/1209.4337}.
\bibitem{RodnianskiSchlein} I. Rodnianski, B. Schlein, \emph{Quantum fluctuations and rate of convergence towards mean field dynamics}, Comm. Math. Phys. \textbf{291} (2009), no.1, 31--61. 
\bibitem{Schlein} B. Schlein, \emph{Derivation of Effective Evolution Equations from Microscopic Quantum Dynamics}, Chapter in \emph{Evolution equations}, edited by D. Ellwood, I. Rodnianski, G. Staffilani, and J. Wunsch, Clay Mathematics Proceedings (2013), Vol 17, 572 pp.
\bibitem{VS2} V. Sohinger, \emph{A rigorous derivation of the defocusing cubic nonlinear Schr\"{o}dinger equation on $\mathbb{T}^3$ from the dynamics of many-body quantum systems}, preprint (2014), \texttt{http://arxiv.org/abs/1405.3003}. 
\bibitem{SoSt} V. Sohinger, G. Staffilani, \emph{Randomization and the Gross-Pitaevskii hierarchy}, preprint (2013),\\ \texttt{http://arxiv.org/abs/1308.3714}.
\bibitem{Spohn} H. Spohn, \emph{Kinetic equations from Hamiltonian Dynamics}, Rev. Mod. Phys. \textbf{52} (1980), no. 3, 569--615.
\bibitem{Stormer} E. St\o rmer, \emph{Symmetric states of infinite tensor products of $C^{*}$ algebras}, J. Funct. Anal. \textbf{3} (1969), 48--68.
\bibitem{deSuzzoni2}  A.-S. de Suzzoni, \emph{Invariant measure for the cubic wave equation on the unit ball of $\mathbb{R}^3$},  Dyn. Partial Differ. Equ. \textbf{8} (2011), no. 2, 127--147.
\bibitem{deSuzzoni} A.-S. de Suzzoni, \emph{On the use of normal forms in the propagation of random waves}, preprint (2013), \texttt{http://arxiv.org/abs/1307.0619}.
\bibitem{deSuzzoni3} A.-S. de Suzzoni, \emph{Invariant measure for the Klein-Gordon equation in a non periodic setting}, preprint (2014), \texttt{http://arxiv.org/abs/1403.2274}.
\bibitem{deSuzzoniTzvetkov} A.-S. de Suzzoni, N. Tzvetkov, \emph{On the propagation of weakly nonlinear random dispersive waves}, Arch. Ration. Mech. Anal. \textbf{212} (2014), no. 3, 849--874.
\bibitem{Tao} T. Tao, \emph{Nonlinear Dispersive Equations: Local and global analysis}, CBMS Reg. Conf. Series in Math., \textbf{106}, AMS, Providence, RI, 2006. 
\bibitem{1Tho} L. Thomann, \emph{Random data Cauchy problem for supercritical Schr\"{o}dinger equations}, Ann. Inst. H. Poincar\'{e}, \textbf{26} (2009), no.6, 2385-2402.
\bibitem{TT} L. Thomann, N. Tzvetkov, \emph{Gibbs measure for the periodic derivative nonlinear Schr\"{o}dinger equation}, Nonlinearity, \textbf{23}(2010), no.11, 2771--2791.
\bibitem{Tzv1} N. Tzvetkov, \emph{Invariant measures for the nonlinear Schr\"{o}dinger equation on the disc}, Dynam. Partial Differ. Eq. \textbf{3} (2006), no.2,  111-160.
\bibitem{Tzv2} N. Tzvetkov, \emph{Invariant measures for the defocusing nonlinear Schr\"{o}dinger equation}, Ann. Inst. Fourier (Grenoble), \textbf{58} (2008), no.7, 2543--2604.
\bibitem{Tzv3} N. Tzvetkov, \emph{Construction of a Gibbs measure associated to the periodic Benjamin-Ono equation}, Probab. Theory Related Fields, \textbf{146} (2010), 481-514.
\bibitem{ProkofievSvistunov1} K. Van Houcke, E. Kozik, N. Prokof'ev, B. Svistunov, in \emph{Computer Simulation Studies in Condensed Matter Physics XXI} (eds. D.P. Landau, S.P. Lewis, H.B. Schuttler), Springer (2008).
\bibitem{Zwierlein2} K. Van Houcke, F. Werner, E. Kozik, N. Prokof'ev, B. Svistunov, M.J.H. Ku, A.T. Summer, L.W. Cheuk, A. Schirotzek, M.W. Zwierlein, \emph{Feynman diagrams versus Fermi-gas Feynman emulator}, Nature Physics, Vol. \textbf{8}, May 2012.
\bibitem{Wolff} T. H. Wolff, \emph{Lectures on Harmonic Analysis}, AMS Univ. Lect. Ser., Vol. \textbf{29} (2003).
\bibitem{Xie} Z. Xie, \emph{Derivation of a Nonlinear Schr\"{o}dinger Equation with a General power-type nonlinearity}, preprint (2013), \texttt{http://arxiv.org/abs/1305.7240}.
\bibitem{Xu} S. Xu, \emph{Invariant Gibbs Measure for 3D NLW in Infinite Volume}, preprint (2014),\\ \texttt{http://arxiv.org/abs/1405.3856}.
\bibitem{ZhangFang} T. Zhang, D. Fang, \emph{Random data Cauchy theory for the incompressible three dimensional Navier-Stokes equations}, Proc. AMS \textbf{139} (2011), no.8, 2827--2837.
\bibitem{Zhidkov} P.E. Zhidkov, \emph{An invariant measure for the nonlinear Schr\"{o}dinger equation} (Russian) Dokl. Akad. Nauk SSSR \textbf{317} (1991), no.3, 543--546; translation in Soviet Math. Dokl. \textbf{43}, no. 2, 431--434.
\bibitem{Zh} P.E. Zhidkov, \emph{Korteweg- de Vries and nonlinear Schr\"{o}dinger equations: qualitative theory}, Lecture Notes in Mathematics, \textbf{1756}, Springer-Verlag, Berlin, (2001).
\end{thebibliography}
\end{document}